\newtheorem{remark}[theorem]{Remark}
\newcommand{\manel}[1]{\textcolor{black}{#1}}
\newcommand{\ab}[1]{\textcolor{black}{#1}}
\newcommand{\pedro}[1]{\textcolor{black}{#1}}
\newcommand{\rhn}[1]{\textcolor{black}{#1}}
\newenvironment{algotab}%
{
\newline\begin{minipage}{\textwidth}
\par\begin{samepage}%
\begin{tabbing}\ttfamily%
 \hspace*{5mm}\=\hspace{3ex}\=\hspace{3ex}\=\hspace{3ex}\=\hspace{3ex}%
\=\hspace{3ex}\=\hspace{3ex}\=\hspace{3ex}\=\hspace{3ex}\kill}%
{\end{tabbing}\end{samepage}
\end{minipage}\\
}
\newcommand{\step}[1]{\noindent\raisebox{1.5pt}[10pt][0pt]{\tiny\framebox{$#1$}}\xspace}
\newcommand{\N}{\ensuremath{\mathbb{N}}}
\renewcommand{\P}{\ensuremath{\mathbb{P}}}
\newcommand{\R}{\ensuremath{\mathbb{R}}}
\newcommand{\ie}{i.\,e.,\xspace}
\newcommand{\definedas}{\mathrel{:=}}
\newcommand{\Cleq}{\ensuremath{\preccurlyeq}}
\newcommand{\Ceq}{\ensuremath{\cong}}
\newcommand{\eps}{\ensuremath{\varepsilon}}
\newcommand{\grids}{\ensuremath{\mathbb{T}}} 
\newcommand{\As}[1][s]{\ensuremath{\mathbb{A}_{#1}}}
\newcommand{\T}{\ensuremath{\mathcal{T}}}
\newcommand{\wT}{ {\ensuremath{\widehat{\mathcal{T}}}} } 
\newcommand{\Tk}[1][k]{\ensuremath{\T_{#1}}} 
\newcommand{\M}{\ensuremath{\mathcal{M}}} 
\newcommand{\RT}{\ensuremath{\mathcal{R}_T}}
\newcommand{\E}{\ensuremath{\mathcal{E}}}
\newcommand{\hT}{\ensuremath{h_{T}}}
\newcommand{\V}{\ensuremath{\mathbb{V}}}
\newcommand{\wnabla}{\ensuremath{\widehat \nabla}}
\DeclareMathOperator{\interior}{int}
\DeclareMathOperator{\osc}{osc}
\DeclareMathOperator{\id}{id}
\newcommand{\cT}{{\T}}
\newcommand{\cR}{{\cal R}}
\newcommand{\cP}{{\cal P}}
\newcommand{\cM}{{\cal M}}
\newcommand{\GREEDY}{\textsf{GREEDY}\xspace}
\newcommand{\SOLVE}{\textsf{SOLVE}\xspace}
\newcommand{\MARK}{\textsf{MARK}\xspace}
\newcommand{\REFINE}{\textsf{REFINE}\xspace}
\newcommand{\ESTIMATE}{\textsf{ESTIMATE}\xspace}
\newcommand{\AFEM}{\textsf{AFEM}\xspace}
\newcommand{\ADAPTSURF}{\textsf{ADAPT\_SURFACE}\xspace}
\newcommand{\ADAPTPDE}{\textsf{ADAPT\_PDE}\xspace}
\newcommand{\ga}{\gamma}
\newcommand{\na}{\nabla}
\DeclareMathOperator{\dist}{dist}
\newcommand{\bXi}{ \boldsymbol{\mathcal{\chi}} }
\newcommand{\bx}{\mathbf{x}}
\newcommand{\by}{\mathbf{y}}
\newcommand{\bz}{\mathbf{z}}
\newcommand{\bv}{\mathbf{v}}
\newcommand{\bu}{\mathbf{u}}
\newcommand{\bn}{\mathbf{n}}
\newcommand{\bq}{\mathbf{q}}
\newcommand{\bw}{\mathbf{w}}
\newcommand{\bG}{\mathbf{T}}
\newcommand{\bD}{\mathbf{D}}
\newcommand{\bA}{\mathbf{E}}
\newcommand{\bI}{\mathbf{I}}
\newcommand{\bgG}{\mathbf{G}}
\newcommand{\bg}{\mathbf{G}}
\newcommand{\bnu}{{\boldsymbol \nu}}
\newcommand{\vg}{\mathbf{g}}
\newcommand{\JS}{\mathcal{J}_S}
\newcommand{\JT}{\mathcal{J}_{\partial T}}
\newcommand{\ldk}[1]{\lambda_{#1}}
\newcommand{\norm}[1]{\left\Vert#1\right\Vert}
\newcommand{\normLtT}[1]{\norm{#1}_{L_2(T)}}
\newcommand{\normLtpT}[1]{\norm{#1}_{L_2(\partial T)}}
\newcommand{\normLtk}[2]{\norm{#1}_{L_2({#2})}}
\newcommand{\abs}[1]{\left\vert#1\right\vert}
\newcommand{\br}[1]{\left[#1\right]}
\newcommand{\DivG}{\nabla_{\!\gamma}}
\newcommand{\DivGk}[1]{\nabla_{\!\Gamma_{#1}}}
\newcommand{\LapG}{\Delta_{\gamma}}
\newcommand{\InT}{\int_T}
\newcommand{\InS}{\int_S}
\newcommand{\vphi}{\varphi}
\newcommand{\Pn}[1][n]{\mathbb{P}_{#1}}
\newcommand{\Solve}{\textsf{SOLVE}}
\newcommand{\Estimate}{\textsf{ESTIMATE}}
\newcommand{\Mark}{\textsf{MARK}}
\newcommand{\Refine}{\textsf{REFINE}}
\newcommand{\bX}{\mathbf X}
\newcommand{\bV}{\mathbf V}
\title
{High-Order AFEM for the Laplace-Beltrami Operator:
Convergence Rates}
\author{%
Andrea Bonito
\thanks{Department of Mathematics, Texas A\&M University, 3368 TAMU, 
College Station, TX 77843-3368, USA. (\texttt{bonito@math.tamu.edu}) .
Partially supported by NSF Grant DMS-1254618.}
\and J.\ Manuel Casc\'on
\thanks{Departamento de Econom{\'\i}a e Historia Econ\'omica, Universidad de Salamanca, Salamanca 37008, Spain. (\texttt{casbar@usal.es}).
Partially supported by Secretar\'{\i}a de Estado de Investigaci\'on, Desarrollo e Innovaci\'on and Centro para el Desarrollo Tecnol\'ogico
Industrial of the Ministerio de Econom\'{\i}a y Competitividad (Spain), grant: CGL2011-29396-C03-02 and by Conserjer\'{\i}a de Educaci\'on of the Junta de Castilla y Le\'on, grants: SA266A12-2, SA020U16.}
\and Khamron Mekchay
\thanks{Department of Mathematics, Chulalongkorn University, Thailand.
(\texttt{k.mekchay@gmail.com}).
Partially supported by NSF Grants DMS-0204670, DMS-0505454, and
INT-0126272.
}
\and Pedro~Morin
\thanks{UNL, CONICET. Departamento de Matem\'atica, Facultad de Ingenier\'{i}a Qu\'{i}mica, 
Santiago del Estero 2829, S3000AOM Santa Fe, Argentina (\texttt{pmorin@santafe-conicet.gov.ar}). 
Partially supported by CONICET through grant PIP 112-2011-0100742, by Universidad Nacional del Litoral through grant CAI+D PI 501 201101 00476 LI, and by Agencia Nacional de Promoci\'on Cient\'{\i}fica y Tecnol\'ogica, through grants PICT-2012-2590, PICT-2013-3293 (Argentina).\looseness=-1
}
\and Ricardo H. Nochetto
\thanks{Department of Mathematics and Institute for Physical Science
  and Technology, University of Maryland, College Park, MD 20742, USA
  (\texttt{rhn@math.umd.edu}). Partially
  supported by NSF Grants DMS-1109325, DMS-1411808,
 and a RASA semester research award of the University of Maryland.
}
}
\newglossaryentry{not:poly_init}
{
type=not,
name={\ensuremath{\overline{\Gamma}_0}},
description={Initial polyhedral surface},
text={},
sort={G}
}
\newglossaryentry{not:macro_element}
{
type=not,
name={\ensuremath{\overline{\Gamma}_0^i}},
description={Macro-element},
text={},
sort={G}
}
\newglossaryentry{not:P0}
{
type=not, 
name={\ensuremath{P_0:\overline{\Gamma}_0 \rightarrow \gamma}},
description={Lipschitz homeomorphism from the initial polyhedral surface to the exact surface},
text={},
sort={P}
}
\newglossaryentry{not:Omega}
{
type=not, 
name={\ensuremath{\Omega}},
description={Local parametric domain},
text={},
sort={Omega}
}
\newglossaryentry{not:X0i}
{
type=not, 
name={\ensuremath{X_0^i:\Omega \rightarrow \overline{\Gamma}_0^i}},
description={Local parametrization of $\overline{\Gamma}_0$},
text={},
sort={X},
}
\newglossaryentry{not:Chi}
{
type=not, 
name={\ensuremath{\chi^i:\Omega \rightarrow \gamma^i}},
description={Local parametrization of $\gamma^i$},
text={},
sort={Chi}
}
\newglossaryentry{not:HatT}
{
type=not, 
name={\ensuremath{\widehat \T^i}},
description={Conforming parametrization of the parametric domain $\Omega$},
text={},
sort={Triangulation}
}
\newglossaryentry{not:Xi}
{
type=not, 
name={\ensuremath{X^i_{\T^i}:\Omega  \rightarrow \Gamma^i}},
description={Piecewise polynomial interpolant of $\chi^i$},
text={},
sort={X}
}
\newglossaryentry{not:Gammai}
{
type=not, 
name={\ensuremath{\Gamma^i}},
description={Piecewise polynomial approximation of $\gamma^i$},
text={},
sort={Gamma}
}
\newglossaryentry{not:OmegaM}
{
type=not, 
name={\ensuremath{\Omega^M}},
description={Global parametric space},
text={},
sort={OmegaM}
}
\newglossaryentry{not:T}
{
type=not, 
name={\ensuremath{\T^i}},
description={Local subdivision of the approximate surface},
text={},
sort={Triangulation}
}
\newglossaryentry{not:TT}
{
type=not, 
name={\ensuremath{\mathbb{T}}},
description={Forest associated with the initial subdivision $\T_0$},
text={},
sort={Forest}
}
\newglossaryentry{not:FEM}
{
type=not, 
name={\ensuremath{\mathbb{V}(\cT)}},
description={Continuous piecewise polynomial finite element spaces associated with $\cT$},
text={},
sort={FEM space}
}
\newglossaryentry{indi:lambdalocal}
{
%type=not, 
type=indi, 
name={\ensuremath{\lambda_{\T^i}(\gamma,T)}},
description={Local geometric indicator},
text={},
sort={Lambda}
}
\newglossaryentry{indi:lambda}
{
%type=not, 
type=indi, 
name={\ensuremath{\lambda_{\T^i}(\gamma)}},
description={Global geometric indicator},
text={},
sort={Lambda}
}
\newglossaryentry{not:first_fund}
{
type=not, 
name={\ensuremath{\bg, \bg_\Gamma}},
description={First fundamental forms},
text={},
sort={G},
}
\newglossaryentry{not:surf_grad}
{
type=not, 
name={\ensuremath{\nabla_\gamma, \nabla_\Gamma}},
description={Surface gradients},
text={},
sort={Gradient}
}
\newglossaryentry{not:area}
{
type=not, 
name={\ensuremath{q, q_\Gamma, r_\Gamma}},
description={Area elements},
text={},
sort={area}
}
\newglossaryentry{not:LB}
{
type=not, 
name={\ensuremath{\Delta_\gamma, \Delta_\Gamma}},
description={Parametric representation of the Laplace-Beltrami operators},
text={},
sort={Laplace-Beltrami}
}
\newglossaryentry{not:normal}
{
type=not, 
name={\ensuremath{\bn, \bn_\Gamma}},
description={Co-normals},
text={},
sort={co-normal}
}
\newglossaryentry{not:errormat}
{
type=not, 
name={\ensuremath{\bA_ \Gamma}},
description={Error Matrix},
text={},
sort={Error Matrix}
}
\newglossaryentry{not:meshsize}
{
type=not, 
name={\ensuremath{h_T}},
description={Meshsize},
text={},
sort={h}
}
\newglossaryentry{indi:eta}
{
type=indi, 
name={\ensuremath{\eta_\T(V,F_\Gamma)}},
description={Global PDE error indicator},
text={},
sort={eta}
}
\newglossaryentry{indi:etalocal}
{
type=indi, 
name={\ensuremath{\eta_\T (V,F_\Gamma,T)}},
description={Local PDE error indicator},
text={},
sort={eta}
}
\newglossaryentry{indi:osc}
{
type=indi, 
name={\ensuremath{\osc_{\cT}(V,f)}},
description={Global oscillation},
text={},
sort={osc}
}
\newglossaryentry{indi:osclocal}
{
type=indi, 
name={\ensuremath{\osc_{\cT}(V,f,T)}},
description={Local oscillation},
text={},
sort={osc}
}
\newglossaryentry{indi:pde_error}
{
type=indi, 
name={\ensuremath{\E_\T(U,f)}},
description={PDE error},
text={},
sort={E},
}
\newglossaryentry{indi:total_error}
{
type=indi, 
name={\ensuremath{E_\T(V;v,f,\gamma)}},
description={Total error},
text={},
sort={E}
}
\newglossaryentry{not:apprxclass}
{
type=not, 
name={\ensuremath{\mathbb{A}_s}},
description={Approximation classes},
text={},
sort={A},
}
\newglossaryentry{const:M}
{
type=const, 
name={\ensuremath{M}},
description={Number of macro-elements},
text={}
}
\newglossaryentry{const:L}
{
type=const, 
name={\ensuremath{L}},
description={Bi-Lipschitz constant of $\chi^i$, see \eqref{bi_lipschitz}},
text={}
}
\newglossaryentry{const:Lambda0}
{
type=const, 
name={\ensuremath{\Lambda_0}},
description={Geometric estimator quasi-monotonicity constant, see \eqref{quasi-mono-n}},
text={}
}
\newglossaryentry{const:b}
{
type=const, 
name={\ensuremath{b}},
description={Number of successive bisections performed on each marked element},
text={}
}
\newglossaryentry{const:upper_lower}
{
type=const, 
name={\ensuremath{C_1, C_2, \Lambda_1}},
description={A-posteriori upper and lower bounds constants, see \eqref{upper} and \eqref{lower}},
text={}
}
\newglossaryentry{const:C3}
{
type=const, 
name={\ensuremath{C_3}},
description={Oscillation and PDE error estimator relation},
text={}
}
\newglossaryentry{const:C45}
{
type=const, 
name={\ensuremath{C_4, C_5}},
description={PDE error and estimator equivalence constants},
text={}
}
\newglossaryentry{const:C6}
{
type=const, 
name={\ensuremath{C_6}},
description={Quasi-monotonicity of the data oscillation constant}, 
text={}
}
\newglossaryentry{const:Lambda3}
{
type=const, 
name={\ensuremath{\Lambda_3}},
description={Quasi-monotonicity of the data oscillation and reduction of residual error estimator constant},  
text={}
}
\newglossaryentry{const:Lambda2}
{
type=const, 
name={\ensuremath{\Lambda_2}},
description={Quasi-monotonicity of the data oscillation, reduction of residual error estimator and quasi-orthogonality constant},  
text={}
}
\newglossaryentry{const:w}
{
type=const, 
name={\ensuremath{\omega}},
description={Surface and PDE approximations relative tolerance constant},
text={}
}
\newglossaryentry{const:w1}
{
type=const, 
name={\ensuremath{\omega_1}},
description={Restriction on $\omega$ to guarantee that \ADAPTPDE  reduces the PDE error},
text={}
}
\newglossaryentry{const:theta}
{
type=const, 
name={\ensuremath{\theta}},
description={D\"orfler marking parameter},
text={}
}
\newglossaryentry{const:C7}
{
type=const, 
name={\ensuremath{C_7}},
description={Complexity of \REFINE constant},
text={}
}
\newglossaryentry{const:w2}
{
type=const, 
name={\ensuremath{\omega_2}},
description={Restriction on $\omega$ required to guarantee that \ADAPTPDE  satisfies a contraction property},
text={}
}
\newglossaryentry{const:omega3}
{
type=const, 
name={\ensuremath{\omega_3}},
description={Restriction on $\omega$ for the adaptive finite element to satisfy an optimal D\"orfler marking property},
text={}
}
\newglossaryentry{const:thetas}
{
type=const, 
name={\ensuremath{\theta^*}},
description={Restriction on the D\"orfler marking parameter $\theta$},
text={}
}
\newglossaryentry{def:shapereg}
{
type=def, 
name={Shape-Regularity},
description={Definition~\ref{d:shape_reg}},
text={}
}
\newglossaryentry{def:label}
{
type=def, 
name={Admissible labeling},
description={Remark~\ref{r:labeling}},
text={}
}
\newglossaryentry{def:besov}
{
type=def, 
name={\ensuremath{|.|_{B^s_q(L_p(\Omega))}}},
description={Besov semi-quasi-norms},
text={}
}
\newglossaryentry{algo:AFEM}
{
type=algo, 
name={\ensuremath{\AFEM}},
description={Adaptive finite element method},
text={}
}
\newglossaryentry{algo:SURFACE}
{
type=algo, 
name={\ADAPTSURF},
description={Adaptive finite element method reducing the surface approximation error},
text={}
}
\newglossaryentry{algo:PDE}
{
type=algo, 
name={\ADAPTPDE},
description={Adaptive finite element method reducing the PDE approximation error},
text={}
}
\newglossaryentry{algo:GREEDY}
{
type=algo, 
name={\GREEDY},
description={Greedy algorithm},
text={}
}
\begin{document}

\maketitle

\begin{abstract}
We present a new AFEM for the Laplace-Beltrami operator 
with arbitrary polynomial degree on parametric
surfaces, which are globally $W^1_\infty$ and piecewise in a
suitable Besov class embedded in $C^{1,\alpha}$ with $\alpha \in (0,1]$.
The idea is to have the surface
sufficiently well resolved in $W^1_\infty$ relative to the current
resolution of the PDE in $H^1$. This gives rise to a conditional contraction 
property of the PDE module. We present a suitable approximation class
and discuss its relation to Besov regularity of the surface,
solution, and forcing.
We prove optimal convergence rates for AFEM which are dictated
by the worst decay rate of the surface error
in $W^1_\infty$ and PDE error in $H^1$.
\end{abstract}

%\tableofcontents

%-------------------------------------------------------------------------------%
\section{Introduction}\label{S:introduction}
%-------------------------------------------------------------------------------%

Let $\gamma$ be a $d$ dimensional surface in $\R^{d+1}$ ($d\ge1$) either with or
without boundary, 
which is globally Lipschitz 
and piecewise in a suitable Besov class embedded in $C^{1,\alpha}$
with $\alpha \in (0,1]$.
We design and study a quasi-optimal adaptive finite
element method (AFEM) to approximate the solution of 
\begin{align}
- \LapG u &= f  \quad\text{ on }\quad \gamma, \label{p:PDE_Gm}
\end{align}
where $f\in L_2(\gamma)$ and $-\LapG$ is the Laplace-Beltrami operator
(or surface Laplacian) on $\gamma$.
In addition, we impose that $u=0$ on $\partial \gamma$ 
or require that $\int_\gamma u = 0$
if $\partial \gamma = \emptyset$ (with $\int_\gamma f = 0$ for compatibility).
To represent $\Delta_\gamma$, one needs to describe $\gamma$ mathematically
using, for example, parametric representations on charts, level
sets, distance functions, graphs of functions, etc. Moreover, one
usually obtains approximate solutions (finite element solutions)
by solving the problem on approximate polyhedral surfaces rather
than the surface $\gamma$ itself. 
Exploiting the variational structure of the Laplace-Beltrami
operator, \cite{DziuK:88} gives an a priori error analysis whereas
\cite{DemlowDziuk:07,Demlow:09,MMN:11,BCMN:Magenes} provide 
a posteriori counterparts. Our present
objective is to continue our
research on AFEM for elliptic PDEs on surfaces initiated in
\cite{MMN:11} for graphs and extended in \cite{BCMN:Magenes} to
parametric surfaces, the latter with polynomial degree $n=1$.
We design herein an AFEM for parametric surfaces using $C^0$ finite elements
of degree $n\ge 1$, prove optimal convergence rates and workload
estimates, and study
suitable approximation classes for the triple $(u, f, \gamma)$.

High-order finite elements are superior to linears for geometric
problems: they provide better approximation of important geometric
quantities such as curvature, and they are less sensitive to mesh tangling
due to tangential node motion for time dependent problems; we refer to 
\cite{BCMN:Magenes} for a discussion of several applications. The
advantage of high-order methods is even more pronounced when they are
combined with adaptivity. AFEMs are known to exploit the nonlinear
Besov regularity scale, instead of the linear Sobolev scale, and to deliver optimal convergence rates $N^{-n/d}$ in terms of
degrees of freedom $N$ for singular elliptic problems on flat domains 
with limited Sobolev regularity
\cite{Stevenson:07,CaKrNoSi:08},
\cite{GSTER14,CN2011,Gantumur,Morin11,NoSiVe:09}.
The study of AFEM for the Laplace-Beltrami operator on parametric
surfaces is, however, restricted to $n=1$ because the
first fundamental form, area element, and normal vector
to the discrete surface as well as the surface gradient of discrete functions
are piecewise constant, which greatly simplifies the analysis
\cite{BCMN:Magenes}. This paper bridges this gap and provides a
comprehensive approach to high-order AFEM on parametric surfaces.

It is standard practice to pose the discrete problem on a piecewise
polynomial approximation $\Gamma$ of the exact surface $\gamma$. This
is unavoidable when dealing with evolving surfaces, such as time
dependent free boundary problems, for which $\gamma$ is unknown
\cite{BCMN:Magenes}. This surface discrepancy is responsible for a
geometric consistency error not present in the flat case, which makes
this setting mathematically challenging and intriguing.
In fact, there is a non-linear interplay between the approximate surface
$\Gamma$ and the approximate solution $U$ defined on $\Gamma$.
To elucidate this issue, one might think of the Laplace-Beltrami
operator as a linear elliptic operator with variable coefficients in a flat
parametric domain, except that the approximate coefficients are not
piecewise polynomials as in \cite{Bonito-DeVore-Nochetto:13} but
rather some rational functions when $n>1$. The multiplicative
structure of the solution-coefficient interaction is an essential new
difficulty we must cope with to develop high-order AFEM and study
their performance.

To handle this nonlinear interaction,
we propose an AFEM which successively applies two
different modules: \ADAPTSURF approximates the surface $\gamma$ in 
$W^1_\infty$ and \ADAPTPDE approximates the solution $u$ in $H^1$.
The former is a greedy algorithm which
monitors the geometric estimator whereas the
latter deals with a residual estimator. If $\{\T_k,U_k\}_{k=0}^\infty$
denotes the sequence of meshes
and Galerkin solutions generated by AFEM in step $k$ using a
discrete forcing function $F_k$, the method reads as follows:

\medskip
{\bf AFEM:}
\rhn{Given $\T_0$} and parameters $\varepsilon_0>0$, 
  		$0<\rho<1$, and $\omega>0$,  set $k=0$.
\begin{algotab}
  \> \>1. \rhn{$\T_k^+ = \ADAPTSURF (\T_k, \omega \eps_k)$}
  \\
  \> \>2. $[U_{k+1}, \T_{k+1}] = \ADAPTPDE (\T_k^+,\eps_k)$ \\
  \> \>3. $\eps_{k+1} = \rho \eps_k$; $k = k+1$ \\  
  \> \>4. go to 1.
\end{algotab}

\smallskip\noindent
This strategy bears  similarities with the algorithms proposed in
\cite{Bonito-DeVore-Nochetto:13} targeting diffusion problems with
partial information on the coefficients.
This concept relates directly to surface approximation in the
present context but it is intrinsically different than piecewise
polynomial approximation of coefficients. We develop herein new
techniques to handle such differences upon insisting on the geometric
nature of the approximation.

We now describe AFEM. For the purpose of this introduction, we assume
that $\gamma$ can be parameterized by a single map
$\chi:\Omega \to \gamma$, where $\Omega\subset\R^d$ denotes the corresponding parametric domain and refer to \S~\ref{S:repres-surface} for the more general case.
 If $\wT:=\wT(\Omega)$ is a generic triangulation in $\Omega$, then $\V(\wT)$ denotes the
space of continuous piecewise polynomial functions of degree $\le n$
subordinate to $\wT$. 
\rhn{Let $I_\cT:C(\overline{\Omega})\to \V(\wT)$ be the Lagrange interpolation operator and $X_\cT = I_\cT \chi$ be the interpolant of the parametrization $\chi$. The map $X_\cT$
}
%The Lagrange interpolant of the parameterization
%$\chi$, $X_{\cT}:=I_\wT \chi \in \widehat \V(\wT)$, 
induces the
discrete (piecewise polynomial) surface
$\Gamma := X_{\cT}(\Omega)$ and a curved mesh $\T := \{ T =
X_{\cT}(\widehat T) \mid \widehat T \in \wT\}$;
note the correspondence between a flat element $\widehat{T}\in\wT$ and
a curved element $T\in\T$.
\rhn{This one-to-one correspondence between $\wT$ and $\cT$ justifies the use of subscript $\cT$ instead of $\wT$ in the notation of $I_\cT$ and $X_\cT$; the same slight abuse of notation will be employed throughout the paper with other quantities defined in $\Omega$ or $\wT$. In this vein, we next}
define the
\textit{geometric estimator}
$\lambda_{\cT}(\gamma) := \max_{\manel{T\in \T}} \ldk{\cT}(\gamma,T)$ in terms of the
\textit{geometric element indicator}
\begin{equation}\label{p:geom_osc}
\lambda_{\cT}(\gamma,T) := 
\|\wnabla (\chi - X_{\cT})\|_{ L_\infty(\widehat{T})}
\qquad\forall \, T\in\T;
\end{equation}
we observe that $\ldk{\cT}(\gamma,T)$ is evaluated in the 
flat element $\widehat{T}\in\wT$.
Given a tolerance $\eps>0$ and a mesh $\T$, the procedure
\begin{equation*}
\manel{	\T_* = \ADAPTSURF (\T,\eps),}
\end{equation*}
finds adaptively a refinement $\T_*$ of $\T$, denoted $\T_* \geq \T$,
and its corresponding piecewise polynomial
approximation $\Gamma_*$ of $\gamma$,  such that
\begin{equation} \label{cond:AS}
\lambda_{\T_*}(\gamma)\leq \eps.
\end{equation}
We \rhn{say that this module is $t-optimal$ provided} the number
of marked elements $\#\cM$ to achieve \eqref{cond:AS} satisfies
\begin{equation}\label{optimal:AS}
\#\cM \Cleq C(\gamma) \; \eps^{-1/t}.
\end{equation}
The largest value of $t\le n/d$ depends on the dimension $d$ and the
polynomial degree $n\ge1$.
In \S \ref{S:gamma}, we show that \eqref{optimal:AS} holds if
$\chi$ belongs to a suitable 
Besov space.

\rhn{Throughout this paper we use the notation $A\Cleq B$ to denote $A\leq
CB$ with a constant $C$ independent of $A$ and $B$, and write $A \Ceq B$ to mean $B \Cleq A \Cleq B$. 
We shall indicate
if appropriate on which quantities the constant $C$ depends on.
}

Since the exact and approximate solutions $u$ and $U$ are defined on
different surfaces $\gamma$ and $\Gamma$, we have to
decide how to compare them.
We lift $U$ to $\gamma$ via the map $X_{\cT} \circ \chi^{-1}$,
\rhn{but keep the symbol $U$,} and define
the {\it energy error} to be
\begin{equation}\label{p:energy}
e_\T(U) := \normLtk{\DivG(u-U)}{\gamma},
\end{equation}
where $\DivG$ denotes the \emph{surface gradient} on $\gamma$
defined below in \S\ref{S:diff-geom}. We further denote
by $\eta_\T(U,F)$ the {\it residual estimator} of $e_\T(U)$,
defined later in \S\ref{S:bounds}.
If $\eps$ stands for a tolerance, the procedure
\begin{equation*}
[U_*,\T_*] = \ADAPTPDE (\cT,\eps)
\end{equation*}
finds adaptively a refinement $\T_*$  of $\T$ such that the Galerkin
solution $U_* \in\mathbb V(\T_*)$ on $\Gamma_*$---the approximate surface corresponding to $\T_*$---satisfies the prescribed bound
$\eta_{\T_*}(U_*,F_*) \leq \varepsilon$. This is 
the usual loop for linear elliptic PDE \cite{Doerfler:96,MNS:02},
\cite{BCMN:Magenes,BN:10,GSTER14,CaKrNoSi:08,CN2011,Morin11,MMN:11,NoSiVe:09}
\begin{equation}  \label{p:loop}
\Solve \to \Estimate \to \Mark \to \Refine,
\end{equation}
except that the approximate surface $\Gamma$
is updated after each $\Refine$  call and therefore changes within \ADAPTPDE.

Note that there is a tolerance $\eps_k$ being 
reduced geometrically in every outer loop of AFEM, 
and a small parameter $\omega$ (to be determined explicitly) that relates
the tolerances for both procedures. The role of $\omega$ is critical
to derive convergence rates for AFEM,
and is explored computationally in~\cite[Section 2]{BCMN:Magenes} for dimension $d=2$ and polynomial degree $n=1$.
The presence of $\omega$ is in the spirit of the inner
loop of \cite{Stevenson:07} to handle data $f\in H^{-1}$ and of
\cite{Bonito-DeVore-Nochetto:13} to deal with discontinuous coefficients in
the flat case. It means that the surface must be resolved slightly better
than the solution for $\eta_\T(U,F)$ to provide reliable
information about $e_\T(U)$.

Our first main result is a \textit{conditional contraction property}
of \ADAPTPDE, which reads as follows and is shown in \S \ref{S:contraction}:
\medskip
\begin{quote}
{\em
If the parameter $\omega>0$ is small enough, there exist constants
$0< \alpha < 1 $ and $\beta >0$ such that, 
if the geometric estimator $\lambda_{\T_k}(\gamma) \le \omega \varepsilon_k$
and the error estimator $\eta_k = \eta_{\T_k}(U_k,F_k) \ge \varepsilon_k$,
then the inner iterates $\{\Gamma_j, \T_j, U_j, \eta_j\}_{j = 0}^J$
of $\ADAPTPDE(\T_k^+, \varepsilon_k)$ satisfy
\begin{equation*}
	e_{j+1}^2 + \beta \eta_{j+1}^2 \le \alpha^2 
	\big(e_j^2 + \beta \eta_{j}^2 \big)
	\qquad\forall \, \manel{0\le j < J},
\end{equation*}
where $e_j:= e_{\T_j}(U_j)$ and $J$ is uniformly bounded with
respect to $k$.
}
\end{quote}

\medskip
To derive convergence rates we need to seek a suitable error quantity and
associated approximation class; this is fully discussed in \S
\ref{S:approx-class}.
Since all decisions of the AFEM are
based on the estimators $\{\eta_\T(U,F), \ldk{\cT}(\gamma) \}$, the convergence rate
of AFEM is dictated by these quantities. We will show in Lemma 
\ref{L:equiv}  that for all the inner iterates $(\T, U )$ within \ADAPTPDE
\begin{equation}\label{total-error1}
\normLtk{\DivG(u-U)}{\gamma}^2 + \osc_{\cT} (U,f)^2 \approx \eta_\T (U,F)^2.
\end{equation} 
where the oscillation $\osc_{\cT} (U,f)$, a
quantity evaluated in the parametric domain, can be bounded
\rhn{separately in terms of $U$ and $f$} as follows:
\begin{equation}\label{osc:f+U}
\osc_{\cT}(U,f)^2 \leq \osc_{\cT}(U)^2 + \osc_{\cT}(f)^2.
\end{equation}
The presence of the first term is a feature inherent to polynomial
degree $n>1$ which is absent in \cite{BCMN:Magenes}.
This justifies the following notion of \textit{total error}
\begin{equation}\label{total-error2}
\rhn{
E_\T(U; u, f,\gamma) := \left( \normLtk{\DivG(u-U)}{\gamma}^2
		+ \osc_{\cT}(U,f) ^2 +  \omega^{-1}\lambda_{\cT}(\gamma)^2 \right)^{1/2},
}
\end{equation}
where the scaling $\omega^{-1}$ brings the \rhn{geometric} estimator
$\lambda_{\cT}(\gamma)$ to a size comparable with $\eta_\T(U,F)$. 
Then the quality of the best
approximation of $(u,f,\gamma)$ with $N$ degrees of freedom can
be assessed  in terms
of the following best approximation error:
\begin{equation*}
  {\sigma}(N;u,f,\gamma) \definedas \inf_{\cT \in  \mathbb{T}_N} 
  \inf_{V\in\V(\T)}
      E_{\T}(V;u,f,\gamma),
\end{equation*}
where $\V(\T)$ denotes the approximation space on the discrete surface $\Gamma$, and 
$\mathbb{T}_N$ is the set of conforming
triangulations obtained after $N$ bisections from $\T_0$.
We say that the triple $(u,f,\gamma)$ 
belongs to the approximation class $\As$, with $0<s\le n/d$, 
if 
\begin{equation}\label{class-s}
\sigma (N; u, f,\gamma) \Cleq N^{-s}; 
\end{equation}
equivalently, for any natural number $N \geq 1$, there is a
conforming mesh refinement $\T_N$ of
the initial mesh $\T_0$ satisfying $\#\T_N-\#\T_0 \leq N$ and such that 
$E_{\T_N}(V;u,f,\gamma) \Cleq  N^{-s}$ for some $V\in\V(\T_N)$.
\rhn{We observe that if $(u,f,\gamma) \in \As$ then the module
  $\ADAPTSURF$ is $s$-optimal, namely~\eqref{cond:AS}--\eqref{optimal:AS} are valid with $t=s$.}

The algebraic error decay \eqref{class-s} relates to
Besov regularity for flat domains \cite{BDDP,GM:14,Gantumur}. The situation
for surfaces is much more intricate due to the nonlinear surface-PDE
interaction. We wonder whether regularity of $\gamma$ enabling an
error decay $N^{-s}$ in $W^1_\infty$ is compatible with a similar
decay rate for $e_\T(U)$ and $\osc_{\cT}(U,f)$, which depend on the
approximate surface $\Gamma$. Exploring this question is a fundamental
contribution of this paper and entails the study of Besov regularity
of products and composition of functions, which we carry out in
\S \ref{S:Besov} and is of independent interest.
We apply our findings in \S \ref{S:approx-class}
to quantify the effect of surface approximation in the decay of both
$e_\T(U)$ and $\osc_{\cT}(U,f)$.
This leads to our second main contribution:
\medskip
\begin{quote}
{\em
Let $0<p,q\le\infty$, $0<s\le n/d$ 
such that $s>\frac{1}{p}-\frac{1}{2}$, $s>\frac{1}{q}$.
If the triple $(u, f, \gamma)$ satisfies
\begin{equation*}\label{besov}
u \in B^{1+sd}_p(L_p(\Omega)),
\quad
f \in B^{sd}_p(L_p(\Omega)),
\quad
\chi \in B^{1+sd}_q(L_q(\Omega)),
\end{equation*}
then \eqref{class-s} holds, i.e., $(u,f,\gamma) \in \As$. Moreover,
$\osc_{\cT}(f)$ exhibits a faster decay $s+1/d$.
}
\end{quote}

\medskip\noindent
We observe that $s>\frac{1}{p}-\frac{1}{2}$ and $s>\frac{1}{q}$
guarantee that $B^{1+sd}_p(L_p(\Omega)) \subset H^1(\Omega)$ and
$B^{1+sd}_q(L_q(\Omega)) \subset W^1_\infty(\Omega)$, whence the
additional regularity is just above the nonlinear Sobolev scale for
both $u$ and $\gamma$. This shows that the two scales are indeed
compatible, and that if $s=\frac{n}{d}$, then
$p>\frac{2d}{2n+d}$ and $q>\frac{d}{n}$ may be
smaller than 1 for $n>\frac{d}{2}$. The latter does not happen for $n=1$ and
represents a striking difference with \cite{BCMN:Magenes}.

\medskip
Our third main contribution is
a quasi-optimal decay rate for the AFEM in terms of
degrees of freedom under natural restrictions on the initial
triangulation $\T_0$, marking parameter 
$\theta$ of \MARK and parameter $\omega$ of AFEM. This is developed in 
\S \ref{S:rates} and reads:

\medskip
\begin{quote}
\em
\rhn{Let the initial mesh $\T_0$ have an admissible labeling
for refinement},
and $\theta \in (0, \theta_*)$,  $\omega\in (0,\omega_*)$ for 
$\theta_*, \omega_*$ sufficiently small.
If $(u,f,\gamma) \in \As$ 
%and the module \ADAPTSURF is
%$s$-optimal in the sense of \eqref{optimal:AS}, 
then the sequence
$\{\Gamma_k, \T_k, U_k\}$ generated by AFEM verifies 
\begin{equation*}
%	\normLtk{\DivG(u-U_{k})}{\gamma}
%		+ \osc_{\T_k}(f,U_k) + \omega^{-1} \lambda_{\T_k}
E_{\T_k}(U_k;u,f,\gamma)
				\Cleq  ( \#\T_k - \# \T_0)^{-s}.
\end{equation*}
Moreover, the workload up to step $k$ of $\AFEM$ is proportional to
$\eps_k^{-1/s}$ provided each inner loop of $\ADAPTPDE$ 
has linear complexity.
\end{quote}

\medskip
The rest of the paper is organized as follows.
We discuss the representation and interpolation of $\gamma$
in \S~\ref{S:param-surface}, and basic differential geometry
leading to the Laplace-Beltrami operator in \S~\ref{S:Laplace-Beltrami}.
In \S~\ref{S:aposteriori} we obtain a 
posteriori error estimates for the energy error and derive several
properties of the estimator and oscillation. 
In \S~\ref{S:AFEM}  we examine the various modules of AFEM and in \S~
\ref{S:contraction}
establish the conditional contraction property of \ADAPTPDE  (first main result).
In \S~\ref{S:approx-class} we show that suitable Besov regularity of
the triple $(u,f,\gamma)$ implies $(u,f,\gamma)\in\As$, which is our second main result.
We next prove quasi-optimal convergence rates in \S~\ref{S:rates} 
-- our third main result.
After recalling a definition of Besov spaces, we
establish in \S~\ref{S:Besov} \rhn{scale independent estimates} for the products and compositions of
functions in Besov spaces. \rhn{These results are instrumental in~\S~\ref{S:approx-class} and do not seem to be available in the literature.}
\ab{Glossaries of notations, relevant constants, definitions and algorithms are provided at the end.}
%%%%%%%%%%%%%%%%%%%%%%%%%%%%%%%%%%%%%%%%%%%%%%%%%%%%%%%%%%%%%%
\section{Parametric Surfaces}\label{S:param-surface}
%%%%%%%%%%%%%%%%%%%%%%%%%%%%%%%%%%%%%%%%%%%%%%%%%%%%%%%%%%%%%%
%
In this section we discuss how to represent a
parametric surface \rhn{$\gamma$ by non-overlapping charts and its discretization $\Gamma$ by interpolation of $\gamma$}, which is instrumental for the design, analysis,
and implementation of our AFEM. % on parametric surfaces.
\rhn{Dealing with overlapping charts is not practical computationally.}

%--------------------------------------------------------------------------------
\subsection{Representation of Parametric  Surfaces}\label{S:repres-surface}
%--------------------------------------------------------------------------------
%

We assume that the surface $\gamma$ is described as the deformation of a $d$ 
dimensional polyhedral surface \gls{not:poly_init}$\overline{\Gamma}_0$ by a globally Lipschitz
\emph{homeomorphism}
 \gls{not:P0}$P_0:\overline{\Gamma}_0 \rightarrow \gamma \subset \mathbb
R^{d+1}$. 
The overline notation is to emphasize that $\overline{\Gamma}_0$ is piecewise affine.
Moreover, if $\overline{\Gamma}_0=\bigcup_{i=1}^{M} \overline{\Gamma}_0^i$ is made up of \gls{const:M}$M$ (closed) faces
\gls{not:macro_element}$\overline{\Gamma}_0^i, i=1,\dots,M$, we denote by
$P_0^i:\overline{\Gamma}_0^i \rightarrow \mathbb R^{d+1}$
the restriction of $P_0$ to $\overline{\Gamma}_0^i$.
 We refer to $\overline{\Gamma}_0^i$ as a
\emph{macro-element} which induces the partition
$\{\gamma^i\}_{i=1}^{M}$ of $\gamma$ upon setting
\[
\gamma^i := P_0^i(\overline{\Gamma}_0^i). 
\]
Note that this \emph{non-overlapping} parametrization allows for piecewise smooth
surfaces $\gamma$ with possible kinks matched by the decomposition 
$\{\gamma^i\}_{i=1}^{M}$.

In order to avoid technicalities, we assume that all the macroelements
are simplices, i.e. there is a (closed) reference simplex
\gls{not:Omega}$\Omega \subset\R^{d}$, from now on called the {\it local parametric domain}, and an
affine map \gls{not:X0i}$X_0^i:\R^d\to\R^{d+1}$ such that
$\overline{\Gamma}_0^i = X_0^i(\Omega)$;
Figure~\ref{F:surf_representation} sketches the
situation when $d=2$.
We thus let \gls{not:Chi}$\chi^{i} := P_0^i\circ X_0^i:\Omega \to\gamma^i$ be a local
parametrization of $\gamma$ which is bi-Lipschitz, namely there exists
a universal constant $L\geq1$ such that for all $1\le i \le M$
\begin{equation}\label{bi_lipschitz}
\gls{const:L}L^{-1} |\hat{\bx}-\hat{\by}| 
\leq |\chi^i(\hat{\bx})- \chi^i(\hat{\by})| 
\leq L |\hat{\bx}-\hat{\by}|, 
\qquad  \forall \, \hat{\bx},\hat{\by}\in\Omega.
\end{equation}
%{\bf AB: This is not quite "globally" bi-Lipschitz as we to not have differences across patches, do we want to remove the globally? P: I already removed it}
%
This {\it minimal regularity} of $\gamma$, to be soon
strengthened out locally in each macro-element, implies the more familiar 
condition, valid for a.e. $\hat{\bx}\in\Omega$,
\begin{equation}\label{jacobian-X}
L^{-1} |\bw| \le |\widehat{\nabla} \chi^i(\hat{\bx}) \bw| \le L |\bw|
\qquad\forall ~\bw\in\mathbb R^d.
\end{equation}
%
%{\bf AB: For the same reason, I have removed ``hence $L\ge1$ is the Lipschitz constant of $\bXi$ and so of $\gamma$". P: ok}
\rhn{We use bold notation to denote} the collection of these parametrizations, i.e.
$\bXi := \{\chi^i\}_{i=1}^M$.
We further assume that $P_0(\mathbf v) = \mathbf v$ 
for all vertices $\mathbf v$ of $\overline{\Gamma}_0$, so that $X_0^i$ is the nodal interpolant of $\chi^{i}$ into linears. 
\begin{figure}[ht!]
\centerline{\includegraphics[width=0.45\textwidth]{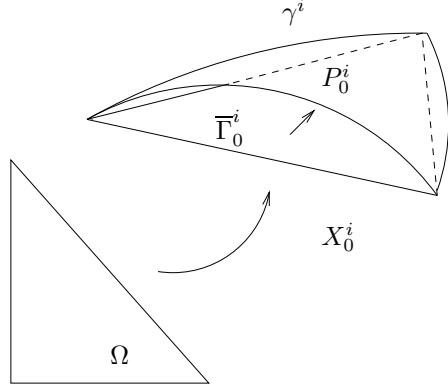}}
\begin{picture}(0,0)(-30,-10)
\put(110,10){$\Omega$}
\put(190,55){$X_0^i$}
\put(150,95){\rhn{$\overline{\Gamma}_0^i$}}
\put(190,115){$P_0^i$}
\put(175,140){$\gamma^i$}
\end{picture}
\caption{\footnotesize{Representation of each component $\gamma^i$ when $d=2$ 
as a parametrization from a
flat triangle $\overline{\Gamma}_0^i\subset\mathbb R^3$ as well as
from the reference simplex $\Omega\subset\mathbb R^2$. The map
$X_0^i:\Omega\to \overline{\Gamma}_0^i$ is affine.}}
\label{F:surf_representation}
\end{figure}

The structure of the map $P_0$ depends on the application. 
For instance, if $\gamma^i$
is described on $\Gamma_0^i$ via the \emph{distance function}
$\dist(\bx)$ to $\gamma$, then
\[
\gamma^i \ni \tilde{\bx} = \bx - \dist(\bx) \nabla\dist(\bx) = P_0(\bx)  \qquad
\forall~\bx\in \overline{\Gamma}_0^i,
\]
provided $\dist(\bx)$ is sufficiently small so that the distance is
uniquely defined.
If, instead, $\gamma^i$ is the \emph{zero level set} $\phi(\bx) = 0$ of
a function $\phi$, then
\[
\overline{\Gamma}_0^i \ni \bx = \tilde{\bx} + \frac{\nabla
\phi(\tilde{\bx})}{\abs{\nabla\phi(\tilde{\bx})}} \abs{\bx-\tilde{\bx}} =
P_0^{-1}(\tilde{\bx}) \qquad \forall~\tilde{\bx}\in\gamma^i
\]
is the inverse map of $P_0$. In both cases, $\dist$ and $\phi$
must be $C^2$ for $P_0^i$ to be $C^1(\overline{\Gamma}_0^i)$.
Yet another option is to view $\gamma^i$ as a graph on $\overline{\Gamma}_0^i$, in
which case $P_0^i$ is a lift in the normal direction to $\overline{\Gamma}_0^i$
and $P_0$ is $C^1(\overline{\Gamma}_0^i)$ if and only if $\gamma^i$ is \rhn{$C^1(\overline\Omega)$};
we refer to \cite{MMN:11}. Notice that the
inverse mapping theorem implies $(P_0^i)^{-1}\in C^1(\gamma^i)$.

The {\it regularity} of $\gamma$ is expressed in terms of the regularity of the maps
$\chi^i$. If $ s > 0$, $0< p, q \leq \infty$,
we say that $\gamma$ is piecewise $B^{1+s}_q(L_p(\Omega))$ whenever
$
\chi^i \in [B^{1+s}_q(L_p(\Omega))]^{d+1}$, $i=1,\dots,M$; or shortly $\bXi \in  [B^{1+s}_q(L_p(\Omega))]^{(d+1)\times M}$.
We refer to \S~\ref{S:Besov} for the definition of Besov norms and spaces.

We observe that a function ${v}^i:\gamma^i\to\R$ defines uniquely two
functions $\widehat{v}^i:\Omega\to\R$ and $\bar{v}^i:\overline{\Gamma}_0^i\to\R$ via the
maps $\chi^{i}$ and $P_0$, namely
\begin{align}  \label{para_lift}
\widehat{v}^i(\hat{\bx}) := v^i(\rhn{\chi}^{i}(\hat{\bx})) \quad \forall\; \hat{\bx}\in
\Omega,
\qquad \bar{v}^i(\bar{\bx}) := v^i(P_0(\bar{\bx}))
\quad\forall\; \bar{\bx}\in\overline{\Gamma}_0^i.
\end{align}
Conversely, a function $\widehat{v}^i:\Omega\to\R$ (respectively,
$\bar{v}^i:\overline{\Gamma}_0^i\to\R$) defines uniquely the two functions
$v^i:\gamma^i\to\R$ and $\bar{v}^i:\overline{\Gamma}_0^i\to\R$ (respectively,
$v^i:\gamma^i\to\R$ and $\widehat{v}^i:\Omega\to\R$).
When no confusion is possible, we will denote by \rhn{$v^i$} the three
functions $v^i,\bar{v}^i$ and \rhn{$\widehat{v}^i$} and set
$\rhn{\tilde{\bx}^i : = \chi^i}(\hat{\bx})$ for all $\hat{\bx}\in\Omega$,
\rhn{$i=1,\ldots, M$}.
Moreover, we will use vector notation
\begin{equation}\label{bf-notation}
\bv := \{ v^i \}_{i=1}^M,
\end{equation}
along with the convention
\begin{equation}\label{global-norm}
\| \bv \|_{B(\Omega)}:= \max_{i=1,...,M} \| v^i \|_{B(\Omega)}, \qquad 
| \bv |_{B(\Omega)}:= \max_{i=1,...,M} | v^i |_{B(\Omega)}.
\end{equation}
for (quasi) norms and semi-norms defined on a quasi-normed linear space $B(\Omega)$;
typically $B(\Omega)$ will be a Lebesgue, Sobolev, or Besov space. Moreover,
we will write
\begin{equation}\label{local-norm}
\|v\|_{B(\widehat{T})}, \qquad |v|_{B(\widehat{T})}
\qquad\forall \, \widehat{T} \in \wT
\end{equation}
to indicate the {\it local} (quasi) norms and semi-norms over a
generic element $\widehat{T}\in\wT^i$ without specifying the
superscript $i$ in either function $v$ or mesh $\wT$.

Before proceeding further, we note that as a general rule, we use hat symbols to denote quantities related to $\Omega$, an overline to refer to quantities on $\overline{\Gamma}_0$, tilde to characterize quantities in $\gamma$ and bold to indicate vector quantities.

%--------------------------------------------------------------------------------
\subsection{Interpolation of Parametric Surfaces and Finite Element Spaces}\label{S:interp--surface}
%--------------------------------------------------------------------------------
%
The partition of the initial polyhedral surface $\overline{\Gamma}_0$ in macro-elements (or faces) induces
a conforming triangulation of $\overline{\Gamma}_0$;
we call this set $\overline{\T}_0$.  
We only discuss the class of conforming meshes $\overline{\mathbb{T}} := \mathbb{T}(\overline{\T}_0)$ 
created by successive bisections of this initial mesh $\overline{\T}_0$.
However, our results remain valid for \rhn{quad-refinements and red-refinements all with hanging nodes; we refer to Remark~\ref{r:Alt_sub_stategies} for the notion of admissible refinements.} 
%any refinement strategy 
%satisfying Conditions 3, 4 and 6 in \cite{BN:10}, \rhn{we refer to Remark \ref{r:Alt_sub_stategies} for more details.}
%\delete{In particular, successive bisections, quad-refinement and red-refinement all with
%hanging nodes are admissible refinement strategies.
%For more details, we refer to %\cite[Section 6]{BN:10}
%}.
A triangulation $\overline{\T} \in \overline{\mathbb T}$
yields triangulations of $M$ copies of $\Omega$ and a piecewise
polynomial approximation $\Gamma$ of $\gamma$ defined below.

%---------------------------------------------------------------------------------
\subsubsection{Finite Element Spaces and Surface Approximations}\label{ss:fem}
%---------------------------------------------------------------------------------

Any number of conforming graded bisections of 
each macro-element  $\overline{\Gamma}_0^i$ generate via $(X_0^i)^{-1}$ a conforming partition of the local parametric domain 
$\Omega\subset\R^{d}$ denoted \gls{not:HatT}$\widehat \T^i(\Omega)$ or simply $\widehat \T^i$.
For $n\geq 1$, let $\widehat \V^i:=\V(\widehat \T^i)$ be the finite element space of 
globally continuous piecewise polynomials of degree $\leq n$  on $\Omega$ subordinate to the partition $\widehat{\T}^i$,
and let $I_{\T^i}: C^0(\overline{\Omega})\to \widehat{\V}^i$
(resp.\ $I_{\T^i}: C^0(\overline{\Omega})^d \to (\widehat{\V}^i)^d$)
be the Lagrange interpolation operator of scalar
functions (resp.\ of vector-valued functions). We next define
\[
\gls{not:Xi}X^i_{\T^i} := I_{\T^i} \chi^i,
\qquad
\gls{not:Gammai}\Gamma^i := X^i_{\T^i}(\Omega),
\qquad
\gls{not:T}\T^i := \big\{T:=X^i_{\T^i}(\widehat T) \, : \, \widehat T\in \wT^i \big\}
\]
to be the piecewise polynomial interpolation of $\chi^i$ and
$\gamma^i$, and their associated mesh.

We now define the corresponding global quantities.
The {\em global parametric space} \gls{not:OmegaM}$\Omega^M$ consists of $M$ identical  copies of the local parametric space $\Omega$.
Its subdivision is denoted $\widehat \T$ and is defined as
$$
\widehat \T:= \cup_{i=1}^{M} \wT^i.
$$
Each triangulation $\overline{\T}\in\overline{\mathbb T}$ uniquely determines $\widehat \T$, so we can define the forest  
$$
\widehat{\mathbb T}:=\mathbb T(\widehat{\T}_0):= \{ \widehat
\T \ : \ \overline{\T} \in \overline{\mathbb T} \}. 
$$
Notice that $\widehat{\mathbb T}$ does not correspond necessarily to
$M$ copies of the same forest, it is rather a set of $M$ different \rhn{but compatible} forests.
Indeed, the bisection rule is governed by the topology of $\overline{\mathcal T}_0$ and dictates which initial bisection of each separate $\Omega$ is performed.
\rhn{Moreover, refinement of a macroelement in $\overline{\mathcal T}_0$ induces a partition of its boundary which must be compatible with refinements of adjacent macroelements.}
Similarly, the global subdivision $\T$ is given by
$$
\T:=\cup_{i=1}^{M} \T^i
$$
and
$$
\gls{not:TT}\mathbb T:=\mathbb T(\T_0):= \{ \T \ : \ \overline{\T} \in \overline{\mathbb T} \}; 
$$
\manel{note that $\T_0 = \overline{\T}_0$ only for polynomial degree $n=1$}.
The global piecewise polynomial surface $\Gamma$
and parametrization $\bX_{\T}$ of $\Gamma$ 
are then given by
$$
\Gamma := \Gamma_{\T} := \cup_{i=1}^{M} \Gamma^i,
\;\qquad
\bX_{\T}:=\big\{ X^i_{\T^i} \big\}_{i=1}^M.
$$
\manel{At this point, we remark that $\bX_0$ and $\bX_{\T_0}$ are, in general, different maps; the first one is the nodal interpolant of $\bXi$ into linears whereas the last one is the parametrization of $\Gamma_{\T_0}$ }.
Moreover, we say that $(\T,\Gamma)$ is a pair of mesh-surface approximation when $\T\in \mathbb T$ and $\Gamma=\Gamma_\T$.
Also, for a subdivision $\T \in \mathbb T$, we denote by $S_\T$ the set of interior faces (edges if $d=2$).
%{\bf AB: I do not think we should define $I_{\T}$ and ${X_{\T}}$ this is confusing when we drop the index $i$ and I do not think we ever use them. P: ok, I removed them.}
%
Finally, we define the finite element space over $\T$
\begin{equation}\label{d:fem_space}
\begin{split}
\gls{not:FEM}\V(\T) :=\{ V \in C^0(\Gamma) \; : \; 
	  & V|_{\Gamma^i} \mbox{ is the lift of some } \widehat V^i \in \widehat \V^i \mbox{ via }    
	    X_{\T}^i,  \\
	  &  
	     \mbox{with } V= 0 \mbox{ on } \partial \Gamma \mbox{ or } \int_\Gamma V = 0 \mbox{ if } \partial \Gamma = \emptyset 
	    \},
\end{split}
\end{equation}
and observe that functions in $\V(\T)$ are not piecewise polynomials.

The refinement procedure consists of bisecting elements in $\overline{\T}_0$ 
and propagating its effects on $\widehat \T$ and $\T$  via the
mappings \manel{ $\bX_0^{-1}$ and
$\bX_{\cT}\circ \bX_0^{-1}$}, respectively.
Figure~\ref{f:surf_refinement} depicts one bisection refinement for
$d=2$.
\begin{figure}[ht!]
\centerline{\includegraphics[width=0.9\textwidth]%
             {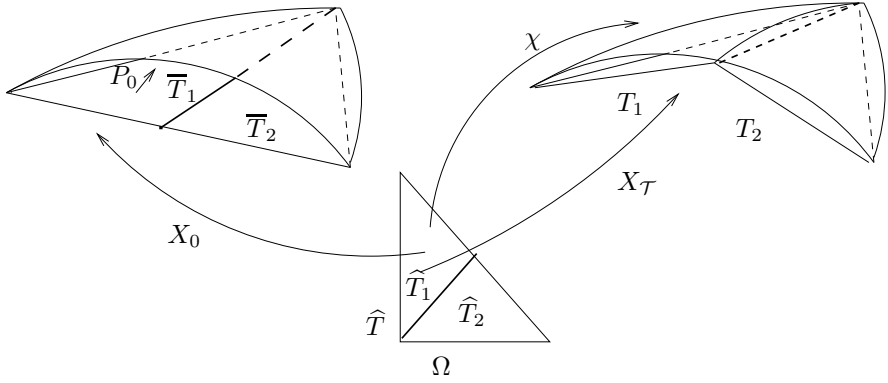}}
\begin{picture}(0,0)
\put(180,0){$\Omega$}
\put(155,15){$\widehat T$}
\put(170,30){$\widehat T_1$}
\put(190,20){$\widehat T_2$}
\put(80,50){$X_0$}
\put(250,70){$ X_{\T}$}
\put(250,100){$T_1$}
\put(295,90){$T_2$}
\put(58,110){$P_0$}
\put(80,105){$\overline{T}_1$}
\put(110,90){$\overline{T}_2$}
\put(215,125){$\chi$}
\end{picture}
\caption{\footnotesize{Effect of one bisection of the macro-element
$X_0(\Omega)$ when $d=2$ and $n=1$; the superscript $i$ is omitted for simplicity. 
(Left) A triangle \rhn{$\overline{T} \in \overline{\T}_0=\T_0$} is split into two
triangles $\overline{T}_1$, $\overline{T_2} \subset \mathbb R^3$. (Bottom) Equivalently, via the affine map $X_0^{-1}$,  the corresponding triangle $\widehat T \in \widehat \T$ is split into two
triangles $\widehat T_1$, $\widehat T_2 \subset \mathbb R^2$,
whereas (Right)
$\gamma$ is interpolated by a new piecewise linear surface
$\Gamma:=X_{\T}(\Omega)$, with $X_{\T} = \mathcal
I_{\T}\chi$ the piecewise linear interpolant of the parametrization $\chi$ defined
in $\Omega$ and subordinate to the new triangulation $\widehat \T$. 
The images via $X_{\T}$ of $\widehat T_1$ and $\widehat T_2$ are denoted $T_1$ and $T_2$ respectively;
they are affine when $n=1$.
}}
\label{f:surf_refinement}
\end{figure}
For $\overline \T$, $\overline \T_* \in \overline{\mathbb{T}}$, we use the notation $\overline{\T}_* \geq \overline{\T}$ to indicate that $\overline{\T}_*$ is a conforming refinement of $\overline{\T}$. 
In addition, slightly abusing the notation, given two subdivisions $\T$, $ \T_* \in \mathbb{T}$, we write
$\T_* \geq \T$ to indicate that $\overline{\T_*} \geq \overline{\T}$.
Notice that given $\T, \T_* \in \mathbb{T}$, with $\T_* \geq \T$, the finite element space $\V(\T)$ is not a subspace of $\V(\T_*)$ since the associated surface approximation $\Gamma$ and $\Gamma_*$ do not match.
This lack of consistency is accounted for in the discussion below
taking advantage of the nested
property \manel{$\V(\widehat \T^i) \subset  \V(\widehat \T_*^i) $} for all $1\le i \le M$.
%over \modif{\sout{$\Omega^M$}}.
  
At this point, the three different subdivisions $\overline \T$, $\widehat \T$ and  $\T$ are defined.
Notice that any of these three subdivisions uniquely determines the other two, which is repeatedly used in this work.
In practice only $\overline \T$ is required and the other two are
recovered using the mappings \manel{$\bX_0^{-1}$} and
\manel{$\bX_{\cT}\circ \bX_0^{-1}$}.
However, they have theoretical different purposes:
the subdivision $\overline{\T}$ is made of flat faces obtained as
refinement of the initial polyhedral surface and drives the refinement
procedure; $\widehat \T$ is the triangulation on the
parametric space and it is used to evaluate some quantities (geometric
estimator, oscillation, etc.) associated to the AFEM in a nested
framework; $\T$ is made of curved faces and is the subdivision defining
$\Gamma=\Gamma_\T$ where the approximate PDE is solved. 

%-------------------------------------------------------------------------------
\subsubsection{Stability of the Lagrange Interpolation Operator}\label{S:Lagrange-stab}
%-------------------------------------------------------------------------------
%
The Lagrange interpolation operator is instrumental to define the
approximate surface and will be central in the definition of the geometric estimator
in \S~\ref{ss:geom_estim}. The following lemma discusses its local
stability in Besov space ${B^s_p(L_q)} $ and Sobolev space $W^1_\infty$. We refer to
\S~\ref{S:Besov} for a definition of the Besov seminorms.

\begin{lemma}[Local stability of Lagrange interpolation]\label{L:interpolation}
Let $\widehat \T$ be any conforming refinement of $\widehat \T_0$. 
If $s>0$ and $0<p,q\le\infty$ satisfy $s>d/q$ and $s\le n+1$, then the
Lagrange interpolation operator $I_{\T}$ (with polynomial degree
$n\ge1$) is stable in $B^s_p(L_q(\widehat T))$, namely there exists a constant 
$C$ depending on $s,d,p,q$ and $n$ such that
\begin{equation}\label{interpolation}
| I_{\T} v|_{B^s_p(L_q(\widehat T))} \le C |v|_{B^s_p(L_q(\widehat T))}
\qquad\forall v\in B^s_p(L_q(\widehat T)),
\end{equation}
for any $\widehat T \in \wT$ and $\wT \in \widehat {\mathbb T}$. The same bound
is valid in $W^1_\infty(\widehat T)$, i.e.
\begin{equation}\label{interpolation-W1}
\|\wnabla I_{\T} v\|_{L_\infty(\widehat{T})} \le
C \| \wnabla v\|_{L_\infty(\widehat{T})}
\qquad\forall v\in W^1_\infty(\widehat{T}).
\end{equation}
\end{lemma}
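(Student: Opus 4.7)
The plan is to exploit three ingredients: (i) the projection property $I_\T p = p$ on polynomials of degree $\le n$; (ii) the embedding $B^s_p(L_q(\widehat T)) \hookrightarrow L_\infty(\widehat T)$ when $s>d/q$, which in particular guarantees pointwise values are defined so that $I_\T v$ makes sense; and (iii) the fact that, for the standard definition of the Besov seminorm via a modulus of smoothness of order $r > s$, one can take $r = n+1$ whenever $s \le n+1$, so that $|p|_{B^s_p(L_q(\widehat T))} = 0$ for every polynomial $p$ of degree $\le n$. Together these let me absorb the polynomial part of $v$ and reduce the analysis to a bounded linear operation between finite dimensional spaces.

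First I would work on a reference simplex $\widehat T_{\rm ref}$ of unit diameter. By a Deny--Lions type argument, there exists a polynomial $p$ of degree $\le n$ with
\begin{equation*}
\|v-p\|_{L_\infty(\widehat T_{\rm ref})} \le C\, |v|_{B^s_p(L_q(\widehat T_{\rm ref}))},
\end{equation*}
which combines the embedding $B^s_p(L_q) \hookrightarrow L_\infty$ with the vanishing of the seminorm on $\mathbb P_n$. Since $I_\T p = p$, we write $I_\T v - p = I_\T(v-p)$ and use the trivial bound $\|I_\T w\|_{L_\infty(\widehat T_{\rm ref})} \le C \|w\|_{L_\infty(\widehat T_{\rm ref})}$, valid because $I_\T$ only samples $w$ at a fixed finite set of nodes. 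As $I_\T(v-p) \in \mathbb P_n$, I can then pass back to the Besov seminorm by the equivalence of all norms on the finite-dimensional space $\mathbb P_n$:
\begin{equation*}
|I_\T v|_{B^s_p(L_q(\widehat T_{\rm ref}))} = |I_\T(v-p)|_{B^s_p(L_q(\widehat T_{\rm ref}))} \le C \|I_\T(v-p)\|_{L_\infty(\widehat T_{\rm ref})} \le C |v|_{B^s_p(L_q(\widehat T_{\rm ref}))}.
\end{equation*}

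To transfer the estimate to an arbitrary element $\widehat T \in \wT$, I would use the affine change of variables $\hat x \mapsto \hat y$ mapping $\widehat T$ to $\widehat T_{\rm ref}$, noting that the scaling factors produced by the Besov seminorm under this change of variables appear identically on both sides of the inequality and therefore cancel. The $W^1_\infty$ bound \eqref{interpolation-W1} is proved analogously, but with the polynomial $p$ replaced by the constant $p \equiv v(\hat x_0)$ for a nodal point $\hat x_0 \in \widehat T$; a Poincaré-type bound $\|v - p\|_{L_\infty(\widehat T)} \le C\, \mathrm{diam}(\widehat T)\, \|\wnabla v\|_{L_\infty(\widehat T)}$, followed by a standard inverse estimate $\|\wnabla I_\T(v-p)\|_{L_\infty(\widehat T)} \le C\, \mathrm{diam}(\widehat T)^{-1}\, \|I_\T(v-p)\|_{L_\infty(\widehat T)}$ on the polynomial $I_\T(v-p)$, closes the argument, with the two powers of $\mathrm{diam}(\widehat T)$ cancelling.

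The main obstacle is to ensure that the Besov seminorm really does vanish on $\mathbb P_n$ for all parameter ranges of interest, which hinges on the specific definition of $|\cdot|_{B^s_p(L_q)}$ in Section~\ref{S:Besov}; this forces the upper bound $s \le n+1$ and dictates the choice of modulus of smoothness of order $n+1$. A secondary subtlety is to verify that on the reference element the Deny--Lions argument still works under the weak regularity $s > d/q$ with possibly $q<1$ or $p<1$, where one must use that $B^s_p(L_q(\widehat T_{\rm ref}))/\mathbb P_n$ is a quasi-Banach space and the seminorm is indeed a quasi-norm on the quotient; the embedding into $L_\infty$ with constant depending only on $s,d,p,q$ then yields the required stability, and no condition on $p$ beyond $0<p\le\infty$ is needed since $p$ enters only as an outer summation/supremum.
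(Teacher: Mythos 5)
Your proposal is correct and follows essentially the same route as the paper: subtract a polynomial reproduced by $I_{\T}$, exploit the embedding $B^s_p(L_q)\subset L_\infty$ from $s>d/q$ together with the nodal-sampling $L_\infty$-stability of Lagrange interpolation, pass between the Besov seminorm and an $L_q$/$L_\infty$ norm on polynomials by an inverse estimate (equivalence of norms on $\P_n$) and scaling, and prove \eqref{interpolation-W1} by subtracting a nodal value and using a Poincar\'e bound plus an inverse inequality. The only cosmetic differences are that the paper performs the Whitney (Deny--Lions) step in $L_q$ with a polynomial of degree $m$, $m<s\le m+1$, so that the fixed modulus of order $\lfloor s\rfloor+1$ annihilates it, whereas you use degree $n$ and a modulus of order $n+1$, invoking the independence of the Besov seminorm on the order of the modulus; both are legitimate.
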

\begin{proof}
We consider an arbitrary element $\widehat T \in \wT$.
If  $m<s\le m+1$, with $0\le m\le n$, then for any $P \in \P_m(\widehat T)$
(the space of polynomials of degree $\le m$ over $\widehat T$)
we have $P = I_{\T} P$ and the following holds
\[
|I_{\T} v|_{B^s_p(L_q(\widehat{T}))} = |I_{\T} (v - P)|_{B^s_p(L_q(\widehat{T}))}
\Cleq \rhn{|\widehat{T}|^{-s/d}} 
\|I_{\T} (v - P)\|_{L_q(\widehat{T})}, 
\]
where in the last inequality we use an inverse estimate for Besov
semi-norms, which is obtained by scaling.
%\cite[\S 4.4]{GM:14}. 
We now introduce $w := v - P$ and estimate  $\|I_{\T} w
\|_{L_q(\widehat T)}$.
First, scaling to the reference simplex $\widehat T_R$ we get
\[
\|I_\T w \|_{L_q(\widehat{T})} \Cleq \rhn{|\widehat{T}|^{1/q}} 
\| w \|_{L_\infty(\widehat{T}_R)} \Cleq
\rhn{|\widehat{T}|^{1/q}}  \| w \|_{B^s_p(L_q(\widehat {T}_R))}
\]
because $B^s_{\manel{p}}(L_q(\widehat{T}_R))$ is embedded in $L_\infty(\widehat{T}_R)$
in view of $sq>d$. Notice that we do not distinguish between the
function $w$ defined on $\widehat T$ and the corresponding one defined
on the reference element $\widehat T_R$. We recall that 
$
\|w\|_{B^s_p(L_q(\widehat{T}_R))} 
= \|w\|_{L_q(\widehat{T}_R)} + |v|_{B^s_p(L_q(\widehat{T}_R))},
$
according to the definitions of \S~\ref{S:Besov}, and scale back to $\widehat T$:
\[
\rhn{|\widehat{T}|^{1/q}}  \| w\|_{B^s_p(L_q(\widehat T_R))} 
\Cleq \|w\|_{L_q(\widehat{T})} + \rhn{|\widehat{T}|^{s/d}} |w|_{B^s_p(L_q(\widehat{T}))}.
\]
Combining previous estimates with the immediate generalization
of~\cite[Lemma 4.15]{GM:14}
\[
\inf_{P \in \P_m} \| v - P \|_{L_q(\widehat T)} \Cleq
|\widehat T|^{\rhn{s/d}} | v |_{B_p^s(L_q(\widehat T))},
\]
and the property $|P|_{B^s_p(L_q(\widehat{T}))}=0$,
we conclude the desired result \eqref{interpolation}
\begin{align*}
|I_{\T} v|_{B^s_p(L_q(\widehat{T}))} 
\Cleq 
\rhn{|\widehat{T}|^{-s/d}} 
\inf_{P \in \P_m} 
 \|v - P \|_{L_q(\widehat{T})} +  |v|_{B^s_p(L_q(\widehat{T}))} 
\Cleq |v|_{B^s_p(L_q(\widehat{T}))}.
\end{align*}
To prove the stability bound \eqref{interpolation-W1}, we take
advantage of the representation $I_{\T} v = \sum_{j=1}^{d+1}
v(\bz_j) \phi_{\bz_j}$ in terms of the canonical basis functions
$\phi_{\bz_j} \in \mathbb P_n(\widehat T)$.
Then
\[
\wnabla I_{\T} v 
= \sum_{j=1}^{d+1} \big( v(\bz_j) - v(\bz_l)\big) \wnabla
\phi_{\bz_j}
\qquad 1\leq l\leq d+1,
\]
where we exploit that $\{\phi_{\bz_j} \}_j$ is a partition
of unity over $\widehat{T}$, ie. $\sum_{j=1}^{d+1}  \phi_{\bz_j} =1$.
Since any sequence of meshes in the flat parametric domain $\Omega$
obtained by successive bisections is shape regular, using inverse
estimates
in $\widehat T$ and interpolation in $L_\infty(\widehat T)$ yields
\begin{align*}
\|\wnabla I_{\T} v\|_{L_\infty(\widehat T)}
\le \max_{1\le j\le d+1} \big|v(\bz_j) - v(\bz_l)\big|
\sum_{j=1}^{d+1} \|\wnabla \phi_{\bz_j}\|_{L_\infty(\widehat T)}
\le C 
\|\wnabla  v\|_{L_\infty(\widehat T)},
\end{align*}
where $C>0$ is a geometric constant independent of $\gamma$ and
proportional to the sum
$\sum_{j=1}^{d+1} \|\wnabla\phi_{\bz_j}\|_{L_\infty(\widehat T)}$
which depends only on $n$ and $d$. 
This concludes the proof.
\end{proof}

%-------------------------------------------------------------------------------
\subsubsection{Shape Regularity and Geometric Estimators}\label{ss:geom_estim}
%-------------------------------------------------------------------------------
%
The proof of Lemma \ref{L:interpolation} utilizes direct
and inverse estimates that rely on the shape regularity of elements
$\widehat T \in \wT \in \widehat{\mathbb T}$. A discussion about
shape regularity of the forests $\overline{\mathbb T}$,
$\widehat{\mathbb T}$ and $\mathbb T$ is in order.

The forest $\overline{\mathbb T}$ induced by bisection on the flat
faces of the initial subdivision $\overline{\T}_0$  
is shape regular \cite{BiDaDeV:04,NoSiVe:09,Stevenson:08} and so is its counterpart $\widehat{\mathbb T}$ on the 
parametric domain.
Regarding  the forest $\mathbb T$, the question is more subtle and we start with a definition.

\begin{definition}[Shape regularity]\label{d:shape_reg}\gls{def:shapereg}
We say that the class of conforming meshes $ \grids$ is {\it shape regular} if there is a
constant $C_0$ only depending on $\overline{\T_0}$, such that for all $\wT \in \widehat\grids$, and all $i=1,...,M$,
\begin{equation}\label{shape-regular}
C_0^{-1} |\hat{\bx}-\hat{\by}| 
\le |X^i_{\T^i}(\hat{\bx}) - X^i_{\T^i}(\hat{\by})| \le C_0
|\hat{\bx}-\hat{\by}|
\qquad\forall \, \hat{\bx},\hat{\by}\in \widehat T, \quad \forall \,
\widehat T\in \widehat \T^i.
\end{equation}
\end{definition}

We have already noted that $\widehat{\mathbb T}$ is shape regular and
observe that \eqref{shape-regular} states that the deformation of
$\widehat T \in\widehat \T^i$ leading to \rhn{$T= X_{\T^i}^i(\widehat T)\in\T^i$} does not
degenerate.
 We also point out that \eqref{shape-regular} 
implies the usual condition on the Jacobian
$\widehat{\nabla} X^i_{\T^i}$, valid for a.e. $\hat{\bx}\in\Omega$,
\begin{equation}\label{jacobian-F}
C_0^{-1}|\bw| \le |\widehat{\nabla} X_{\T^i}^i(\hat{\bx}) \, \bw| \le C_0 |\bw|
\qquad\forall \, \bw\in\mathbb R^d,
\end{equation}
and that $\widehat{\nabla} X_{\T^i}^i$ happens to be constant  
on $\widehat T$ for an affine map $X_{\T^i}^i$ \cite{Cpg78}.

We stress that a bi-Lipschitz parametrization
satisfying \eqref{bi_lipschitz} does {\it not} guarantee that $\mathbb
T$ is shape regular.
This issue has been tackled in \cite{BP:11}
assuming that the surface $\gamma$ is $W^2_\infty$ and $\overline{\T}_0$ is sufficiently
fine. We present a similar result in Lemma \ref{l:shape_reg},
invoking piecewise $C^1$-regularity of
$\gamma$, which hinges on the quasi-monotonicity of the geometric
estimator $\lambda_{\cT}(\gamma)$, which we prove first in Lemma~\ref{L:quasi-mono-n}.
We start with the definition of $\lambda_{\cT}(\gamma)$. Since there is a 
one-to-one correspondence between subdivisions $\T \in \mathbb T$
defining the surface interpolant $\Gamma=\Gamma_\T$ and
subdivisions $\widehat \T \in \widehat{\mathbb T}$ of $M$ copies of
the parametric domain $\Omega$, we define $\lambda_{\cT}(\gamma)$ \rhn{as follows.}
For $1\leq i \leq M$ and $T\in \T^i$ ($\widehat T\in \widehat \T^i$), let the
\textit{geometric element indicator} be
\begin{equation}%\label{p:geom_osc}
\gls{indi:lambdalocal}\lambda_{\T^i}(\gamma,T) := 
\norm{\smash { \wnabla (\chi^{i}-X_{\T^i}^i)}}_{ L_\infty(\widehat{T})}
= \rhn{
\norm{\smash { \wnabla (\chi^{i}- I_{\T^i} \chi^{i})}}_{ L_\infty(\widehat{T})},
}
\end{equation}
and the corresponding \textit{geometric estimator} be
\begin{equation}\label{geo-estimator}
\gls{indi:lambda}\lambda_{\cT}(\gamma) :=\max_{i=1,...,M} \max_{T\in \T^i}\lambda_{\T^i}(\gamma,T).
\end{equation}
%
%The geometric estimator may not decrease upon refinement
\rhn{It is worth mentioning that this quantity could increase upon refinement}, especially
in the pre-asymptotic regime, but the following 
{\it quasi-monotonicity} property is valid instead.

\begin{lemma}[Quasi-monotonicity of the geometric
    estimator]\label{L:quasi-mono-n}
There exists a constant $\Lambda_0>1$, solely depending on \manel{$\overline{\T}_0$}, 
the polynomial degree $n$, and dimension $d$, such that 
\begin{equation}\label{quasi-mono-n}
\gls{const:Lambda0}\lambda_{\T_*}(\gamma) \le \Lambda_0 \lambda_{\T}(\gamma)
\end{equation}
for any $\T, \T_* \in {\mathbb T}$ with
$\T_* \geq \T$. This bound holds elementwise as well.
\end{lemma}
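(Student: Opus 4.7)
The plan is to prove the elementwise quasi-monotonicity and then take maxima. Fix $i \in \{1,\ldots,M\}$, a refined element $\widehat{T}_* \in \widehat{\T}_*^i$, and the unique $\widehat{T} \in \widehat{\T}^i$ with $\widehat{T}_* \subset \widehat{T}$ (which exists because $\widehat{\T}_*^i$ is obtained from $\widehat{\T}^i$ by bisection, so the refinement is nested). The crucial algebraic ingredient is the nested inclusion $\V(\widehat{\T}^i) \subset \V(\widehat{\T}_*^i)$ (recalled at the end of \S~\ref{ss:fem}) which, applied componentwise, gives $I_{\T_*^i}(I_{\T^i}\chi^i) = I_{\T^i}\chi^i$. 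Consequently,
\begin{equation*}
\chi^i - I_{\T_*^i}\chi^i = (\chi^i - I_{\T^i}\chi^i) - I_{\T_*^i}\bigl(\chi^i - I_{\T^i}\chi^i\bigr),
\end{equation*}
which holds in $W^1_\infty(\widehat{T}_*)$ since $\chi^i$ is Lipschitz (cf.\ \eqref{jacobian-X}) and $I_{\T^i}\chi^i$ is piecewise polynomial.

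Applying $\wnabla$, the triangle inequality in $L_\infty(\widehat{T}_*)$, and the $W^1_\infty$-stability \eqref{interpolation-W1} of Lemma~\ref{L:interpolation} to the second summand yields
\begin{equation*}
\|\wnabla(\chi^i - I_{\T_*^i}\chi^i)\|_{L_\infty(\widehat{T}_*)}
\le (1 + C)\,\|\wnabla(\chi^i - I_{\T^i}\chi^i)\|_{L_\infty(\widehat{T}_*)},
\end{equation*}
with $C$ depending only on $n$ and $d$. Since $\widehat{T}_* \subset \widehat{T}$, the right-hand side is bounded by $(1+C)\,\|\wnabla(\chi^i - I_{\T^i}\chi^i)\|_{L_\infty(\widehat{T})}$, which is precisely $(1+C)\,\lambda_{\T^i}(\gamma,T)$, where $T \in \T^i$ is the curved element corresponding to $\widehat{T}$. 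This proves the elementwise bound
\begin{equation*}
\lambda_{\T_*^i}(\gamma, T_*) \le (1+C)\,\lambda_{\T^i}(\gamma, T) \qquad \forall\, T_* \in \T_*^i,\ T_* \subset T.
\end{equation*}
Taking the maximum over $T_* \in \T_*^i$ and then over $i = 1, \ldots, M$ gives \eqref{quasi-mono-n} with $\Lambda_0 := 1+C$, which depends only on $n$, $d$, and (through the shape-regularity of $\widehat{\mathbb{T}}$ implicit in Lemma~\ref{L:interpolation}) on $\overline{\T}_0$.

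The main obstacle would have been handling the mismatch between the coarse interpolant $I_{\T^i}\chi^i$ and the fine interpolant $I_{\T_*^i}\chi^i$ when $\chi^i$ is merely $C^1$ (or Lipschitz), since a naive bound would only give $\|\wnabla(\chi^i - I_{\T_*^i}\chi^i)\|_{L_\infty(\widehat{T}_*)} \le \|\wnabla \chi^i\|_{L_\infty(\widehat{T})} + \|\wnabla I_{\T_*^i}\chi^i\|_{L_\infty(\widehat{T}_*)}$, which does not close. The decomposition above sidesteps this by exploiting the invariance of the coarse interpolant under $I_{\T_*^i}$, thereby reducing the question to the local $W^1_\infty$-stability already established in Lemma~\ref{L:interpolation}.
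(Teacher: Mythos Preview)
Your proof is correct and follows essentially the same approach as the paper: both exploit the identity $\chi - I_{\T_*}\chi = (\chi - I_{\T}\chi) - I_{\T_*}(\chi - I_{\T}\chi)$ (equivalently, the invariance of $I_{\T_*}$ on $\V(\widehat\T)$) together with the local $W^1_\infty$-stability \eqref{interpolation-W1} of the Lagrange interpolant, and conclude with $\Lambda_0 = 1+C$. The only cosmetic difference is that the paper takes the $L_\infty$-norm directly over the coarse element $\widehat T$, while you work on the fine element $\widehat T_*$ and then pass to $\widehat T$ via $\widehat T_* \subset \widehat T$; these are equivalent formulations of the same argument.
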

\begin{proof}
We consider an arbitrary element $\widehat T\in \widehat \T$,
i.e. $\widehat T\in \widehat \T^i$ for some $i$, but we do not write
explicitly the superscript $i$.
We further observe that
the Lagrange interpolation operator $I_{\T_*}$ is invariant on 
polynomials of degree $\le n$ over $\widehat T$, whence
\begin{equation*}
\|\wnabla(\chi-I_{\cT_*}\chi)\|_{L_\infty(\widehat{T})}
= \|\wnabla(\chi-I_{\cT}\chi) -
\wnabla I_{\cT_*}(\chi-I_{\cT}\chi)\|_{L_\infty(\widehat{T})}.
\end{equation*}
From the local $W^{1}_\infty$ stability bound \eqref{interpolation-W1}
of $I_{\T_*}$, we deduce the existence of $C>0$, solely
depending on $\overline {\T}_0$, $d$ and $n$, such that
\[
\|\wnabla I_{\cT_*} \big(\chi-I_{\cT}\chi \big)\|_{L_\infty(\widehat{T})} \le
C \| \wnabla \big( \chi -I_{\cT}\chi\big)\|_{L_\infty(\widehat{T})}.
\]
The desired estimate \eqref{quasi-mono-n} thus follows with $\Lambda_0=1+C$.
\end{proof}

This result turns out to be critical not only for Lemma
\ref{l:shape_reg} below,
which guarantees the shape regularity of $\mathbb T$, but
also to control the possible increase of the geometric estimator due to the 
\ADAPTPDE calls. 
We reiterate that bi-Lipschitz parametrizations
satisfying \eqref{bi_lipschitz} do {\it not} guarantee that
$\mathbb T$ is shape regular.

\begin{lemma}[Shape regularity]\label{l:shape_reg}
The forest $\mathbb T = \mathbb T(\T_0)$ is shape-regular
with constant $C_0=2L$ provided 
\begin{equation}\label{init_sub_cond}
\lambda_{\T_0}(\gamma)\leq \frac 1 {2\Lambda_0 L}, 
\end{equation}
with $L\ge1$ the non-degeneracy constant in \eqref{bi_lipschitz} and
$\Lambda_0>0$ the constant in \eqref{quasi-mono-n}.

\end{lemma}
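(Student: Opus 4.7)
My plan is to combine the triangle inequality with the bi-Lipschitz property of $\chi^i$ and the pointwise control of $\wnabla(\chi^i - X^i_{\T^i})$ provided by the geometric element indicator, then use the quasi-monotonicity result of Lemma~\ref{L:quasi-mono-n} to propagate the smallness assumed at the initial mesh $\T_0$ to every descendant $\T \in \mathbb{T}$.

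First, I would fix an arbitrary $\T \in \mathbb{T}$ with $\T \geq \T_0$, an index $1 \leq i \leq M$, and an element $\widehat{T} \in \widehat{\T}^i$, and drop the superscript $i$ to lighten notation. The key observation is that the quasi-monotonicity bound \eqref{quasi-mono-n}, combined with the hypothesis \eqref{init_sub_cond}, yields
\begin{equation*}
\lambda_{\T}(\gamma,T) \;\leq\; \lambda_{\T}(\gamma) \;\leq\; \Lambda_0 \, \lambda_{\T_0}(\gamma) \;\leq\; \frac{1}{2L}.
\end{equation*}
Since $\widehat{T}$ is a simplex, hence convex, and $\chi - X_{\T^i}^i$ belongs to $W^1_\infty(\widehat T)$, the mean value theorem along the segment joining any two points $\hat{\bx}, \hat{\by} \in \widehat{T}$ gives
\begin{equation*}
\bigl|(\chi - X_{\T^i}^i)(\hat{\bx}) - (\chi - X_{\T^i}^i)(\hat{\by})\bigr|
\;\leq\; \|\wnabla(\chi - X_{\T^i}^i)\|_{L_\infty(\widehat{T})} \, |\hat{\bx}-\hat{\by}|
\;=\; \lambda_{\T}(\gamma,T) \, |\hat{\bx}-\hat{\by}|.
\end{equation*}

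For the upper bound in \eqref{shape-regular}, I write $X_{\T^i}^i = \chi - (\chi - X_{\T^i}^i)$ and use the bi-Lipschitz estimate \eqref{bi_lipschitz} to get
\begin{equation*}
|X_{\T^i}^i(\hat{\bx}) - X_{\T^i}^i(\hat{\by})| \;\leq\; \Bigl(L + \tfrac{1}{2L}\Bigr) |\hat{\bx}-\hat{\by}| \;\leq\; 2L \,|\hat{\bx}-\hat{\by}|,
\end{equation*}
where the last inequality uses $L \geq 1$ so that $1/(2L) \leq L$. For the lower bound I apply the reverse triangle inequality together with the lower bi-Lipschitz bound of $\chi$, obtaining
\begin{equation*}
|X_{\T^i}^i(\hat{\bx}) - X_{\T^i}^i(\hat{\by})| \;\geq\; \Bigl(L^{-1} - \tfrac{1}{2L}\Bigr) |\hat{\bx}-\hat{\by}| \;=\; \tfrac{1}{2L}\,|\hat{\bx}-\hat{\by}|.
\end{equation*}
Setting $C_0 = 2L$ yields \eqref{shape-regular}.

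The only nontrivial ingredient is the propagation of smallness from $\T_0$ to $\T$, which is exactly what Lemma~\ref{L:quasi-mono-n} provides; without the factor $\Lambda_0$ appearing in the assumption \eqref{init_sub_cond} the refinement steps within $\ADAPTPDE$ could enlarge $\lambda_{\T^i}(\gamma,T)$ and break the non-degeneracy of $X_{\T^i}^i$. Everything else reduces to elementary manipulations using the convexity of the simplex $\widehat{T}$ and the bi-Lipschitz constant $L$ of the parametrization.
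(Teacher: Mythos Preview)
Your proof is correct and follows essentially the same approach as the paper's own proof, which invokes the same three ingredients: the Lipschitz bound on $\chi - X_{\T}$ over the convex element $\widehat{T}$, the bi-Lipschitz property \eqref{bi_lipschitz}, and the quasi-monotonicity \eqref{quasi-mono-n}. The paper states these steps more tersely, leaving the triangle-inequality computation implicit, whereas you spell it out in full.
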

\begin{proof}
Let $\T\in\grids$ be an arbitrary mesh.
For any $T\in \T$, we recall that $T$ belongs to a mesh patch $\T^i$
for some $i$, which we do not write explicitly.
Let $\widehat T$ be the corresponding element in $\widehat \T$.
Since for $\widehat \bx$, $\widehat \by \in \widehat T$
\[
|(\chi-X_{\T})(\widehat \bx) - (\chi-X_{\T})(\widehat \by)| \le 
|\widehat \bx- \widehat \by| \|\wnabla (\chi-X_{\T})\|_{L_\infty(\widehat T)} = 
|\widehat \bx- \widehat \by| \lambda_{\cT}(\gamma, T),
\]
the shape-regularity assertion is a consequence of \eqref{bi_lipschitz}
and \eqref{quasi-mono-n}.
\end{proof}

\manel{
\ab{Note that once shape-regularity
of the forest $\grids$ is established, the triangulations $\T_0$ and $\overline{\T_0}$ 
are equivalent}. Therefore, with a slight abuse of notation, hereafter we regard
$\T_0$ as the initial triangulation of the AFEM.
}

We refer to \cite[Figure 11]{BCMN:Magenes} for an intermediate degenerate situation
in which $\lambda_{\T_1}(\gamma)>(2\Lambda_0 L)^{-1}$ and
\eqref{init_sub_cond} is violated
for polynomial degree $n=1$.

%%%%%%%%%%%%%%%%%%%%%%%%%%%%%%%%%%%%%%%%%%%%%%%%%%%%%%%%%%%%%%%%%%%%%%%%%%%%%%%
\section{The Laplace-Beltrami Operator}\label{S:Laplace-Beltrami}
%%%%%%%%%%%%%%%%%%%%%%%%%%%%%%%%%%%%%%%%%%%%%%%%%%%%%%%%%%%%%%%%%%%%%%%%%%%%%%%

In this section, we start the discussion with basic differential geometry properties leading to the definition of the Laplace-Beltrami operator $\Delta_\gamma$ together with other relevant geometric operators.
We then derive a weak formulation of $-\Delta_\gamma u = f$ as well as its finite element counterpart.
We assume $\gamma$ to be \rhn{globally $W_\infty^1$ and} piecewise $C^1$, 
i.e., $\chi^i\in C^1(\overline\Omega)^{d+1}$ for all
$1\le i\le M$, and $\Gamma$ denote its piecewise polynomial approximation.
In the discussion below we \rhn{often} remove the superscript $i$, 
because no confusion is possible. \manel{We also note that the hat on a function such as $\hat{v}$
will be omitted when the domain is explicit in the formula or when it appears 
on an operator being applied to the function.}

%--------------------------------------------------------------------------------
\subsection{Basic Differential Geometry}\label{S:diff-geom}
%--------------------------------------------------------------------------------
%
In this subsection we recall a matrix formulation of some basic
differential geometry facts and refer to \cite{BCMN:Magenes} for details. 
Our first task is to relate the gradient $\widehat{\nabla}$ in the 
parametric domain $\Omega$ with the tangential gradient
$\nabla_\gamma$ on $\gamma$. To this end, let $\bG\in \R^{(d+1)\times d}$ be the matrix
$$
\bG := \bG_{\gamma} := [\widehat{\partial}_1\chi, \ldots, 
\widehat{\partial}_d\chi],
$$
whose $j$-th column $\widehat{\partial}_j\chi\in\R^{d+1}$ 
is the vector of partial derivatives of $\chi$
with respect to the $j^{th}$ coordinate of $\Omega$. Since
$\chi$ is a diffeomorphism, the set $\{\widehat{\partial}_j\chi\}_{j=1}^d$ of
tangent vectors to $\gamma$ is well defined, linearly
independent, and expands the tangent hyperplane to each $\gamma^i$ 
at interior points for all $1\le i\le M$. The {\it first fundamental
form} of $\gamma$ is the symmetric and positive definite matrix
$\bg\in\R^{d\times d}$ defined by
\begin{equation}\label{1st-form}
\gls{not:first_fund}\bg %= \big( g_{\gamma,ij} \big)_{1\leq i,j \leq d} 
:= \big(
\widehat{\partial}_i\chi^T\widehat{\partial}_j\chi \big)_{1\leq i,j \leq d} = \bG^T\bG.
\end{equation}
Given $v : \gamma \to \R$, the tangent gradient
$\nabla_\gamma v(\tilde{\bx})
=\sum_{i=1}^d \alpha_i(\hat{\bx})\widehat{\partial}_i\chi(\hat{\bx})^T$ 
\rhn{is a row vector that} satisfies the \rhn{chain rule}
\begin{equation}\label{grad-hat}
\widehat{\nabla} v = \DivG {v} \, \bG .
\end{equation}
To get the reverse relation, we augment $\bG$ to the matrix $\widetilde{\bG} \in
\R^{(d+1)\times (d+1)}$ by adding the (outer) unit normal
$\manel{\bnu}=(\nu_1,\cdots,\nu_{d+1})^T\in\R^{(d+1)}$ 
to the tangent hyperplane
$\text{span}\{\widehat{\partial}_i\chi\}_{i=1}^d$ to $\gamma$
as the last column, namely
\[
\widetilde{\bG} := \br{ \bG, \bnu} = \big[\widehat{\partial}_1\chi, \dots,
\widehat{\partial}_d\chi, \bnu \big].
\]
Since $\widetilde{\bG}$ is invertible, we let $\widetilde{\bD} =
\widetilde{\bG}^{-1}$ and use~\eqref{grad-hat} to realize that
\begin{equation}\label{grad-gamma}
\gls{not:surf_grad}\DivG {v} = \DivG {v} \, \widetilde{\bG} \, \widetilde{\bD} =\big[
\widehat{\nabla} v, 0\big] \widetilde{\bD} = \widehat{\nabla} v\; \bD,
\end{equation}
where $\bD \in \R^{d\times (d+1)}$ results from
$\widetilde{\bD}$ by suppressing its last row. Moreover, 
the first fundamental form $\bg$ has
inverse $\bg^{-1} = \bD\bD^T$. We let 
\begin{equation}\label{area}
\gls{not:area}q := \sqrt{\det \bg}
\end{equation}
be the area element of $\gamma$ and point out the change of variables formula
\begin{equation}\label{change-var}
\int_\omega v q = \int_{\chi(\omega)} v,
\end{equation}
for any $\omega \subset \Omega$ measurable.
When $\chi$ is $C^2$ and $v\in H^2(\gamma)$ ($\widehat v \in H^2(\Omega)$), we have the compact expression for the Laplace-Beltrami operator on $\gamma$
\begin{equation*}
\gls{not:LB}\Delta_\gamma v = \frac{1}{q} \widehat{\text{div}} 
\big(q \widehat{\nabla} v\bg^{-1}\big).
\end{equation*}
The above representation is instrumental to derive the following integration by parts formula on surfaces
\begin{equation}\label{parts-discrete}
\int_{\gamma} \nabla_\gamma w \nabla_\gamma^T v =
\int_{\gamma} -\Delta_\gamma w \, v + \int_{\partial \gamma}
\nabla_\gamma w \, \bn~ v
\qquad\forall\, v,w \in H^2(\gamma),
\end{equation}
where $\bn$ is the unit co-normal on $\partial\gamma$ pointing outside $\gamma$.

The discussion above applies as well to the piecewise polynomial surface
$\Gamma$ (recall that we dropped the index $i$ specifying the underlying patch). 
We denote the corresponding matrices $\bG_\Gamma=\widehat{\nabla}X_{\cT}$ and
$\bD_\Gamma$ associated with $X_{\cT}:\Omega\to\Gamma$, and get
\begin{equation}\label{grad-Gamma}
\gls{not:surf_grad}\nabla_\Gamma v = \widehat{\nabla} v \, \bD_\Gamma.
\end{equation}
The first fundamental form $\bg_\Gamma$ of $\Gamma$ and its elementary area
$q_\Gamma$ are defined by
\begin{equation}\label{1st-form-discrete}
\gls{not:first_fund}\bg_\Gamma := \bG_\Gamma^T \, \bG_\Gamma,
\qquad
\gls{not:area}q_\Gamma := \sqrt{\det \bgG_\Gamma},
\end{equation}
\rhn{and the (outer) unit normal to $\Gamma$ is denoted by $\bnu_\Gamma$}. The corresponding expression of the Laplace-Beltrami operator  is
\begin{equation}\label{lap-bel}
\gls{not:LB}\Delta_\Gamma V = \frac{1}{q_\Gamma} \widehat{\text{div}} 
\big(q_\Gamma \widehat{\nabla} V\bg_\Gamma^{-1}\big),
\end{equation}
and only makes sense elementwise.
In addition, we recall that for $T\in \T$ and $S$ a side of $T$,
the unit co-normal $\bn_\Gamma$ on $S$ pointing outside $T$ satisfies
\begin{equation}\label{eq:hat_n}
\gls{not:normal}  \widehat \bn =  \frac{r_\Gamma}{q_\Gamma} \bG_\Gamma^T \bn_\Gamma,
  \qquad  \bn_\Gamma  = \frac{q_\Gamma}{r_\Gamma} \bD_\Gamma^T \widehat \bn
\end{equation}
where \gls{not:area}$r_\Gamma$ is the area element associated with the subsimplex
$\widehat S:=X_\T^{-1}(S)$ (see \cite{BCMN:Magenes} for a detailed expression).
Hence, \eqref{eq:hat_n} and \eqref{grad-Gamma} give the following local expression for the tangential derivative of $v$ in the direction $\bn_\Gamma$ on $S$ 
\begin{equation}\label{e:normal_grad}
\nabla_\Gamma v \,\bn_\Gamma = \frac{q_\Gamma}{r_\Gamma}
\widehat{\nabla} v \bg_\Gamma^{-1}|_{\widehat S} \, \widehat \bn.
\end{equation}
This is of particular importance when considering residual type
estimators as in the present work; see \S~\ref{S:aposteriori}.

%--------------------------------------------------------------------------------
\subsection{Variational Formulation and Galerkin Method}\label{S:variational}
%--------------------------------------------------------------------------------
%
We start by introducing relevant Lebesgue and Sobolev spaces.
Let
\[
 L_{2,\#}(\gamma) := \Big\{v \in L_2(\gamma) \; : \; \int_\gamma v = 0 \quad
   \text{if} \quad \partial \gamma = \emptyset  \Big\}
\]
be the subspace of $L_2(\gamma)$ of functions with vanishing meanvalue
whenever the surface $\gamma$ is closed, and let $H^1_\#(\gamma)$ be
the subspace of $H^1(\gamma)$ given by
\begin{equation*}
\begin{split}
 H^1_\#(\gamma) := \Big\{v\in L_{2,\#}(\gamma) \; : \; 
      & 
         \DivG v^i\in [L_2(\gamma^i)]^{d+1} ,\\
      &   v^i = v^j \ \text{on} \ \gamma^i\cap\gamma^j
          \ 1 \le i,j\le M, 
         \, v=0 \ \text{on} \ \partial \gamma  \Big\},
\end{split}
\end{equation*}
where $\nabla_\gamma$ and traces of $v^i = v_{|\gamma^i}$ are well defined in each component $\gamma^i$ due
to \eqref{grad-gamma}.
Let the weak form of the
{\it Laplace-Beltrami operator} $\Delta_\gamma v$ for
any function $v\in H^1_\#(\gamma)$ be
\begin{equation}\label{weak-LB}
\langle -\LapG v, \varphi \rangle := 
\sum_{i=1}^{M}\int_{\gamma^i} \DivG v^i  \, \DivG^T \vphi^i
\qquad \forall \varphi \in H^1_\#(\gamma),
\end{equation}
where $\langle \cdot,\cdot  \rangle$ denotes the 
$(H^1_\#(\gamma))^*$-$H^1_\#(\gamma)$ duality product. 

We now build on \eqref{weak-LB} and write the weak formulation of 
$-\Delta_\gamma u = f$ as follows:
given $f\in L_{2,\#}(\gamma)$, we seek $u \in H^1_\#(\gamma)$ satisfying
\begin{equation}\label{p:Weak_PdeGm}
 \int_{\gamma} \nabla_{\gamma}u  \, \nabla_{\gamma}^T \vphi 
=  \int_\gamma f \, \vphi ,
\qquad \forall \; \vphi \in H^1_\#(\gamma),
\end{equation}
where we have written $\int_{\gamma} \nabla_{\gamma}u \, \nabla_{\gamma}^T \vphi $ 
to denote $\sum_{i=1}^{M} \int_{\gamma^i} \nabla_{\gamma}u^i \nabla_{\gamma}^T \vphi^i $.
Existence and uniqueness of a solution $u\in H^1_\#(\gamma)$ is a consequence of
the Lax-Milgram theorem provided $\gamma$ is Lipschitz.

When $\chi^i$ is $C^2$ and $u\in
H^2(\gamma^i)$ for each $1\le i\le M $, we showed in \cite{BCMN:Magenes} that \rhn{in the interior of} each component $\gamma^i$\rhn{, namely $\chi^i(\interior(\Omega))$,} we have
\begin{equation}\label{strong-form}
-\Delta_{\gamma} u^i = f^i, %\quad\text{in $\interior(\gamma^i):= \chi^i(\interior(\Omega))$},
\quad 1\le i\le M,
\end{equation}
together with vanishing jump conditions at the interfaces $\gamma^i\cap\gamma^j$
\ab{
\begin{equation}\label{jumps-ij}
\mathcal{J}(u)|_{\gamma^i\cap\gamma^j}
:=\nabla_{\gamma^i} u \, {\bn}^i
+ \nabla_{\gamma^j} u \, {\bn}^j= 0
\qquad\forall \, 1\le i,j\le M,
\end{equation}
}
where \ab{${\bn}^i$} is the unit outer normal to $\partial\gamma^i$ in the tangent
plane to $\gamma^i$ (see~\eqref{eq:hat_n}).

Given $\T \in \mathbb T$, we next formulate an approximation to the
Laplace-Beltrami operator on
the piecewise polynomial interpolant $\Gamma=\Gamma_\T$ of $\gamma$ as follows. 
If $F_\Gamma \in L_{2,\#}(\Gamma)$ is a suitable approximation of $f$, 
then the finite element solution $U:\Gamma\to\R$ solves
\begin{align}                  \label{FEM:weakform}
U \in\V(\T): \quad \int_{\Gamma} \DivGk{} U 
\DivGk{}^T V = \int_{\Gamma} F_\Gamma \, V \qquad \forall \;
V \in\V(\T),
\end{align}
where again $\int_\Gamma g = \sum_{i=1}^M \int_{\Gamma^i} g^i$.
To this end we choose $F_\Gamma$ to be 
\begin{equation}\label{def:F}
  F_\Gamma := f \frac{q}{q_\Gamma},
\end{equation}
because this specific choice of $F_\Gamma$ satisfies
the compatibility property
\begin{equation}\label{forcing}
\int_\Gamma F_\Gamma = \int_\gamma f = 0,
\end{equation}
whenever $\gamma$ is closed, and allows us to 
handle separately the approximation of surface $\gamma$ and forcing $f$.
In particular, \eqref{FEM:weakform} admits a unique solution $U$ as a
consequence of the Lax-Milgram theorem. 

%%%%%%%%%%%%%%%%%%%%%%%%%%%%%%%%%%%%%%%%%%%%%%%%%%%%%%%%%%%%%%%%%%%%%%%%%%%%%%%%%
\section{A Posteriori Error Analysis}\label{S:aposteriori}
%%%%%%%%%%%%%%%%%%%%%%%%%%%%%%%%%%%%%%%%%%%%%%%%%%%%%%%%%%%%%%%%%%%%%%%%%%%%%%%%%
%
In order to study the discrepancy between $u$ and $U$ we need to agree
on comparing them in a common domain, say $\gamma$.
Our goal is thus to obtain a posteriori error estimates for the energy
error $\|\nabla_\gamma(u-U)\|_{L_2(\gamma)}$. This entails developing
an a priori error analysis for the interpolation error committed in
replacing $\gamma$ by $\Gamma$ in \eqref{FEM:weakform}, which is a 
sort of consistency error, and its impact on the PDE error. We are
concerned with these issues in this section and refer to 
\cite{Demlow:09,MMN:11} where they are addressed \rhn{for different
surface representations as well as
\cite{DemlowDziuk:07,BCMN:Magenes}} that discusses the case
$n=1$. We again drop the superscript $i$ that identifies the
surface patch.

%--------------------------------------------------------------------------------
\subsection{Geometric Error and Estimator}\label{S:geom-error}
%--------------------------------------------------------------------------------
%
We now quantify the error arising from interpolating $\gamma$,
the so-called {\it geometric error}. To this end we resort to the 
matrix formulation of \S~\ref{S:diff-geom} to relate the geometric
error with the geometric estimator $\lambda_{\cT}(\gamma)$ of \eqref{p:geom_osc}.

Given $T \in \T$, we will deal with the regions
$\widehat{T} \in \widehat \T$ and $\widetilde{T} \subset \gamma$ given by
\begin{align}\label{def:regions_T}
\widehat{T}:=X_{\T}^{-1}(T),
\qquad
\widetilde{T} := \chi(\widehat{T}).
\end{align}
On mapping back and forth to $\widehat T$, and using
\eqref{change-var}, we easily see that
\begin{align}   \label{eqi:l2}
\InT v = \int_{\widetilde{T}} {v} \frac{q_\Gamma}{q}.
\end{align}
\pedro{%
The consistency error stems from the different bilinear forms of the
continuous and discrete equations~\cite[Lemma 5.1]{BCMN:Magenes}. From~\eqref{change-var},
\eqref{grad-Gamma}, and \eqref{grad-hat}, we realize that
\begin{equation}\label{diff-int}
\int_{\Gamma} \DivGk{} v \DivGk{}^T w 
= \int_{\gamma}   \DivGk{} v \bG\bD_\Gamma\bD_\Gamma^T\bG^T \DivGk{}^T w \frac{q_\Gamma}{q}.
\end{equation}
Using that $\bG\bD=\bI-\bnu\bnu^T$ is the projection onto the
tangent plane to $\gamma$, we obtain
\begin{equation*}
\int_{\gamma} \DivG v \DivG^T w
= \int_{\gamma}   \DivGk{} v \bG\bD\bD^T\bG^T \DivGk{}^T w.
\end{equation*}
These two expressions, in conjunction with
$\bg^{-1}=\bD\bD^T$ and $\bg_\Gamma^{-1}=\bD_\Gamma\bD_\Gamma^T$,
yield
\begin{equation}\label{eq:cons_error_rep}
\int_{\Gamma} \DivGk{} v \DivGk{}^T w - \int_{\gamma} \DivG v
\DivG^T w 
= \int_{\gamma} \DivG v \bA_\Gamma
\DivG^T w
\qquad \forall \, v,w \in H^1_\#(\gamma),
\end{equation}
where \gls{not:errormat}$\bA_ \Gamma\in\mathbb{R}^{(d+1)\times (d+1)}$ stands for the following error matrix
\begin{equation}\label{eq:cons_error}
\bA_ \Gamma := \frac{1}{q} \bG (q_\Gamma \bgG_\Gamma^{-1}-q\bg^{-1})\bG^T.
\end{equation}
}
%see~\cite[Lemma 5.1]{BCMN:Magenes} for details.}

\rhn{Corollary 5.1} in \cite{BCMN:Magenes} provides the following conditional estimate on the consistency error:
If $\lambda_{\T_0}(\gamma)$ satisfies 
\begin{equation}\label{eq:init_cond}
\lambda_{\T_0}(\gamma) \le \frac{1}{6\Lambda_0L^3},
\end{equation}
then we have, for 
% all $\widehat \T \in \widehat{\mathbb T}$ (corresponding to $\Gamma := \Gamma_\T$ with $\T \in  \mathbb{T}$)
$\T \in \mathbb T$,
\begin{equation}\label{Lp:Ak}
\norm{\bA_\Gamma}_{L_\infty(\widehat{T})} \Cleq  
\lambda_{\cT}(\gamma,T)
%\lambda_{\T^i}(\widehat T)  
\qquad \forall\;
%\widehat T\in \widehat \T^i, \quad 1\leq i \leq M,
T \in \T,
\end{equation}
where the hidden constant depends on \manel{$\T_0$} and the Lipschitz 
constant $L$ of $\gamma$ appearing in \eqref{bi_lipschitz}.
The consistency error estimate \eqref{Lp:Ak} relies on the following
properties for $q_\Gamma$, $r_\Gamma$,  $\bgG_\Gamma$, $\bD_\Gamma$
and $\bnu_\Gamma$ which will be used again later.
Their proofs can be found in \cite[Lemmas 5.2 and 5.4]{BCMN:Magenes}
except that of $r_\Gamma$, 
\pedro{which is analogous and thus omitted.}

\begin{lemma}[Properties of $q_\Gamma$,
$r_\Gamma$, $\bnu_\Gamma$, $\bgG_\Gamma$ and $\bD_\Gamma$]\label{L:geom_estim}
If $\lambda_{\T_0}(\gamma)$ satisfies \eqref{eq:init_cond}, then the matrices $\bg$ and $\bgG_\Gamma$ have eigenvalues in the interval $[L^{-2},L^2]$
and $[\frac12 L^{-2}, \frac32 L^2]$, respectively.
Moreover, the forest $\grids$ is shape regular,
$L^{-d} \Cleq q,q_\Gamma \Cleq L^{d}$, and for $\mathcal T \in \mathbb T$ 
\begin{equation}\label{g-G}
\begin{aligned}
\norm{q -q_\Gamma}_{L_\infty(\widehat T)}
&+ \norm{r - r_\Gamma}_{L_\infty(\partial\widehat T)}
+\norm{\bnu-\bnu_\Gamma}_{L_\infty(\widehat T)}
\\
&+ \norm{\bg-\bgG_\Gamma}_{L_\infty(\widehat T)}
+ \norm{\bD-\bD_\Gamma}_{L_\infty(\widehat T)}
\Cleq \rhn{\lambda_{\cT}(\gamma,T)
\quad\forall \,  T \in \T}
\end{aligned}
\end{equation}
where we recall that $\Gamma=\Gamma_\T$.
\end{lemma}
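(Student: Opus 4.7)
The plan is to reduce everything to a smallness/perturbation argument built on the bi-Lipschitz bound \eqref{jacobian-X} and the assumption \eqref{eq:init_cond}, after which each assertion becomes a bounded $C^1$ function of matrix entries. First, the bound $L^{-1}|\bw|\le |\bG\bw|\le L|\bw|$ from \eqref{jacobian-X} gives $|\bw|^2 L^{-2}\le \bw^T\bg\bw\le L^2|\bw|^2$, so the eigenvalues of $\bg=\bG^T\bG$ lie in $[L^{-2},L^2]$, whence $q=\sqrt{\det \bg}\in [L^{-d},L^d]$. Observe that since $\bG_\Gamma=\widehat\nabla X_\T=\widehat\nabla I_\T\chi$ and $\bG=\widehat\nabla\chi$, we have the identity
\begin{equation*}
\|\bG-\bG_\Gamma\|_{L_\infty(\widehat T)}=\|\widehat\nabla(\chi-I_\T\chi)\|_{L_\infty(\widehat T)}=\lambda_{\T}(\gamma,T),
\end{equation*}
so combining the quasi-monotonicity \eqref{quasi-mono-n} with \eqref{eq:init_cond} yields the uniform smallness $\lambda_{\T}(\gamma,T)\le \Lambda_0\lambda_{\T_0}(\gamma)\le (6L^3)^{-1}$.

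Next, I would factor the perturbation of the fundamental form as
\begin{equation*}
\bg-\bg_\Gamma=(\bG-\bG_\Gamma)^T\bG+\bG_\Gamma^T(\bG-\bG_\Gamma),
\end{equation*}
and use $\|\bG\|,\|\bG_\Gamma\|\Cleq L$ to obtain $\|\bg-\bg_\Gamma\|_{L_\infty(\widehat T)}\Cleq L\,\lambda_\T(\gamma,T)\Cleq \lambda_\T(\gamma,T)$. Weyl's inequality then forces the eigenvalues of $\bg_\Gamma$ to lie in $[\tfrac12 L^{-2},\tfrac32 L^2]$ once $(6L^3)^{-1}$ is small enough relative to $L^{-2}$, which is the reason for the factor $L^3$ in \eqref{eq:init_cond}. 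Shape regularity of $\grids$ is then immediate from Lemma~\ref{l:shape_reg}, since \eqref{eq:init_cond} is stronger than \eqref{init_sub_cond}. The bound $q_\Gamma\Ceq 1$ follows from the spectral localization just obtained.

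For the remaining five quantities in \eqref{g-G}, the strategy is uniform: each of $q,r,\bnu,\bD$ is a smooth (in particular Lipschitz) function of the entries of $\bG$ on the compact set where the smallest singular value is bounded below, so the perturbation is controlled by $\|\bG-\bG_\Gamma\|_{L_\infty(\widehat T)}=\lambda_\T(\gamma,T)$. Concretely, I would write $q-q_\Gamma=(\det\bg-\det\bg_\Gamma)/(q+q_\Gamma)$ and use the established lower bound on $q+q_\Gamma$ together with the fact that $\det$ is Lipschitz on the bounded matrix ball containing $\bg,\bg_\Gamma$; for $\bnu-\bnu_\Gamma$, use the cross-product formula for the oriented unit normal and the lower bound on $q+q_\Gamma$; for $\bD-\bD_\Gamma$, exploit $\widetilde\bD=\widetilde\bG^{-1}$ with $\widetilde\bG=[\bG,\bnu]$ (and analogously for $\widetilde\bG_\Gamma$), use $\widetilde\bD-\widetilde\bD_\Gamma=\widetilde\bD_\Gamma(\widetilde\bG_\Gamma-\widetilde\bG)\widetilde\bD$, and control $\|\widetilde\bD\|,\|\widetilde\bD_\Gamma\|$ by the uniform eigenvalue bounds on $\bg,\bg_\Gamma$ obtained in the first step. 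The case of $r-r_\Gamma$ is entirely analogous: $r$ is the area element of the pulled-back metric restricted to the $(d{-}1)$-dimensional face $\widehat S$, i.e.\ $r=\sqrt{\det(\bg|_{\widehat S})}$ with the obvious analogue for $r_\Gamma$, so the same determinant-perturbation argument applies on $\partial\widehat T$.

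The main obstacle, and the reason \eqref{eq:init_cond} requires an $L^3$ factor rather than merely $L$, is the control of $\bD-\bD_\Gamma$: inverting $\widetilde\bG_\Gamma$ requires the \emph{discrete} matrix to be uniformly nondegenerate, which comes from the eigenvalue lower bound on $\bg_\Gamma$, and this is the step where the absolute size of $\lambda_{\T_0}(\gamma)$, not just its smallness relative to $\Lambda_0$, enters. Since all these calculations are carried out in \cite[Lemmas 5.2 and 5.4]{BCMN:Magenes} for $n=1$ and depend only on the bi-Lipschitz bound and smallness of $\widehat\nabla(\chi-I_\T\chi)$, which hold verbatim for arbitrary $n\ge 1$, the extension is direct and the $r_\Gamma$ estimate needs no new idea.
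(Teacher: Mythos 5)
Your argument is correct: the identity $\|\bG-\bG_\Gamma\|_{L_\infty(\widehat T)}=\lambda_{\cT}(\gamma,T)$ combined with quasi-monotonicity \eqref{quasi-mono-n} and \eqref{eq:init_cond} reduces every assertion to elementary matrix perturbation bounds, and your constants do close (e.g. $\|\bg-\bgG_\Gamma\|\le \lambda(2L+\lambda)<\tfrac12 L^{-2}$ under \eqref{eq:init_cond}, so Weyl's inequality gives the stated spectral interval for $\bgG_\Gamma$). The paper gives no proof of this lemma — it cites \cite[Lemmas 5.2 and 5.4]{BCMN:Magenes} and notes that the $r_\Gamma$ bound is analogous — and your perturbation argument is exactly the standard route underlying those cited results, so the approach is essentially the same.
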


We stress that if $\T_0$ does not satisfy \eqref{eq:init_cond}, then
the algorithm AFEM of \S \ref{S:AFEM} will first refine $\T_0$ to make
it comply with \eqref{eq:init_cond} without ever solving the discretized PDE. In
this sense, \eqref{eq:init_cond} is not a serious
restriction for AFEM, although necessary for the subsequent theory.
We also note that \eqref{eq:init_cond} implies \eqref{init_sub_cond}
because $L\ge1$ in \eqref{bi_lipschitz}.

We finally point out the equivalence of norms on $\gamma$ and $\Gamma$
provided \eqref{eq:init_cond} is valid.

\begin{lemma}[Equivalence of norms]\label{equi:norm} 
If $\lambda_{\T_0}(\gamma)$ satisfies \eqref{eq:init_cond}, then
%If  $\lambda_\Gamma \le \min\{\frac{1}{2\Lambda L},\frac{1}{6L^3} \}$, then
the following equivalence of norms holds 
for all  $\T\in  \mathbb{T}$  with constants depending on $\T_0$ and $L$
\begin{equation}\label{eq:equiv-norms}
\|v\|_{L_2(\widetilde{T})} \approx \|v\|_{L_2(T)}
\approx \|v\|_{L_2(\widehat T)},
\quad
|v|_{H^1(\widetilde{T})} \approx |v|_{H^1(T)}
\approx |v|_{H^1(\widehat T)}
\quad\forall\, T\in \T,
\end{equation}
where $\widehat T = X_{\cT}^{-1}(T)$ and $\widetilde T = \chi(\widehat T)$.
\end{lemma}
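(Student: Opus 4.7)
The plan is to prove the equivalence by pulling everything back to the flat reference region $\widehat T$ via the change of variables formula~\eqref{change-var} and its analogue for $\Gamma$, and then using the uniform spectral bounds on $\bg$, $\bg_\Gamma$, $q$, and $q_\Gamma$ provided by Lemma~\ref{L:geom_estim}.

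First I would treat the $L_2$ equivalence, which is the easier half. Applying~\eqref{change-var} to $v^2$ and the corresponding identity for $\Gamma$ (which is~\eqref{eqi:l2} with $v^2$ in place of $v$) gives
\begin{equation*}
\|v\|_{L_2(\widetilde T)}^2 = \int_{\widehat T} v^2\, q,
\qquad
\|v\|_{L_2(T)}^2 = \int_{\widehat T} v^2 \, q_\Gamma,
\end{equation*}
while $\|v\|_{L_2(\widehat T)}^2=\int_{\widehat T} v^2$. Since~\eqref{eq:init_cond} implies~\eqref{init_sub_cond} (because $L\ge1$), Lemma~\ref{L:geom_estim} applies and yields $L^{-d}\Cleq q, q_\Gamma \Cleq L^d$. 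Therefore all three $L_2$ quantities are pointwise equivalent with constants depending only on $L$, which settles the first chain.

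Next I would address the $H^1$-seminorm equivalence. Combining~\eqref{grad-gamma} with $\bg^{-1}=\bD\bD^T$ gives the pointwise identity
\begin{equation*}
|\nabla_\gamma v|^2 = \widehat\nabla v\,\bD\bD^T\,\widehat\nabla^T v = \widehat\nabla v\,\bg^{-1}\,\widehat\nabla^T v,
\end{equation*}
and an analogous identity on $\Gamma$ using~\eqref{grad-Gamma} and $\bg_\Gamma^{-1}=\bD_\Gamma\bD_\Gamma^T$. Therefore
\begin{equation*}
|v|_{H^1(\widetilde T)}^2 = \int_{\widehat T}\widehat\nabla v\,\bg^{-1}\,\widehat\nabla^T v\,q,
\qquad
|v|_{H^1(T)}^2 = \int_{\widehat T}\widehat\nabla v\,\bg_\Gamma^{-1}\,\widehat\nabla^T v\,q_\Gamma.
\end{equation*}
By Lemma~\ref{L:geom_estim} the eigenvalues of $\bg$ lie in $[L^{-2},L^2]$ and those of $\bg_\Gamma$ in $[\tfrac12 L^{-2},\tfrac32 L^2]$, so the eigenvalues of $\bg^{-1}$ and $\bg_\Gamma^{-1}$ are uniformly bounded above and away from zero by constants depending only on $L$. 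Together with the bounds on $q,q_\Gamma$, this yields the pointwise equivalence of the integrands with $|\widehat\nabla v|^2$, and integrating over $\widehat T$ concludes the $H^1$ chain.

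The proof is essentially a bookkeeping exercise once Lemma~\ref{L:geom_estim} is in hand; the only subtlety to watch is that the hypothesis~\eqref{eq:init_cond} is invoked to guarantee both the pointwise bounds on $q$, $q_\Gamma$ and the spectral bounds on $\bg$, $\bg_\Gamma$ uniformly over all $\T\in\mathbb T$, not just on $\T_0$. I do not foresee any real obstacle: all the analytic heavy lifting has already been done in Lemma~\ref{L:geom_estim} and in the matrix formulas of \S~\ref{S:diff-geom}; the present lemma is their immediate consequence via two changes of variables.
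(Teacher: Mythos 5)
Your proof is correct and follows essentially the same route as the paper: pull the integrals back to a common domain via the change-of-variables formulas and then invoke the spectral bounds on $\bg$, $\bg_\Gamma$ and the bounds on $q$, $q_\Gamma$ from Lemma~\ref{L:geom_estim} (together with \eqref{jacobian-X}). The only cosmetic difference is that you pull everything back to $\widehat T$ and use $\bg^{-1}$, $\bg_\Gamma^{-1}$ directly, whereas the paper compares $T$ and $\widetilde T$ through \eqref{eqi:l2} and \eqref{diff-int}; the substance is identical.
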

\begin{proof} The first assertion follows directly from
\eqref{eqi:l2} and Lemma \ref{L:geom_estim}, which implies
$L^{-2d} \Cleq\frac{q_\Gamma}{q}\Cleq L^{2d}$.
\rhn{We next rewrite the integrals in \eqref{diff-int} over $T\in\T$
and $\widetilde T$.}
%For the second equivalence, we note that \eqref{change-var}, \eqref{grad-Gamma} and \eqref{grad-hat} readily imply that 
%for $v,w\in H^1(\gamma^i)$ there holds
%\begin{equation*}
%\int_{T}  \DivGk{} v \  \DivGk{}^T w = \int_{\widetilde T} \DivG {v}
% \  \bG \bD_\Gamma  \bD_\Gamma^T \bG^T  \  \DivG^T {w} \frac{q_\Gamma}{q}
%\qquad\forall v,w\in H_\#^1(\gamma).
%\end{equation*}
This, combined with the spectral estimate given in Lemma \ref{L:geom_estim}
for $\bgG_{\Gamma}^{-1}=\bD_\Gamma \bD_\Gamma ^T$
and \eqref{jacobian-X} for $\bG=\widehat{\nabla}\chi$, yields the second equivalence.
Similar reasoning applies to $\widehat T$.
\end{proof}

%--------------------------------------------------------------------------------
\subsection{Inverse Estimates for Discrete Geometric Quantities}
\label{S:inverse-estimates}
%--------------------------------------------------------------------------------

We now establish some inverse estimates for the discrete quantities
$q_\Gamma$ and $\bgG_\Gamma$ that are instrumental to derive
Lemma \ref{L:est-reduction} (reduction of residual estimator)
and Lemma \ref{P:oscU-bound} (local decay of oscillation).
These estimates are only required when the polynomial degree
$n$ is strictly greater than $1$, which is a key distinction between this work and \cite{BCMN:Magenes}.

In the following, for $T \in \T$, we \rhn{set} \gls{not:meshsize}$h_T :=
|\widehat {T}|^{\frac{1}{d}}$ \rhn{where $\widehat T$ is defined in~\eqref{def:regions_T}}.
This choice is motivated by the resulting reduction property
after \gls{const:b}$b\ge1$ bisections of $\widehat {T}$.
\begin{equation}\label{h-reduce}
h_{T'}\le 2^{-b/d}h_T,
\end{equation}
where $T'$ is the curvilinear element corresponding to any element
$\widehat {T}' \subset \widehat{T}$.

\begin{lemma}[Inverse inequalities in $L_p$]\label{L:inverse-inequalities}
If $\lambda_{\T_0}(\gamma)$ satisfies \eqref{eq:init_cond}, then
the following estimates hold for all $1\leq p \leq \infty$, 
%$\Gamma:=\Gamma_\T, \Gamma_*:=\Gamma_{\T_*}$  with 
$\T, \T_*\in  \mathbb{T}$ and $\T\leq \T_*$,
with constants depending on $\T_0$ and $L$
\begin{align}
&\| D q_\Gamma \|_{L_p(\widehat{T})} \Cleq {h_T}^{\frac{d}{p}-1}, 
&& \| D(q_{\Gamma_*} -q_\Gamma) \|_ {L_p(\widehat{T}_*)} 
      \Cleq  {h_T}^{\frac{d}{p}} \, h_{T_*}^{-1} \, \lambda_{\cT}(\gamma,T),  \label{est-inv-Q}
\\
& \| D \bgG^{-1}_\Gamma \|_{L_p(\widehat{T})} \Cleq {h_T}^{\frac{d}{p}-1},
&&
\| D (\bgG^{-1}_{\Gamma_*} - \bgG^{-1}_\Gamma) \|_{L_p(\widehat{T}_*)} 
\Cleq {h_T}^{\frac{d}{p}} \, h_{T_*}^{-1} \, \lambda_{\cT}(\gamma,T) ,\label{est-inv-G}
\end{align}
\rhn{whenever ${T} \in {\T}$, ${T_*} \in {\T_*}$ satisfy
  $\widehat{T}_* \subset \widehat T$.}
\end{lemma}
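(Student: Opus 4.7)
The plan is to exploit the fact that $q_\Gamma$ and $\bg_\Gamma^{-1}$ are smooth (rational) functions of $\bG_\Gamma=\wnabla X_\T$, which on each $\widehat T\in\wT$ is a polynomial of degree $\le n-1$ (because $X_\T=I_\T\chi$ is polynomial of degree $\le n$). Since Lemma~\ref{L:geom_estim} provides uniform spectral control on $\bg_\Gamma$ under the standing assumption \eqref{eq:init_cond}, these functions together with all of their partial derivatives in the entries of $\bG_\Gamma$ are bounded in $L_\infty$ by constants depending only on $\T_0$ and $L$. The chain rule will then reduce the bounds on $Dq_\Gamma$ and $D\bg_\Gamma^{-1}$ to polynomial inverse inequalities for $\bG_\Gamma$ and $\bG_{\Gamma_*}$.

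For the left-hand bounds in \eqref{est-inv-Q} and \eqref{est-inv-G}, I would write $Dq_\Gamma=F'(\bG_\Gamma)\colon D\bG_\Gamma$ with $F(\bG)=\sqrt{\det(\bG^T\bG)}$ (and analogously for $\bg_\Gamma^{-1}$), use $\|F'(\bG_\Gamma)\|_{L_\infty(\widehat T)}\Cleq 1$, and apply the standard polynomial inverse estimate on $\widehat T$:
\[
\|D\bG_\Gamma\|_{L_p(\widehat T)}\Cleq h_T^{-1}\|\bG_\Gamma\|_{L_p(\widehat T)}
\Cleq h_T^{-1}\,|\widehat T|^{1/p}\,\|\bG_\Gamma\|_{L_\infty(\widehat T)}\Cleq h_T^{d/p-1},
\]
where the $L_\infty$ bound on $\bG_\Gamma$ follows from \eqref{jacobian-X} combined with the stability \eqref{interpolation-W1} of Lagrange interpolation applied to $\chi$.

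For the right-hand bounds, the key observation is that on $\widehat T_*\subset\widehat T$ the difference $\bG_{\Gamma_*}-\bG_\Gamma=\wnabla(I_{\T_*}\chi-I_\T\chi)$ is itself a polynomial of degree $\le n-1$, and, by the triangle inequality and the quasi-monotonicity \eqref{quasi-mono-n},
\[
\|\bG_{\Gamma_*}-\bG_\Gamma\|_{L_\infty(\widehat T_*)}
\le \|\wnabla(\chi-I_\T\chi)\|_{L_\infty(\widehat T)}
+\|\wnabla(\chi-I_{\T_*}\chi)\|_{L_\infty(\widehat T_*)}\Cleq \lambda_\T(\gamma,T).
\]
I would then use the Newton--Leibniz identity
\[
q_{\Gamma_*}-q_\Gamma=\int_0^1 F'\bigl(\bG_\Gamma+t(\bG_{\Gamma_*}-\bG_\Gamma)\bigr)\colon (\bG_{\Gamma_*}-\bG_\Gamma)\,dt,
\]
differentiate inside the integral, and bound the two resulting terms. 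The first involves $D(\bG_{\Gamma_*}-\bG_\Gamma)$, for which the polynomial inverse estimate on $\widehat T_*$ and the $L_\infty$ bound above yield $\Cleq h_{T_*}^{d/p-1}\lambda_\T(\gamma,T)$. The second involves either $D\bG_\Gamma$ (polynomial on $\widehat T$, whence $\|D\bG_\Gamma\|_{L_\infty(\widehat T)}\Cleq h_T^{-1}$ and integration over $\widehat T_*$ produces the factor $h_{T_*}^{d/p}h_T^{-1}$) or $D\bG_{\Gamma_*}$ (polynomial on $\widehat T_*$, bounded exactly as in the left-hand estimate), times $\|\bG_{\Gamma_*}-\bG_\Gamma\|_{L_\infty(\widehat T_*)}$. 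All three resulting contributions are dominated by $h_T^{d/p}h_{T_*}^{-1}\lambda_\T(\gamma,T)$ because $h_{T_*}\le h_T$. The proof of \eqref{est-inv-G} is obtained verbatim on replacing $F(\bG)=\sqrt{\det(\bG^T\bG)}$ with $F(\bG)=(\bG^T\bG)^{-1}$.

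The main technical obstacle is the careful bookkeeping between the two scales $h_T$ and $h_{T_*}$: the factors $|\widehat T_*|^{1/p}=h_{T_*}^{d/p}$ (coming from integrating an $L_\infty$ bound) and $h_T^{-1}$ (coming from an inverse estimate on the coarse scale) must combine to match the advertised exponent $h_T^{d/p}h_{T_*}^{-1}$. A secondary subtlety is verifying that $F'$ and $F''$ remain uniformly bounded along the entire segment $\{\bG_\Gamma+t(\bG_{\Gamma_*}-\bG_\Gamma)\}_{t\in[0,1]}$; this is precisely where \eqref{eq:init_cond} together with Lemma~\ref{L:geom_estim} applied to both $\T$ and $\T_*$ enters, ensuring that $\det(\cdot)$ stays away from zero and the spectral radius stays bounded throughout.
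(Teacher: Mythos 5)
Your proof is correct, and it reaches the same estimates with the same basic toolkit (polynomial inverse inequalities, the geometric bounds of Lemma~\ref{L:geom_estim}, elementwise quasi-monotonicity \eqref{quasi-mono-n}), but the bookkeeping is organized differently from the paper. The paper works directly with the scalar and matrix quantities through explicit algebraic identities: $\partial_j q_\Gamma = \tfrac{1}{2q_\Gamma}\partial_j\det\bg_\Gamma$, the two-term splitting of $\partial_j(q_{\Gamma_*}-q_\Gamma)$, $\partial_j\bgG_\Gamma^{-1}=-\bgG_\Gamma^{-1}\partial_j\bgG_\Gamma\,\bgG_\Gamma^{-1}$ and $\bgG_{\Gamma_*}^{-1}-\bgG_\Gamma^{-1}=\bgG_{\Gamma_*}^{-1}(\bgG_\Gamma-\bgG_{\Gamma_*})\bgG_\Gamma^{-1}$, applying inverse inequalities to the polynomials $\det\bg_\Gamma$, $\bg_\Gamma$ and their differences, and controlling $\|q_{\Gamma_*}-q_\Gamma\|$ and $\|\bg_{\Gamma_*}-\bg_\Gamma\|$ via \eqref{g-G}. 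You instead descend one level to $\bG_\Gamma=\wnabla X_{\cT}$, treat $q_\Gamma$ and $\bg_\Gamma^{-1}$ as smooth functions $F(\bG_\Gamma)$, and use the chain rule plus a Newton--Leibniz representation of the difference, so that everything reduces to inverse estimates for the polynomials $\bG_\Gamma$, $\bG_{\Gamma_*}$, $\bG_{\Gamma_*}-\bG_\Gamma$ together with $\|\bG_{\Gamma_*}-\bG_\Gamma\|_{L_\infty(\widehat T_*)}\Cleq\lambda_{\cT}(\gamma,T)$. What your route buys is uniformity (one argument handles \eqref{est-inv-Q} and \eqref{est-inv-G} at once, and would handle any smooth function of $\bG_\Gamma$); what it costs is the extra verification you flagged, namely that $F'$ and $F''$ stay bounded along the segment $\bG_\Gamma+t(\bG_{\Gamma_*}-\bG_\Gamma)$. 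That verification does go through: since $\bG_t-\wnabla\chi=(1-t)(\bG_\Gamma-\wnabla\chi)+t(\bG_{\Gamma_*}-\wnabla\chi)$, its $L_\infty$ norm on $\widehat T_*$ is at most the larger of the two endpoint geometric indicators, hence at most $\Lambda_0\lambda_{\T_0}(\gamma)\le (6L^3)^{-1}$ by \eqref{quasi-mono-n} and \eqref{eq:init_cond}, so \eqref{jacobian-X} gives uniform spectral bounds on $\bG_t^T\bG_t$ for all $t\in[0,1]$, exactly as for the endpoints in Lemma~\ref{L:geom_estim}. Your exponent bookkeeping is also consistent: the three contributions $h_{T_*}^{d/p-1}\lambda$, $h_{T_*}^{d/p}h_T^{-1}\lambda$ and $h_{T_*}^{d/p-1}\lambda$ are all dominated by the stated $h_T^{d/p}h_{T_*}^{-1}\lambda_{\cT}(\gamma,T)$ because $h_{T_*}\le h_T$ (the paper's own bound arises with $L_p$ norms of differences taken over the coarse element $\widehat T$, which produces $h_T^{d/p}$ directly).
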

\begin{proof}
 We start with
$q_\Gamma=\sqrt{\det \bgG_\Gamma}$ and observe that
$\partial_j q_\Gamma = \frac{1}{2 q_\Gamma} \partial_j \det\bgG_\Gamma$
 and $\det\bgG_\Gamma$ is polynomial. Using an inverse inequality for
 $\det\bgG_\Gamma$, along with the fact that $q_\Gamma$ is bounded from above
and below (see Lemma~\ref{L:geom_estim}), we obtain
\begin{align*}
\|\partial_j q_\Gamma\|_{L_p(\widehat T)} 
& \le \frac{1}{2} \|q_\Gamma^{-1}\|_{L_\infty(\widehat T)} 
\|\partial_j \det\bgG_\Gamma\|_{L_p(\widehat T)}
\\ &\Cleq
\frac{1}{h_T} \|\det\bgG_\Gamma\|_{L_p(\widehat T)}
\Cleq  \frac{1}{h_{T}}  \|q_\Gamma^2\|_{L_p(\widehat T)}
\Cleq h_{T}^{\frac{d}{p}-1}.
\end{align*}
We now deal with $q_{\Gamma_*}-q_\Gamma$ as follows. We first write
\[
\partial_j (q_{\Gamma_*} - q_\Gamma) =
\frac{1}{2} \Big( \frac{1}{q_{\Gamma_*}} - \frac{1}{q_\Gamma} \Big)
\partial_j \det\bgG_{\Gamma_*} + \frac{1}{2q_\Gamma} 
\partial_j \big( \det\bgG_{\Gamma_*} - \det\bgG_\Gamma \big),
\]
whence, for \pedro{$T \in \T$, $T_* \in \T_*$} with $\widehat{T}_* \subset \widehat T$,
\begin{equation*}
\begin{aligned}
\| \partial_j (q_{\Gamma_*} - q_\Gamma) \|_{L_p(\widehat {T}_*)} &\Cleq
\|q_\Gamma - q_{\Gamma_*}\|_{L_p(\widehat T)} \|\partial_j\det\bgG_{\Gamma_*}\|_{L_\infty(\widehat {T}_*)}
\\
&\qquad + \|\partial_j \big( \det\bgG_{\Gamma_*} - \det\bgG_\Gamma\big)\|_{L_p(\widehat {T}_*)}.
\end{aligned}
\end{equation*}
Using an inverse inequality for $\det \bgG_{\Gamma_*} -\det\bgG_\Gamma
= q_{\Gamma_*}^2 - q_\Gamma^2$, the
bounds \eqref{g-G} on $q_\Gamma$ and $q_{\Gamma*}$  in terms of
\rhn{$\lambda_{\cT}(\gamma,T)$ and $\lambda_{\T_*}(\gamma,T)$,
and the quasi-monotonicity  \eqref{quasi-mono-n}
of $\lambda_{\cT}(\gamma,T)$}, we get
\begin{equation*}
\| \partial_j (q_{\Gamma_*} - q_\Gamma) \|_{L_p(\widehat {T}_*)}
\le h_{T_*}^{-1} \|q_{\Gamma_*} - q_\Gamma \|_{L_p(\widehat T)}
\Cleq h_{T}^{d/p} h_{T_*}^{-1} \rhn{\lambda_{\cT}(\gamma,T).}
\end{equation*}

To estimate $D\bgG_\Gamma^{-1}$ we see that
$\partial_j(\bgG_\Gamma^{-1}\bgG_\Gamma) = \partial_j\bgG_\Gamma^{-1}\bgG_\Gamma +
\bgG_\Gamma^{-1}\partial_j\bgG_\Gamma = 0$, whence $\partial_j\bgG_\Gamma^{-1} = 
- \bgG_\Gamma^{-1} \partial_j\bgG_\Gamma \bgG_\Gamma^{-1}$. This, an inverse inequality for $\bgG$
and the lower bound of the eigenvalues of $\bgG_\Gamma$ in Lemma \ref{L:geom_estim} imply
\[
\| \partial_j\bgG_\Gamma^{-1} \|_{L_p(\widehat T)} 
\Cleq \| \bgG_\Gamma^{-1} \|_{L_\infty(\widehat T)}^2
\| \partial_j\bgG_\Gamma  \|_{L_p(\widehat T)} \Cleq
h_{T}^{-1} \| \bgG_\Gamma \|_{L_p(\widehat T)} \Cleq h_{T}^{\frac{d}{p} -1}.
\]
Finally, $\bgG_{\Gamma_*}^{-1} - \bgG_\Gamma^{-1}=\bgG_{\Gamma_*}^{-1} \big( \bgG_\Gamma 
- \bgG_{\Gamma_*}\big)\bgG_\Gamma^{-1}$, so that the partial
derivatives can be computed with the product rule and always keeping the 
$L_p$ norm in the middle term and the $L_\infty$ norm in the other
two. Then,  making use of some inverse inequalities together with
\eqref{g-G} and  \eqref{quasi-mono-n}, we arrive at 
\[
\|\partial_j \big(\bgG_{\Gamma_*}^{-1} - \bgG_\Gamma^{-1}\big)\|_{L_p(\widehat {T}_*)}
\Cleq h_{T_*}^{-1} \, \|\bgG_{\Gamma_*} - \bgG_\Gamma\|_{L_p(\widehat T)}
\Cleq h_{T}^{d/p}  \, h_{T_*}^{-1} \, \rhn{\lambda_{\cT}(\gamma,T) ,}
\]
as asserted.
\end{proof}

We now establish an inverse estimate in Besov spaces. We refer to
\S~\ref{S:Besov} for the definition \eqref{besov-seminorm} of the Besov seminorm
$|V|_{B^s_\infty(L_p(\widehat{T}))}$ in terms of the modulus of
smoothness of order $k=\lfloor s \rfloor + 1$
\[
  \omega_k(V,t)_p = \sup_{|h|\le t} \| \Delta_h^k V
  \|_{L_p(\widehat T)} ,
\]
where $\Delta_h^k$ are the $k$-th order differences defined in~\eqref{Delta^k}.
  
\begin{lemma}[Inverse estimate in Besov space]\label{L:inverse-Besov}
 Let $\T \in \mathbb T$ and $s>0$, $0<p\le\infty$. Then,
 the following inequality holds
  \[
  | \partial_i V |_{B^s_\infty(L_p(\widehat T))}
  \Cleq \frac1{h_{T}} | V |_{B^s_\infty(L_p(\widehat T))},\qquad
  \]
  for any $\widehat T \in \wT$, and function
  $V \in \mathbb P_n(\widehat T)$ or $V = q_\Gamma \bgG_{\Gamma}^{-T}$ (with $\Gamma=\Gamma_\T$).
  \end{lemma}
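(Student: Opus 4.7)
The plan is to prove the inequality by a scaling argument that reduces matters to a reference element, where both sides become manageable via finite-dimensionality (for polynomials) or uniform smoothness (for $V=q_\Gamma \bgG_\Gamma^{-T}$).

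First, I would set up the scaling. Let $F:\widehat T_R\to\widehat T$ be the affine bijection from a fixed reference simplex, with $\det F' \Ceq h_T^d$, $\|F'\|\Ceq h_T$, $\|(F')^{-1}\|\Ceq h_T^{-1}$ guaranteed by shape regularity of $\widehat{\mathbb T}$. Setting $\tilde V = V\circ F$, a direct computation with the definition of $\omega_k(\cdot,t)_p$ via $k$-th order differences $\Delta_h^k$ in~\eqref{Delta^k} yields the homogeneity relations
\begin{equation*}
|V|_{B^s_\infty(L_p(\widehat T))} \Ceq h_T^{d/p - s}\, |\tilde V|_{B^s_\infty(L_p(\widehat T_R))},
\qquad
|\partial_i V|_{B^s_\infty(L_p(\widehat T))} \Ceq h_T^{d/p - s - 1}\, |\wnabla \tilde V|_{B^s_\infty(L_p(\widehat T_R))},
\end{equation*}
the extra $h_T^{-1}$ in the second relation coming from the chain rule $\partial_i V = \sum_j (F')^{-T}_{ij}\,\partial_j \tilde V \circ F^{-1}$. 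Thus the claim reduces to proving, on the fixed reference element $\widehat T_R$,
\begin{equation*}
|\wnabla \tilde V|_{B^s_\infty(L_p(\widehat T_R))} \le C\, |\tilde V|_{B^s_\infty(L_p(\widehat T_R))}, \tag{$\star$}
\end{equation*}
with $C$ independent of $\tilde V$ in the relevant class.

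For the polynomial case $V\in\Pn(\widehat T)$, the pulled-back $\tilde V$ lies in $\Pn(\widehat T_R)$, which is finite-dimensional. The kernel of $|\cdot|_{B^s_\infty(L_p)}$, built from $k=\lfloor s\rfloor+1$ order differences, is exactly $\mathbb P_{\lfloor s \rfloor}(\widehat T_R)$; since $\wnabla$ maps $\mathbb P_{\lfloor s \rfloor}\to\mathbb P_{\lfloor s\rfloor-1}\subset\mathbb P_{\lfloor s\rfloor}$, both sides of $(\star)$ descend to norms on the finite-dimensional quotient $\Pn(\widehat T_R)/\mathbb P_{\lfloor s\rfloor}(\widehat T_R)$, and equivalence of norms on a finite-dimensional space gives $(\star)$ with a constant depending only on $n,s,p,d$.

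For the case $V=q_\Gamma \bgG_\Gamma^{-T}$, the pulled-back $\tilde V$ is determined by $\tilde X = X_{\cT}\circ F \in [\Pn(\widehat T_R)]^{d+1}$ via $\tilde{\bgG}_\Gamma = (\wnabla\tilde X)^T(\wnabla\tilde X)$ and $\tilde q_\Gamma = \sqrt{\det\tilde{\bgG}_\Gamma}$. By the shape-regularity of $\grids$ (Lemma~\ref{l:shape_reg}) and the spectral bounds in Lemma~\ref{L:geom_estim}, the map $\tilde X$ ranges in a compact subset of $\Pn(\widehat T_R)^{d+1}$ on which $\det\tilde{\bgG}_\Gamma$ is bounded uniformly away from $0$. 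Since $\tilde X\mapsto \tilde q_\Gamma \tilde{\bgG}_\Gamma^{-T}$ is a smooth ($C^\infty$) map on this compact set, the corresponding $\tilde V$ lie in a compact subset $\mathcal{K}$ of $C^\infty(\widehat T_R)$. A continuity and compactness argument on $\mathcal{K}$ yields $(\star)$ with a uniform constant; alternatively, one writes $\tilde V \tilde q_\Gamma = \mathrm{adj}(\tilde{\bgG}_\Gamma)^T$ (polynomial) and differentiates, using the product/quotient rule together with the polynomial case just proved and the uniform lower bound on $\tilde q_\Gamma$.

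The main obstacle is the second case: $V=q_\Gamma\bgG_\Gamma^{-T}$ is a genuine rational function (with $\sqrt{\det\bgG_\Gamma}$ in the denominator), not polynomial, so the clean finite-dimensional argument does not apply directly. The leverage comes from the fact that all such $V$ on the reference element live in a $C^\infty$-compact family parametrized by polynomials with controlled coefficients, which is ultimately the shape-regularity hypothesis \eqref{eq:init_cond} in disguise.
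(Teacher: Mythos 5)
Your reduction-to-the-reference-element argument handles the polynomial case correctly: the scaling identities for $|\cdot|_{B^s_\infty(L_p)}$ are right (shape regularity of $\wT$ absorbs the distortion of the ball $|h|\le t$), and the quotient argument on $\mathbb P_n(\widehat T_R)/\mathbb P_{\lfloor s\rfloor}(\widehat T_R)$ is a legitimate finite-dimensional norm-equivalence proof. This is a genuinely different route from the paper, which never rescales: it commutes $\Delta_h^k$ with $\partial_i$, observes that $\Delta_h^k V$ is itself a polynomial on $\widehat T_{kh}$, and applies the ordinary $L_p$ inverse inequality to it for each fixed $h$ before taking the supremum defining the seminorm. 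The paper's argument is shorter and pointwise in $h$; yours trades that for a standard compactness-in-finite-dimensions step. Either is acceptable for $V\in\mathbb P_n(\widehat T)$.

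The case $V=q_\Gamma\bgG_\Gamma^{-T}$, however, is where the content of the lemma lies, and your treatment of it has a genuine gap. First, a bookkeeping slip: the pullback $\tilde V=V\circ F$ is \emph{not} the quantity $\tilde q_\Gamma\tilde{\bgG}_\Gamma^{-T}$ built from $\tilde X=X_\T\circ F$, since $\wnabla\tilde X=(\wnabla X_\T\circ F)F'$ rescales the first fundamental form by $h_T^2$; one must instead parametrize the family by $P:=(\wnabla X_\T)\circ F\in[\mathbb P_{n-1}(\widehat T_R)]^{(d+1)\times d}$, which Lemma~\ref{L:geom_estim} (hence \eqref{eq:init_cond}) keeps in a compact set with $\det(P^TP)$ bounded below. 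That is fixable. The real problem is the ``continuity and compactness argument'': the desired inequality is a bound on the ratio of two seminorms, $|\wnabla\tilde V_P|_{B^s_\infty(L_p)}\le C\,|\tilde V_P|_{B^s_\infty(L_p)}$, over a \emph{nonlinear} compact family with no homogeneity. Both sides vanish simultaneously on a nontrivial subset (e.g.\ whenever $\tilde V_P$ happens to be a polynomial of degree $\le\lfloor s\rfloor$, as in the affine case), so a sequence $P_j$ approaching that set gives $0/0$ in the limit and compactness plus continuity of $P\mapsto|\tilde V_P|_{B^s_\infty(L_p)}$ cannot exclude that the ratio blows up; there is no quotient space to pass to, which is exactly what made the polynomial case work. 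To close the argument you need the quantitative, structural route you only mention in passing: exploit $q_\Gamma\bgG_\Gamma^{-T}=\mathrm{adj}(\bgG_\Gamma)^T/q_\Gamma$, differentiate, and control the resulting terms by the polynomial inverse inequality together with product-type estimates for $\Delta_h^k$ in which the smooth non-polynomial factor $1/q_\Gamma$ (bounded in $C^m$ by Lemma~\ref{L:geom_estim} and inverse estimates for $\wnabla X_\T$) is handled explicitly, taking care that only the seminorm $|V|_{B^s_\infty(L_p(\widehat T))}$ — and not lower-order norms — survives on the right. This is precisely what the paper does by ``repeating the steps in the proof of Lemma~\ref{L:inverse-inequalities}''; as written, your proposal asserts rather than proves this step.
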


\begin{proof}
We prove the estimate  for $V \in \mathbb P_n(\widehat T)$
because dealing with $ q_\Gamma\bgG_{\Gamma}^{-T}$ reduces to
repeating the steps in the proof of
Lemma~\ref{L:inverse-inequalities} and applying the inverse inequality for
polynomials. Since the $k$-th order differences satisfy
  \[
  \Delta_h^k (\partial_i V) (\widehat \bx) = \partial_i (\Delta_h^k V)(\widehat \bx),
  \qquad\forall \widehat \bx \in \widehat T_{kh},
  \]
  and $\Delta_h^k V \in \mathbb P_n(\widehat T_{kh})$, \rhn{in view of~\eqref{Delta^k}}
  the usual inverse inequality gives
  \[
  \| \Delta_h^k \partial_i V \|_{L_q(\widehat T)}
  =  \| \Delta_h^k \partial_i V \|_{L_q(\widehat T_{kh})} = \| \partial_i\Delta_h^k V \|_{L_q(\widehat T_{kh})}
  \Cleq \frac{1}{h_{T}} \| \Delta_h^k V \|_{L_q(\widehat T)}.
  \]
Invoking the definition \eqref{besov-seminorm} yields the desired estimate.
\end{proof}

%--------------------------------------------------------------------------------
\subsection{Upper and Lower Bounds for the Energy Error}\label{S:bounds}
%--------------------------------------------------------------------------------
%
We now derive an error representation formula leading to
lower and upper \rhn{a posteriori} bounds for the energy error. 
Given $\T \in \mathbb T$, we recall the notation $\Gamma=\Gamma_\T$
and introduce the usual interior 
and jump residuals \rhn{suggested by \eqref{strong-form} and \eqref{jumps-ij}}
for arbitrary $V\in \V(\T)$
\begin{equation*}
\begin{gathered}
\RT(V,F_\Gamma) := F_\Gamma|_T + \Delta_\Gamma V|_T,
\qquad \JT(V) := \{\JS(V)\}_{S\subset\partial T}
\qquad \forall\, T\in \T,
\\
\JS(V) := \DivGk{} V^+|_S \, \bn_S^+ +
\DivGk{} V^-|_S \, \bn_S^-\qquad \, \forall\, S\in  \mathcal{S}_\T,
\end{gathered}
\end{equation*}
where, for each $\bx \in S$, $\bn_S^\pm(\bx)$ denotes the outward unit normal to $S$ and tangent to $T^\pm$ at $\bx$, and $T^+$, $T^-$ are curvilinear elements in
$\T$ that share the side $S\in\mathcal{S}_\T$; recall that  $\mathcal{S}_\T$ denotes the
set of interior faces of $T\in\T$. We emphasize that, in contrast
to flat domains, $\bn_S^+\ne -\bn_S^-$.
% because the vector may have
%different supporting hyperplanes.
Similarly, if $\bD^{\pm}_{\Gamma}$ denote the matrices associated to $T^\pm$, $\DivGk{}V^\pm|_S=\widehat{\nabla}V^\pm\bD^\pm _\Gamma|_{\widehat{S}}$ are tangential gradients of $V$ on $T^\pm$ restricted to $S$.
Moreover, according to \eqref{lap-bel}, see that
$\Delta_\Gamma V|_T=q_\Gamma^{-1}\widehat{\text{div}}
\big(q_\Gamma\widehat{\nabla} V
\bgG_\Gamma^{-1}\big)|_{\widehat{T}} \not = 0$ 
in general for $T\in \T$ when the polynomial degree $n>1$.
This is a major difference relative to \cite{BCMN:Magenes}, which deals with 
$n=1$ and $ V|_{\widehat T} \in \mathbb P_1(\widehat T)$,
$q_\Gamma \in \mathbb P_0(\widehat T)$, $\bg_\Gamma \in \mathbb
P_0(\widehat T)^{d\times d}$ imply $\Delta_\Gamma V|_T = 0$.

Subtracting  the weak formulations \eqref{p:Weak_PdeGm} and \eqref{FEM:weakform}, 
and employing \eqref{parts-discrete} to integrate by parts elementwise, we obtain for all 
\rhn{$v\in H_\#^1(\gamma)$}:
\begin{align}\label{eq:error_form}
\int_\gamma \nabla_\gamma (u - U)\, 
	\nabla_\gamma^T \, v = I_1 + I_2 +I_3,
\end{align}
with
\begin{align*}
I_1 &:= \sum_{T \in \T} \InT \RT(U,F_\Gamma) (v-V)
-\sum_{S \in \mathcal{S}_\T} \InS \JS(U)(v-V), \\
I_2 &:= \int_\Gamma \nabla_\Gamma U  \, \nabla_\Gamma^T v
	- \int_\gamma \nabla_\gamma U \,  \nabla_\gamma^T v 
	= \int_\gamma \nabla_\gamma U \bA_\Gamma \nabla_\gamma^T v, \\
I_3 &:= \int_\gamma f v - \int_\Gamma F_\Gamma v.
\end{align*}
The choice $F_\Gamma = \frac{q}{q_\Gamma}f$ of~\eqref{def:F}  
implies $I_3=0$ so that only $I_1$ and $I_2$ need to be estimated. 
Observe that $I_1$ is the usual residual 
term, whereas $I_2$ is the geometry consistency 
term \eqref{eq:cons_error_rep} and accounts
for the discrepancy between $\gamma$ and $\Gamma$.
An estimate for the error matrix $\bA_ \Gamma$ is given in \eqref{Lp:Ak}.

The PDE error indicator stems from $I_1$ and is defined as follows for any $V\in\V(\T)$ 
\begin{equation*}
\gls{indi:etalocal}\eta_\T (V,F_\Gamma,T)^2 := h_T^2 \normLtT{\RT(V,F_\Gamma)}^2 + 
h_T \normLtpT{\JT(V)}^2 \qquad\forall\, T\in \rhn{\T}.
\end{equation*}
We recall that the definition $h_T = |\widehat {T}|^{\frac{1}{d}}$ with
$\widehat {T} = X_{\cT}^{-1}(T)$ guarantees the strict reduction property \eqref{h-reduce}.

We also introduce the {\it oscillation} for any $V\in \V(\T)$ and \manel{$T \in \T$}
\begin{equation}\label{d:osc-def}
\begin{split}
& \gls{indi:osclocal}\osc_{\cT}(V,f,T)^2  :={} h_T^2 \Big\| 
      (\text{id} - \Pi^2_{2n-2})
      \left(fq + \widehat{\text{div}} 
       \big( q_\Gamma \widehat{\nabla} V \bgG_\Gamma^{-1} \big)\right)
       \Big\|_{L_2(\widehat{T})}^2\\
    & \qquad + h_T
        \Big\| (\text{id} - \Pi^2_{2n-1}) \left(q_\Gamma^+ \widehat{\nabla} V^+ (\bgG_{\Gamma}^+)^{-1}\widehat{\bn}^+
    + q_\Gamma^-\widehat{\nabla} V^- (\bgG_{\Gamma}^-)^{-1}\widehat{\bn}^- \right)
    \Big\|_{L_2(\partial\widehat{T})}^2,
\end{split}
\end{equation}
where $\widehat \bn^{\pm}$ is defined according to \eqref{eq:hat_n},
$\bg_\Gamma^\pm$ and $q_\Gamma^\pm=\sqrt{\det \bg_\Gamma^\pm}$
are the first fundamental form and
area element associated to $T^\pm$, and $\Pi^p_{m}$ denotes the best $L_p$-approximation operator onto the 
space $\Pn[m]$ of polynomials of degree $\leq m$; the domain is implicit from the context.
Notice that we used scaled local versions of the residual
$q(f+\Delta_\Gamma V)$ (see \eqref{lap-bel})  and co-normal derivatives
$r_\Gamma \nabla_\Gamma V \,\bn$ (see \eqref{e:normal_grad}) to
define the oscillation. We refer to Remark \ref{r:osc} for an
alternative definition of oscillation.

Finally, for any subset $\tau \subset \T$ we set
\begin{equation*}
\eta_\T(V,F_\Gamma,\tau)^2:= \sum_{T\in \tau} \eta_\T(V,F_\Gamma,T)^2, 
\qquad  %\text{and} \qquad
\osc_{\cT}(V,f, \tau)^2 
%:= \sum_{i=1}^M \sum_{T \in \tau \cap \T^i} \osc_{\T^i}(V,f,T)^2. 
:= \sum_{T \in \tau} \osc_{\T}(V,f, T)^2,
\end{equation*}
and simply write \gls{indi:eta}$\eta_\T(V,F_\Gamma)$ and \gls{indi:osc}$\osc_{\cT}(V,f)$ whenever $\tau = \T$.

Standard arguments \cite{AO00,Vr96} to derive upper and lower bounds for 
the energy error on flat domains can be extended to this case; see
\cite{DemlowDziuk:07,MMN:11,BCMN:Magenes}. 

\begin{lemma}[A posteriori upper and lower bounds]\label{L:upper-lower}
Let $\lambda_{\T_0}(\gamma)$ satisfy \eqref{eq:init_cond}.
Let $u\in H^1_\#(\gamma)$ be the solution of 
\eqref{p:Weak_PdeGm}, $(\T,\Gamma)$ be a pair of mesh-surface approximations and $U \in \V(\T)$ be the Galerkin solution of 
\eqref{FEM:weakform}.
Then there exist  constants \gls{const:upper_lower}$C_1, C_2$ and $\Lambda_1$ 
depending only on $\T_0$, the Lipschitz constant of $\gamma$, 
and $\|f\|_{L_2(\gamma)}$, such that
\begin{align}\label{upper}
\|\na_\ga(u-U)\|_{L_2(\gamma)}^2 
&\le C_1 \eta_\T(U,F_\Gamma)^2 + \Lambda_1\lambda_{\T}(\gamma)^2,
\\
\label{lower}
C_2 \eta_\T(U,F_\Gamma)^2 
& \le 
 \|\na_\ga(u-U)\|_{L_2(\gamma)}^2+ \osc_{\cT}(U,f)^2 
 + \Lambda_1\lambda_{\T}(\gamma)^2.
\end{align}
\end{lemma}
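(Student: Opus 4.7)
My plan is to build on the error representation \eqref{eq:error_form} established immediately above the lemma, in which $I_3=0$ by the choice $F_\Gamma = (q/q_\Gamma)f$, treating $I_1$ and $I_2$ separately for \eqref{upper} and inverting this procedure locally with bubble functions for \eqref{lower}.

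\textbf{Upper bound.} Test \eqref{eq:error_form} with $v=u-U \in H^1_\#(\gamma)$, so that $\|\na_\ga(u-U)\|_{L_2(\gamma)}^2 = I_1 + I_2$. For $I_1$, choose $V \in \V(\T)$ to be the lift through $X_\T$ of a Scott--Zhang quasi-interpolant of the pullback of $v$ built on $\widehat\T$. Lemma \ref{equi:norm} transports the standard interpolation estimates $\|v-V\|_{L_2(T)} \Cleq h_T \|\na_\ga v\|_{L_2(\omega_T)}$ and $\|v-V\|_{L_2(S)} \Cleq h_T^{1/2}\|\na_\ga v\|_{L_2(\omega_S)}$ between the three domains $\widehat T$, $T$ and $\widetilde T$; Cauchy--Schwarz and finite overlap of patches then yield $|I_1| \Cleq \eta_\T(U,F_\Gamma)\|\na_\ga v\|_{L_2(\gamma)}$. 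For $I_2$, the representation $I_2 = \int_\gamma \na_\ga U\,\bA_\Gamma\,\na_\ga^T v$ combined with the consistency bound \eqref{Lp:Ak} gives $|I_2| \Cleq \lambda_\T(\gamma)\|\na_\ga U\|_{L_2(\gamma)}\|\na_\ga v\|_{L_2(\gamma)}$. Testing \eqref{FEM:weakform} with $V=U$, using $F_\Gamma = (q/q_\Gamma)f$ together with Lemma \ref{equi:norm} and a Poincar\'e inequality on $\gamma$, gives the a priori bound $\|\na_\ga U\|_{L_2(\gamma)} \Cleq \|f\|_{L_2(\gamma)}$. Squaring and applying Young's inequality to absorb a fraction of $\|\na_\ga(u-U)\|_{L_2(\gamma)}^2$ produces \eqref{upper}, with the $\|f\|_{L_2(\gamma)}^2$ factor collected into $\Lambda_1$.

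\textbf{Lower bound.} The strategy is the classical bubble-function argument adapted to the parametric setting. Fix $T\in \T$ and set $\widehat g := qf + \widehat{\mathrm{div}}(q_\Gamma\wnabla U\,\bgG_\Gamma^{-1})$, so that by \eqref{lap-bel} and \eqref{eqi:l2} the element residual satisfies $\int_T R_T(U,F_\Gamma)\phi = \int_{\widehat T}\widehat g\,\widehat\phi$ for every $\phi$ on $T$ with pullback $\widehat\phi$ on $\widehat T$. Let $\bar g := \Pi^2_{2n-2}\widehat g$ and let $\psi_{\widehat T}$ be the standard interior bubble on $\widehat T$. Testing the error identity with the $\gamma$-lift $v$ of $\widehat v := \psi_{\widehat T}\bar g$ (which is compactly supported in $\widehat T$, hence $V=0$ is admissible), the usual bubble estimate $\|\bar g\|_{L_2(\widehat T)}^2 \Cleq \int_{\widehat T} \psi_{\widehat T}\bar g^2$, the inverse bound $\|\na_\ga v\|_{L_2(\widetilde T)} \Cleq h_T^{-1}\|\bar g\|_{L_2(\widehat T)}$, and \eqref{Lp:Ak} applied to the localized $I_2$, combine into
\[
h_T \|\bar g\|_{L_2(\widehat T)} \Cleq \|\na_\ga(u-U)\|_{L_2(\widetilde T)} + \lambda_\T(\gamma,T)\|\na_\ga U\|_{L_2(\widetilde T)}.
\]
The triangle inequality $\|\widehat g\|_{L_2(\widehat T)} \le \|\bar g\|_{L_2(\widehat T)} + \|\widehat g - \bar g\|_{L_2(\widehat T)}$ converts this into a bound on $h_T \|R_T(U,F_\Gamma)\|_{L_2(T)}$, the extra term $h_T\|\widehat g - \bar g\|_{L_2(\widehat T)}$ being precisely the interior part of \eqref{d:osc-def}. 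An analogous edge-bubble argument with the projection $\Pi^2_{2n-1}$ applied to the parametric co-normal $q_\Gamma\wnabla U\,\bgG_\Gamma^{-1}\widehat\bn$ --- which represents $r_\Gamma \na_\Gamma U\,\bn_\Gamma$ via \eqref{e:normal_grad} --- handles the jump, the edge oscillation in \eqref{d:osc-def} absorbing the non-polynomial remainder. Summing over $T\in\T$ and invoking $\|\na_\ga U\|_{L_2(\gamma)} \Cleq \|f\|_{L_2(\gamma)}$ one last time yields \eqref{lower}.

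The main obstacle, which is genuinely new with respect to the linear case of \cite{BCMN:Magenes}, is that for $n>1$ neither $R_T(U,F_\Gamma)$ nor $\JS(U)$ is a polynomial, since $q_\Gamma$ and $\bgG_\Gamma^{-1}$ are rational functions of the pullback coordinates and $\Delta_\Gamma U|_T\ne 0$ in general. The oscillation \eqref{d:osc-def} is tailored to absorb exactly this non-polynomial remainder via best $L_2$-projection onto $\P_{2n-2}$ in the interior and $\P_{2n-1}$ on edges, while the inverse inequalities of Lemma \ref{L:inverse-inequalities} keep the bubble-function manipulations uniformly valid on these rational coefficients; the geometric consistency contribution $\Lambda_1 \lambda_\T(\gamma)^2$ in both bounds traces back to \eqref{Lp:Ak} and to the discrepancy between integrals on $\gamma$ and $\Gamma$ controlled by \eqref{g-G}.
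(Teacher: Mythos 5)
Your proof is correct and follows essentially the same route as the paper: the upper bound comes from the error representation \eqref{eq:error_form} with a lifted Scott--Zhang interpolant, the consistency estimate \eqref{Lp:Ak} for $\bA_\Gamma$ and the a priori bound $\|\nabla_\gamma U\|_{L_2(\gamma)}\Cleq\|f\|_{L_2(\gamma)}$, while the lower bound is obtained by localized bubble-function arguments on $\widehat T\in\wT$ with the oscillation \eqref{d:osc-def} absorbing the non-polynomial (rational) parts of the residual and jump, which is precisely what the paper summarizes as ``standard arguments for flat domains'' carried out in the parametric domain. Two cosmetic points: your intermediate display for $h_T\|\bar g\|_{L_2(\widehat T)}$ should already include the term $h_T\|\widehat g-\bar g\|_{L_2(\widehat T)}$ coming from $\int_{\widehat T}(\widehat g-\bar g)\psi_{\widehat T}\bar g$ (it does reappear correctly in your final bound), and the Scott--Zhang operator is best built on the global flat mesh $\overline{\T}$ and then lifted, as in the paper, so that $V$ is continuous across macro-element interfaces and genuinely belongs to $\V(\T)$.
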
 
\vskip-0.3cm
\begin{proof}
Our departing point is \eqref{eq:error_form} with $v\in H^1_\#(\gamma)$
arbitrary and $V\in\V(\T)$ being its Scott-Zhang interpolant
built over the partition $\overline{\T}$ of $\overline \Gamma$ and lifted to $\Gamma$
using $X_{\cT} \circ X_0^{-1}$.
Using interpolation estimates and \eqref{eq:equiv-norms} yields
\[
|I_1| \Cleq \eta_\T(U,F_\Gamma) \|\nabla_\gamma v\|_{L_2(\gamma)}.
\]
Since $\|\nabla_\Gamma U\|_{L_2(\gamma)} \Cleq
\|f\|_{L_2(\gamma)}$, the estimate
\eqref{Lp:Ak} on the error matrix $\bA_\Gamma$ gives
\[
|I_2| \Cleq \lambda_{\cT}(\gamma)\|\nabla_\gamma v\|_{L_2(\gamma)}.
\]
The upper bound \eqref{upper} follows from $I_3=0$. 
The lower bound \eqref{lower} can be proved locally over an element
$\widehat{T}\in\wT$ in $\Omega$ using standard arguments for flat domains.
\end{proof}

To prove optimality of AFEM we need a localized upper bound for the
distance between two discrete solutions $U$ and $U_*$. This bound measures 
$\|\nabla_{\gamma} (U_* - U)\|_{L_2(\gamma)}$ in terms of the 
PDE estimator restricted to the refined set and geometric estimator;
we refer to \cite[Lemma 4.13]{BCMN:Magenes} for a similar estimate for $n=1$.

\begin{lemma}[Localized upper bound]\label{L:local-upper}
Let $\lambda_{\T_0}(\gamma)$ satisfy \eqref{eq:init_cond}.
For $(\T,\Gamma)$, $(\T_*,\Gamma_*)$ pairs of mesh-surface approximations 
with $\T \leq \T_*$, let 
$\mathcal{R} := \mathcal{R}_{\T \to \T_*} \subset \T$ be the set of 
elements refined in $\T$ to obtain $\T_*$ i.e., $\mathcal{R} = \T \setminus \T_*$. 
Let $U\in \V(\T)$ and $U_*\in \V(\T_*)$ be the
corresponding discrete solutions of \eqref{FEM:weakform} on $\Gamma$ and
$\Gamma_*$, respectively. 
Then the following localized upper bound is valid
\begin{equation}\label{localized}
\|\na_{\gamma} (U_*-U) \|_{L_2(\gamma)}^2 \le C_1 \eta_\T(U,F_\Gamma,\cR)^2 +
\Lambda_1 \lambda_{\T}(\gamma)^2,
\end{equation}
with constants $C_1,\Lambda_1$ as in Lemma \ref{L:upper-lower}.
\end{lemma}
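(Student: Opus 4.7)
The plan is to mirror the a posteriori upper bound~\eqref{upper}, replacing the continuous solution $u$ by the finer Galerkin iterate $U_*$ and choosing the test function so that it reproduces the discrete error on elements outside $\mathcal{R}$. This reproduction will restrict the residual sums to $\mathcal{R}$ and deliver the localized estimator $\eta_{\T}(U,F_\Gamma,\mathcal{R})$ on the right-hand side of~\eqref{localized}.

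The construction would live in the parametric domain $\Omega$. Since $\widehat{U}\in \V(\widehat{\T})\subset \V(\widehat{\T}_*)$ and $\widehat{U}_*\in \V(\widehat{\T}_*)$, their difference $\widehat{e}:=\widehat{U}_*-\widehat{U}$ is polynomial of degree $\le n$ on every unrefined element $\widehat{T}\in\widehat{\T}\cap\widehat{\T}_*$. I would introduce a Scott--Zhang quasi-interpolant $\widehat{V}\in \V(\widehat{\T})$ of $\widehat{e}$ over the coarser partition $\widehat{\T}$, which by $\mathbb{P}_n$-reproduction satisfies $\widehat{V}=\widehat{e}$ on every such $\widehat{T}$. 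The single object $\widehat{V}$ then yields simultaneously a test function $V\in\V(\T)$ on $\Gamma$ via $X_{\T}$ and $V^{(*)}\in\V(\T_*)$ on $\Gamma_*$ via $X_{\T_*}$; both share the common lift $\widehat{V}\circ\chi^{-1}$ on $\gamma$. On $\Gamma$, the difference $e-V$ is continuous and vanishes identically on every $T\in\T\setminus\mathcal{R}$, so its support is contained in $\bigcup_{T\in\mathcal{R}}T$.

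Testing with $v=U_*-U$ lifted to $\gamma$, I would split
\[
\|\nabla_\gamma(U_*-U)\|_{L_2(\gamma)}^2=\int_\gamma\nabla_\gamma(U_*-U)\,\nabla_\gamma^T V +\int_\gamma\nabla_\gamma(U_*-U)\,\nabla_\gamma^T(v-V).
\]
For the first integral I would subtract the $U$- and $U_*$-Galerkin identities transferred to $\gamma$ via the consistency identity~\eqref{eq:cons_error_rep}; since $F_\Gamma=fq/q_\Gamma$ by~\eqref{def:F}, both forcings yield $\int_\gamma fV$ and cancel, leaving a difference of geometric terms involving $\bA_\Gamma$ and $\bA_{\Gamma_*}$. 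Using~\eqref{Lp:Ak}, the energy stability $\|\nabla_\gamma U\|_{L_2(\gamma)}+\|\nabla_\gamma U_*\|_{L_2(\gamma)}\Cleq\|f\|_{L_2(\gamma)}$, quasi-monotonicity~\eqref{quasi-mono-n} and Scott--Zhang stability, this contribution is $\Cleq\lambda_\T(\gamma)\|\nabla_\gamma(U_*-U)\|_{L_2(\gamma)}$. For the second integral I would apply the $U_*$-Galerkin identity with the admissible test $(v-V)^{(*)}\in\V(\T_*)$ (admissible because $\widehat{e}-\widehat{V}\in\V(\widehat{\T}_*)$), convert the $U$-contribution to $\Gamma$ via~\eqref{eq:cons_error_rep}, and integrate by parts elementwise on $\Gamma$; since $e-V$ vanishes off $\bigcup_{T\in\mathcal{R}}T$ the residual--jump sums collapse to $\mathcal{R}$, producing $\Cleq\eta_\T(U,F_\Gamma,\mathcal{R})\|\nabla_\gamma(v-V)\|_{L_2(\gamma)}$ together with further geometric terms absorbed as above. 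Scott--Zhang stability bounds $\|\nabla_\gamma(v-V)\|_{L_2(\gamma)}$ by $\|\nabla_\gamma(U_*-U)\|_{L_2(\gamma)}$, and Young's inequality recovers~\eqref{localized} with the same constants $C_1,\Lambda_1$ as in Lemma~\ref{L:upper-lower}. The central difficulty is the bookkeeping for the four lifts between $\Omega$, $\gamma$, $\Gamma$ and $\Gamma_*$ and verifying that one $\widehat{V}$ simultaneously tests both discrete problems while its lifts to $\gamma$ coincide; the reduction of the support of $e-V$ to $\mathcal{R}$ crucially relies on $\mathbb{P}_n$-reproduction of Scott--Zhang, a subtlety absent from the $n=1$ analysis of~\cite{BCMN:Magenes}.
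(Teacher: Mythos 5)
Your route is essentially the paper's: test with the discrete error $E_*=U_*-U$, exploit the two Galerkin identities together with the consistency representation \eqref{eq:cons_error_rep}, the choice \eqref{def:F} (which makes the forcing terms cancel), the bound \eqref{Lp:Ak} plus quasi-monotonicity \eqref{quasi-mono-n} for the geometric terms, and a Scott--Zhang approximation $V$ of $E_*$ that coincides with $E_*$ away from the refined region so that the residual/jump sums collapse to $\cR$. The place where your write-up would fail as stated is the claim that the \emph{global} Scott--Zhang interpolant $\widehat V$ of $\widehat e$ over the coarse mesh satisfies $\widehat V=\widehat e$ on every unrefined element ``by $\mathbb{P}_n$-reproduction''. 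Scott--Zhang reproduces functions of the global coarse space $\V(\widehat{\T})$, and $\widehat e\in\V(\widehat{\T}_*)\setminus\V(\widehat{\T})$; for an unrefined element $\widehat T$ with a node $z$ on the interface with $\cR$, the averaging face/element $\sigma_z$ used to define the nodal value may lie on the refined side, where $\widehat e$ is only a fine-mesh piecewise polynomial, so in general $\widehat V(z)\neq\widehat e(z)$ and $\widehat e-\widehat V$ spills over one layer of unrefined neighbours of $\cR$. With a generic Scott--Zhang operator your argument therefore yields the estimator over a neighbourhood of $\cR$, not $\eta_\T(U,F_\Gamma,\cR)$ as in \eqref{localized}. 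To repair it you must either fix the averaging sets so that every node belonging to some unrefined element uses a face of an unrefined element (then the dual-basis average returns the point value $\widehat e(z)$ and local reproduction on all unrefined elements does hold), or do what the paper does: build the quasi-interpolant only on each connected component $\omega_j$ of the refined region, preserving the coarse piecewise-polynomial trace of $E_*$ on $\partial\omega_j$, and set $V=E_*$ elsewhere.

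Two smaller points. First, the bound of the localized residual term should be written with the Scott--Zhang \emph{approximation} estimates $h_T^{-1}\|v-V\|_{L_2(T)}+h_T^{-1/2}\|v-V\|_{L_2(\partial T)}\Cleq\|\nabla_\gamma(U_*-U)\|_{L_2(\text{patch})}$, which is what produces the $h_T$-weights in $\eta_\T(U,F_\Gamma,\cR)$; $H^1$-stability and the factor $\|\nabla_\gamma(v-V)\|_{L_2(\gamma)}$ alone do not suffice. Second, your splitting into $\int_\gamma\nabla_\gamma(U_*-U)\nabla_\gamma^T V$ plus the remainder is a mild rearrangement of the paper's use of the error representation \eqref{eq:error_form} with $(\Gamma_*,U_*)$ in place of $(\gamma,u)$ and $v=E_*$; it buys you a direct bound of $\|\nabla_\gamma(U_*-U)\|_{L_2(\gamma)}^2$ without passing through the $\Gamma_*$-energy, but otherwise the two arguments coincide.
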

\begin{proof}
We start from the error representation formula \eqref{eq:error_form} 
by replacing $\gamma$ by $\Gamma_*$, $u$ by $U_*$, and taking as a test 
function $v=E_* := U_* - U\in \rhn{\V(\T_*)}$ %H^1_\#(\gamma)$
\begin{equation*}
\int_{\Gamma_*} \nabla_{\Gamma_*} (U_* - U) \,
	\nabla_{\Gamma_*}^T E_* = I_1 + I_2 +I_3.
\end{equation*}
To estimate $I_1$, we proceed as in the flat case 
\cite{CaKrNoSi:08,NoSiVe:09,Stevenson:07}. 
We first construct an approximation $V\in \V(\T)$ of $E_*\in \V(\T_*)$.
Let $\omega$ be the union of elements of $\mathcal{R} = \T \setminus \T_*$
and let $\overline \omega$ be the corresponding union in $\overline{\T}$.
Let $\omega_j$ (resp. $\overline{\omega_j}$),  $1\le j\le J$, denote the connected components of
the interior of $\omega$ (resp. $\overline \omega$).
We stress that $\omega_j$ may intersect several patches $\Gamma^i$ and
likewise $\overline{\omega_j}$ may intersect several copies of $\Omega$.
Let $\overline{\T}_j$ be the subset of elements in $\overline{\T}$
contained in $\overline{\omega}_j$ and let
${\V}(\overline{\T}_j)$  be the restriction of
${\V}(\overline \T)$ to $\overline{\omega}_j$.
We now construct
the Scott-Zhang operator  \cite{MR1011446} on 
$\overline{\omega}_j$ and use the map 
$X_{\T} \circ X_0^{-1}$ to lift it to $\Gamma$.
We denote this lift by $\pi_j:H^1(\overline{\omega}_j)\to \V(\T_j)$,
with 
$$
\T_j:= \Big\{T=X_{\T} \circ X_0^{-1}  (\overline T)
\ : \ \overline T \in \overline{\T}_j\big\} \subset \T.
$$
Let $V\in \V(\T)$ be the following approximation of the error 
$E_*\in \V(\T_*)$:
\begin{equation*}
V:= \pi_j E_* 	\quad \text{in} ~ \omega_j,
\qquad
V:= E_* \quad \text{elsewhere}.
\end{equation*}
By construction, $V$ has conforming boundary values on
$\partial \omega_j$, $V\in \V(\T)$,
and is an $H^1$-stable approximation to $E_*$. Since $V = E_*$ in 
$\Gamma \backslash \omega$, by the same standard argument for flat
domains, we obtain
\begin{equation*}\label{eq:loc-1}
|I_1| \leq C_1 \eta_\T(U,F_\Gamma,\cR)\|\nabla_\Gamma E_*\|_{L_2(\Gamma)}.
\end{equation*}
To estimate $I_2$, we note that $\Gamma$ and $\Gamma_*$ coincide in the unrefined region
$\Gamma \backslash \omega$, so that 
%$I_2|_{\Gamma \backslash \omega} = 0$.
%Adding and substracting 
%$\sum_{i=1}^M\int_{\widetilde{\omega}^i}\nabla_\gamma U\nabla_\gamma E_* $ 
%
\begin{equation*}
I_2 = \sum_{j=1}^J \int_{\widetilde{\omega}_j} \nabla_\gamma U \bA_\Gamma \nabla_\gamma^T E_*
-  \nabla_\gamma U \bA_{\Gamma_*} \nabla_\gamma^T E_*
\end{equation*}
with $\widetilde{\omega}_j := \chi \circ X_{\T}^{-1}(\omega_j)$.
Combining the estimate \eqref{Lp:Ak} on the error matrices
$\bA_\Gamma$ and $\bA_{\Gamma_*}$ with \eqref{eq:equiv-norms} and
\eqref{quasi-mono-n}, in its elementwise form, we obtain
\begin{equation*}\label{eq:loc-2}
|I_2| \Cleq \left(\lambda_{\cT}(\gamma) + \lambda_{\cT_*}(\gamma)
		   \right) \|\nabla_\Gamma E_*\|_{L_2(\gamma)}
	\Cleq (1 + \Lambda_0) \|f\|_{L_2(\gamma)}\lambda_{\T}(\gamma).
\end{equation*}
Since $I_3=0$ in view of the choice \eqref{def:F} of
$F_{\Gamma_*}$ and $F_{\Gamma}$, collecting the preceding estimates
we finally conclude \eqref{localized}.
\end{proof}

%--------------------------------------------------------------------------------
\subsection{Properties of the PDE Estimator and Oscillation}
\label{S:pde-estimator}
%--------------------------------------------------------------------------------
%
As indicated in \eqref{upper}--\eqref{lower}, we have access to the
energy error $\|\nabla_\gamma(u-U)\|_{L_2(\gamma)}$ only through the
PDE estimator $\eta_\T(U,F_\Gamma)$, the geometric estimator
$\lambda_{\T}(\gamma)$, and the oscillation quantity
$\osc_{\cT}(U,f)$.
As is customary for flat domains, the definition \eqref{d:osc-def} of
oscillation guarantees that
$\osc_{\cT}(U,f)$ is dominated by $\eta_\T(U,F_\Gamma)$, namely
\begin{equation}\label{dominance}
\osc_{\cT}(U,f,T)^2 \le C_3 \eta_{\T}(U,F_\Gamma,T)^2
\qquad\forall\, T\in \T,
\end{equation}
where the constant \gls{const:C3}$C_3$ depends on the surface $\gamma$.
\begin{remark}[Definition of oscillation] \label{r:osc}
\rm
The alternative definition to~\eqref{d:osc-def}:
%$\osc_{\cT}(V,f,\widehat T)$  
%
\begin{align*}
\osc_{\cT} &(V,f,T)^{\rhn{2}} = h_T^2 \Big\| 
      (\text{id} - \Pi^2_{2n-2})
      \Big(fqq_\Gamma^{-1/2} - q_\Gamma^{-1/2} \widehat{\text{div}} 
       \big( q_\Gamma \widehat{\nabla} V \bgG_\Gamma^{-1} \big)\Big)
       \Big\|_{L_2(\widehat{T})}^2\\
       & +  h_T
        \Big\| (\text{id} - \Pi^2_{2n-1})\Big( r_\Gamma^{-1/2}\big( q_\Gamma^+  \widehat{\nabla} V^+ (\bgG_{\Gamma}^+)^{-1}\widehat{\bn}^+
+ q_\Gamma^-\widehat{\nabla} V^- (\bgG_{\Gamma}^-)^{-1}\widehat{\bn}^-
				\big)\Big)
       \Big\|_{L_2(\widehat{\partial T})}^2
\end{align*}
would imply \eqref{dominance} with an optimal constant $C_3=1$.
However, this would be at the expense of a more intricate proof of
Proposition~\ref{P:oscU-bound} (local decay of oscillation).
We opted to use definition \eqref{d:osc-def} to 
simplify the presentation.
\end{remark}

The main novelty in \eqref{upper}--\eqref{localized} relative to flat
domains, which is also the chief challenge of
the present analysis, is the presence of $\lambda_{\T}(\gamma)$. 
In this respect, we show now the equivalence of $\eta_\T(U,F_\Gamma)$ 
and the \gls{indi:pde_error}{\it PDE error} 
\begin{equation}\label{total-error}
\E_\T (U,f) := \Big(\|\nabla_\gamma(u-U)\|_{L_2(\gamma)}^2
+\osc_{\cT}(U,f)^2 \Big)^{\frac12}
\end{equation}
provided $\lambda_{\cT}(\gamma)$ is small relative to
$\eta_\T(U,F_\Gamma)$. We refer to \cite{CaKrNoSi:08,NoSiVe:09} for a
similar result for flat domains, and to \cite{BCMN:Magenes} for
parametric surfaces and $n=1$.

\begin{lemma}[Equivalence of error and estimator]\label{L:equiv}
Let $C_1$, $C_2$, $\Lambda_1$ be given in Lemma \ref{L:upper-lower} \rhn{and $C_3$ be as in \eqref{dominance}}. If \looseness=-1
\begin{equation}\label{bound_lambda}
  \lambda_{\cT}(\gamma)^2 \le \frac{C_2}{2\Lambda_1} \eta_\T(U,F_\Gamma)^2,
\end{equation}
then there exist explicit constants \gls{const:C45}$C_4\ge C_5>0$, depending on $C_1$, $C_2$ \rhn{and $C_3$},
such that
\begin{equation}\label{equiv}
C_5\eta_\T(U,F_\Gamma) \le \E_\T(U,f) \le C_4\eta_\T(U,F_\Gamma).
\end{equation}
\end{lemma}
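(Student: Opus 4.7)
The plan is to prove the two inequalities in \eqref{equiv} separately by combining the a posteriori upper/lower bounds of Lemma~\ref{L:upper-lower}, the dominance relation \eqref{dominance}, and the smallness hypothesis \eqref{bound_lambda} on $\lambda_{\cT}(\gamma)$. The key point is that \eqref{bound_lambda} allows us to absorb the geometric estimator into the PDE estimator on the right-hand sides, which is what makes $\eta_\T$ and $\E_\T$ equivalent.

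For the \emph{upper bound} $\E_\T(U,f) \le C_4 \eta_\T(U,F_\Gamma)$, I would start from the definition \eqref{total-error} of $\E_\T$, apply the upper bound \eqref{upper} to the energy term and the summed dominance \eqref{dominance} to the oscillation term, obtaining
\begin{equation*}
\E_\T(U,f)^2 \le C_1 \eta_\T(U,F_\Gamma)^2 + \Lambda_1 \lambda_{\cT}(\gamma)^2 + C_3 \eta_\T(U,F_\Gamma)^2.
\end{equation*}
Inserting \eqref{bound_lambda} to bound $\Lambda_1 \lambda_{\cT}(\gamma)^2 \le \tfrac{C_2}{2} \eta_\T(U,F_\Gamma)^2$ yields $\E_\T^2 \le (C_1 + C_3 + C_2/2)\,\eta_\T^2$, so one may take $C_4 = \sqrt{C_1 + C_3 + C_2/2}$.

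For the \emph{lower bound} $C_5 \eta_\T(U,F_\Gamma) \le \E_\T(U,f)$, I would rewrite the lower bound \eqref{lower} as
\begin{equation*}
C_2\, \eta_\T(U,F_\Gamma)^2 \le \E_\T(U,f)^2 + \Lambda_1 \lambda_{\cT}(\gamma)^2,
\end{equation*}
and once again invoke \eqref{bound_lambda} to absorb the geometric term on the right into the left-hand side, giving $\tfrac{C_2}{2} \eta_\T^2 \le \E_\T^2$. This yields $C_5 = \sqrt{C_2/2}$.

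There is no real obstacle here: both inequalities follow by direct algebra once \eqref{upper}, \eqref{lower}, \eqref{dominance}, and \eqref{bound_lambda} are combined. The only point that deserves care is being explicit about the constants so that the claim that $C_4, C_5$ depend only on $C_1, C_2, C_3$ (and not on $\Lambda_1$, $\gamma$, or $f$) is manifest; the factor $\Lambda_1$ is completely consumed by the hypothesis \eqref{bound_lambda}, which is precisely why the statement of Lemma~\ref{L:equiv} records that dependence structure.
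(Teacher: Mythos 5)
Your proof is correct and follows essentially the same route as the paper: absorb $\Lambda_1\lambda_{\cT}(\gamma)^2$ via \eqref{bound_lambda} into $\tfrac{C_2}{2}\eta_\T^2$ in both the upper bound \eqref{upper} (combined with the summed dominance \eqref{dominance}) and the lower bound \eqref{lower}. Your explicit constants $C_4=\sqrt{C_1+C_3+C_2/2}$ and $C_5=\sqrt{C_2/2}$ match what the paper obtains (the latter is even quoted later in \S\ref{S:rates}).
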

\vskip-0.3cm
\begin{proof}
Combining \eqref{upper} with \eqref{bound_lambda}, we infer that
\begin{equation}\label{simpler-upper}
\|\nabla_\gamma (u-U)\|_{L_2(\gamma)}^2 \le 
\Big(C_1 + \frac{C_2}{2}\Big) \eta_\T(U,F_\Gamma)^2.
\end{equation}
This, together with \eqref{dominance}, gives the upper bound in \eqref{equiv}.
We next resort to \eqref{lower} and \eqref{bound_lambda} to obtain
\begin{equation*}
C_2 \eta_\T(U,F_\Gamma)^2 \le \|\nabla_\gamma (u-U)\|_{L_2(\gamma)}^2 
+ \osc_{\cT}(U,f)^2 + \frac{C_2}{2} \eta_\T(U,F_\Gamma)^2,
\end{equation*}
which implies the lower bound in \eqref{equiv} and concludes the proof.
\end{proof}

It turns out that the usual reduction property of $\eta_\T(U,F_\Gamma)$
\cite[Corollary 3.4]{CaKrNoSi:08}, \cite{NoSiVe:09},
which is instrumental to prove a contraction property of $\AFEM$, is also
compromised by the presence of $\lambda_{\cT}(\gamma)$ as stated below. 
%The following result is proved in \cite[Lemma 4.2]{MMN:11} for any polynomial degree.

\begin{lemma}[Reduction of residual error estimator]\label{L:est-reduction}
Let $\lambda_{\T_0}(\gamma)$ satisfy \eqref{eq:init_cond}.
Given a mesh-surface pair $(\T,\Gamma)$, let $\cM\subset\T$ be the
subset of elements bisected at least $b\ge1$ times in refining $\T$ to
obtain $\T_*\ge\T$. If $\xi := 1 - 2^{-\frac{b}{d}}$, then 
there exist constants \gls{const:Lambda2}$\Lambda_2$ and \gls{const:Lambda3}$\Lambda_3$, solely depending on 
the shape regularity of $\mathbb T$, the Lipschitz constant $L$ of
$\gamma$, and $\|f\|_{L_2(\gamma)}$,
such that for any $\delta>0$
\begin{equation}\label{reduction}
\begin{aligned}
\eta_{\T_*}(U_*,F_{\Gamma^*})^2  \le{}& (1+\delta) 
\big(\eta_\T(U,F_\Gamma )^2 - \xi \eta_\T(U, \rhn{F_\Gamma},\cM)^2  \big)
\\
&+ (1+\delta^{-1}) \big(\Lambda_3 \|\nabla_\gamma (U_*-U)\|_{L_2(\gamma)}^2 
+ \Lambda_2 \lambda_{\cT}(\gamma)^2 \big).
\end{aligned}
\end{equation}
\end{lemma}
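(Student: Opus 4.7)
The plan is to adapt the standard flat-domain estimator reduction argument (cf.\ \cite{CaKrNoSi:08,NoSiVe:09}) to the parametric surface setting. The new difficulty, relative to the flat case and to the $n=1$ case of \cite{BCMN:Magenes}, is that the geometric coefficients $q_\Gamma$, $\bg_\Gamma^{-1}$, the co-normal vectors $\bn_\Gamma$ and the discrete forcing $F_\Gamma = f\,q/q_\Gamma$ all change when we pass from $(\T,\Gamma)$ to $(\T_*,\Gamma_*)$, and that the interior residual $\Delta_\Gamma U|_T$ is no longer zero. First I would apply the elementwise triangle inequality
\[
\eta_{\T_*}(U_*,F_{\Gamma_*},T_*) \le \eta_{\T_*}(U,F_{\Gamma_*},T_*) + \eta_{\T_*}(U_*-U,0,T_*),
\]
and then square using Young's inequality with parameter $\delta>0$. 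The last term involves only the polynomial $U_*-U\in\V(\T_*)$ (here viewing $U$ as an element of $\V(\T_*)$ via the nested structure $\V(\widehat{\T}^i)\subset\V(\widehat{\T}_*^i)$ on the parametric domain), and after invoking inverse estimates on $T_*$ and the norm equivalence \eqref{eq:equiv-norms}, it sums to at most $\Lambda_3\|\nabla_\gamma(U_*-U)\|_{L_2(\gamma)}^2$.

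Next I would compare $\eta_{\T_*}(U,F_{\Gamma_*},T_*)$ with the parent indicator $\eta_\T(U,F_\Gamma,T)$ on the ancestor $T\in\T$ of $T_*\in\T_*$. A second triangle inequality splits this into (a) the same indicator on $T_*$ but with the \emph{old} data $(q_\Gamma,\bg_\Gamma^{-1},\bn_\Gamma,F_\Gamma)$, and (b) a perturbation generated by replacing them with $(q_{\Gamma_*},\bg_{\Gamma_*}^{-1},\bn_{\Gamma_*},F_{\Gamma_*})$. For (a), the argument is the classical one: when $T_*$ is a child of some $T\in\cM$ that has been bisected at least $b$ times, the reduction \eqref{h-reduce} gives $h_{T_*}\le 2^{-b/d}h_T$, whereas for $T_*=T\notin\cM$ the scalings are unchanged; summation produces $\eta_\T(U,F_\Gamma)^2-\xi\,\eta_\T(U,F_\Gamma,\cM)^2$ with $\xi=1-2^{-b/d}$. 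For (b), I would bound each coefficient difference $F_{\Gamma_*}-F_\Gamma$, $q_{\Gamma_*}-q_\Gamma$, $\bg_{\Gamma_*}^{-1}-\bg_\Gamma^{-1}$, $\bn_{\Gamma_*}-\bn_\Gamma$ in $L_\infty(\widehat{T})$ by $\lambda_\T(\gamma)+\lambda_{\T_*}(\gamma)$ via Lemma \ref{L:geom_estim}, and then absorb $\lambda_{\T_*}(\gamma)$ into $\Lambda_0\lambda_\T(\gamma)$ using the quasi-monotonicity \eqref{quasi-mono-n}; the factor $\|f\|_{L_2(\gamma)}$ enters through the $F_\Gamma$-difference $f\,q\,(q_\Gamma-q_{\Gamma_*})/(q_\Gamma q_{\Gamma_*})$.

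Because the interior residual for $n>1$ involves $\widehat{\mathrm{div}}(q_\Gamma\widehat\nabla U\,\bg_\Gamma^{-1})$ and the jump term contains $q_\Gamma\widehat\nabla U\,\bg_\Gamma^{-1}\widehat\bn$, the perturbation (b) requires differentiating the geometric quantities. Here I would invoke the $L_p$ inverse estimates of Lemma \ref{L:inverse-inequalities} together with the Besov inverse estimate of Lemma \ref{L:inverse-Besov}: the extra $h_{T_*}^{-1}$ produced by one derivative on $q_{\Gamma_*}-q_\Gamma$ or $\bg_{\Gamma_*}^{-1}-\bg_\Gamma^{-1}$ is absorbed by the $h_{T_*}$ factor in the estimator, and summation over descendants of a single parent $T$ telescopes to a uniform multiple of $\lambda_\T(\gamma,T)^2$ that is independent of the refinement depth $b$. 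Collecting everything and applying Young's inequality with $\delta$ to separate the geometric perturbation from the $U_*-U$ perturbation yields \eqref{reduction}, with $\Lambda_2,\Lambda_3$ read off from the bounds above. The main obstacle I anticipate is precisely this bookkeeping in (b): one must verify that after summing over all children of each marked element and inserting the correct $h_{T_*}$ scalings, every geometric perturbation collapses to a single multiple of $\lambda_\T(\gamma)^2$ with constants uniform in both the bisection depth $b$ and the mesh—this step was trivial for $n=1$ (since then $\Delta_\Gamma U\equiv 0$ and the coefficients are piecewise constant), and is precisely why Lemmas \ref{L:inverse-inequalities}--\ref{L:inverse-Besov} were established.
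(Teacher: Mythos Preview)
Your proposal is correct and follows essentially the same route as the paper. The paper decomposes the residual $\mathcal{R}_{T_*}(U_*,F_{\Gamma_*})$ directly into the pieces $F_{\Gamma_*}-F_\Gamma$, $\Delta_{\Gamma_*}(U_*-U)$, $(\Delta_{\Gamma_*}-\Delta_\Gamma)U$, an area-element correction $(q_{\Gamma_*}^{1/2}-q_\Gamma^{1/2})\mathcal{R}_T(U,F_\Gamma)$, and the old residual, and does the analogous splitting for the jump; this is equivalent to your two-stage triangle inequality, only organized at the residual level rather than at the indicator level. Two minor remarks: (i) Lemma~\ref{L:inverse-Besov} is not needed here---the paper uses only the $L_p$ inverse estimates of Lemma~\ref{L:inverse-inequalities} (the Besov inverse estimate is reserved for the oscillation analysis in \S\ref{S:oscillation}); (ii) the dependence on $\|f\|_{L_2(\gamma)}$ enters not only through the $F_\Gamma$-difference but also through the term $\lambda_\T(\gamma,T)\|\nabla_\Gamma U\|_{L_2}$ arising from $(\Delta_{\Gamma_*}-\Delta_\Gamma)U$, which is absorbed at the end via the global stability bound $\|\nabla_\Gamma U\|_{L_2(\Gamma)}\Cleq\|f\|_{L_2(\gamma)}$.
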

\begin{proof}
We first examine the residual $\mathcal{R}_T(U,F_\Gamma)$.
If $T_*\in \T_*$ and $T\in \T$ 
satisfy $\widehat T_* \subset \widehat T$, and
$T':= X_{\cT}\circ X_{\T_*}^{-1}(T_*) \subset T$,  then
the bound on $q_{\Gamma_*}$ given in Lemma~\ref{L:geom_estim} yields
\begin{equation*}
\begin{split}
\|\mathcal{R}_{T_*} &(U_*,F_{\Gamma_*}) \|_{L_2(T_*)} =
\| q_{\Gamma_*} ^{\frac{1}{2}} \mathcal{R}_{T_*}(U_*,F_{\Gamma_*}) \|_{L_2(\widehat T_*)}  \\
& \Cleq 
\left( \|F_{\Gamma_*} - F_\Gamma \|_{L_2(\widehat T_*)} 
 + \| \Delta_{\Gamma_*}(U_* - U) \|_{L_2(\widehat T_*)} 
 + \| (\Delta_{\Gamma_*} - \Delta_{\Gamma}) U \|_{L_2(\widehat T_*)}\right) \\
 &  \quad + \| (q_{\Gamma_*}^{1/2} - q_\Gamma^{1/2}) \mathcal{R}_{T}(U,F_\Gamma) \|_{L_2(\widehat T_*)} +\| q_\Gamma^{1/2} \mathcal{R}_{T}(U,F_\Gamma) \|_{L_2(\widehat T_*)}.
 \end{split}
\end{equation*} 
Now, from \eqref{g-G} and the local form of \eqref{quasi-mono-n} we bound the first term
\begin{equation*}
\|F_{\Gamma_*} - F_\Gamma \|_{L_2(\widehat T_*)} 
\leq \| \left( q_{\Gamma_*}^{-1} - {q_\Gamma}^{-1} \right) qf \|_{L_2(\widehat T_*)}
\Cleq \lambda_{\T}(\gamma,T') \| f \|_{L_2(T')}.
\end{equation*}
Recalling the expression \eqref{lap-bel} for the Laplace-Beltrami operator  and taking
$V = U_* - U$, we can write
\begin{align*}
\Delta_{\Gamma_*} V ={}& q_{\Gamma_*}^{-1} \widehat {\text{div}}\;  (
q_{\Gamma_*}  \widehat \nabla V \bgG^{-1}_{\Gamma_*})
					  \\={}& q_{\Gamma_*}^{-1} \left( 
					  \widehat \nabla q_{\Gamma_*}\cdot \widehat \nabla V \bgG^{-1}_{\Gamma_*}
					  +
					  q_{\Gamma_*} \widehat D^2 V :\bgG^{-1}_{\Gamma_*}
					  +
					  q_{\Gamma_*} \widehat \nabla V \cdot\widehat{\text{div}}\; \bgG^{-1}_{\Gamma_*}
					  \right),
\end{align*}
  and using bounds for $\|q_{\Gamma_*}\|_{L_\infty(\widehat{T}_*)}$
  and $\|\bgG_{\Gamma_*}^{-1}\|_{L_\infty(\widehat{T}_*)}$ from
  Lemma~\ref{L:geom_estim},  the inverse inequalities
  \eqref{est-inv-Q}, \eqref{est-inv-G}  and a third one for
  $\widehat{D}^2V$, we get
\begin{equation*}
\| \Delta_{\Gamma_*}(U_* - U) \|_{L_2(\widehat T_*)} \Cleq \frac{1}{h_{T_*}} 
\|\nabla_\gamma (U_*-U)\|_{L_2(T')}.
\end{equation*}
Again by virtue of \eqref{lap-bel} we rewrite the third term above
\begin{equation*}
\begin{split}
\| (\Delta_{\Gamma_*} &- \Delta_{\Gamma}) U \|_{L_2(\widehat T_*)}
\leq 
\| (q_{\Gamma_*}^{-1}  - q_\Gamma^{-1})  \widehat {\text{div}}\;
\big( q_{\Gamma_*} \widehat \nabla U \bgG^{-1}_{\Gamma_*} \big)\|_{L_2(\widehat T_*)} \\ 
& + \| q_\Gamma^{-1}   \widehat{\text{div}} \big (
\big(q_{\Gamma_*} - q_\Gamma) \widehat \nabla U \bgG_{\Gamma_*}^{-1} \big) \|_{L_2(\widehat T_*)}\\
& +\|  q_\Gamma^{-1} \widehat{\text{div}}\; \big( q_\Gamma \widehat
\nabla U (\bgG_{\Gamma_*}^{-1}  - \bgG_\Gamma^{-1}) \big) \|_{L_2(\widehat T_*)} 
\Cleq \frac{1}{h_{T_*}} \lambda_{\cT} (\gamma,T')
\|\nabla U\|_{L_2(T')}
\end{split}
\end{equation*}
due to an inverse inequality for $\widehat{D}^2U$, \eqref{est-inv-Q}, \eqref{est-inv-G}  and Lemma
\ref{L:geom_estim}. Finally, using the same arguments for the fourth
term we obtain
\begin{equation*}
\begin{split}
\| (q_{\Gamma_*}^{1/2} - q_\Gamma^{1/2}) \mathcal{R}_{T}(U,F_{\Gamma}) \|_{L_2(\widehat T_*)}
&= \| (q_{\Gamma_*} - q_{\Gamma}) ( q_{\Gamma_*}^{1/2} + q_{\Gamma}^{1/2})^{-1} \mathcal{R}_{T}(U,F_\Gamma) \|_{L_2(\widehat T_*)}
\\ & \Cleq \lambda_{\cT}(\gamma,T')
\| \mathcal{R}_{T}(U,F_{\Gamma}) \|_{L_2(\widehat T')}
\\
&\Cleq  \lambda_{\cT} (\gamma,T') \left( \frac{1}{h_{T_*}} \|\nabla U\|_{L_2(T')}
   + \| f \|_{L_2(T')} \right).
\end{split}
\end{equation*}
As a consequence, the interior residuals on $\Gamma_*$ and $\Gamma$
are related through the estimate
\begin{equation}
\begin{aligned}\label{one}
h_{T_*} \| & \mathcal{R}_{T}(U_* ,F_{\Gamma_*})\|_{L_2(T')} \le
h_{T_*} \| \mathcal{R}_{T}(U,F_\Gamma) \|_{L_2(T')} \\
& + C \| \nabla_\gamma (U_* - U) \|_{L_2(T')}   +
C \lambda_{\cT}(\gamma,T) 
\left( \|\nabla U\|_{L_2(T')}
+ h_{T_*} \| f \|_{L_2(T')}
\right),
\end{aligned}
\end{equation}
for some constant $C$ only depending on the shape regularity of
$\mathbb T$ and the Lipschitz constant $L$ of $\gamma$.

We now examine the jump
residual $\mathcal{J}_{\partial T}(U)$.
 Let $S_*\in \mathcal{S}_{\Gamma*}$ and  
$S':= X_{\cT}\circ X_{\T_*}^{-1}(S_*) \subset \Gamma$.
We denote by $T_*^\pm$ the two elements of $\T_*$ sharing $S_*$ 
\manel{(resp. $[T^\pm]' : =X_{\cT}\circ X_{\T_*}^{-1}(T_*^\pm) $) }
and
recall that the corresponding outward pointing co-normals
$\bn_{S_*}^{\pm}$ are not necessarily co-linear;
moreover, $T_*^\pm$ may belong to different surface patches,
i.e. $T_*^+ \in \T_*^i$ and $T_*^- \in \T_*^j$ for some $1\leq i,j \leq M$.
Still, observe that the  jump $\mathcal{J}_{S_*}(U_*)$ can be rewritten as follows
\begin{equation*}
\begin{split}
\mathcal{J}_{S_*}(U_*)  = \mathcal{J}_{S}(U) |_{S_*}  & +
\left(
\nabla_{\Gamma_*} U_*^+|_{S_*} \bn_{S_*}^+ 
		- \nabla_{\Gamma} U ^+|_{S_*}  \bn_S^+|_{S_*}
		\right)
\\
&+ \left(
\nabla_{\Gamma_*} U_*^-|_{S_*} \bn_{S_*}^-  
		- \nabla_{\Gamma} U ^-|_{S_*} \bn_S^-|_{S_*}
		\right),
\end{split}
\end{equation*}
regardless of $\Gamma^i$ and $\Gamma^j$. Therefore,
the last two terms in the right hand side can now be estimated
using the geometric error estimates \eqref{g-G}.
Note that on $S_*$
\begin{equation*}
\begin{split}
\nabla_{\Gamma_*} U_*^\pm \, \bn_{S_*}^\pm
		&- \nabla_{\Gamma} U^\pm \, \bn_S^\pm
		= \; \nabla_{\Gamma_*} (U_*^\pm-U^\pm) \, \bn^\pm_{S_*}			
               \\&+ \left(\nabla_{\Gamma_*}-\nabla_{\Gamma}\right) U ^\pm 
		\, \bn_{S_*}^\pm 
		+
		\nabla_{\Gamma} U^\pm \left( 
		\bn_{S_*}^\pm - \bn_{S}^\pm
		\right) = I + II + III.
\end{split}
\end{equation*}
We bound each term using their parametric representation on
$\widehat S_*:= X_{\T_*}^{-1}(S_*)$.
For the first term, we use the expression \eqref{e:normal_grad}
of the tangential derivative in the co-normal direction,
the spectral bounds on $\bg_{\Gamma_*}$ and $q_{\Gamma_*}$ given in
Lemma \ref{L:geom_estim}, and a scaled trace estimate to deduce
$$
\| \nabla_{\Gamma_*} (U_*^\pm-U^\pm) \bn^\pm_{S_*}\|_{L_2(S_*)}\Cleq |\widehat T_*^\pm|^{-\frac 1 {2d}} \| \widehat \nabla  (\widehat U_*^\pm-\widehat U^\pm) \|_{L_2(\widehat T_*)}.
$$
Recalling that \manel{$h_{T_*^\pm}^d =  |\widehat
T_*^\pm|$}, we see that
$$
\| I \|_{L_2(S_*)}\Cleq h_{T_*^\pm}^{-1/2} \|  \nabla_{\Gamma_*} (U_*^\pm-U^\pm)\|_{L_2(T_*^\pm)}.
$$
Similarly, in view of \eqref{grad-Gamma} and \eqref{e:normal_grad},
we obtain
\begin{equation*}
\begin{split}
\| II \|_{L_2(S_*)}
  +\| III \|_{L_2(S_*)}
  & \Cleq h_{T_*^\pm}^{-1/2} \|\nabla_\Gamma U\|_{L_2(\widehat T_*^\pm)}
  \Big(\| \bD_{\Gamma_*}-\bD_{\Gamma})\|_{L_\infty(\widehat T_*^\pm)}
\\ &+
                        \|q_{\Gamma_*}-q_\Gamma\|_{L_\infty(\widehat{T}_*^\pm)}
                        + \|r_{\Gamma_*}-r_\Gamma\|_{L_\infty(\partial\widehat{T}_*^\pm)}
                        \Big),
\end{split}
\end{equation*}
where $r_{\Gamma_*}$ and $r_\Gamma$ denote the area elements associated with
$S_*$ and $S' := X_{\cT} \circ X_{\T_*}^{-1}(S_*)$ respectively.
Utilizing the geometry error estimate~\eqref{g-G}, we further get
\[
\| II \|_{L_2(S_*)} +\| III \|_{L_2(S_*)}
\Cleq  h_{T_*^+}^{-1/2} \manel{\lambda_{\cT}(\gamma, [T^\pm]') }\|\nabla_\Gamma U\|_{L_2(\widehat T_*^\pm)}.	
\]
Hence, combining the previous two estimates,  we get
\begin{multline*}
\| \nabla_{\Gamma_*} U_*^\pm \, \bn_{S_*}^\pm
		- \nabla_{\Gamma} U ^\pm \, \bn_S^\pm
\|_{L_2(S_*)} \\
\Cleq h^{-1/2}_{T_*^\pm} \left (\| \nabla_\gamma (U_*^\pm - U_*^\pm) \|_{L_2(T_*^\pm)} 
+ \manel{\lambda_{\cT}(\gamma, [T^\pm]') }\|\nabla_\Gamma U\|_{L_2(\widehat T_*^\pm)} \right),
\end{multline*}
whence
\begin{equation*}
\begin{split}
\|\mathcal{J}_{S}(U) \| _{L_2(S_*)}
= \| r_{\Gamma_*}^{1/2} \mathcal{J}_{S} (U) \|_{L_2(\widehat S_*)} 
 \leq  \| (r^{1/2}_{\Gamma_*} - r^{1/2}_\Gamma)  \mathcal{J}_{S}(U) \|_{L_2(\widehat S_*)}
+ \| \mathcal{J}_{S}(U) \|_{L_2(S')}.
\end{split}
\end{equation*}
Invoking again~\eqref{g-G} %Lemma~\ref{L:geom_estim},  
we realize that
$
\| r_{\Gamma_*}- r_\Gamma\|_{L_\infty(\widehat S)} \leq
 \manel{\lambda_{\cT}(\gamma, [T^\pm]')}.
$
Combining this with a scaled trace theorem, we deduce that
\begin{equation*}
\| (r^{1/2}_{\Gamma_*} - r_{\Gamma}^{1/2})  \mathcal{J}_{S}(U) \|_{L_2(\widehat S_*)}
\Cleq {h^{-1/2}_{T_*^\pm}} \manel{\lambda_{\cT}(\gamma, [T^\pm]')} \|\nabla_\Gamma U\|_{L_2(T_*^\pm)}
\end{equation*}
whence
\begin{equation*}
\|\mathcal{J}_{S}(U) \| _{L_2(S_*)}
 \leq   \| \mathcal{J}_{S}(U) \|_{L_2(S')} +
 C {h^{-1/2}_{T_*^\pm}} \manel{\lambda_{\cT}(\gamma, [T^\pm]') } \|\nabla_\Gamma U\|_{L_2(T_*^\pm)}.
\end{equation*}
The above three estimates guarantee the existence of a constant $C$
only depending on the shape regularity of $\mathbb T$ and the Lipschitz constant $L$ of $\gamma$ such that
\begin{equation}\label{two}
\begin{split}
h_{T_*^\pm}^{1/2}
&
\|\mathcal{J}_{S_*}(U_*) \| _{L_2(S_*)}
 \leq h^{1/2}_{T_*^\pm} \| \mathcal{J}_{S'}(U) \|_{L_2(S')}
\\
&+ C \left( \| \nabla_\gamma (U_* - U) \|_{L_2(T^\pm)}
+ \manel{\lambda_{\cT}(\gamma, [T^\pm]')} \|\nabla_\Gamma U\|_{L_2(T_*^\pm)}
 	\right).	
\end{split}
\end{equation}
To conclude the proof we proceed as for graphs \cite[Lemma 4.2]{MMN:11},
basically squaring \eqref{one} and \eqref{two} via
Young's inequality, adding over
all elements $T_*\in\T_*$ and sides $S_*\in\mathcal{S}_*$, and using
the strict reduction
\eqref{h-reduce} of meshsize $h_T$ for all refined elements.
In addition, we employ the global
bound $\| \nabla_\Gamma U \|_{L_2(\Gamma)} \Cleq \| f \|_{L_2(\gamma)}$.
\end{proof}

Another difference with the theory of adaptivity for flat domains is
the behavior of data oscillation under refinement. The usual situation
is that $\osc_{\cT}(U,f)$ does not increase upon refinement from
$\T$ to $\T_*$ \cite{MNS:00}. This is no longer true because
$\osc_{\cT}(U,f)$ and $\osc_{\cT_*}(U_*,f)$ correspond to different
domains $\Gamma$ and $\Gamma_*$. We state a quasi-monotonicity
property in Lemma \ref{L:osc-reduction} but omit its proof because
it is similar and somewhat simpler than that of Lemma~\ref{L:est-reduction}.

\begin{lemma}[Quasi-monotonicity of data oscillation]\label{L:osc-reduction}
Let $\lambda_{\T_0}(\gamma)$ satisfy \eqref{eq:init_cond}. 
Let $(\T,\Gamma)$, $(\T_*,\Gamma_*)$ be mesh-surface pairs
with $\T \leq \T_*$. 
Then, there exist constant \gls{const:C6}$C_6$, \gls{const:Lambda2}$\Lambda_2$ and \gls{const:Lambda3}$\Lambda_3$  
depending only on \manel{$\T_0$}, the Lipschitz constant $L$ of $\gamma$, and 
$\|f\|_{L_2(\gamma)}$,  such that
\begin{equation}\label{osc-mono}
\osc_{\T_*}(V_*,f)^2 \le C_6 \osc_{\cT}(V,f)^2 
			       +\Lambda_3 \|\nabla_\gamma(V_* - V)\|_{L_2(\gamma)}^2  
			       +\Lambda_2 \lambda_{\cT}(\gamma)^2.
\end{equation}
%Moreover,
%\begin{equation}\label{osc-pert}
%\osc_{\Gamma_*}(V_*,\cT_* \cap \cT)^2 \le 2 \osc_{\Gamma}(V,\cT)^2 
%			       +2 \Lambda_3 \|\nabla_\gamma(V_* - V)\|_{L_2(\gamma)}^2  
%			       + 2 \Lambda_2\lambda_\Gamma(\T)^2
%\end{equation}
%
\end{lemma}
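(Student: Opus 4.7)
The plan is to establish \eqref{osc-mono} by arguing element-by-element on $\T_*$ and summing. Fix $T_* \in \T_*$ and let $T \in \T$ be its ancestor, so that $\widehat{T}_* \subset \widehat T$ and $T' := X_{\cT} \circ X_{\T_*}^{-1}(T_*) \subset T$. For the interior contribution to $\osc_{\T_*}(V_*,f,T_*)^2$, introduce and subtract to write
\begin{equation*}
fq + \widehat{\mathrm{div}}\bigl(q_{\Gamma_*}\widehat\nabla V_* \bgG_{\Gamma_*}^{-1}\bigr) = \Bigl[fq + \widehat{\mathrm{div}}\bigl(q_\Gamma\widehat\nabla V \bgG_\Gamma^{-1}\bigr)\Bigr] + \widehat{\mathrm{div}}(\Theta_*),
\end{equation*}
with $\Theta_* := q_{\Gamma_*}\widehat\nabla V_* \bgG_{\Gamma_*}^{-1} - q_\Gamma\widehat\nabla V \bgG_\Gamma^{-1}$. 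Apply $(\mathrm{id}-\Pi^2_{2n-2})$ and the triangle inequality.

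For the frozen bracket, exploit the nesting property of $L_2$-best approximation: since $\P_{2n-2}(\widehat T) \subset \P_{2n-2}(\widehat T_*)$ when restricted, we have $\|(I-\Pi^2_{2n-2,T_*})g\|_{L_2(\widehat T_*)} \le \|(I-\Pi^2_{2n-2,T})g\|_{L_2(\widehat T_*)}$. Combining this with $h_{T_*}\le h_T$ (see~\eqref{h-reduce}) and summing over all children $T_*$ of $T$, the total contribution is bounded by $h_T^2 \|(I-\Pi^2_{2n-2,T})(fq + \widehat{\mathrm{div}}(q_\Gamma\widehat\nabla V \bgG_\Gamma^{-1}))\|_{L_2(\widehat T)}^2$, i.e.\ by the interior part of $\osc_{\cT}(V,f,T)^2$ up to an absolute constant, yielding the $C_6$ term. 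For the error $\widehat{\mathrm{div}}(\Theta_*)$, decompose
\begin{equation*}
\Theta_* = (q_{\Gamma_*}-q_\Gamma)\widehat\nabla V_* \bgG_{\Gamma_*}^{-1} + q_\Gamma\widehat\nabla(V_*-V)\bgG_{\Gamma_*}^{-1} + q_\Gamma\widehat\nabla V\,(\bgG_{\Gamma_*}^{-1}-\bgG_\Gamma^{-1}),
\end{equation*}
and bound $\|\widehat{\mathrm{div}}(\Theta_*)\|_{L_2(\widehat T_*)}$ via the inverse estimates \eqref{est-inv-Q}--\eqref{est-inv-G}, the $L_\infty$ bounds \eqref{g-G} of Lemma~\ref{L:geom_estim}, the elementwise quasi-monotonicity \eqref{quasi-mono-n}, and equivalence of norms \eqref{eq:equiv-norms}, by $C\, h_{T_*}^{-1}\bigl(\lambda_{\cT}(\gamma,T)\|\nabla_\gamma V\|_{L_2(T')}+ \|\nabla_\gamma(V_*-V)\|_{L_2(T')}\bigr)$. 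Multiplying by $h_{T_*}^2$ kills the negative power of $h_{T_*}$; summing and using the Galerkin a priori bound $\|\nabla_\Gamma V\|_{L_2(\Gamma)} \Cleq \|f\|_{L_2(\gamma)}$ produces the $\Lambda_2\lambda_{\cT}(\gamma)^2$ and $\Lambda_3\|\nabla_\gamma(V_*-V)\|_{L_2(\gamma)}^2$ contributions.

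The boundary contribution is handled analogously, but sides $S_* \in \mathcal{S}_{\T_*}$ split into two types: sides contained in some $S\in\mathcal{S}_{\T}$, for which the nesting of $\Pi^2_{2n-1}$ and the strict reduction $h_{T_*}\le h_T$ again give the quasi-monotone contribution to $C_6\osc_\T(V,f)^2$; and sides lying in the interior of some $T\in\T$, for which the co-normal trace of $\widehat\nabla V$ is continuous across $S_*$ (since $V$ is smooth on $T$), so the bracketed frozen term vanishes and the entire contribution is absorbed into the geometric-error and $V_*-V$ terms via Lemma~\ref{L:geom_estim}, the inverse estimates of Lemma~\ref{L:inverse-inequalities}, and scaled trace inequalities exactly as in \eqref{two} of Lemma~\ref{L:est-reduction}.

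The main obstacle is the careful bookkeeping of these boundary terms, especially coordinating which geometric quantities ($q_\Gamma^\pm$, $\bgG_\Gamma^\pm$, $\widehat\bn^\pm$) live on which side of the interface and ensuring that $L_\infty$ bounds on differences of these quantities are taken on the element where Lemma~\ref{L:geom_estim} applies, together with keeping track of macro-patch boundaries where $T_*^+$ and $T_*^-$ may belong to different $\Gamma^i$, $\Gamma^j$. Once this is organized, summation over all $T_* \in \T_*$ and $S_* \in \mathcal{S}_{\T_*}$, an application of Young's inequality to separate the $\lambda_\T(\gamma)^2$ and $\|\nabla_\gamma(V_*-V)\|^2$ contributions, and the use of $\|\nabla_\Gamma V\|_{L_2(\Gamma)}\Cleq \|f\|_{L_2(\gamma)}$ yield~\eqref{osc-mono}; the absence of a dyadic reduction factor (no $\xi$ analog) is precisely what makes this proof somewhat simpler than Lemma~\ref{L:est-reduction}.
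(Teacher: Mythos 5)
Your argument is correct and is essentially the proof the paper has in mind: the paper omits it, saying only that it is ``similar and somewhat simpler'' than that of Lemma~\ref{L:est-reduction}, and your frozen-term-plus-perturbation decomposition --- nesting of the best $L_2$-projections under refinement together with $h_{T_*}\le h_T$ for the $C_6$ part, the inverse estimates \eqref{est-inv-Q}--\eqref{est-inv-G} and Lemma~\ref{L:geom_estim} for the perturbation, and the vanishing of the frozen jump on new sides interior to an element of $\T$ --- is exactly that argument. The one point worth flagging is that absorbing $\lambda_{\cT}(\gamma)\,\|\nabla_\gamma V\|_{L_2(\gamma)}$ into $\Lambda_2\lambda_{\cT}(\gamma)^2$ uses $\|\nabla_\Gamma V\|_{L_2(\Gamma)}\Cleq\|f\|_{L_2(\gamma)}$, i.e.\ an energy bound on $V$, which is precisely why the paper's constants depend on $\|f\|_{L_2(\gamma)}$, so your use of it mirrors the paper's own convention.
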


\begin{remark}[Local perturbation of data oscillation] 
\rm
The previous result is also valid locally, that is for any subset $ \tau \subset \T_*$. In fact, if $\tau = \T \cap \T_*$
the same proof gives \eqref{osc-mono} with $C_6 = 2$,
\begin{equation}\label{osc-pert}
\rhn{\osc_{\T_*}(V,f, \tau)^2 \le 2 \osc_{\cT}(W,f,	\tau)^2 
			       + \Lambda_3 \|\nabla_\gamma(V - W)\|_{L_2(\gamma)}^2
			       +  \Lambda_2\lambda_{\cT}(\gamma)^2,}
\end{equation}
for any  piecewise polynomials $V,W$  subordinate to $\tau$. 
Although the elements in $\tau$ describe (part of) the
common surface $\Gamma \cap \Gamma_*$, whence there is no geometric
discrepancy, the presence of the geometric
estimator $\lambda_{\cT}(\gamma)$ in \eqref{osc-pert} 
is due to the boundary of this common region. Note that  the 
contribution to the oscillation associated to a side on the boundary of $\tau$
involves  the terms $q_\Gamma^\pm$ according to \eqref{d:osc-def}.
\end{remark}

%%%%%%%%%%%%%%%%%%%%%%%%%%%%%%%%%%%%%%%%%%%%%%%%%%%%%%%%%%%%%%%%%%%%%%%%%%%%%%%%%
\section{AFEM: Design and Properties}\label{S:AFEM}
%%%%%%%%%%%%%%%%%%%%%%%%%%%%%%%%%%%%%%%%%%%%%%%%%%%%%%%%%%%%%%%%%%%%%%%%%%%%%%%%%
%\modif{AB: I AM HERE}

Since $\lambda_{\cT}(\gamma)$ and $\eta_\T(U,F_\Gamma)$ account for quite
different effects, following \cite{Bonito-DeVore-Nochetto:13}, the algorithm AFEM is designed to handle
them separately via the modules \ADAPTSURF and \ADAPTPDE.

\medskip
\gls{algo:AFEM}{\bf AFEM:}
\rhn{Given $\T_0$ and parameters $\varepsilon_0>0$, $0<\rho<1$, and
$\omega>0$, set $k=0$.}
\begin{algotab}
  \> \>1. $\T_k^+ = \ADAPTSURF (\T_k,\omega \eps_k)$\\
  \> \>2. $\rhn{[U_{k+1},\T_{k+1}]} = \ADAPTPDE (\T_k^+,\eps_k)$ \\
  \> \>3. $\eps_{k+1} = \rho \eps_k$; $k = k+1$ \\  
  \> \>4. go to 1.
\end{algotab}
\noindent
We notice the presence of the factor \gls{const:w}$\omega$ , which is employed to
make the geometric error small relative to the current tolerance $\eps_k$, thereby controlling the interactions between the geometry and the PDE.
This turns out to be essential for both contraction and optimality of
AFEM, even for polynomial degree $n = 1$ as discussed in
\cite{BCMN:Magenes}.

%--------------------------------------------------------------------------------
\subsection{Module  \ADAPTSURF}\label{S:adapt-surface}
%--------------------------------------------------------------------------------
%
Given a tolerance $\eps>0$ and an admissible subdivision $\cT$,
$\rhn{\T_*}=\ADAPTSURF(\cT, \eps)$ improves the surface resolution
until the new subdivision $\rhn{\cT_*} \ge \T$ satisfies
\begin{equation}\label{e:target_surf}
\lambda_{\T_*}(\gamma)\leq \eps,
\end{equation}
where $\lambda_{\cT}(\gamma)$ is the \emph{geometric estimator} introduced in \eqref{p:geom_osc}. 
This module is based on a {\it greedy} algorithm and acts on a generic
mesh $\T=\cup_{i=1}^M\T^i \in\grids$:
\smallskip
\begin{algotab}\gls{algo:SURFACE}
\> $\rhn{\cT_*} = \ADAPTSURF (\cT,\eps)$ \\
\> \> 1. if  $\cM :=\{T\in \cT \, : \,\lambda_{\cT}(\gamma,T)>\eps \} =\emptyset$ \\
\>  \>  \> \> return($\cT$) and exit \\
\> \> 2. $\cT = \REFINE(\cT,\cM)$\\
\> \> 3. go to 1.
\end{algotab}
%
%\rhn{
%\smallskip
%\begin{algotab}
%  \> $\cT^+ = \ADAPTSURF (\cT,\tau)$ \\
%\>  \> while $\cM :=\{T\in \cT \, : \,\lambda_{\cT}(\widehat T)>\eps \}\ne\emptyset$\\
%\>  \> \> $\cT = \REFINE(\cT,\cM)$\\
%\>  \> end while\\
%\> return $\c$
%\end{algotab}
%}
%

\smallskip\noindent
where $\REFINE(\cT,\cM)$ refines all elements in the marked set
$\cM$ and keeps conformity; more details are given in \S \ref{S:adapt-pde}.
To derive convergence rates for AFEM, we require that \ADAPTSURF is 
{\it $t$-optimal}, i.e. there exists a constant
\rhn{$C(\gamma)$} such that the set $\M$ of all the elements marked for refinement 
in a call to $\ADAPTSURF(\T,\manel{\eps})$ satisfies
\begin{equation}\label{complex-adaptsurf}
\#\M \le C(\gamma) \; \eps^{-1/t}.
\end{equation}
In \S~\ref{S:gamma} we show that
this assumption is satisfied by a greedy algorithm provided that 
$\chi^i \in B_q^{1+td}(L_q(\Omega))$  with  $tq>1$, $0<q\leq
\infty$ and $td \leq n$ for all $1\le i\le M$.

%--------------------------------------------------------------------------------
\subsection{Module \ADAPTPDE}\label{S:adapt-pde}
%--------------------------------------------------------------------------------
%
Given a tolerance $\varepsilon>0$ and an admissible subdivision 
\rhn{$\cT \in\grids$},
\rhn{$[U_*,\cT_*]=\ADAPTPDE(\cT, \varepsilon)$} outputs a refinement 
\rhn{$\cT_*\geq \cT$} 
and the associated finite element solution \rhn{$U_*\in \mathbb V(\T_*)$}
such that
\begin{equation}\label{afem-enter}
\rhn{
\eta_{\T_*}(U_*,F_{\Gamma_*}) \leq \varepsilon.
}
\end{equation}
This module is based on the standard adaptive sequence:
\smallskip
\begin{algotab}\gls{algo:PDE}
\> $\rhn{[U_*,\cT_*]} = \ADAPTPDE (\cT,\varepsilon)$ \\
\> \> 1. $U = \SOLVE(\cT)$ \\
\> \> 2. $\{ \eta_\T(U,F_\Gamma,T) \}_{T \in \T} = \ESTIMATE(\cT, U)$ \\
\> \> 3. if $\eta_\T(U, F_\Gamma) < \varepsilon$\\
\> \> \> \> return($\cT$, $U$) and exit \\
\> \> 4. $\cM = \MARK(\T,\{ \eta_\T(U,F_\Gamma,T) \}_{T \in \T})$\\
\> \> 5. $\cT = \REFINE(\cT,\cM)$\\
\> \> 6. go to 1
\end{algotab}

\smallskip\noindent
We describe below the modules $\SOLVE, \ESTIMATE, \MARK$ and $\REFINE$ separately.

\medskip\noindent
%--------------------------------------------------------------------------------
{\bf Procedure \SOLVE.}
%--------------------------------------------------------------------------------
%
This procedure solves the SPD linear system resulting for 
\eqref{FEM:weakform} where we recall that $\Gamma = \Gamma_\T$. 
For simplicity we assume that \eqref{FEM:weakform} is solved exactly
with linear complexity.
We refer to \cite{KY:08} for a hierachical basis multigrid preconditioner 
and to \cite{BP:11} for standard variational and non-variational multigrid algorithms.

\medskip\noindent
%--------------------------------------------------------------------------------
{\bf Procedure \ESTIMATE.}
%--------------------------------------------------------------------------------
Given the Galerkin solution  $U \in \V(\T)$ of \eqref{FEM:weakform} 
the procedure 
$\ESTIMATE$ computes the PDE error indicators $\{\eta_\T(U,F_\Gamma,T)\}_{T\in\T}$. We emphasize that this procedure does not compute the oscillation terms, which are only needed to carry out the analysis.

Lemma \ref{L:equiv} (equivalence of error and estimator) is critical to deduce 
that the \ADAPTPDE module, which reduces the error indicators $\eta_\T(U,F_\Gamma)$, is successful in reducing the PDE error
$\E_\T (U,f)$ of \eqref{total-error}
provided the parameter $\omega$ satisfies
\begin{equation}\label{bound_omega}
\gls{const:w1}  \omega \leq \omega_1 \definedas \sqrt{\frac{C_2}{2 \Lambda_0^2 \Lambda_1}}.
\end{equation}
In fact, given a tolerance $\varepsilon>0$ to be reached by $\ADAPTPDE$ starting from the input subdivision \rhn{$\T$} satisfying \rhn{$\lambda_{\cT}(\gamma)\leq \omega \varepsilon$},
we  observe that \eqref{quasi-mono-n} guarantees that \rhn{$\T$} as well
as all subdivisions \rhn{$\T_*\ge \T$} constructed within the inner iterates of $\ADAPTPDE$ satisfy
\looseness=-1
\[
\lambda_{\T_*}(\gamma)^2 \le \Lambda_0^2 \lambda_{\T}(\gamma)^2 \le 
\frac{C_2}{2\Lambda_1} \eps^{2}.
\]
Within the while loop of $\ADAPTPDE$ we have $\eta_\T(U,F_\Gamma)>\eps$, so we deduce the validity of
\eqref{bound_lambda} whence that of \eqref{equiv} within that loop.

\medskip\noindent
%--------------------------------------------------------------------------------
{\bf Procedure \MARK.}
%--------------------------------------------------------------------------------
We rely on an optimal {\it D\"orfler's} marking strategy for the selection of elements.
Given the set 
of indicators $\{\eta_\T(U,F_\Gamma,T)\}_{T\in \T}$ and a marking parameter \gls{const:theta}$\theta \in
(0,1]$, $\MARK$ outputs a subset of marked elements $\cM \subset \T$
such that%
\begin{equation}\label{doerfler}
\eta_\T(U, F_\Gamma, \cM) \ge \theta \eta_\T(U, F_\Gamma).
\end{equation}
In contrast to \cite{MMN:11}, $\MARK$ only employs the error 
indicators and does not use either the oscillation or surface indicators.
We will see that quasi-optimal cardinality requires that $\cM$ is
\emph{minimal} and quasi-optimal workload that the sorting scales linearly.

\medskip\noindent
%--------------------------------------------------------------------------------
{\bf Procedure \REFINE.}
%--------------------------------------------------------------------------------
%
Given a subdivision $\cT$ and a set $\cM \subset \T$ of marked elements, 
the call $\REFINE(\cT,\cM)$ bisects 
all elements in $\cM$ at least 
$b\geq 1$ times and \rhn{performs} additional refinements necessary to maintain conformity.
The resulting subdivision is denoted by $\T_*$. 
Recall that the bisection procedure is first executed on faces of the
corresponding flat subdivision $\overline{\T}$ and its effect is
transferred to the actual subdivision via interpolation maps
$X_{\T^i}^i\circ (X_0^i)^{-1}$ for $i=1,...,M$.

Since the refinement procedure is performed on $\overline{\T}$ or similarly on $\widehat{\T}$, the complexity results of the overall refinement algorithm proved by Binev, Dahmen, and DeVore for $d=2$ \cite{BiDaDeV:04} and Stevenson \cite{Stevenson:08} 
for $d>2$ hold in our setting. 
\rhn{In order to state them precisely, following~\cite{BiDaDeV:04,Stevenson:08,NoSiVe:09}, we need the concept of \gls{def:label}admissible labeling \ab{\cite{BiDaDeV:04,Stevenson:08}}.
\ab{
\begin{remark}[Admissible labeling]\label{r:labeling}
\rm 
For $d=1$, any subdivision is said to have an admissible labeling.
For $d=2$, we say that $\T_0$ has an admissible labeling if each edge of $\T_0$ is labeled either 0 or 1 such that each element of $\T_0$ has exactly two edges with label 1 and one with label 0~\cite{BiDaDeV:04}; refining an element entails connecting the middle of the edge labeled 0 with the opposite angle.
For $d>2$, the corresponding condition (b) of \S4 in~\cite{Stevenson:08} is much more technical and is omitted here. In short, an admissible initial labeling guarantees that the bisection procedure terminates in finite steps with a conforming mesh, and that any uniform refinement of $\T_0$ is conforming.
\end{remark}
}
}
\smallskip
\begin{lemma}[Complexity of \REFINE]\label{L:REFINE:complexity}
Assume that the initial triangulation $\T_0$ has an admissible labeling.
Let $\{\T_k\}_{k\geq 0}$ be a
sequence of triangulations produced by successive calls to $\Tk[k+1] = \REFINE(\Tk,\cM_k)$, where $\cM_k$ is any subset of $\T_k$, $k\ge 0$.
Then, there exists a constant \gls{const:C7}$C_7$ 
solely depending on $\T_0$, its labeling,
and the refinement depth~$b$ such that
\begin{equation}\label{REFINE:complexity}
\# \T_k - \# \T_0 \leq 
	C_7 \sum_{j=0}^{k-1} \#\cM_j, \qquad \forall k\geq 1.
\end{equation} 
\end{lemma}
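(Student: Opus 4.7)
The plan is to reduce this complexity result to the classical bisection complexity theorems of Binev--Dahmen--DeVore (for $d=2$) and Stevenson (for $d>2$), which are already cited in the paragraph preceding the statement. The reduction proceeds in two stages: first, moving from the curved refinement $\T_k$ to the flat reference refinement $\overline{\T}_k$; second, converting $b$-fold bisection into single bisection.

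First, I would observe that, by construction, the refinement procedure acts on $\overline{\T}$ (or equivalently on $\widehat{\T}$), and the actual curved subdivision $\T$ is obtained by applying the maps $X_{\T^i}^i\circ(X_0^i)^{-1}$. This correspondence is one-to-one on elements, so $\#\T_k=\#\overline{\T}_k$ and $\#\cM_k=\#\overline{\cM}_k$, where $\overline{\cM}_k\subset\overline{\T}_k$ is the preimage of $\cM_k$. Hence it suffices to prove the estimate for the sequence $\{\overline{\T}_k\}_{k\ge0}$ of flat conforming bisection refinements.

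Next, I would address the refinement depth $b\ge1$. Performing $b$ bisections on a marked element $\overline{T}$ produces at most $2^b$ descendants, which can be interpreted as a sequence of $2^b-1$ ordinary single bisections, all triggered by the original marking of $\overline{T}$. Thus, calling $\REFINE$ with marked set $\overline{\cM}_k$ and depth $b$ is dominated (up to the factor $2^b-1$) by a sequence of standard single-bisection $\REFINE$ calls whose cumulative marked set has cardinality at most $(2^b-1)\#\overline{\cM}_k$. The resulting sequence still satisfies the conformity-closure step of \cite{BiDaDeV:04,Stevenson:08}.

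Finally, I would invoke the theorem of \cite{BiDaDeV:04} for $d=2$ and of \cite{Stevenson:08} for $d>2$ (condition (b) in Stevenson's notation being exactly the admissible labeling hypothesis in Remark~\ref{r:labeling}) which yields
\[
\#\overline{\T}_k-\#\overline{\T}_0 \;\le\; \widetilde{C}\sum_{j=0}^{k-1}\#\overline{\cM}_j,
\]
where $\widetilde{C}$ depends only on $\overline{\T}_0$ and its labeling. Combining with the two reductions above gives \eqref{REFINE:complexity} with $C_7=(2^b-1)\widetilde{C}$.

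The main obstacle, which is not really an obstacle but a point of care, is justifying the reduction to the flat setting: one must verify that the shape-regularity of $\mathbb{T}$ established in Lemma~\ref{l:shape_reg} is not needed for the counting argument itself (only combinatorial information of $\overline{\mathbb{T}}$ matters), and that propagation of refinement across macro-element interfaces---which is governed by $\overline{\T}_0$ and its admissible labeling---is already handled by the classical analysis because $\overline{\T}_0$ is a single conforming triangulation of $\overline{\Gamma}_0$. No separate per-patch argument is therefore required.
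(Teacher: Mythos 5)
Your proposal is correct and follows essentially the same route as the paper: the paper proves nothing beyond noting that refinement is executed on the flat triangulation $\overline{\T}$ (equivalently $\widehat{\T}$), so the classical complexity theorems of Binev--Dahmen--DeVore ($d=2$) and Stevenson ($d>2$) apply verbatim, with the constant $C_7$ depending on $\T_0$, its labeling, and the depth $b$. Your explicit reductions (element-wise bijection with the curved mesh, and absorbing the depth $b$ into a factor such as $2^b-1$) are exactly the routine bookkeeping the paper leaves implicit.
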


\vskip-0.2cm
It is worth noticing that the user parameter $b\ge 1$ can be chosen equal to one, which only implies a minimal refinement, and does not force an interior node property \cite{MNS:00,MNS:02}
 or an extra refinement to improve the surface approximation \cite{MMN:11}.

\begin{remark}[Alternative subdivision strategies] \label{r:Alt_sub_stategies}
\rm For simplicity we only discuss the refinement strategy based on 
simplex bisection. However, all the results obtained here can be extended
to quadrilaterals with fixed number of hanging nodes or red refinements. 
We refer to \cite[Section 6]{BN:10} for details.
\end{remark}

%%%%%%%%%%%%%%%%%%%%%%%%%%%%%%%%%%%%%%%%%%%%%%%%%%%%%%%%%%%%%%%%%%%%%%%%%%%%%%%%%
\section{Conditional Contraction Property}\label{S:contraction}
%%%%%%%%%%%%%%%%%%%%%%%%%%%%%%%%%%%%%%%%%%%%%%%%%%%%%%%%%%%%%%%%%%%%%%%%%%%%%%%%%

The procedure $\ADAPTPDE$ is known to yield a contraction property
in the {\it flat} case. 
In the present context, however, the surface approximation is
responsible for 
lack of consistency in that the sequence of finite element spaces
is no  longer nested.
This in turn leads to failure of a key orthogonality property
between discrete solutions, 
the Pythagoras property. We have, instead, a perturbation result 
referred to as \emph{quasi-orthogonality} below. Its proof follows the
steps of that for graphs \cite[Lemma 4.4]{MMN:11}.
In this section, we use the notation
\begin{gather*}
e^j :=\| \nabla_\gamma (u-U^j)\|_{L_2(\gamma)}, \quad
E^j:= \| \nabla_\gamma (U^{j+1}-U^j)\|_{L_2(\gamma)}, \\
\eta^j := \eta_{\T^j}(U^j,F^j), \quad
\eta^j(\cM^j) := \eta_{\T^j}(U^j,F^j,\cM^j), \quad
\lambda^j:=\lambda_{\T^j}(\gamma),
\end{gather*}
where $\cT^j$ are meshes obtained after each inner iteration of \ADAPTPDE,
starting with $\cT^0=\rhn{\cT}$, $\cM^j\subset \cT^j$ are the subsets of elements selected by the marking procedure, 
$F^j$ are the scaled right hand sides defined in \eqref{def:F} with $\Gamma$ replaced by $\Gamma^j$, the surface associated to $\cT^j$,
 and $U^j \in \mathbb V(\cT^j)$ are the corresponding Galerkin solutions. 

\begin{lemma} [Quasi-orthogonality]  \label{l:quasi}
There exists a constant \gls{const:Lambda2}$\Lambda_2>0$ solely depending on the
Lipschitz constant $L$ of $\gamma$ and $\|f\|_{L_2(\gamma)}$ such that for $i=j,j+1$ with $j \ge 0$, we have
\begin{equation}
\label{quasi-ortho}
(e^j)^2 - \frac32 (E^j)^2 - \Lambda_2 (\lambda^i)^2
\le
(e^{j+1})^2 
\le 
(e^j)^2 - \frac12 (E^j)^2 + \Lambda_2 (\lambda^i)^2.
\end{equation}
\end{lemma}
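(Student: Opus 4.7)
The plan is to expand $(e^{j+1})^2$ using $u-U^{j+1}=(u-U^j)-E^j$ to obtain the exact identity
\begin{equation*}
(e^{j+1})^2 = (e^j)^2 - 2 \int_\gamma \nabla_\gamma(u-U^j)\nabla_\gamma^T E^j + (E^j)^2,
\end{equation*}
then rewrite the cross term as $\int_\gamma\nabla_\gamma(u-U^{j+1})\nabla_\gamma^T E^j + (E^j)^2$, which yields
\begin{equation*}
(e^{j+1})^2 = (e^j)^2 - 2 \int_\gamma\nabla_\gamma(u-U^{j+1})\nabla_\gamma^T E^j - (E^j)^2.
\end{equation*}
The lemma will then follow by showing that the residual cross term is controlled by $\lambda^{j+1}E^j$ modulo a geometric consistency error.

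The core observation is the following nestedness in the parametric domain: writing $\hat U^j\in\widehat{\V}^j$ and $\hat U^{j+1}\in\widehat{\V}^{j+1}$ for the parametric representatives of the Galerkin solutions, the fact that $\widehat{\T}^{j+1}\ge\widehat{\T}^j$ gives $\widehat{\V}^j\subset\widehat{\V}^{j+1}$, and therefore $\hat E^j:=\hat U^{j+1}-\hat U^j\in\widehat{\V}^{j+1}$. Lifting $\hat E^j$ through $X_{\T^{j+1}}$ provides an element of $\V(\T^{j+1})$; lifting it through $\chi$ provides the representative of $E^j$ on $\gamma$. Using this surface lift of $E^j$ as a test function in the discrete equation \eqref{FEM:weakform} on $\Gamma^{j+1}$, combined with the consistency identity \eqref{eq:cons_error_rep} and the equality $\int_{\Gamma^{j+1}}F^{j+1}E^j=\int_\gamma f E^j$ from the choice \eqref{def:F} of $F^{j+1}$, gives
\begin{equation*}
\int_\gamma \nabla_\gamma(u-U^{j+1})\nabla_\gamma^T E^j = \int_\gamma \nabla_\gamma U^{j+1}\, \bA_{\Gamma^{j+1}}\,\nabla_\gamma^T E^j.
\end{equation*}

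With this identity in hand, the consistency error bound \eqref{Lp:Ak} and the a priori stability $\|\nabla_\gamma U^{j+1}\|_{L_2(\gamma)}\lesssim \|f\|_{L_2(\gamma)}$ yield
\begin{equation*}
\Big|\int_\gamma \nabla_\gamma(u-U^{j+1})\nabla_\gamma^T E^j\Big|
\le C\,\lambda^{j+1}\|f\|_{L_2(\gamma)}\,E^j,
\end{equation*}
and Young's inequality with parameter $\tfrac12$ absorbs $\tfrac12(E^j)^2$ into the right-hand side while producing a term of the form $\Lambda_2 (\lambda^{j+1})^2$ with $\Lambda_2\simeq \|f\|_{L_2(\gamma)}^2$. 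Inserting this bound with the two possible signs into the exact identity above produces the upper bound with $-\tfrac12(E^j)^2$ and the lower bound with $-\tfrac32(E^j)^2$, both with $i=j+1$. The statement with $i=j$ then follows from the elementwise quasi-monotonicity \eqref{quasi-mono-n}, $\lambda^{j+1}\le\Lambda_0\lambda^j$, at the price of enlarging $\Lambda_2$ by a factor $\Lambda_0^2$.

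The main obstacle is the first step: making rigorous that $E^j$, although it is a priori defined only on $\gamma$ via two different surface lifts, admits a well-defined lift to $\Gamma^{j+1}$ lying in $\V(\T^{j+1})$, so that Galerkin orthogonality at level $j+1$ is applicable. Everything else—transferring from $\Gamma^{j+1}$ to $\gamma$ through the consistency identity, invoking \eqref{Lp:Ak}, and applying Young's inequality—is essentially bookkeeping once that test function is in place.
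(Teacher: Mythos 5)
Your proposal is correct and follows essentially the same route as the paper: the paper expands the error via the symmetry of the Dirichlet form and then applies its error representation formula \eqref{eq:error_form} with $v=U^{j+1}-U^j$, noting $I_1=I_3=0$, which is precisely the Galerkin-orthogonality-plus-consistency identity you derive by hand, followed by \eqref{Lp:Ak}, the stability bound $\|\nabla_\Gamma U^{j+1}\|_{L_2}\Cleq\|f\|_{L_2(\gamma)}$, Young's inequality, and \eqref{quasi-mono-n} for the case $i=j$. The "obstacle" you flag is handled exactly as you suggest, via the nestedness $\widehat{\V}^{j}\subset\widehat{\V}^{j+1}$ of the parametric spaces, so $E^j$ lifted through $X_{\T^{j+1}}$ is an admissible test function at level $j+1$.
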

Before proceeding with the proof of the above lemma, we point out that the constant $\Lambda_2$ was already defined in Lemma~\ref{L:est-reduction}.
This is to simplify the notations below and is without loss of generality (upon redefining $\Lambda_2$ as the maximum of the two constants).
\begin{proof} %[of Lemma~\ref{l:quasi}]
Since the symmetry of the Dirichlet form implies
\[
(e^j)^2 = (e^{j+1})^2 + (E^j)^2 + 
2 \int_\gamma\nabla_\gamma(u-U^{j+1})\nabla_\gamma^T(U^{j+1}-U^j),
\]
we just have to examine the last term.
\rhn{Utilizing the error representation~\eqref{eq:error_form} with \manel{$v = U^{j+1}-U^{j}$} and realizing that $I_1=I_3=0$, we readily obtain}
% Combining \eqref{p:Weak_PdeGm},
%\eqref{FEM:weakform}, and \eqref{def:F} with the consistency error
%representation \eqref{eq:cons_error_rep} gives
%
\[
\int_\gamma \nabla_\gamma(u-U^{j+1})\nabla_\gamma^T (U^{j+1}-U^j)
= \int_\gamma \nabla_\gamma U^{j+1} \bA_{\Gamma^{j+1}} \nabla_\gamma^T(U^{j+1}-U^j).
\]
Invoking \eqref{Lp:Ak} yields
\[
\Big|\int_\gamma\nabla_\gamma(u-U^{j+1})\nabla_\gamma^T(U^{j+1}-U^j) \Big|
\Cleq \|f\|_{L_2(\gamma)} 
\lambda^{j+1}  E^j.
\]
This leads to \eqref{quasi-ortho} after applying Young's inequality
and using \eqref{quasi-mono-n}.
\end{proof}

Notice that relation \eqref{quasi-ortho}  also holds when (i)
$\T^{j}$, $\T^{j+1}$ are replaced with $\cT$, $\cT^*$ satisfying $\T^*
\geq \T$; (ii) $U^{j+1}$ is replaced by $U^* \in \mathbb V(\cT^*)$
and (iii) $U^j$ is replaced by any $V\in \mathbb V(\cT)$
because $V$ need not be the Galerkin solution over $\T$.
The parameter
\begin{equation}
\gls{const:w2}\omega_2 \definedas 
\frac{\xi\theta^2}{\Lambda_0 \sqrt{32\Lambda_2(2 \Lambda_3 + 1)}},   \label{bound_omega_2}
\end{equation}
where $\xi:=1-2^{-b/d}$ is defined in Lemma \ref{L:est-reduction}, is used subsequently as a threshold for the  \AFEM parameter $\omega$.
\begin{theorem}[Conditional contraction property]\label{T:conditional}
Let $\theta \in (0,1]$ be the marking parameter of \MARK and let 
$\{\T^j, U^j\}_{j = 0}^J$ be a
sequence of meshes and discrete solutions
produced by one call to procedure $\ADAPTPDE \, (\T^0, \varepsilon)$ inside AFEM, \ie $\lambda^0:=\lambda_{\T^0}(\gamma) \le \omega \varepsilon$. Assume that the AFEM parameter $\omega$ satisfies
\begin{equation*}
 \omega \leq \min \left\{ \omega_1, \omega_2\right\}, 
\end{equation*}
where $\omega_1$ and $\omega_2$ are given in \eqref{bound_omega} and \eqref{bound_omega_2}, respectively.
Then there exist constants $0<\alpha<1$ and $\beta>0$ such that
\begin{equation}\label{contraction}
(e^{j+1})^2 + \beta
(\eta^{j+1})^2 \le \alpha^2 \Big(
(e^j)^2 + \beta (\eta^j)^2\Big) \qquad\forall\, 0\le j<J.
\end{equation}
Moreover, the number of inner iterates $J$ of   $\ADAPTPDE$ is uniformly bounded.
\end{theorem}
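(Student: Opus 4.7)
The strategy is the classical Cascón--Kreuzer--Nochetto--Siebert contraction argument, suitably modified to absorb the geometric perturbation terms generated by the moving surface. The core idea is that, \emph{inside} the inner loop of $\ADAPTPDE$ one has $\eta^j > \varepsilon$, while the quasi-monotonicity Lemma~\ref{L:quasi-mono-n} together with the assumption $\lambda^0 \leq \omega\varepsilon$ yields
$\lambda^j \leq \Lambda_0 \lambda^0 \leq \Lambda_0 \omega \varepsilon \leq \Lambda_0 \omega \eta^j$ for every $j$. Hence any term of the form $(\lambda^j)^2$ can be converted into a small multiple of $(\eta^j)^2$ by choosing $\omega$ sufficiently small.

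First, I combine the quasi-orthogonality estimate \eqref{quasi-ortho},
\[
(e^{j+1})^2 \leq (e^j)^2 - \tfrac12 (E^j)^2 + \Lambda_2 (\lambda^j)^2,
\]
with the estimator reduction \eqref{reduction}, into which I feed the D\"orfler property $\eta^j(\cM^j) \geq \theta \eta^j$ to obtain
\[
(\eta^{j+1})^2 \leq (1+\delta)(1-\xi\theta^2)(\eta^j)^2 + (1+\delta^{-1})\bigl[\Lambda_3 (E^j)^2 + \Lambda_2 (\lambda^j)^2\bigr]
\]
for any $\delta>0$. Multiplying the second bound by $\beta>0$, adding to the first, and choosing $\beta := \bigl(2(1+\delta^{-1})\Lambda_3\bigr)^{-1}$ to cancel the contribution of $(E^j)^2$, gives
\[
(e^{j+1})^2 + \beta(\eta^{j+1})^2 \leq (e^j)^2 + \beta(1+\delta)(1-\xi\theta^2)(\eta^j)^2 + \widetilde\Lambda (\lambda^j)^2,
\]
with $\widetilde\Lambda := \Lambda_2\bigl(1 + \beta(1+\delta^{-1})\bigr)$. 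Applying $(\lambda^j)^2 \leq \Lambda_0^2 \omega^2 (\eta^j)^2$, the condition $\omega \leq \omega_2$ of \eqref{bound_omega_2} is chosen precisely so that $\widetilde\Lambda \Lambda_0^2 \omega^2 \leq \tfrac{1}{4}\beta\xi\theta^2$, so the coefficient of $(\eta^j)^2$ is bounded by $\beta\bigl(1 - \tfrac12\xi\theta^2\bigr)$ provided $\delta$ is fixed small enough that $(1+\delta)(1-\xi\theta^2) \leq 1 - \tfrac34\xi\theta^2$.

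To produce a genuine contraction on $(e^j)^2$, I split $(e^j)^2 = (1-\tau)(e^j)^2 + \tau(e^j)^2$ and bound the second piece via the a posteriori upper bound \eqref{upper} together with $\omega \leq \omega_1$ (which ensures \eqref{bound_lambda} and hence \eqref{simpler-upper}), namely $(e^j)^2 \leq (C_1 + C_2/2)(\eta^j)^2$. A standard choice of $\tau$, small compared with $\beta\xi\theta^2/(4(C_1+C_2/2))$, then yields
\[
(e^{j+1})^2 + \beta(\eta^{j+1})^2 \leq \alpha^2 \bigl[(e^j)^2 + \beta(\eta^j)^2\bigr],
\]
with $\alpha^2 := \max\bigl\{1-\tau,\, 1 - \tfrac14\xi\theta^2 + \tau(C_1+C_2/2)/\beta\bigr\} < 1$.

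The main obstacle here is purely bookkeeping: at each step the extra $\Lambda_2(\lambda^j)^2$ and $\Lambda_2(\lambda^{j+1})^2$ perturbations from \emph{both} quasi-orthogonality and estimator reduction must be controlled simultaneously, and the threshold $\omega_2$ in \eqref{bound_omega_2} is the sharpest value compatible with this juggling; the freedom in $\delta$ and $\tau$ has to be spent carefully, in the right order, to reach a single contraction factor.

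Finally, for the uniform bound on $J$, it suffices to show $\eta^0 \leq C_*\varepsilon$ with $C_*$ independent of the outer index $k$. At outer step $k\geq 1$, the mesh $\T_k^+$ is a refinement of the mesh $\T_k$ output by the previous $\ADAPTPDE$ call, which satisfies $\eta_{\T_k}(U_k,F_k)\leq \varepsilon_{k-1} = \rho^{-1}\varepsilon_k$. Applying \eqref{reduction} with $\xi = 0$, together with the localized upper bound \eqref{localized} to control $\|\nabla_\gamma(U-U_k)\|_{L_2(\gamma)}$, and invoking quasi-monotonicity of $\lambda$ plus $\lambda_{\T_k^+}\leq \omega\varepsilon_k$, gives $\eta^0 \leq C_*\varepsilon_k$. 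By contraction, $(\eta^J)^2 \leq \alpha^{2J}\beta^{-1}\bigl[(e^0)^2 + \beta(\eta^0)^2\bigr]$, and the upper bound \eqref{simpler-upper} further yields $(e^0)^2 \leq C\varepsilon_k^2$. Therefore the exit criterion $\eta^J \leq \varepsilon_k$ is met for any $J \geq J^* := \lceil \log(C'C_*^2)/(2\log \alpha^{-1})\rceil$, a bound independent of $k$.
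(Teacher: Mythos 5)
Your proposal follows essentially the same route as the paper's proof: quasi-orthogonality combined with the estimator reduction, the choice $\beta(1+\delta^{-1})\Lambda_3=\tfrac12$ to cancel $(E^j)^2$, D\"orfler marking, absorption of $(\lambda^j)^2\le\Lambda_0^2\omega^2(\eta^j)^2$ via $\omega\le\omega_2$, the reliability bound \eqref{simpler-upper} under $\omega\le\omega_1$ to convert part of the decay into a contraction of $(e^j)^2$, and the same mechanism (bounding $\eta^0\Cleq\varepsilon_k$) for the uniform bound on $J$. The only caveat is that $\delta$ cannot simply be taken ``small enough'': since $\beta\simeq\delta/\Lambda_3$, a too-small $\delta$ makes $\beta$ shrink and the fixed threshold $\omega_2$ of \eqref{bound_omega_2} no longer yields $\Lambda_2\bigl(1+\tfrac1{2\Lambda_3}\bigr)\Lambda_0^2\omega^2\le\tfrac14\beta\xi\theta^2$, so $\delta$ must be pinned at a value comparable to $\xi\theta^2/(4-2\xi\theta^2)$, exactly as the paper does (and the final $\alpha^2$ bookkeeping should use the coefficient $1-\tfrac12\xi\theta^2+\tau(C_1+\tfrac{C_2}2)/\beta$ with $\tau$ strictly below $\beta\xi\theta^2/\bigl(2(C_1+\tfrac{C_2}2)\bigr)$).
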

\begin{proof}
We proceed in four steps. Note that $\eta^j\ge\eps$ for $0\le j <J$.

\step{1} Let $\beta>0$ be a scaling parameter to be found later. 
We combine \eqref{quasi-ortho} and \eqref{reduction} to write
\begin{equation*}
\begin{aligned}
(e^{j+1})^2 + \beta (\eta^{j+1})^2 & \le
(e^j)^2 + 
\Big(-\frac12 + \beta (1+\delta^{-1}) \Lambda_3 \Big) (E^j)^2
\\
& + \Lambda_2 \Big(1+ \beta (1+\delta^{-1}) \Big) (\lambda^j)^2
+  \beta (1+\delta) \Big((\eta^j)^2 - \xi \eta^j(\cM^j)^2 \Big),
\end{aligned}
\end{equation*}
where $\mathcal M^j$ is the set of elements  marked for refinement at
the $j$-th subiteration.
To remove the factor of $E^j$ we now
choose $\beta$ dependent on  $\delta$, to be
\begin{equation}\label{eq:beta}
\beta(1+\delta^{-1}) \Lambda_3 = \frac12 \quad\Rightarrow\quad
\beta (1+\delta) = \frac{\delta}{2\Lambda_3},
\end{equation}
and thereby obtain
\begin{equation*}
(e^{j+1})^2 + \beta (\eta^{j+1})^2
\le (e^j)^2  
%+ \Lambda_2 \Big(1+ \beta (1+\delta^{-1}) \Big) (\lambda^j)^2
+ \Lambda_2 \Big(1+ \frac{1}{2\Lambda_3} \Big) (\lambda^j)^2
+  \beta (1+\delta) \Big((\eta^j)^2 - \xi \eta^j(\cM^j)^2 \Big) .
\end{equation*}

\step{2} Invoking D\"orfler marking
\eqref{doerfler}, we deduce
\[
(\eta^j)^2 - \xi \eta^j(\cM^j)^2 \le (1-\xi\theta^2) (\eta^j)^2.
\]
Since the initial mesh $\T^0$ comes from $\ADAPTSURF$ we know that
$\lambda^0 \le \omega\eps\le\omega\eta^j$ for $1 \le j \rhn{<} J$.
Using \eqref{quasi-mono-n} yields
$\lambda^j\le \Lambda_0 \omega\eta^j$, whence
\begin{equation*}
\begin{aligned}
(e^{j+1})^2 + \beta (\eta^{j+1})^2 \le &
(e^j)^2 - \beta(1+\delta)\frac{\xi\theta^2}{2} (\eta^j)^2\\
&+ \beta \Big((1+\delta)\Big(1-\frac{\xi\theta^2}{2}\Big) + 
\Lambda_2 \Big(1+\frac{1}{2\Lambda_3}\Big) \frac{\Lambda_0^2\omega^2}{\beta} \Big)(\eta^j)^2.
\end{aligned}
\end{equation*}
Moreover, $\omega\le\omega_1$ implies
$(\lambda^j)^2 \le \frac{C_2}{2\Lambda_1} (\eta^j)^2$ which turns out
to be \eqref{bound_lambda}. Therefore, applying the bound \eqref{simpler-upper}
and replacing $\beta$ according to \eqref{eq:beta}, 
we obtain 
\begin{equation*}
(e^{j+1})^2 + \beta (\eta^{j+1})^2 \le \alpha_1(\delta) (e^j)^2 
+ \alpha_2(\delta) \beta (\eta^{j})^2
\end{equation*}
with
\begin{equation*}
\alpha_1(\delta)^2 \definedas 1 - \delta \frac{\xi \theta^2}{4 \Lambda_3 (C_1+\frac{C_2}2)},
\quad
\alpha_2(\delta)^2 \definedas (1+\delta) \left(1 - \frac{\xi \theta^2}{2} \right) 
   + \Lambda_2 \left(1 + \frac{1}{2 \Lambda_3} \right) \frac{\Lambda_0^2 \omega^2}{\beta}.
\end{equation*}

\step{3}
It remains to prove that $\delta$ can be chosen so that $\alpha_2(\delta)^2<1$. We then
fix the parameter $\delta$ so that 
\[
(1+\delta) \Big(1-\frac{\xi\theta^2}{2}\Big) = 1 -\frac{\xi\theta^2}{4}
\quad
\Rightarrow
\quad
\delta = \frac{\xi \theta^2}{4 - 2 \xi \theta^2},
\]
We now realize that \eqref{eq:beta} gives $\beta = \frac{\xi \theta^2}
{2 \Lambda_3 (4 - \xi \theta^2)} \ge \frac{\xi \theta^2}
{8\Lambda_3}$ and, since $\omega \leq \omega_2$,
we infer that
\[
\Lambda_2 \left( 1 + \frac{1}{2 \Lambda_3} \right) \frac{\Lambda_0^2\omega^2}{\beta}
\leq 
 \frac{4 \Lambda_2 ( 2\Lambda_3 + 1) }{ \xi \theta^2 } \Lambda_0^2 \omega^2
\le 
\frac{\xi \theta^2}{8}.
\]
Hence $\alpha_2^2 \le 1 - \frac{\xi \theta^2}{8} < 1$, and choosing
$\alpha \definedas \max\{\alpha_1, \alpha_2\}<1$ yields~\eqref{contraction}.

\step{4} The contraction property \eqref{contraction} guarantees that
$\ADAPTPDE$ stops in a finite number of iterations $J$. To show that
$J$ is independent of the outer iteration counter $k$, take $k\ge1$ and note
that before the call $\ADAPTPDE(\T_k^+,\eps_k)$ \rhn{in \AFEM of \S~5}, we have
\[
\eta_k:=\eta_{\T_k}(U_k, F_k)\le
  \eps_{k-1}=\frac{\eps_k}{\rho},
\qquad
\lambda_k:=\lambda_{\T_k}(\gamma) \le \Lambda_0\lambda_{\T_{k-1}^+}(\gamma)
\le \frac{\Lambda_0\omega}{\rho}\eps_k.
\]
We next combine \eqref{reduction}, with $\delta = 1$,  
and the estimate 
$\|\nabla_\gamma(U_k^+ - U_k)\|_{L_2(\gamma)}^2 \Cleq \|\nabla_\gamma(u - U_k)\|_{L_2(\gamma)}^2
+ \lambda_k^2$ arising from \eqref{quasi-ortho}, to get
\begin{align*}
\eta_{\T_k^+}(U_k^+,F_k^+)^2 
&\Cleq \eta_k^2 + \lambda_k^2 + \| \nabla_\gamma(U_k^+ - U_k)\|_{L_2(\gamma)}^2 
\Cleq
\eta_k^2 + \lambda_k^2 + \| \nabla_\gamma(u - U_k)\|_{L_2(\gamma)}^2,
\end{align*}
where $F_k^+$ is the right hand side associated to $\Gamma_{\T_k^+}$
defined in \eqref{def:F} and the hidden constants depend on $\Lambda_2,\Lambda_3$.
The bounds on $\eta_k$, $\lambda_k$, together with \eqref{upper}, yield
\[
(\eta^0)^2=\eta_{\T_k^+}(U_k^+,F_k^+)^2 \Cleq \eta_k^2 + \lambda_k^2
\Cleq \eps_k^2.
\]
Since the stopping condition of $\ADAPTPDE$ is $\eta^J\le\eps_k$,
\eqref{contraction} implies that $J$ is bounded independently of $k$,
as asserted.
\end{proof}

The fact that $J$ is uniformly bounded controls the complexity of $\ADAPTPDE$
because the most expensive module $\SOLVE$ is run just $J$
times. However, this property is not required for the study of cardinality
of \S \ref{S:rates}.

%%%%%%%%%%%%%%%%%%%%%%%%%%%%%%%%%%%%%%%%%%%%%%%%%%%%%%%%%%%%%%%%%%%%%%%%%%%%%%%
\section{Approximation Class}\label{S:approx-class}
%%%%%%%%%%%%%%%%%%%%%%%%%%%%%%%%%%%%%%%%%%%%%%%%%%%%%%%%%%%%%%%%%%%%%%%%%%%%%%%

In this section we discuss the approximation classes $\As$ and 
their connection with Besov regularity.
We start with the notion of total error in \S \ref{S:total-error}
leading to the definition of $\As$. 
We then introduce and discuss a {\it greedy} algorithm in
\S \ref{S:greedy_algo}, that we use repeatedly in the rest of the section.
We study the
best approximation error achievable with piecewise polynomials of
degree $n\ge1$
for the surface $\gamma$ in \S \ref{S:gamma} and for the solution $u$ in \S \ref{S:u-f}.
We analyze the decay rate of oscillation in \S \ref{S:decay-osc}.
Finally in \S \ref{S:member-As} we conclude
with our second main result:  the membership  $(u,f,\gamma) \in \As$ 
in terms of Besov regularity of $u,f$ \rhn{and} $\gamma$.

\manel{In the discussion below we also  remove the hat on functions, when 
no confusion is possible.}

%--------------------------------------------------------------------------------
\subsection{The Total Error}\label{S:total-error}
%--------------------------------------------------------------------------------

Let $\mathbb{T}_N\subset\mathbb{T}:=\mathbb{T}(\T_0)$ 
be the set of all possible conforming 
triangulations, generated on $\gamma$ with at most $N$ elements more than
$\cT_0$ by successive bisection of $\cT_0$:
\begin{equation*}
  \mathbb{T}_N \definedas \big\{\cT \in\mathbb{T} \mid \#\cT - \#\cT_0 \leq N
  \big\}.
\end{equation*}
Given $v\in H^1_\#(\gamma), f\in L_2(\gamma)$ and $V\in \mathbb V(\T)$, we recall the notion of {\it total error}
\[
\gls{indi:total_error} E_\T(V;v,f,\gamma)^2 = \normLtk{\DivG(v-V)}{\gamma}^2 
+ \osc_{\cT}(V,f)^2 
+ \rhn{\omega^{-1} }\lambda_{\cT}(\gamma)^2,
\]
or $E_\T(V;v,f,\gamma)^2 = \E_\T(V,f)^2 + \omega^{-1} \lambda_{\cT}(\gamma)^2$.
Owing to the equivalence of norms \eqref{eq:equiv-norms} we rewrite the first term in the parametric
domain $\Omega$, and omit the factor $\omega$, to obtain the following equivalent notion of total
error provided $\lambda_{\T_0}(\gamma)$ satisfies \eqref{eq:init_cond}:
\begin{equation}\label{total-error-3}
\widehat{E}_\T (V;v,f,\gamma)^2 := %\inf_{V\in\V(\T;\Gamma)}
%\left(
\sum_{T \in \T} \normLtk{\smash{\widehat\nabla( v-  V)}}{\widehat T}^2 
+ \osc_{\cT}(V,f)^2 
%\right)
+ \lambda_{\cT}(\gamma)^2,
\end{equation}
Note that the last two terms are already evaluated in $\Omega$ according to definitions \eqref{geo-estimator}, \eqref{d:osc-def}.
Yet, there is a nonlinear interaction between the approximations of $\gamma$ and of $u,f$ defined on $\gamma$.
At this point, we recall the convention of dropping the patch index when no confusion arises, for example $ v|_{\widehat T}$ for $\widehat T \in \widehat{\cT}^i$ in \eqref{total-error-3} stands for $ v^i|_{\widehat T}$.

Assuming that the parameter $\omega$ satisfies $\omega \leq \omega_1$
in \eqref{bound_omega} to guarantee the validity of Lemma \ref{L:equiv}
\rhn{(equivalence of error and estimator),} along with the
fact that AFEM is driven by $\eta_\T(U,F_\Gamma)$ and
$\lambda_{\cT}(\gamma)$, we assess the quality of the best
approximation of $(v,f,\gamma)$ with $N$ degrees of freedom in terms
of the following modulus of smoothness:
\begin{equation*}
  { \sigma}(N;v,f,\gamma) \definedas \inf_{\cT \in  \mathbb{T}_N} 
  \inf_{V\in\V(\T)}
  \widehat{E}_{\T}(V;v,f,\gamma).
\end{equation*}
This is thus consistent with the approach taken for flat domains in 
\cite{CaKrNoSi:08,NoSiVe:09}.
For $s>0$, we define the nonlinear (algebraic) approximation class $\As$ to be
\begin{equation}\label{approx_class}
 \gls{not:apprxclass} \As \definedas \Big\{ (v,f,\gamma)  : \;
    |v,f,\gamma|_{\mathbb A_s} \definedas \sup_{N\ge1}  
    \big(N^s \; \sigma(N;v,f,\gamma) \big) < \infty
  \Big\}.
\end{equation}
%
\begin{comment}
We emphasize that the approximability of the surface $\gamma$ only appears
implicitly by measuring the errors on $\gamma$.
In fact, the definition of data oscillation \eqref{d:osc}, and in particular the
specific choice of $F_\Gamma$, implies that
$\osc_\Gamma^2(f,\cT)$ entails the approximation of $f$ by piecewise constants
on $\gamma$ but does not include the approximation of $\gamma$ by
$\Gamma$. On the other hand,
\end{comment}
The generic range of $s$ is dictated by the polynomial degree, namely
$0<s\le n/d$.

A useful and alternative definition to $(u,f,\gamma)\in \As$
follows: given $\eps>0$, there exists a subdivision $\cT_\eps$ with $\cT_\eps\ge\cT_0$ and a discrete function $V_\eps\in\V(\T_\eps)$ such
that
\begin{equation}\label{class-As}
\widehat E_{\T_\epsilon}(V_\epsilon;u,f,\gamma) \le  \eps,
\qquad \text{and} \qquad \#\cT_\eps-\#\cT_0 \le |u,f,\gamma|_{\mathbb{A}_s}^{\frac{1}{s}} \eps^{-\frac{1}{s}}.
\end{equation}
The characterization of $\As$ in terms of Besov
regularity is an open issue but we give sufficient conditions for
membership in $\As$ in \S~\ref{S:gamma} and \S~\ref{S:u-f}.
Before doing so, we discuss in \S~\ref{S:greedy_algo} \emph{greedy} algorithms suited to our particular framework, where the subdivisions consist of a collection of compatible subdivisions.

%

%--------------------------------------------------------------------------------
\subsection{Greedy Algorithm}\label{S:greedy_algo}
%--------------------------------------------------------------------------------
In this section we present and discuss a {\it greedy} algorithm
to construct a {\it near best} piecewise polynomial approximation
of a vector-valued function
$\mathbf{g}:=\{g^i\}_{i=1}^M:  \Omega \to \mathbb{R}^M$ in a suitable
semi-norm. Given a mesh $\T :=\cup_{i=1}^M \T^i \in \mathbb{T}$, and a corresponding parametric mesh $\wT :=\cup_{i=1}^M \wT^i \in \widehat{\mathbb{T}}$,
the algorithm requires a {\it local} error estimator $\zeta_{\T^i}(g^i,T)$ for $ T \in \T^i$, $1\le i\le M$. To simplify the notations, we set
\[
\zeta_{\T}(\mathbf{g}, T) := \zeta_{\T^i}(g^i, T),
\quad T \in \T^i, ~1\le i\le M.
\]

We emphasize at this point that approximating the functions $g^i$ requires mesh compatibility conditions on $\partial \Omega$
\rhn{to account for adjacent components $\T^i$ of the entire conforming mesh $\T$. This explains why we employ this somewhat cumbersome notation, which however we will simplify as much as possible below}.  
Given a conforming refinement $\T$ of an initial
triangulation $\T_0$ and a prescribed tolerance
$\delta$, the algorithm reads:
\medskip
\begin{algotab}\gls{algo:GREEDY}
  \> $ \cT_* = \GREEDY ( \mathbf g, \cT,\delta)$ \\
  \> \> 1. if  $ \cM :=\{ T \in   \cT \, : \,
  \zeta_{\T}(\mathbf{g},T)>\delta \} = \emptyset$ \\
  \> \> \> \> return \rhn{$( \cT)$} and exit \\
  \> \> 2.  $ \cT = \REFINE(  \cT,  \cM)$ \\
  \> \> 3. go to 1
\end{algotab}

\smallskip\noindent
where the module $\REFINE$ bisects all elements in the marked set
$\M$ and keeps conformity as described in
\S \ref{S:adapt-pde}.
Note that $\ADAPTSURF$ is a particular instance of $\GREEDY$ that 
uses $\zeta_{\T^i}(\chi^i,T):=\lambda_{\T^i}(\gamma,T)$ as
local error estimator to approximate the patch of the surface $\gamma^i$
parameterized by $\chi^i:\Omega \rightarrow \mathbb R^{d+1}$.
We now discuss some properties of the greedy algorithm following
\cite{NoSiVe:09}. Results of this type started with Birman and Solomyak
for Sobolev spaces \cite{BirSol67}, and continued with \cite{BDDP}
for Besov spaces and \cite{CDDD} for wavelet tree approximation. We do
not refer to any specific norm below.

\vskip0.2cm
\begin{proposition}[Performance of $\GREEDY$]\label{P:greedy_algo}
Let $\T :=\cup_{i=1}^M \T^i$ be created by successive bisections of $\T_0$,
which \rhn{has an admissible labeling.
Let $0 < \mathtt{p}\le \infty$ and let $\mathbf
g:=\{g^i\}_{i=1}^M:\Omega \to \mathbb{R}^M$ be a
vector-valued function} and 
$\{\zeta_{\T}(\mathbf{g},T)\}_{T\in\T}$ be corresponding
local error estimators that satisfy
\begin{equation}\label{eq:zeta-bound}
  \rhn{\zeta_{\T}(\mathbf{g},T) \Cleq h_{T}^r
  |g^i|_{T}, \quad r>0, \quad T \in \T^i, \quad 1\le i \le M,}
\end{equation}
where $h_{T}:=|\widehat T|^{1/d}$ and
$\left(\sum_{i=1}^M\sum_{T \in \T^i}
|g^i|_{T}^\mathtt{p}\right)^{\mathtt {1/p}}\le |
\mathbf g|_{\Omega}$ is a given semi-(quasi) norm (with the convention that
$\max_{i=1,...,M} \max_{T \in \T^i} |g^i|_{T}\le | \mathbf g|_{\Omega} $ if $\mathtt {p} = \infty$).

If $|\mathbf g|_{\Omega} < \infty$, then  the module $\GREEDY (\mathbf g,\cT,\delta)$ terminates in a
finite number of steps and the number of elements marked
$\M$ within $\GREEDY$ satisfies
\begin{equation}\label{eq:greedy-bounds}
%\begin{split}
	\# \M \Cleq 
	|\mathbf g|_\Omega^\frac{d\mathtt{p} }{d + r \mathtt{p}} 
	\delta^{-\frac{d\mathtt{p} }{d + r \mathtt{p}}}.
%	,\qquad 
%	\delta \Cleq  |g|_\Omega \|\Omega\|^{\frac{r}{d}}
%	(\#\T - \#\T_0)^{-\frac{d+r \mathtt{p}}{\mathtt{p}d}}	
%\end{split}	
\end{equation}
\end{proposition}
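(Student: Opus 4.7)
The plan is to analyze \GREEDY by a level-by-level bookkeeping of the marked elements. For each $j\ge 0$, let $\mathcal{M}_j$ denote the collection of all elements ever marked by some iteration of \GREEDY which, at the moment they were marked, had been obtained by exactly $j$ bisections from an ancestor in $\T_0$. A key preliminary observation, independent of any regularity of $\mathbf g$, is that the elements of $\mathcal{M}_j$ are pairwise disjoint: once a level-$j$ element is refined it is absent from every subsequent mesh, and its descendants live at strictly deeper levels, so two level-$j$ marked elements from different iterations, or from different patches, cannot overlap in any copy of $\Omega$.

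The core of the argument will then combine two competing bounds on $\#\mathcal{M}_j$. First, the marking criterion $\zeta_{\T}(\mathbf g,T)>\delta$ together with hypothesis~\eqref{eq:zeta-bound} and the identity $h_T\sim 2^{-j/d}$ (valid throughout the bisection forest $\widehat{\mathbb{T}}$, which is shape regular thanks to the admissible labeling of $\T_0$) yields, for every $T\in\mathcal{M}_j$ lying in patch $i$, the lower bound $|g^i|_T\gtrsim \delta\,2^{jr/d}$. When $\mathtt{p}<\infty$, raising to the $\mathtt{p}$-th power, summing over $T\in\mathcal{M}_j$ across all patches using disjointness, and invoking the hypothesis $\sum_i\sum_T|g^i|_T^{\mathtt{p}}\le|\mathbf g|_\Omega^{\mathtt{p}}$ gives
\[
\#\mathcal{M}_j\Cleq |\mathbf g|_\Omega^{\mathtt{p}}\,\delta^{-\mathtt{p}}\,2^{-jr\mathtt{p}/d};
\]
for $\mathtt{p}=\infty$, the same step with the uniform bound $|g^i|_T\le|\mathbf g|_\Omega$ forces the sharper conclusion $\mathcal{M}_j=\emptyset$ once $j$ exceeds a multiple of $(d/r)\log_2(|\mathbf g|_\Omega/\delta)$. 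Second, the elementary combinatorial bound $\#\mathcal{M}_j\le 2^j\,\#\T_0$ holds because the bisection forest above $\T_0$ contains at most $2^j\,\#\T_0$ elements at level $j$.

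Choosing $j^\ast$ so that the two bounds coincide, i.e.\ $2^{j^\ast(1+r\mathtt{p}/d)}\sim|\mathbf g|_\Omega^{\mathtt{p}}\delta^{-\mathtt{p}}/\#\T_0$, equivalently $2^{j^\ast}\sim(|\mathbf g|_\Omega/\delta)^{d\mathtt{p}/(d+r\mathtt{p})}$, and splitting $\sum_{j\ge 0}\#\mathcal{M}_j$ at $j^\ast$, both geometric tails (ratios $2$ for $j\le j^\ast$ and $2^{-r\mathtt{p}/d}$ for $j>j^\ast$) are dominated by their common value at $j^\ast$, which is exactly the target bound~\eqref{eq:greedy-bounds}. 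The case $\mathtt{p}=\infty$ is handled in parallel, using the depth cutoff from the first bound to truncate the sum; the geometric sum $\sum_{j\le j^\ast}2^j\,\#\T_0\sim (|\mathbf g|_\Omega/\delta)^{d/r}$ then reproduces the formula with $\mathtt{p}=\infty$. Termination of \GREEDY is then automatic: since $\#\mathcal{M}=\sum_j\#\mathcal{M}_j$ has been shown a priori to be finite, only finitely many iterations can produce a nonempty marked set.

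The obstacle I anticipate is not technical but organizational: ensuring that the disjointness-based counting is applied globally across all iterations of \GREEDY and across the $M$ compatible copies of the parametric domain, and that the single hypothesis $\sum_i\sum_T|g^i|_T^{\mathtt{p}}\le|\mathbf g|_\Omega^{\mathtt{p}}$ is invoked once on the aggregate of the pairwise disjoint level-$j$ marked elements from all patches. A secondary care point is that the admissible labeling of $\T_0$ is precisely what underlies the shape regularity of $\widehat{\mathbb{T}}$ and hence the scaling $h_T\sim 2^{-j/d}$ used in the lower bound on $|g^i|_T$.
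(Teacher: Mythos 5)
Your proposal is correct and follows essentially the same argument as the paper: two complementary bounds on the number of marked elements per dyadic level (a disjointness/volume count and a threshold count from \eqref{eq:zeta-bound} plus the accumulation hypothesis), a crossover level, and two geometric sums, exactly as in the paper's proof. The only notable differences are organizational — you group by bisection generation rather than by element measure (equivalent, since bisection exactly halves $|\widehat T|$, so the scaling $h_T\sim 2^{-j/d}$ needs no shape-regularity input), you treat $\mathtt p=\infty$ explicitly via a depth cutoff, and your direct counting over the actual run from $\T$ bypasses the paper's final reduction to $\T=\T_0$ via the comparison result of Bonito--DeVore--Nochetto.
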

\vskip-0.2cm
\begin{proof}
The algorithm stops in a finite number of steps because 
the local estimator $\zeta_{\T}(\mathbf{g},T)$
is bounded by a positive 
power of $h_{T}$ according to \eqref{eq:zeta-bound} and the \rhn{admissibility of the labeling of $\T_0$}
ensures that a finite number of refinements \rhn{is} required to guarantee the conformity \rhn{of $\T$}.
To prove \eqref{eq:greedy-bounds} we organize the elements in $ \cM$
by size so that it allows for a counting argument. Let 
$\cP_j$ be the set of elements $T$ of $\cM$ with size (in the parametric domain) satisfying $2^{-(j+1)} \le |\widehat T| < 2^{-j}$, so that
\[
 T \in \cP_j \quad\iff\quad	2^{-(j+1)}\le|\widehat T|<2^{-j} \quad\iff\quad 
        2^{-(j+1)/d}\le h_{T}<2^{-j/d}.
\]
\rhn{We assume that $\T = \T_0$ and} proceed in several steps. %, starting with $\T=\T_0$.

\step{1} We first observe that all $T$'s in $\cP_j$ are
\emph{disjoint}. In fact, if $T_1,\,T_2\in \cP_j$ and
they overlap (their interiors have a nonempty intersection), then one of them is
contained in the other, say $T_1\subset T_2$, due to the bisection
procedure, thus
$|\widehat T_1|\le\frac{1}{2}\,|\widehat T_2|$,
contradicting the definition of $\cP_j$. Then, recalling that we have $M$ copies of $\Omega$, we deduce
\begin{equation}\label{small}
 2^{-(j+1)}\,\#\cP_j
 \le M|\Omega|
\quad\implies\quad
 \#\cP_j\le M|\Omega|\, 2^{j+1}.
\end{equation}
\step{2} We note that $\cP_j= \cup_{i=1}^M \cP_j^i$ where $\cP_j^i$ contains the elements of $\cP_j$ which are refinements of elements in $\T_0^i$.
Each element $T\in \cP_j$ belongs to a subdivision $\T$ \rhn{created by $\GREEDY$ 
so that,} in light of \eqref{eq:zeta-bound} and the fact that $\cP_j \subset \cM$, we have
\[
 T \in \cP_j^i \quad\implies\quad
	\delta\le \zeta_{\T}(\mathbf{g},T) \Cleq 2^{-(j/d)r} 
|g^i|_{T}.
\]
Therefore we have
%
%\[
$\displaystyle
\delta^{\mathtt{p}}\,\#\cP_j\Cleq 2^{-(j/d)r\mathtt{p}}
\sum_{i=1}^M\sum_{T\in{\mathcal P}_j^i} |g^i|^\mathtt{p}_{\widehat{T}}
$
%\]
and 
\begin{equation}\label{large}
	\#\cP_j\Cleq
        \delta^{-\mathtt{p}}\,2^{-(j/d)r\mathtt{p}}\,
        |\mathbf g|^\mathtt{p}_{\Omega}.
\end{equation}
\step{3} The two bounds for $\#\cP_j$ in \eqref{small} and
\eqref{large} are complementary. The first one is good for $j$ small
whereas the second one is suitable for $j$ large (think of $\delta\ll
1$). The crossover takes place for $j_0$ such that 
\[
2^{j_0+1}M|\Omega| \approx \delta^{-\mathtt{p}}\,2^{-j_0r\mathtt{p}/d}
|\mathbf g|_\Omega^{\mathtt{p}}
\quad\iff\quad 
2^{j_0} \approx 
\left( 
 M^{-1} |\Omega|^{-1} \delta^{-\mathtt{p}} |\mathbf g|_\Omega^{\mathtt{p}}
 \right)^\frac{d}{d + r \mathtt{p}}.
 \]
\step{4} We now compute 
\[
\# \cM =\sum_j\#\cP_j\Cleq
\sum_{j\le j_0}2^j+\delta^{-\mathtt{p}}\,
|\mathbf g|_\Omega^{\mathtt{p}} 
\sum_{j>j_0}(2^{-r\mathtt{p}/d})^j.
\]
Since 
$\sum_{j\le j_0}2^j\approx 2^{j_0}$ and
$\sum_{j>j_0}(2^{-r\mathtt{p}/d})^j\Cleq 2^{-(r\mathtt{p}/d)j_0}$,
we can write 
\begin{equation*}
\begin{split}
\# \cM &\Cleq
\left(\,|\mathbf g|_\Omega \delta^{-1} \right)^
{\frac{d\mathtt{p}}{d + r \mathtt{p}}},
\end{split}
\end{equation*}
which is the desired estimate.

\step{5} It remains to remove the \rhn{simplifying} assumption $\T=\T_0$. 
Since $\T$ is a conforming refinement of $\T_0$,
\cite[Proposition 2]{Bonito-DeVore-Nochetto:13} shows that the number
of elements marked by $\GREEDY (\mathbf{g},\T,\delta)$ does
not exceed those marked by $\GREEDY (\mathbf{g},\T_0,\delta)$
and estimated in step 4. This concludes the proof.
\end{proof}

We consider the estimator
\[
\zeta_\T (\mathbf{g}) :=
\{\zeta_\T(\mathbf{g},T)\}_{T \in \T}
\]
and its accumulation in $\ell_\mathtt{q}$ with $0<\mathtt{p}<\mathtt{q}\le\infty$. Its decay rate
is assessed next.
\begin{corollary}[Estimate in $\ell_q$]\label{C:est-lq}
  Let $\zeta_\T (\mathbf{g})$ satisfy \eqref{eq:zeta-bound} with
  $r:=d(s-1/\mathtt{p}+1/\mathtt{q})>0$. Let the initial subdivision $\T_0$
  \manel{have} \rhn{ an admissible labeling}. 
Given $\delta>0$ there exists a conforming mesh refinement 
$\T \in \mathbb{T}$ such that
\begin{equation}\label{g-rate}
\|\zeta_{\T}(\mathbf g)\|_{\ell_\mathtt{q}} \Cleq  \delta,
\qquad \#\T - \#\T_0 \Cleq \#\M \Cleq
|\mathbf{g} |_{\Omega}^{1/s}
 \delta^{-1/s}.
\end{equation}
\end{corollary}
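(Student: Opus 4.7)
The plan is to apply $\GREEDY(\mathbf g,\T_0,\delta')$ once, with a single threshold $\delta'$ chosen to simultaneously enforce the $\ell_\mathtt{q}$-bound on $\zeta_\T(\mathbf g)$ and the cardinality bound. Upon termination, every element of the output mesh $\T$ satisfies $\zeta_\T(\mathbf g,T)\le\delta'$ by the exit test, so
\[
\|\zeta_\T(\mathbf g)\|_{\ell_\mathtt{q}}^\mathtt{q}\le \#\T\,(\delta')^\mathtt{q}.
\]
On the other hand, Proposition~\ref{P:greedy_algo} combined with the complexity estimate $\#\T-\#\T_0\le C_7\#\M$ of Lemma~\ref{L:REFINE:complexity} gives
\[
\#\T-\#\T_0\Cleq|\mathbf g|_\Omega^{d\mathtt{p}/(d+r\mathtt{p})}\,(\delta')^{-d\mathtt{p}/(d+r\mathtt{p})}.
\]

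The key algebraic simplification is that under the hypothesis $r=d(s-1/\mathtt{p}+1/\mathtt{q})$ the exponent collapses: $d+r\mathtt{p}=d\mathtt{p}(s+1/\mathtt{q})$, so $d\mathtt{p}/(d+r\mathtt{p})=1/(s+1/\mathtt{q})$. Imposing $\#\T\,(\delta')^\mathtt{q}\Cleq\delta^\mathtt{q}$ and $\#\T\Cleq|\mathbf g|_\Omega^{1/s}\delta^{-1/s}$ simultaneously then dictates the choice
\[
\delta':=\delta^{(s\mathtt{q}+1)/(s\mathtt{q})}\,|\mathbf g|_\Omega^{-1/(s\mathtt{q})}
\quad\text{(with $\delta'=\delta$ when $\mathtt{q}=\infty$).}
\]
Direct substitution confirms that this $\delta'$ yields both $\#\T-\#\T_0\Cleq|\mathbf g|_\Omega^{1/s}\delta^{-1/s}$ and $\|\zeta_\T(\mathbf g)\|_{\ell_\mathtt{q}}^\mathtt{q}\Cleq\delta^\mathtt{q}$, which are the two asserted bounds.

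A minor subtlety concerns the contribution of $\#\T_0$ to $\#\T$ in the $\ell_\mathtt{q}$ estimate: for $\delta$ sufficiently small the marked-set count $\#\M$ dominates $\#\T_0$ and the substitution above applies directly, while for larger $\delta$ one simply takes $\T=\T_0$, at the cost of a larger implicit constant. The only substantive step of the proof is thus the exponent arithmetic that collapses the mismatched exponents $d\mathtt{p}/(d+r\mathtt{p})$ and $\mathtt{q}$ into the single target exponent $1/s$; this is precisely where the hypothesis on $r$ enters, and everything else reduces to direct applications of Proposition~\ref{P:greedy_algo} and Lemma~\ref{L:REFINE:complexity}.
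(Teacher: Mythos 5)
Your proposal is correct and follows essentially the same route as the paper: a single call to $\GREEDY$ with the rescaled tolerance $\delta'=\delta^{(s\mathtt{q}+1)/(s\mathtt{q})}|\mathbf g|_\Omega^{-1/(s\mathtt{q})}$ (the paper's $\epsilon$, defined implicitly through $\delta=\epsilon^{\mathtt{q}s/(1+\mathtt{q}s)}|\mathbf g|_\Omega^{1/(1+\mathtt{q}s)}$), the identity $\frac{d\mathtt{p}}{d+r\mathtt{p}}=\frac{\mathtt{q}}{1+\mathtt{q}s}$, Proposition~\ref{P:greedy_algo} for $\#\M$, Lemma~\ref{L:REFINE:complexity} for $\#\T-\#\T_0$, and the crude bound $\|\zeta_\T(\mathbf g)\|_{\ell_\mathtt{q}}\le\#\T^{1/\mathtt{q}}\delta'$. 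Your explicit handling of the $\#\T_0$ versus $\#\M$ issue is a small refinement of what the paper leaves as ``it is easy to see.''
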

\vskip-0.5cm
\begin{proof}
Since $\frac{d \mathtt{p}}{d+r\mathtt{p}} = \frac{\mathtt{q}}{1+\mathtt{q}s}$,
the output of the call $\T= \GREEDY (\mathbf{g},
\T_0,\epsilon)$ satisfies
\[
\zeta_\T(\mathbf{g}, T) \Cleq \epsilon,
\qquad
\# \mathcal{M} \Cleq |\mathbf{g}|_\Omega^{\frac{\mathtt{q}}{1+\mathtt{q} s}}
\epsilon^{-\frac{\mathtt{q}}{1+\mathtt{q}s}},
\qquad\forall \, T \in \T,
\]
for any $\epsilon>0$ according to Proposition
\ref{P:greedy_algo}. Combining this with the complexity estimate
\eqref{REFINE:complexity} readily implies
\[
\|\zeta_\T(\mathbf{g})\|_{\ell_\mathtt{q}}
\Cleq \#\T^{1/\mathtt{q}} \epsilon
\Cleq \#\M^{1/\mathtt{q}} \epsilon
\Cleq \epsilon^{\frac{\mathtt{q} s}{1+\mathtt{q} s}} |\mathbf{g}|_\Omega^{\frac{1}{1+\mathtt{q}s}}.
\]
If $\epsilon$ satisfies
$\delta = \epsilon^{\frac{\mathtt{q} s}{1+\mathtt{q} s}} |\mathbf{g}|_\Omega^{\frac{1}{1+\mathtt{q} s}}$,
then it is easy to see that \eqref{g-rate} is valid.
\end{proof}

\rhn{
\begin{remark}[Scale invariant error estimates]\label{R:scale-invariant}
\rm The interpolation estimate~\eqref{eq:zeta-bound} will be derived and utilized below for functions $g$ which are products or composition of functions to account for geometry. To illustrate the importance of scale invariance, we consider now the product $g=vw$ with $v,w \in W_p^2(\widehat T)$ and $2 > d/p$ ($1\le p \le \infty$); this implies $L_\infty(\widehat T), W_{2p}^1(\widehat T) \subset W_p^2(\widehat T)$ and $g \in W_p^2(\widehat T)$. Therefore $|g|_T := |g|_{W_p^2(\widehat T)}$ satisfies 
\[
|g|_{W_p^2(\widehat T)} 
\Cleq |v|_{W_p^2(\widehat T)} \|w\|_{L_\infty(\widehat T)}
+ |v|_{W_{2p}^1(\widehat T)} |w|_{W_{2p}^1(\widehat T)} + \|v\|_{L_\infty(\widehat T)} |w|_{W_p^2(\widehat T)}.
\]
This scale invariant expression accumulates correctly in $\ell_p$,
\[
\sum_{T \in \T}
|g|_{W_p^2(\widehat T)}^p 
\Cleq |v|_{W_p^2(\Omega)}^p \|w\|_{L_\infty(\Omega)}^p
+ |v|_{W_{2p}^1(\Omega)}^p |w|_{W_{2p}^1(\Omega)}^p + \|v\|_{L_\infty(\Omega)}^p |w|_{W_p^2(\Omega)}^p
\]
as a consequence of applying the Cauchy-Schwarz inequality to the middle term. In \S\ref{S:Besov} we extend estimates for the product and composition of functions to Besov spaces $B_q^s(L_p(\widehat T))$ with any order of differentiability $s$ and integrability $p$, provided $s > d/p$.
\end{remark}
}

%------------------------------------------------------------------------------
\subsection{Constructive Approximation of $\gamma$}
\label{S:gamma}
%------------------------------------------------------------------------------
We now analyze a constructive approximation of $\gamma$ by
piecewise polynomials based on the $\GREEDY$ algorithm. We also show 
that this algorithm, and hence $\ADAPTSURF$, is $t$-optimal, i.e. 
the set of marked elements satisfies \eqref{optimal:AS}, provided that 
$\gamma$ belongs to a suitable Besov space. 
The case of polynomial degree $n=1$ with regularity of $\gamma$ in terms of
Sobolev scales is discussed in~\cite{BCMN:Magenes}. 
We establish here a result for higher order degree $n \ge 1$ for which
the regularity of $\gamma$ must be measured in Besov scales.

We recall the following compact notation \eqref{bf-notation}:
{
\begin{equation*}
\mathbf{\bXi} := \{\chi^i\}_{i=1}^M, 
\quad
| \bXi |_{B_q^{1+td}(L_q(\Omega))}
:= \max_{i=1,...,M} |\chi^i|_{B^{1+td}_q(L_q(\Omega))}.
\end{equation*}
\rhn{
We also note that the superscript $i$ indicating the patch label is dropped when no confusion arises.}
\begin{corollary}[Constructive approximation of $\gamma$]\label{C:t-optimality}
\rhn{Let $\gamma$ be globally of class $W^1_\infty$ and be
parameterized by $\bXi \in [B^{1+td}_q(L_q(\Omega))]^M$}
with $tq>1, 0<q\le\infty$, $td\leq n$.
Let $\T_0$ \manel{have} \rhn{an admissible labeling}.
Then \manel{$ \T_*=\ADAPTSURF(\T,\delta)$}
is $t$-optimal, i.e. \looseness=-1
$$
\lambda_{\T_*}(\gamma) \le \delta, 
\qquad 
\#\M\Cleq  
C_1(\gamma)^{1/t}
\delta^{-1/t},
$$
where $\M$ denotes the number of elements \rhn{marked during the
execution of the procedure} $\ADAPTSURF\pedro{(\T,\delta)}$
and $C_1(\gamma) \leq  | \bXi |_{B^{1+td}_q(L_q(\Omega))}$
 is the constant in \eqref{optimal:AS}.
\end{corollary}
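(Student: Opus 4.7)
The plan is to recognize $\ADAPTSURF$ as a specific instance of $\GREEDY$ (noted right after its definition) with local estimator
$\zeta_{\T}(\bXi,T) := \lambda_{\T^i}(\gamma,T) = \|\widehat\nabla(\chi^i - I_{\T^i}\chi^i)\|_{L_\infty(\widehat T)}$ for $T\in\T^i$, and then to invoke Proposition~\ref{P:greedy_algo}. The termination of \GREEDY immediately gives $\lambda_{\T_*}(\gamma)\le\delta$, so the substance of the proof is producing the cardinality estimate $\#\M \Cleq |\bXi|^{1/t}_{B^{1+td}_q(L_q(\Omega))} \delta^{-1/t}$. To apply Proposition~\ref{P:greedy_algo} I must identify $r>0$ and a scale-invariant semi-quasi-norm $|\chi^i|_{\widehat T}$ satisfying $\zeta_{\T}(\bXi,T)\Cleq h_T^r |\chi^i|_{\widehat T}$ together with $\left(\sum_{i,T}|\chi^i|_{\widehat T}^q\right)^{1/q}\le |\bXi|_{\Omega}$, and then verify that the resulting exponent $\frac{dq}{d+rq}$ equals $1/t$.

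The key ingredient is the local interpolation estimate
\[
\|\widehat\nabla(\chi^i - I_{\T^i}\chi^i)\|_{L_\infty(\widehat T)}
\Cleq h_T^{td-d/q}\,|\chi^i|_{B^{1+td}_q(L_q(\widehat T))}
\qquad \forall\,\widehat T\in\wT^i.
\]
This I would derive in two steps, mirroring the proof of Lemma~\ref{L:interpolation}. First, since $I_{\T^i}$ reproduces polynomials of degree $\le n\ge td$, for any $P\in\mathbb P_n(\widehat T)$ one writes $\chi^i - I_{\T^i}\chi^i = (\chi^i-P)-I_{\T^i}(\chi^i-P)$ and uses the $W^1_\infty$-stability~\eqref{interpolation-W1} to bound $\|\widehat\nabla(\chi^i-I_{\T^i}\chi^i)\|_{L_\infty(\widehat T)}\Cleq \inf_{P\in\mathbb P_n}\|\widehat\nabla(\chi^i-P)\|_{L_\infty(\widehat T)}$. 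Second, a Whitney/Jackson estimate on the reference element combined with the embedding $B^{td}_q(L_q)\hookrightarrow L_\infty$ (valid precisely because $tq>1$) and scaling back to $\widehat T$ yields
$\inf_P\|\widehat\nabla(\chi^i-P)\|_{L_\infty(\widehat T)}\Cleq h_T^{td-d/q}|\chi^i|_{B^{1+td}_q(L_q(\widehat T))}$; the hypothesis $td\le n$ is needed so that polynomials of degree $n$ can absorb Taylor terms up to the required order. The Besov seminorm $|\chi^i|_{B^{1+td}_q(L_q(\widehat T))}$ is subadditive in the element patch (with the usual convention $\max$ in place of $\ell^q$ if $q=\infty$), so that the required global accumulation bound $\sum_{i,T}|\chi^i|_{B^{1+td}_q(L_q(\widehat T))}^q \le |\bXi|^q_{B^{1+td}_q(L_q(\Omega))}$ holds.

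With $r=td-d/q>0$ and $\mathtt{p}=q$, Proposition~\ref{P:greedy_algo} yields $\#\M\Cleq |\bXi|^{\frac{dq}{d+rq}}_{B^{1+td}_q(L_q(\Omega))}\delta^{-\frac{dq}{d+rq}}$, and the exponent arithmetic
\[
\frac{dq}{d+rq}=\frac{dq}{d+q(td-d/q)}=\frac{dq}{tdq}=\frac{1}{t}
\]
gives the asserted bound $\#\M\Cleq C_1(\gamma)^{1/t}\delta^{-1/t}$ with $C_1(\gamma)\le |\bXi|_{B^{1+td}_q(L_q(\Omega))}$. The main obstacle I anticipate is the careful bookkeeping needed for the Besov interpolation estimate when $q<1$, since one works with quasi-norms and the Whitney estimate must be stated in a form that scales correctly with $h_T$; I would rely on an elementwise version of the inequality cited in the proof of Lemma~\ref{L:interpolation} (generalization of \cite[Lemma 4.15]{GM:14}) and the scale-invariance principles established in Remark~\ref{R:scale-invariant}.
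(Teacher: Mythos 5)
Your proposal is correct and follows essentially the same route as the paper: it identifies \ADAPTSURF with \GREEDY, establishes the local bound $\lambda_{\cT}(\gamma,T)\Cleq h_T^{td-d/q}|\chi|_{B^{1+td}_q(L_q(\widehat T))}$, and applies Proposition~\ref{P:greedy_algo} with $\mathtt{p}=q$, $r=td-d/q$, so that $\frac{dq}{d+rq}=\frac1t$. The only difference is that you spell out the derivation of the interpolation estimate (polynomial invariance plus the $W^1_\infty$-stability \eqref{interpolation-W1} and a Whitney-type bound), which the paper compresses into ``a scaling argument and local interpolation estimates.''
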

\begin{proof}
Observe that $B^{1+td}_q(L_q(\widehat T))$,
with $tq>1$ and $0<q\le\infty$ is just above the 
nonlinear Sobolev scale of $W^1_\infty$ in dimension $d$
\cite[p.\ 482]{Triebel:02}, \cite[Lemma 4.12]{GM:14}, so that
\[
B^{1+td}_q(L_q(\widehat T)) \subset B^1_\infty(L_\infty(\widehat T))
\subset W^1_\infty(\widehat T).
\]
Therefore, a scaling argument and local interpolation estimates give the
following bound with $r=dt-d/q>0$ and
\manel{$T\in \T$}
%\cite[Lemma 4.15]{GM:14}, %\cite[Thm 3.5]{DVS:84}, 
%
\begin{equation}\label{termination-lambda}
\lambda_{\cT}(\gamma,T) = \| \widehat\nabla(\chi-
  X_{\cT})\|_{L_\infty(\widehat T)}
\Cleq h_{T}^r |\chi|_{B^{1+td}_q (L_q(\widehat T))}.
\end{equation}
We then can apply Proposition \ref{P:greedy_algo} with $\mathtt{p} = q$,
$\vg = \bXi$, $| \bXi |_\Omega = | \bXi |_{B^{1+td}_q (L_q(\Omega))}$, 
$r = dt-d/q$ and $\zeta_{\T}(\vg,T)=\lambda_{\T}(\gamma,T)$.
Upon termination of $\cT_*=\ADAPTSURF(\T,\delta)$, we obtain
$\lambda_{\T_*}(\gamma) \leq \delta$
along with the asserted estimate on $\#\M$ because
$
\frac{d q}{d + r q} = \frac{1}{t}.
$
\end{proof}

%------------------------------------------------------------------------------
\subsection{Constructive Approximation of $u$}\label{S:u-f}
%------------------------------------------------------------------------------
%
We use the vector notation \eqref{bf-notation} 
\[
\mathbf{u} := \{u^i\}_{i=1}^M,
\quad
|\mathbf{u}|_{B^{1+sd}_p(L_p(\Omega))}
:= \max_{i=1,...,M} |u^i|_{B^{1+sd}_p(L_p(\Omega))},
\]
where $u^i:=u|_{\gamma^i} \circ \chi^i$ and $\gamma^i$ is the
$i$-th surface patch, along with 
$\bV = \{V^i\}_{i=1}^M$
 and %\looseness=-1
\manel{
\[
\|\wnabla(\bu-\bV)\|_{L^2(\Omega)}^2 := \sum_{i=1}^M
\|\wnabla(u^i- V^i)\|_{L^2(\Omega)}^2.
\]}
\begin{corollary}[Constructive approximation of $u$]\label{C:approx-u}
Let $u \in \rhn{H^1_{\#}(\gamma) }$ be piecewise of class
$B^{1+sd}_p(L_p(\Omega))$,
\rhn{namely $\bu \in [B^{1+sd}_p(L_p(\Omega))]^M$,} with $s-1/p+1/2 > 0$,
$0<p\le\infty$ and $0 < sd \le n$. 
Let $\T_0$ \manel{have} \rhn{an admissible labeling.}
Then, given $\delta>0$ there exists a triangulation $\T \in \mathbb{T}$ such that
\begin{equation}\label{u-rate}
\inf_{\bV\in\V(\T)} \|\wnabla (\bu-\bV)\|_{L^2(\Omega)} \Cleq \delta,
\qquad 
\#\M \Cleq C(\bu)^{1/s} \delta^{-1/s},
\end{equation}
where $\M$ is the set of marked elements to create $\T$ and
$C(\bu) = |\bu|_{B^{1+sd}_p(L_p(\Omega))}$.
\end{corollary}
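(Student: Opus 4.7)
The plan is to recast the problem into the framework of Corollary~\ref{C:est-lq} by designing a suitable local error estimator for the approximation of $u$ in the $H^1$-seminorm. First, I would introduce, for $T \in \cT^i$,
\[
\zeta_{\cT}(\bu, T) := \inf_{P \in \mathbb{P}_n(\widehat T)} \|\wnabla(u^i - P)\|_{L^2(\widehat T)},
\]
i.e., the best local polynomial approximation of the pull-back $u^i$ in the $H^1$-seminorm. A standard scaling argument combined with a Bramble--Hilbert/Deny--Lions estimate in the Besov scale then yields
\[
\zeta_{\cT}(\bu, T) \Cleq h_T^{r} \, |u^i|_{B^{1+sd}_p(L_p(\widehat T))},
\qquad r = d\Big(s - \frac{1}{p} + \frac{1}{2}\Big) > 0,
\]
where positivity of $r$ is exactly the hypothesis $s > 1/p - 1/2$, and $sd \le n$ guarantees that polynomials of degree $n$ carry enough approximation power (since $1+sd \le n+1$). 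Additivity of the Besov semi-quasi-norm over the disjoint elements of the $M$ parametric copies provides $\big(\sum_i \sum_{T\in\cT^i} |u^i|_{B^{1+sd}_p(L_p(\widehat T))}^p \big)^{1/p} \le |\bu|_{B^{1+sd}_p(L_p(\Omega))}$, matching the hypothesis of Corollary~\ref{C:est-lq}.

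Next, I would invoke Corollary~\ref{C:est-lq} with $\mathtt{p} = p$, $\mathtt{q} = 2$, and the above $r$ to obtain a conforming refinement $\cT \in \grids$ of $\cT_0$ and a marked set $\cM$ such that
\[
\|\zeta_{\cT}(\bu)\|_{\ell^2} \Cleq \delta,
\qquad
\#\cT - \#\cT_0 \Cleq \#\cM \Cleq |\bu|_{B^{1+sd}_p(L_p(\Omega))}^{1/s} \delta^{-1/s},
\]
which is precisely the cardinality bound claimed in \eqref{u-rate}.

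The main obstacle is then the passage from this $\ell^2$ control of element-wise best polynomial approximations (which in general do not match across element interfaces, let alone across interpatch boundaries) to the existence of a globally conforming $\bV \in \V(\cT)$ attaining the same error bound. I would handle this by constructing $\bV$ as the image of $\bu$ under a Scott--Zhang type quasi-interpolation operator $\pi_{\cT}$ adapted to $\V(\cT)$, built so as to respect interpatch compatibility as well as the Dirichlet or zero-mean constraints encoded in \eqref{d:fem_space}. Its standard patch-based stability and approximation properties, relying on the shape-regularity of $\widehat{\grids}$ established in Lemma~\ref{l:shape_reg}, give the local bound
\[
\|\wnabla(u^i - \pi_{\cT} u^i)\|_{L^2(\widehat T)}^2 \Cleq \sum_{T' \in \widehat\omega_T} \zeta_{\cT}(\bu, T')^2,
\]
where $\widehat\omega_T$ is the star of $T$; summing over $T$ and invoking the finite overlap of stars, I would conclude $\|\wnabla(\bu - \pi_{\cT}\bu)\|_{L^2(\Omega)} \Cleq \|\zeta_{\cT}(\bu)\|_{\ell^2} \Cleq \delta$. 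Taking $\bV := \pi_{\cT}\bu$ would then complete the proof with $C(\bu) = |\bu|_{B^{1+sd}_p(L_p(\Omega))}$. If the star-versus-element mismatch becomes awkward in the greedy argument, one may equivalently redefine $\zeta_{\cT}(\bu,T)$ from the outset on the star $\widehat\omega_T$, at the cost of a bounded-overlap constant in the $\ell_p$ accumulation; the remainder of the argument is unaffected.
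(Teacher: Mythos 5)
Your cardinality argument is essentially the paper's: the paper also funnels the problem through Corollary~\ref{C:est-lq} with $\mathtt{q}=2$, taking $\vg=\wnabla\bu\in B^{sd}_p(L_p(\Omega))$ (equivalent to your use of $|u^i|_{B^{1+sd}_p(L_p(\widehat T))}$), with the same exponent $r=d(s-1/p+1/2)$ and the same bound on $\#\cM$. The divergence is in the passage from elementwise (discontinuous) approximation to a conforming $\bV\in\V(\T)$: the paper settles this by citing Veeser \cite{Veeser:12}, which says that for a globally $H^1$ function the best gradient error over \emph{continuous} piecewise polynomials is comparable, up to shape-regularity constants, to the accumulated elementwise best errors, and this also takes care of trace continuity across patches. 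Your Scott--Zhang route is not wrong in spirit, but the local bound you assert,
$\|\wnabla(u-\pi_\T u)\|_{L_2(\widehat T)}^2\Cleq\sum_{T'\in\widehat\omega_T}\zeta_\T(\bu,T')^2$
with \emph{independent} best polynomials on each element of the star, is not a standard Scott--Zhang property: the standard estimate only yields $\inf_{P}\|\wnabla(u-P)\|_{L_2(\widehat\omega_T)}$ with a \emph{single} polynomial on the whole star (which the operator reproduces), and the elementwise-sum version is essentially a local form of Veeser's theorem, i.e.\ exactly the nontrivial point the paper outsources; you should not present it as a routine patch-stability fact. Your fallback -- defining $\zeta_\T(\bu,T)$ on the star from the outset -- does repair this: then the standard Scott--Zhang bound applies and the greedy/cardinality argument survives, at the cost of Whitney-type Besov estimates on shape-regular stars, a bounded-overlap accumulation of the broken Besov quantities (mind that the stars are mesh-dependent inside the greedy counting, so one must check that the stars of the disjoint, comparably sized marked elements overlap boundedly, which shape regularity gives), and a Scott--Zhang construction on the glued parametric domain respecting interpatch continuity and the boundary or mean-value constraint, as the paper itself does in the proof of Lemma~\ref{L:local-upper}. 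In short: your approach trades the citation of \cite{Veeser:12} for an explicit quasi-interpolation argument with patch bookkeeping; it works once the star-based version is adopted, but the elementwise-sum Scott--Zhang estimate as first stated is a genuine gap.
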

\begin{proof}
Taking $\vg :=\wnabla \bu \in B^{sd}_p(L_p(\Omega))$ and applying Corollary
\ref{C:est-lq} with $\mathtt{q}:=2$
we obtain the desired estimate provided we employ
discontinuous piecewise polynomials of degree $\le n$ over $\wT$.
%which are lifts by $\bX_{\cT}$
%of discontinuous elements over $\wT$.
%To conclude
We finally resort to 
\cite{Veeser:12}, which shows that the error decay is in fact the same
regardless of continuity for approximation of \rhn{globally $H^1(\Omega)$-functions.
This takes care also of continuity of traces across patches.}
\end{proof}

%-----------------------------------------------------------------------------
\subsection{Decay Rate of Oscillation}\label{S:decay-osc}
%-----------------------------------------------------------------------------
In order to study the decay rate of the oscillation $\osc_{\cT}(U,f)^2$,
we split it into two terms that we analyze separately, namely
\begin{equation}\label{d:osc-split}
\osc_{\cT}(U,f)^2 \leq \osc_{\cT}(U)^2 + \osc_{\cT}(f)^2,
\end{equation}
where
\begin{equation}\label{label1}
\osc_{\cT}(U)^2:=\sum_{ T \in \rhn{\T }} \osc_{\cT}(U, T)^2,
\qquad
\osc_{\cT}(f)^2:= \sum_{ T \in \rhn{\T}} \osc_{\cT}(f, T)^2,
\end{equation}
and for $T \in \T$ and $V\in\V(\T)$
\begin{align}
   & \osc_{\cT}(V, T)^2  :={} h_T^2 \Big\| 
      (\text{id} - \Pi^2_{2n-2})
      \widehat{\text{div}} 
      \left( \smash{q_\Gamma \widehat{\nabla}  V \bgG_\Gamma^{-1}} \right)
       \Big\|_{L_2(\widehat{T})}^2 \label{d:osc-U} \\
      & \qquad + h_T  
       \Big\| (\text{id} - \Pi^2_{2n-1}) \big(q_\Gamma^+
       \widehat{\nabla}  V^+ (\bgG_{\Gamma}^+)^{-1} \widehat \bn^+
      - q_\Gamma^-\widehat{\nabla}  V^- (\bgG_{\Gamma}^-)^{-1}
       \widehat\bn^- \big)
      \Big\|_{L_2(\partial\widehat{T})}^2  \nonumber \\
   & \osc_{\cT}(f, T)^2  :={}    h_T^2 \Big\| (\text{id} - \Pi^2_{2n-2}) ( f q) \Big\|_{L_2(\widehat{T})}^2. \label{d:osc-f}
\end{align}
To assess their decay rate,
we resort to the following bound \cite[Lemma 3.2]{CaKrNoSi:08}:
\begin{equation}\label{implicit-interpolation}
  \| (\id - \Pi_m^2) (v V) \|_{L_2(\omega)}
  \le \| (\id - \Pi_{m-n}^\infty) v \|_{L_\infty(\omega)}
  \| V \|_{L_2(\omega)},
\end{equation}
which is valid for $0 \le n \le m$, any domain $\omega$ of $\R^d$ or
$\R^{d-1}$, $V \in \Pn(\omega)$ and $v \in L_\infty(\omega)$.
In fact, since $\Pi_m^2$ is invariant over $\mathbb{P}_m$, we see that
$(\id - \Pi_m^2) (\Pi_{m-n}^\infty v V) = 0$ whence
\[
(\id - \Pi_m^2) (v V) = (\id - \Pi_m^2) [(\id - \Pi_{m-n}^\infty)v V].
\]
This yields \eqref{implicit-interpolation} for any interpolant
$\Pi_{m-n}^\infty v$ via the $L^2$-stability of $\Pi_m^2$.

We now embark on the study of the decay rate of oscillation:
we investigate $ \osc_{\cT}(f)$ in \S~\ref{S:osc-f}
and $\osc_{\cT}(U)$ in \S~\ref{S:oscillation}.

%------------------------------------------------------------------------------
\subsubsection{Decay Rate of $\osc_{\cT}(f)$}\label{S:osc-f}
%------------------------------------------------------------------------------
We employ the vector notation \eqref{bf-notation} \rhn{with $\mathbf{f} := \{f^i\}_{i=1}^M$ and $\mathbf{q} = \{q^i\}_{i=1}^M$}.
%
%\[
%\mathbf{f} := \{f^i\}_{i=1}^M,
%\qquad
%|\mathbf{f}|_{B^{sd}_p(L_p(\Omega))}
%:= \max_{i=1,...,M} |f^i|_{B^{sd}_p(L_p(\Omega))},
%\]
%%
%where $f^i:=f|_{\gamma^i} \circ \chi^i$ and $f|_{\gamma^i}$ is the restriction of
%$f$ to the surface patch $\gamma^i$ for $1\le i \le M$.
We also recall that the superscript $i$ indicating the patch label is omitted when no confusion arises.

\begin{corollary}[\rhn{Decay rate of $\osc_{\cT}(f)$}]\label{C:approx-f}
\rhn{Let $\gamma$ be globally of class $W_\infty^1$ and be
parameterized by $\bXi \in [B^{1+td}_q(L_q(\Omega))]^M$ with $tq>1$, 
$0<q\leq\infty$, $td\leq n$, and let \manel{$k = \lfloor td \rfloor + 1$}.
Let $f \in L_2(\gamma)$ be piecewise of class $B^{sd}_p(L_p(\Omega))$,
namely $\mathbf{f} \in [B^{sd}_p(L_p(\Omega))]^M$,
with $s-1/p+1/2>0, 0<p\le\infty$ and $sd \le n$.
Let $\T_0$ \manel{have} an admissible labeling.}
Then, given $\delta > 0$ there exists a triangulation $\T \in \grids$ such that
\begin{equation}\label{eq:oscf_rate}
\begin{split}
\osc_{\T}(f) \Cleq \delta,
\qquad
\#\T - \#\T_0  \Cleq  C(\mathbf f, \gamma)^{\frac1{s\wedge t+1/d}} \,\delta^{-\frac{1}{s\wedge t+1/d}},
\end{split}
\end{equation}
where $s\wedge t := \min\{s,t\}$ and
\[
C(\mathbf f,\gamma) :=  \| \mathbf f
\|_{B^{sd}_p(L_p(\Omega))}
 \Big(
\| \bXi\|_{B_q^{1+td}(L_q(\Omega))} + 
\| \bXi\|_{B_q^{1+td}(L_q(\Omega))}^k \Big) .
\]
\end{corollary}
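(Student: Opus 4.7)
The plan is to bound $\osc_\T(f,T)$ by a scale-invariant indicator of the form $h_T^r |\mathbf g|_T$ with $r$ corresponding to the claimed rate, and then apply Corollary~\ref{C:est-lq} with $\mathtt p = p$, $\mathtt q = 2$ to accumulate the local bounds in $\ell_2$. First, applying \eqref{implicit-interpolation} (or directly expanding the definition~\eqref{d:osc-f}) yields $\osc_\T(f,T) \le h_T \|(\id - \Pi^2_{2n-2})(fq)\|_{L_2(\widehat T)}$. Next, because $sd \le n \le 2n-1$, a standard scaling and Sobolev-embedding argument gives the local interpolation estimate
\[
\| (\id - \Pi^2_{2n-2})(fq) \|_{L_2(\widehat T)}
\Cleq h_T^{(s\wedge t)d - d/p + d/2} \, |fq|_{B^{(s\wedge t) d}_p(L_p(\widehat T))},
\]
where the embedding $B^{(s\wedge t)d}_p(L_p) \hookrightarrow L_2$ is guaranteed by $s - 1/p + 1/2 > 0$ and $s\wedge t \le s$.

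The crux is then to control $|fq|_{B^{(s\wedge t)d}_p(L_p(\widehat T))}$ in a scale-invariant way in terms of $|f|_{B^{sd}_p(L_p(\widehat T))}$ and $|\chi|_{B^{1+td}_q(L_q(\widehat T))}$. I would invoke the Besov product and composition results of \S\ref{S:Besov}. Writing $q = \sqrt{\det \bg}$ with $\bg = \widehat\nabla \chi^T \widehat\nabla\chi$, the function $\sqrt{\det(\cdot)}$ is smooth on a neighborhood of the range of $\bg$ (by the uniform lower bound on $q$ from Lemma~\ref{L:geom_estim}), so the composition estimate of \S\ref{S:Besov} with difference order $k = \lfloor td \rfloor + 1$ controls $|q|_{B^{td}_q(L_q(\widehat T))}$ by a polynomial of degree $k$ in $\|\chi\|_{B^{1+td}_q(L_q(\widehat T))}$; the embedding $B^{1+td}_q(L_q) \hookrightarrow W^1_\infty$ (since $tq>1$) provides the $L_\infty$ control on $q$. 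The Besov product estimate then produces a bound of the form
\[
|fq|_{B^{(s\wedge t)d}_p(L_p(\widehat T))} \Cleq |f|_{B^{sd}_p(L_p(\widehat T))}\|q\|_{L_\infty(\widehat T)} + \|f\|_{L_p(\widehat T)} |q|_{B^{td}_q(L_q(\widehat T))},
\]
cf.\ Remark~\ref{R:scale-invariant}, where the mixed $L_\infty$--Besov structure is what permits accumulation in $\ell_p$ over $\wT$ by Hölder's inequality to the global constant $C(\mathbf f,\gamma)$.

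Combining these estimates yields the desired indicator bound $\osc_\T(f,T) \Cleq h_T^{r} |\mathbf g|_T$ with $r = d((s\wedge t) + 1/d) - d/p + d/2$. Applying Corollary~\ref{C:est-lq} with $\mathtt p = p$, $\mathtt q = 2$ and effective decay rate $s' := s\wedge t + 1/d$ in place of $s$ delivers, via $\GREEDY$ applied to this indicator, a mesh $\T$ satisfying $\osc_\T(f) \le \delta$ with $\#\T - \#\T_0 \Cleq C(\mathbf f,\gamma)^{1/s'} \delta^{-1/s'}$, which is precisely~\eqref{eq:oscf_rate}. The main obstacle is ensuring the Besov product and composition estimates of \S\ref{S:Besov} are genuinely scale invariant in the sense of Remark~\ref{R:scale-invariant}, so that element-wise bounds sum correctly in $\ell_p$; in particular, tracking the degree $k$ that arises from Faà di Bruno--type iterations applied to $\sqrt{\det(\cdot)}$ is precisely the source of the $\|\bXi\|_{B_q^{1+td}(L_q(\Omega))}^k$ term in $C(\mathbf f,\gamma)$.
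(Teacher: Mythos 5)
Your overall architecture (reduce to a scale-invariant local indicator, run \GREEDY via Corollary~\ref{C:est-lq}, and use the composition estimate of \S~\ref{S:Besov} on $q=\sqrt{\det(\widehat\nabla\chi^T\widehat\nabla\chi)}$ to produce the degree-$k$ dependence on $\|\bXi\|_{B^{1+td}_q(L_q(\Omega))}$) is in the right spirit, and the last ingredient does coincide with the paper's use of Corollary~\ref{C:composition-norm}. But the central step---bounding $\osc_{\cT}(f,T)$ by $h_T^r\,|fq|_{B^{(s\wedge t)d}_p(L_p(\widehat T))}$ and then invoking a mixed product estimate---has a genuine gap. First, the product machinery of \S~\ref{S:Besov} (Lemma~\ref{L:Besov-product}, Corollary~\ref{C:Besov-product-seminorm}) requires \emph{both} factors to lie in one space $B^s_q(L_p)$ with $s>d/p$, hence in $L_\infty$; here $f$ is only assumed to satisfy $s-1/p+1/2>0$, i.e.\ $B^{sd}_p(L_p)\subset L_2$, and $f$ need not be bounded, so the two-term bound you write for $|fq|_{B^{(s\wedge t)d}_p(L_p(\widehat T))}$ is not covered by anything in the paper and would need a separate multiplier-type argument. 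Second, even granting such a bound, your local interpolation estimate carries the exponent $d\bigl((s\wedge t)-1/p+1/2\bigr)$ (plus $1$ from the factor $h_T$), and since the hypotheses impose \emph{no} relation between $(t,q)$ and $(s,p)$, this exponent can be nonpositive (e.g.\ $d=2$, $p=2/5$, $s=2.1$, $n=5$, $t=1$, $q=2$ gives $r=-1$); then the requirement $r>0$ of Proposition~\ref{P:greedy_algo}/Corollary~\ref{C:est-lq} fails and the single-indicator greedy cannot be run. Third, a single \GREEDY call with one accumulation exponent cannot digest the mixed term $\|f\|_{L_p(\widehat T)}\,|q|_{B^{td}_q(L_q(\widehat T))}$: its $\ell_{\mathtt p}$-accumulation over $\widehat T\in\wT$ is not controlled by H\"older when $p$ and $q$ are unrelated.

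The paper's proof is designed precisely to avoid putting $f$ and $q$ into a common Besov scale. Using that $\Pi^2_{2n-2}$ is the best $L_2$-approximation, it inserts $V=\Pi^2_{n-1}f$ and applies \eqref{implicit-interpolation} to obtain $\osc_{\cT}(f,T)\le h_T\|f-\Pi^2_{n-1}f\|_{L_2(\widehat T)}\|q\|_{L_\infty(\widehat T)}+h_T\|q-\Pi^\infty_{n-1}q\|_{L_\infty(\widehat T)}\|f\|_{L_2(\widehat T)}$. This decouples the two factors: the $f$-indicator decays with rate $r_2=d[(s+1/d)-1/p+1/2]>1$ in terms of $|f|_{B^{sd}_p(L_p(\widehat T))}$, the $q$-indicator with rate $r_\infty=d[(t+1/d)-1/q]>1$ in terms of $|q|_{B^{td}_q(L_q(\widehat T))}$ (both exponents positive under the stated hypotheses), and Corollary~\ref{C:est-lq} is applied \emph{twice}, with $\mathtt q=2$ for the $f$-part and $\mathtt q=\infty$ for the $q$-part and suitably scaled tolerances; the final mesh is the overlay $\T=\T_2\oplus\T_\infty$, whose cardinality is controlled by \cite[Lemma 3.7]{CaKrNoSi:08} and yields the exponent $s\wedge t+1/d$. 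If you want to salvage your one-shot argument you would have to prove a genuinely new scale-invariant multiplier estimate for unbounded $f$ and handle the accumulation and sign-of-$r$ issues; the split-plus-overlay route sidesteps all three.
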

\begin{proof}
Since $\Pi^2_{2n-2} (f q)\in\mathbb{P}_{2n-2}$ is the best $L_2$-approximation of $fq$,
we see that
\begin{align*}
\| fq - \Pi^2_{2n-2} ( f q) \|_{L_2(\widehat T)}
&\le \| fq - \Pi^2_{2n-2} ( V q) \|_{L_2(\widehat T)} \\
&\le \| (f-V)q \|_{L_2(\widehat T)}  + \| Vq - \Pi^2_{2n-2} ( V q) \|_{L_2(\widehat T)}  \\
&\le \| f-V \|_{L_2(\widehat T)}\| q \|_{L_\infty(\widehat T)}  
   + \| q - \Pi^\infty_{n-1}  q \|_{L_\infty(\widehat T)}  \| V \|_{L_2(\widehat T)},
\end{align*}
for all $V\in\mathbb{P}_{n-1}$
due to~\eqref{implicit-interpolation}.
Taking $V = \Pi_{n-1}^2f$ we have $\| V \|_{L_2(\widehat T)} \le  \| f \|_{L_2(\widehat T)}$ and  
\begin{align*}
\osc_{\cT}(f, T)
\le \hT \| f-\Pi_{n-1}^2 f \|_{L_2(\widehat T)}\| q \|_{L_\infty(\widehat T)}  
   + \hT \| q - \Pi^\infty_{n-1}  q \|_{L_\infty(\widehat T)}  \| f \|_{L_2(\widehat T)}.
\end{align*}
We now introduce for each $T\in\T$
\begin{equation}\label{local-est-f-q}
  E_{2,\T}(\mathbf{f},T):=
  \hT \|f^i-\Pi_{n-1}^2 f^i \|_{L_2(\widehat T)},
 \quad
 E_{\infty,\T}(\bq,T):= \hT \| q^i - \Pi^\infty_{n-1} q^i\|_{L_\infty(\widehat T)}
\end{equation}
and notice that an immediate generalization
of \cite[Lemma 4.15]{GM:14} implies
the local error estimates
\begin{align*}
E_{2,\T}(\mathbf{f},T)
\Cleq \hT^{r_2}   | f |_{B_p^{sd}(L_p(\widehat T))},
\qquad 
E_{\infty,\T}(\bq,T)
\Cleq \hT^{r_\infty}  |q|_{B_q^{td}(L_q(\widehat T))}
\end{align*}
with \rhn{$r_2 = d[(s+1/d)-1/p+1/2]>1$ and $r_\infty = d[(t+1/d)-1/q]>1$}.
Moreover, \rhn{Corollary~\ref{C:composition-norm} yields
$\bq \in [B_q^{td}(L_q(\Omega))]^M$
because $\widehat{\nabla}\bXi\in [B_q^{td}(L_q(\Omega))]^M$ and}
\begin{equation}\label{smoothness of q}
\| \bq \|_{L_\infty(\Omega)} \Cleq\|\bq\|_{B_q^{td}(L_q(\Omega))} \Cleq
\max\left\{\| \bXi \|_{B_q^{1+td}(L_q(\Omega))}, \| \bXi\|_{B_q^{1+td}(L_q(\Omega))}^k
\right\},
\end{equation}
with $k=\lfloor td \rfloor + 1$.
Given a constant $\ab{c_2}>0$ to be determined later, we resort to Corollary \ref{C:est-lq}
with tolerance $\ab{c_2}\delta$, the local indicator $E_{2,\T}(\mathbf{f},T) $,
$\mathtt{q}=2$ and $r=r_2$, to obtain a mesh
$\T_2 \in \mathbb{T}$ that satisfies
\begin{equation}\label{f-rate}
E_{2,\T_2}(\mathbf{f}) \Cleq \ab{c_2}\delta,
\qquad 
 \# \T_2 - \# \T_0 \Cleq
 |\mathbf{f}|^{\frac{1}{s+1/d}}_{B^{sd}_p(L_p(\Omega))} (\ab{c_2} \delta)^{-\frac{1}{s+1/d}}.
\end{equation}
\rhn{Invoking Corollary~\ref{C:est-lq} once again, this time with 
tolerance $\ab{c_\infty}\delta$, constant $c_\infty$ to be chosen later, local indicator 
$E_{\infty, \T}(\bq, T)$, $\mathtt{q}=\infty$ and $r=r_\infty$,}
we find a mesh $\T_\infty \in \mathbb{T}$ such that 
\begin{equation}\label{q-rate}
E_{\infty,\T_\infty}(\mathbf q) \Cleq \ab{c_\infty}\delta,
\qquad 
\# \T_\infty - \# \T_0 \Cleq
|\mathbf{q}|^{\frac{1}{t+1/d}}_{B^{td}_q(L_q(\Omega))}
(\ab{c_\infty} \delta)^{-\frac{1}{t+1/d}}.
\end{equation}
If $\T = \T_2 \oplus \T_\infty$ is the overlay of the meshes
$\T_2$ and $\T_\infty$, then
it remains to show that $\T$ satisfies \eqref{eq:oscf_rate}. Since the
local indicators \eqref{local-est-f-q} are monotone, i.e they do not increase
with refinement, we deduce from \eqref{f-rate} and \eqref{q-rate}
\begin{align*}
\osc_{\cT}(f)^2 &
\Cleq E^2_{2,\T}(\mathbf f) \, \| \bq \|^2_{L_\infty(\Omega)}  
+ E^2_{\infty, \T}(\mathbf q) \, \| \mathbf{f} \|^2_{L_2(\Omega)} 
\\
& \Cleq \delta^2 \left( \ab{c_2}^2 \| \bq \|^2_{L_\infty(\Omega)}
+ \ab{c_\infty}^2 \| \mathbf{f} \|^2_{L_2(\Omega)}  \right).
\end{align*}
We now choose the constants $\ab{c_2}$ and $\ab{c_\infty}$ as follows:
\[
\ab{c_2} = \|\bq\|_{B_q^{td}(L_q(\Omega))}^{-1},
\qquad
\ab{c_\infty} = \| \mathbf f \|_{B^{sd}_p(L_p(\Omega))}^{-1}.
\]
This implies $\osc_{\cT}(f)\Cleq \delta$ in view of \eqref{smoothness of q}
and $\| \mathbf f \|_{L_2(\Omega)} \Cleq \| \mathbf f \|_{B^{sd}_p(L_p(\Omega))}$.
Finally, since 
$
\#\T \le \#\T_2 + \#\T_\infty - \#\T_0
$
according to \cite[Lemma 3.7]{CaKrNoSi:08}, we obtain
\begin{equation*}
\#\T - \# \T_0 \leq  (\#\T_2 - \#\T_0)  + (\#\T_\infty  - \#\T_0 )
\Cleq C(\mathbf f, \gamma) \,
\delta^{-\frac{1}{s\wedge t + 1/d}}.
\end{equation*}
This follows from \eqref{f-rate} and \eqref{q-rate} upon replacing the
exponents $s$ and $t$ by $s\wedge t$ \rhn{because their left-hand sides are
always larger than or equal to 1, and eventually using Corollary~\ref{C:composition-norm} to estimate $\|\bq\|_{B_q^{td}(L_q(\Omega))}$. This concludes the proof.}
\end{proof}
\begin{remark}[Besov regularity of $f$]\label{R:Besov-f}
\rm If $u^i\in B^{1+sd}_p(L_p(\Omega))$ and $\gamma$ is smooth, then
$f^i = - q^{-1} \widehat{\text{div}} \big( q \wnabla u^i \bgG^{-1} \big)\in
B^{sd-1}_p(L_p(\Omega))$ is the natural Besov regularity for $f^i$. 
However, we require that $f^i\in B^{sd}_p(L_p(\Omega))$ because the data oscillation is
evaluated in $L_2(\Omega)$ rather than $H^{-1}(\Omega)$. This
additional degree of regularity of $f$ is responsible for the faster decay of
$\osc_{\cT}(f)$ reported in Corollary \ref{C:approx-f}.
\end{remark}

%------------------------------------------------------------------------------
\subsubsection{Decay Rate of $\osc_{\T}(U)$}\label{S:oscillation}
%------------------------------------------------------------------------------

In this section we study the decay rates of $\osc_{\T}(U)$ defined in~\eqref{d:osc-U}.
 We again use the $\GREEDY$ algorithm where now
the local indicator will be $\osc_{\T}(V,T)$ for $V\in \V(\T)$. 

We start with an estimate for $\osc_{\cT}(V,T)$ in terms
of a positive  power of  $h_T$. 
In view of expression \eqref{d:osc-U} for $\osc_{\cT}(V,T)$,
the major non-standard obstruction is the presence of the surface dependent and non-polynomial term $q_\Gamma \bgG_\Gamma^{-1}$.
This requires two auxiliary
results about Besov spaces, namely Corollary \ref{C:Besov-product-seminorm}
(scale-invariant Besov semi-norm of products of functions) and 
Lemma \ref{L:composition-semi} (scale-invariant Besov norm of composition),
which we prove later in \S~\ref{S:Besov} not to interrupt the flow.

We are now in position to show that $\osc_{\T}(V,T)$ is
bounded by a positive power of $h_{T}$.
The proof of Proposition \ref{P:oscU-bound}
is a consequence of the subsequent three lemmas.\looseness=-1
\medskip
\begin{proposition}[Local decay of oscillation]\label{P:oscU-bound}
\rhn{Let the surface $\gamma$ be globally of class $W_\infty^1$ and
be parameterized by $\bXi \in [B^{1+td}_q(L_q(\Omega))]^M$} with $tq>1$, 
$0<q\leq\infty$, $td\leq n$.  
For all $\T \in \mathbb T$ and all $V \in \mathbb V(\cT)$
\begin{equation}\label{eq:oscUT-bound}
\osc_{\cT}(V,T) \Cleq
h_{T}^r 
\left(
\sum_{j=1, \ell_j = j/k^ {k}}^{k^{k}}
\rhn{| \bXi|^{1/\ell_j}_{B^{1+td \ell_j}_q(L_{q/\ell_j}(N_\T(\widehat T));\T)} } \right)
\| \widehat{\nabla}  V \|_{L_2(N_\T(\widehat T))} ,
\end{equation}
with $r = td - d/q > 0$ and $k = \lfloor td \rfloor + 1$,
$N_\T(\widehat T)$ is the set containing $\widehat T$ and its
adjacent elements, and $B^{1+td \ell_j}_q(L_{q/\ell_j}(N_\T(\widehat T));\T)$
indicates the broken Besov space.
\end{proposition}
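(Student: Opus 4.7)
The plan is to estimate the two contributions to $\osc_{\cT}(V,T)^2$ in~\eqref{d:osc-U} -- the volume divergence term and the jump term -- separately, using the generic bound~\eqref{implicit-interpolation} to factor the non-polynomial coefficient $W:=q_\Gamma\bgG_\Gamma^{-1}$ out of the polynomial pieces depending on $V$. For the volume term I first apply the product rule,
$$
\widehat{\mathrm{div}}\big(W\,\wnabla V\big)
= W : \widehat D^2 V + \widehat{\mathrm{div}}(W)\cdot\wnabla V,
$$
observing that $\widehat D^2 V$ and $\wnabla V$ are polynomials of degree $\le n-1$ on $\widehat T$. Splitting $(\id-\Pi^2_{2n-2})$ across the two summands and using~\eqref{implicit-interpolation} with $m=2n-2$ produces bounds of the form $\|(\id-\Pi^\infty_{n})W\|_{L_\infty(\widehat T)}\|\widehat D^2 V\|_{L_2(\widehat T)}$ and $\|(\id-\Pi^\infty_{n-1})\widehat{\mathrm{div}} W\|_{L_\infty(\widehat T)}\|\wnabla V\|_{L_2(\widehat T)}$. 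A standard polynomial inverse estimate on the shape-regular $\widehat T$ gives $\|\widehat D^2 V\|_{L_2(\widehat T)}\Cleq h_T^{-1}\|\wnabla V\|_{L_2(\widehat T)}$, which together with the prefactor $h_T^2$ in~\eqref{d:osc-U} yields the required $h_T$-balance. The jump term is treated analogously: a scaled trace inequality transfers $\|\cdot\|_{L_2(\partial\widehat T)}$ to a volume norm on the neighborhood $N_\T(\widehat T)$ (needed because $W^\pm$ live on the two sides of each interior face, possibly on different patches), and then the same implicit-interpolation/inverse argument applies with $m=2n-1$.

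Next I estimate the polynomial approximation of $W$ in $L_\infty$ by a direct Besov estimate on the shape-regular $\widehat T$, in the spirit of~\eqref{termination-lambda} and using the embedding $B^{td}_q(L_q(\widehat T))\hookrightarrow L_\infty(\widehat T)$ valid since $r=td-d/q>0$:
$$
\|(\id-\Pi^\infty_{n})W\|_{L_\infty(\widehat T)}\Cleq h_T^{r}\,|W|_{B^{td}_q(L_q(\widehat T))},
$$
and a completely analogous estimate for $\widehat{\mathrm{div}}\,W$ (with one order of smoothness consumed by the derivative, still giving the same scaling $h_T^{r-1}$ once combined with the extra $h_T$ from the leading weight). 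To control the Besov seminorm $|W|_{B^{td}_q(L_q(\widehat T))}$, I note that $W=q_\Gamma^{-1}\mathrm{cof}(\bgG_\Gamma)^T$ is built from the entries of $\wnabla X_{\cT}\in\mathbb P_{n-1}^{(d+1)\times d}$ by a finite chain of products/determinants, matrix inversion, and a square-root -- a chain of at most $k=\lfloor td\rfloor+1$ nested nonlinear operations. Iteratively invoking the scale-invariant Besov product estimate (Corollary~\ref{C:Besov-product-seminorm}) and the composition estimate with smooth nonlinear scalar maps (Lemma~\ref{L:composition-semi}) from~\S\ref{S:Besov}, and using Lemma~\ref{L:interpolation} to replace $\wnabla X_{\cT}$ by $\wnabla\chi$ up to a uniformly bounded factor (since $\lambda_{\cT}(\gamma)\le\lambda_{\T_0}(\gamma)$ is small), yields
$$
|W|_{B^{td}_q(L_q(\widehat T))}\Cleq\sum_{j=1,\ \ell_j=j/k^k}^{k^k}|\bXi|^{1/\ell_j}_{B^{1+td\ell_j}_q(L_{q/\ell_j}(N_\T(\widehat T));\T)}.
$$
The summation over the dyadic fractions $\ell_j$ enumerates all admissible Gagliardo--Nirenberg distributions of the smoothness/integrability budget $(1+td,q)$ of $\bXi$ across the finitely many product/composition factors in $W$, and the factors $1/\ell_j$ on $|\bXi|$ record the multiplicity with which each such factor enters. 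Collecting the previous estimates finishes the proof of~\eqref{eq:oscUT-bound}.

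The principal obstacle is this second stage, namely carrying the scale-invariant Besov bookkeeping through every nonlinear operation -- determinant, cofactor, matrix inversion, and square-root -- that enters $W=q_\Gamma\bgG_\Gamma^{-1}$ and verifying that the resulting sum of products of $|\bXi|$-norms reduces cleanly to the form prescribed by~\eqref{eq:oscUT-bound}. This is precisely where the compositional Besov machinery of~\S\ref{S:Besov} is indispensable, and also where the restriction $k=\lfloor td\rfloor+1$ on the composition depth is dictated (any smoother composition produces smoother output than needed, and the $k^k$ in the range of $j$ provides enough granularity to accommodate all nested applications). Once that step is in place, the remainder of the proof is routine polynomial inverse and trace estimates on shape-regular meshes.
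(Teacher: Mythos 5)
Your overall architecture matches the paper's: split \eqref{d:osc-U} into the element and jump contributions, use the product rule and \eqref{implicit-interpolation} to peel the coefficient $W=q_\Gamma\bgG_\Gamma^{-1}$ off the polynomial factors of $V$, use inverse and scaled trace estimates for those factors (this is exactly Lemmas~\ref{L:elem-osc} and~\ref{L:jump-osc}), and then bound $|W|_{B^{td}_q(L_q(\widehat T))}$ by the sum over $\ell_j$ of powers of $|\bXi|$ via the product and composition results of \S~\ref{S:Besov} together with the stability of Lagrange interpolation (this is Lemma~\ref{L:Besov-qG}). The Besov bookkeeping you describe, although stated loosely, is the paper's second stage.

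There is, however, a genuine gap in your treatment of the divergence part of the element term. You bound $\|(\id-\Pi^\infty_{n-1})\widehat{\mathrm{div}}\,W\|_{L_\infty(\widehat T)}$ by ``consuming one order of smoothness,'' i.e.\ by an interpolation estimate of order $td-1$ with scaling $h_T^{r-1}$. Such an $L_\infty$ approximation bound requires the Besov space of order $td-1$ to embed into $L_\infty(\widehat T)$, i.e.\ $td-1>d/q$, which means $r>1$; when $td\le 1$ the order is even nonpositive. The hypotheses of the proposition only give $r=td-d/q>0$, and the low-regularity regime $0<r\le 1$ is precisely the interesting one, so this step fails as written. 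The paper never lowers the smoothness index: since $W$ is, on each $\widehat T$, an algebraic function of the polynomial $\wnabla X_{\cT}$, the inverse estimate in Besov space of Lemma~\ref{L:inverse-Besov} yields $|\widehat{\mathrm{div}}(q_\Gamma\bgG_\Gamma^{-1})|_{B^{td}_q(L_q(\widehat T))}\Cleq h_T^{-1}|q_\Gamma\bgG_\Gamma^{-1}|_{B^{td}_q(L_q(\widehat T))}$, after which the order-$td$ interpolation estimate applies and gives the correct power $h_T^{r-1}$; your proposal never invokes this discrete inverse estimate, and without it the element bound cannot be closed. A minor further slip: you justify replacing $\wnabla X_{\cT}$ by $\wnabla\chi$ via ``$\lambda_{\cT}(\gamma)\le\lambda_{\T_0}(\gamma)$,'' but the geometric estimator is only quasi-monotone, see \eqref{quasi-mono-n}; the correct (and sufficient) argument is the $W^1_\infty$-stability \eqref{interpolation-W1} and Lemma~\ref{L:interpolation}, as used in Lemma~\ref{L:Besov-qG}.
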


\medskip
\begin{lemma}[Element oscillation]\label{L:elem-osc}
	Under the assumptions of Proposition \ref{P:oscU-bound},
	for all $V \in \mathbb V(\T)$ and all $T \in \T$,
        we have
	\begin{align*}
	h_T \left\| 
	(\text{id} - \Pi^2_{2n-2})
	 \widehat{\mathrm{div}} 
	(q_\Gamma \widehat{\nabla}  V \bgG_\Gamma^{-1})
	\right\|_{L_2(\widehat{T})}
	 \Cleq 
	  h_{T}^r 
		|q_\Gamma \bgG_\Gamma^{-1} |_
		{B^{td}_q(L_{q}(\widehat{T}))}
	\| \widehat{\nabla}  V \|_{L_2(\widehat T)},
	\end{align*}
        with $r = td - d/q > 0$.
\end{lemma}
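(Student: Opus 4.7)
The plan is to expand the divergence by the product rule, apply the $L_2$--$L_\infty$ tensorization bound~\eqref{implicit-interpolation} (and an $L_{p'}$--$L_{p''}$ H\"older splitting when~\eqref{implicit-interpolation} is inapplicable) to reduce the $(\text{id}-\Pi^2_{2n-2})$ operator to polynomial approximation errors of the matrix $g := q_\Gamma\bgG_\Gamma^{-1}$, and bound the latter by $|g|_{B^{td}_q(L_q(\widehat T))}$ via a scale-invariant Bramble--Hilbert estimate on the reference simplex $\widehat T_R$. Concretely, writing
\begin{equation*}
\widehat{\text{div}}(g\,\widehat\nabla V) = \sum_{i,j}(\partial_j g_{ij})\,\partial_i V + \sum_{i,j}g_{ij}\,\partial_{ij} V,
\end{equation*}
with $\partial_i V \in \mathbb{P}_{n-1}(\widehat T)$ and $\partial_{ij}V \in \mathbb{P}_{n-2}(\widehat T)$, the second sum is handled by~\eqref{implicit-interpolation} followed by an inverse inequality to absorb the extra derivative on $V$:
\begin{equation*}
\|(\text{id}-\Pi^2_{2n-2})(g_{ij}\,\partial_{ij}V)\|_{L_2(\widehat T)} \Cleq h_T^{-1}\,\|(\text{id}-\Pi^\infty_n)g_{ij}\|_{L_\infty(\widehat T)}\,\|\widehat\nabla V\|_{L_2(\widehat T)}.
\end{equation*}

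For the first sum, $\widehat\nabla g$ need not lie in $L_\infty(\widehat T)$ when $td-1\le d/q$, so a direct $L_\infty$--$L_2$ argument is unavailable. I would instead drop $(\text{id}-\Pi^2_{2n-2})$ using $L_2$-boundedness of the projection and pair H\"older with $\tfrac1{p'}+\tfrac1{p''}=\tfrac12$ with an inverse inequality in $L_{p''}$:
\begin{equation*}
\|(\partial_j g_{ij})\,\partial_i V\|_{L_2(\widehat T)} \Cleq h_T^{-d/p'}\,\|\partial_j g_{ij}\|_{L_{p'}(\widehat T)}\,\|\widehat\nabla V\|_{L_2(\widehat T)},
\end{equation*}
choosing $p'<\infty$ large enough that the embedding $B^{td-1}_q(L_q)\hookrightarrow L_{p'}$ is valid---possible thanks to the slack $tq>1$, i.e., $td>d/q$. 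Combining the scaling identity $|f\circ F_T|_{B^s_q(L_q(\widehat T_R))} = h_T^{s-d/q}\,|f|_{B^s_q(L_q(\widehat T))}$ (with $F_T$ the affine map $\widehat T_R\to\widehat T$) with a Bramble--Hilbert inequality on $\widehat T_R$ then yields
\begin{equation*}
\|(\text{id}-\Pi^\infty_n)g_{ij}\|_{L_\infty(\widehat T)} \Cleq h_T^r\,|g|_{B^{td}_q(L_q(\widehat T))}, \quad \|\partial_j g_{ij}\|_{L_{p'}(\widehat T)} \Cleq h_T^{r-1+d/p'}\,|g|_{B^{td}_q(L_q(\widehat T))}.
\end{equation*}

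Inserting these bounds into the two contributions and multiplying by the outer factor $h_T$, the $p'$-dependence cancels and each piece reduces to $h_T^r\,|g|_{B^{td}_q(L_q(\widehat T))}\,\|\widehat\nabla V\|_{L_2(\widehat T)}$, which is the asserted estimate after summing over $i,j$. The main obstacle is the gradient term $\partial_j g_{ij}$: the textbook $L_\infty$-based route (as used for the second sum) breaks down when $B^{td-1}_q(L_q)$ fails to embed into $L_\infty$; the $L_{p'}$--$L_{p''}$ splitting above circumvents this by exploiting the strict embedding of $B^{td}_q(L_q)$ into $L_\infty$ (and of its gradient into some finite $L_{p'}$) that $tq>1$ guarantees. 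The scale-invariant polynomial approximation and Besov-embedding results underlying these bounds are among the ingredients developed in \S\ref{S:Besov}.
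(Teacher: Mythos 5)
Your splitting by the product rule and your treatment of the second-order term $\sum_{i,j} g_{ij}\partial_{ij}V$ coincide with the paper's argument (apply \eqref{implicit-interpolation}, then the Besov interpolation estimate of \cite[Lemma 4.15]{GM:14} together with an inverse inequality on $V$). The gap is in the gradient term. After discarding $(\mathrm{id}-\Pi^2_{2n-2})$ you claim
$\|\partial_j g_{ij}\|_{L_{p'}(\widehat T)} \Cleq h_T^{\,r-1+d/p'}\,|g|_{B^{td}_q(L_q(\widehat T))}$
from scaling plus Bramble--Hilbert, but no such estimate can hold for functions known only to lie in $B^{td}_q(L_q(\widehat T))$. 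If $td\ge 1$, the seminorm on the right (built with $k=\lfloor td\rfloor+1\ge 2$ differences) annihilates affine functions while the left-hand side does not, so the inequality fails already for $g$ affine; and once you have dropped the projection there is no polynomial invariance left on the left-hand side for a Bramble--Hilbert argument to exploit. If $td<1$ (which the hypotheses permit, since only $td>d/q$ and $td\le n$ are assumed), the space $B^{td}_q(L_q)$ does not control weak first derivatives in any $L_{p'}$, so no choice of $p'$ rescues the embedding route; in addition, for $d=1$ your constraint $p'\ge 2$ from the H\"older splitting can be incompatible with $d/p'>1-r$ when $r<1/2$.

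What saves the argument — and what the paper does — is to keep the projection and use the \emph{discrete} structure of $g=q_\Gamma\bgG_\Gamma^{-1}$ (a rational function of the polynomial $\wnabla X_\T$ with denominators controlled by Lemma \ref{L:geom_estim}). Applying \eqref{implicit-interpolation} to the product $\widehat{\mathrm{div}}\,(q_\Gamma\bgG_\Gamma^{-1})\cdot\wnabla V$ leaves the factor $\|(\mathrm{id}-\Pi^\infty_{n-1})\widehat{\mathrm{div}}\,(q_\Gamma\bgG_\Gamma^{-1})\|_{L_\infty(\widehat T)}$, which is bounded by $h_T^r\,|\widehat{\mathrm{div}}\,(q_\Gamma\bgG_\Gamma^{-1})|_{B^{td}_q(L_q(\widehat T))}$ via the same interpolation estimate, and then the inverse estimate in Besov spaces of Lemma \ref{L:inverse-Besov} trades the divergence for a factor $h_T^{-1}$:
$|\widehat{\mathrm{div}}\,(q_\Gamma\bgG_\Gamma^{-1})|_{B^{td}_q(L_q(\widehat T))} \Cleq h_T^{-1}\,|q_\Gamma\bgG_\Gamma^{-1}|_{B^{td}_q(L_q(\widehat T))}$.
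This inverse estimate, valid precisely because $g$ is a discrete geometric quantity and not a generic Besov function, is the missing ingredient in your proposal; without it (or some equivalent use of the rational-polynomial structure of $q_\Gamma\bgG_\Gamma^{-1}$), the gradient term cannot be bounded by the order-$td$ seminorm of $g$ alone.
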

\begin{proof}
	  We first observe that
	  \[
	  \widehat{\text{div}} 
	  (q_\Gamma \widehat{\nabla}  V \bgG_\Gamma^{-1})
	  = \widehat{\text{div}} \left(
	  q_\Gamma \bgG_\Gamma^{-1} \right) \cdot \widehat{\nabla}  V +
	  q_\Gamma \bgG_\Gamma^{-1} :\widehat{D}^2 V,
	  \]
	  and by~\eqref{implicit-interpolation}
	  \begin{equation*}
	  \begin{split}
	  \left\| 
	  (\text{id} - \Pi^2_{2n-2})
	  \widehat{\text{div}} 
	  (q_\Gamma \widehat{\nabla}  V\bgG_\Gamma^{-1})
	  \right\|_{L_2(\widehat{T})}
	  \Cleq{}&
	  \left\| 
	  (\text{id} - \Pi^\infty_{n-1})
	  \widehat{\text{div}} \left(
	  q_\Gamma \bgG_\Gamma^{-1} \right)\right\|_{L_\infty(\widehat T)}
	  \big\| \widehat{\nabla}  V \big\|_{L_2(\widehat T)}
	  \\
	  &+
	  \big\| 
	  (\text{id} - \Pi^\infty_{n-1})
	  (q_\Gamma \bgG_\Gamma^{-1}) \big\|_{L_\infty(\widehat T)}
	  \big\| \widehat{D}^2 V \big\|_{L_2(\widehat T)} .
	  \end{split}
	  \end{equation*}
	  Using interpolation estimates in Besov norms
          of an immediate generalization of  \cite[Lemma 4.15]{GM:14}
	we have
	\[
	\left\|
	(\text{id} - \Pi^\infty_{n-1})
	\widehat{\text{div}} \left(
	q_\Gamma \bgG_\Gamma^{-1} \right)\right\|_{L_\infty(\widehat T)}
	\lesssim
	h_{T}^r \left|\widehat{\text{div}} \left(
	q_\Gamma \bgG_\Gamma^{-1} \right)\right|_{B^{td}_q(L_q(\widehat T))} ,
	\]
	with $0< r \le n$.
	By the inverse inequality of Lemma~\ref{L:inverse-Besov}, we
        readily get
	\[
	\left|\widehat{\text{div}} \left(
	q_\Gamma \bgG_\Gamma^{-1} \right)\right|_{B^{td}_q(L_q(\widehat T))}
	\lesssim \frac{1}{h_{T}} \left|
	q_\Gamma \bgG_\Gamma^{-1} \right|_{B^{td}_q(L_q(\widehat T))}.
	\]
	A similar argument applied to the second term gives
	\[
	\big\| 
	(\text{id} - \Pi^\infty_{n-1})
	(q_\Gamma \bgG_\Gamma^{-1}) \big\|_{L_\infty(\widehat T)}
	\big\| \widehat{D}^2  V\big\|_{L_2(\widehat T)} 
	\lesssim
	h_{T}^r
	\left|q_\Gamma \bgG_\Gamma^{-1}\right|_{B^{td}_q(L_q(\widehat T))}
	\frac{ \big\| \widehat{\nabla}  V \big\|_{L_2(\widehat T)}}{h_{T}} .
	\]
	This proves the asserted estimate.
\end{proof}

\begin{lemma}[Jump oscillation]\label{L:jump-osc}
Let the assumptions of Proposition \ref{P:oscU-bound} be valid.
For all $T \in \T$ and all $V \in \V(\T)$, there holds
\begin{align*}\label{e:estim_jump_osc}
h_T^{1/2}
\Big\| (\text{id} - \Pi^2_{2n-1}) \Big( q_\Gamma^+ \widehat{\nabla}  V^+
(\bgG_{\Gamma}^+)^{-1} \widehat{\bn}^+
     & - q_\Gamma^- \widehat{\nabla} V^- (\bgG_{\Gamma}^-)^{-1}
      \widehat{\bn}^- \Big) \Big\|_{L_2(\partial \widehat T)}
\\
 & \Cleq  h_{T}^r 
 |q_\Gamma \bgG_\Gamma^{-1} |_{B^{td}_q(L_{q}(N_T(\widehat{T}));\T)}
 \| \widehat{\nabla}  V\|_{L_2(N_T(\widehat{T}))},
\end{align*}
\vskip-0.05cm
\noindent
with $r = td - d/q > 0$.

\end{lemma}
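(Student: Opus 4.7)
\smallskip
\noindent
\textbf{Proof plan.}
My approach mirrors Lemma~\ref{L:elem-osc}, but replaces divergence/second-order derivatives with a scaled trace inequality. Fix $T\in\T$ and let $\widehat S$ be a face of $\widehat T$, with neighbours $\widehat T^\pm$ (so $\widehat T$ is one of them). Since $\widehat S$ is a flat $(d-1)$-face of $\widehat T^+$ and $\widehat T^-$, the outer unit normals satisfy $\widehat{\bn}^+=-\widehat{\bn}^-$ on $\widehat S$, whence the jump inside the $L_2(\widehat S)$-norm splits as a sum of two terms of the form $q_\Gamma^{\pm}\,\widehat\nabla V^\pm(\bgG_\Gamma^\pm)^{-1}\widehat\bn^{\pm}$, each of which I will estimate separately and then put together through a triangle inequality.

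For each side I will apply \eqref{implicit-interpolation} on $\omega=\widehat S$ with $V=\widehat\nabla V^\pm\in\mathbb P_{n-1}$ and $v=q_\Gamma^{\pm}(\bgG_\Gamma^{\pm})^{-1}\widehat\bn^{\pm}$ (read as the vector $A\widehat\bn$ with $A:=q_\Gamma\bgG_\Gamma^{-1}$); with $m=2n-1$ this leaves $m-(n-1)=n$, so I obtain
\[
\bigl\|(\id-\Pi^2_{2n-1})[\widehat\nabla V^\pm\cdot(A^\pm\widehat\bn^\pm)]\bigr\|_{L_2(\widehat S)}
\le \bigl\|(\id-\Pi^\infty_{n})(A^\pm\widehat\bn^\pm)\bigr\|_{L_\infty(\widehat S)}\,
\|\widehat\nabla V^\pm\|_{L_2(\widehat S)}.
\]

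The two factors are then handled as follows. Because $\widehat\bn^\pm$ is a constant vector along $\widehat T^\pm$ (extend it constantly from $\widehat S$ into the element), the best $L_\infty$-approximation on $\widehat S$ is bounded by the same approximation on $\widehat T^\pm$, so the generalization of~\cite[Lemma 4.15]{GM:14} and the embedding $B^{td}_q(L_q(\widehat T^\pm))\subset L_\infty(\widehat T^\pm)$ (valid since $tq>1$) give
\[
\bigl\|(\id-\Pi^\infty_{n})(A^\pm\widehat\bn^\pm)\bigr\|_{L_\infty(\widehat S)}
\Cleq h_T^{\,r}\,|q_\Gamma\bgG_\Gamma^{-1}|_{B^{td}_q(L_q(\widehat T^\pm))},
\qquad r=td-d/q.
\]
For the second factor I will combine a scaled trace inequality with the inverse estimate for polynomials of degree $\le n-1$ on $\widehat T^\pm$ to get $\|\widehat\nabla V^\pm\|_{L_2(\widehat S)}\Cleq h_T^{-1/2}\|\widehat\nabla V^\pm\|_{L_2(\widehat T^\pm)}$; the shape-regularity of $\mathbb T$ (Lemma~\ref{l:shape_reg}) ensures $h_{T^+}\Ceq h_{T^-}\Ceq h_T$ for neighbours, so the choice of scaling is harmless.

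Multiplying the two estimates and the extra factor $h_T^{1/2}$ then produces $h_T^{r}$ times the desired product of a Besov seminorm and an $L_2$-norm, but localized to $\widehat T^+\cup\widehat T^-$. Summing over the finitely many faces $\widehat S\subset\partial\widehat T$ (bounded by shape-regularity) and invoking the broken-Besov seminorm on the element patch $N_\T(\widehat T)$ delivers~\eqref{e:estim_jump_osc}. The only delicate point I anticipate is the bookkeeping of the constant-in-$\widehat T^\pm$ extension of $\widehat\bn^\pm$, which is needed to transfer the local Besov interpolation estimate from the $(d-1)$-face to the $d$-element; everything else is a direct adaptation of the element-oscillation argument.
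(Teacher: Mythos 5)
Your argument is correct and essentially coincides with the paper's proof: split the jump by the triangle inequality, apply \eqref{implicit-interpolation} to peel off the polynomial factor $\widehat{\nabla} V^\pm$, bound the resulting $L_\infty(\widehat{S})$ best-approximation error of $q_\Gamma \bgG_\Gamma^{-1}\widehat{\bn}^\pm$ using the constancy of $\widehat{\bn}^\pm$ on $\widehat{S}$ together with the Besov interpolation estimate on $\widehat{T}^\pm$, and conclude with a scaled trace inequality for $\widehat{\nabla} V^\pm$. The only cosmetic deviations are your use of $\Pi^\infty_{n}$ where the paper takes $\Pi^\infty_{n-1}$ (both admissible since $td\le n$) and the superfluous claim $\widehat{\bn}^+=-\widehat{\bn}^-$ (which may fail when $T^+$ and $T^-$ lie in different macro-patches), but this claim is never actually used since the triangle inequality alone performs the splitting.
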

\begin{proof}
Let $\widehat S = \widehat{T}^+\cap\widehat{T}^-$ be any side of
$\widehat{T}^+ :=\widehat{T}$. Since 
\begin{equation*}
\begin{split}
h_{T}^{1/2} \Big\|
(\text{id} - \Pi^2_{2n-1})
& \Big(q_\Gamma^+\widehat{\nabla}  V^+(\bgG_\Gamma^+)^{-1} \widehat{\bn}^+
- q_\Gamma^-\widehat{\nabla}  V^-(\bgG_\Gamma^-)^{-1} \widehat{\bn}^-
\Big) \Big\|_{L_2(\widehat{S})} \\
\Cleq {}&
h_{T}^{1/2} 
\Big \| (\text{id} - \Pi^2_{2n-1})
\Big(q_\Gamma^+\widehat{\nabla}  V^+(\bgG_\Gamma^+)^{-1}\widehat{\bn}^+\Big)\Big\|_{L_2(\widehat{S})}\\
&+ h_{T}^{1/2} 
\Big\| (\text{id} - \Pi^2_{2n-1})
\Big(q_\Gamma^-\widehat{\nabla}  V^-(\bgG_\Gamma^-)^{-1}\widehat{\bn}^-\Big)\Big\|_{L_2(\widehat{S})}, 
\end{split}
\end{equation*}
we estimate each term separately, dropping the $\pm$ superscript. We  invoke~\eqref{implicit-interpolation} to deduce that
\begin{equation*}
\Big\| (\text{id} - \Pi^{2}_{2n-1})
\big(q_\Gamma\widehat{\nabla}  V\bgG_\Gamma^{-1}
\widehat{\bn} \big) \Big\|_{L_2(\widehat{S})}
\leq \big\| (\text{id}-\Pi^{\infty}_{n-1})\big(q_\Gamma \bgG_\Gamma^{-1} \widehat{\bn}\big) \big\|_{L_\infty(\widehat{S})} \big\| \widehat{\nabla}  V\big\|_{L_2(\widehat{S})},
\end{equation*}
where \rhn{$\Pi^{\infty}_{n-1}$ is the Lagrange interpolation operator}
%$L_\infty(\widehat S)$ projector 
onto $\P_{n-1}(\widehat S)$. 
Since the unit normal $\widehat{\bn}$ is constant on $\widehat{S}$,
the interpolation estimate  from an immediate generalization
of~\cite[Lemma 4.15]{GM:14}
reveals that
$$
	\| (\text{id}-\Pi^{\infty}_{n-1})\big(q_\Gamma
        \bgG_\Gamma^{-1} \widehat{\bn} \big) \|_{L_\infty(\widehat{S})} \Cleq 
	h_{T}^r \big|q_\Gamma \bgG_\Gamma^{-1} \big|_{B^{td}_q(L_q(\widehat{T}^\pm))},
$$ 
where we have used the assumption $r \leq n$.
This together with a scaled trace estimate
$\|\widehat{\nabla}  V\|_{L_2(\widehat{S})} 
\Cleq h_{T}^{-1/2} \| \widehat{\nabla}  V \|_{L_2(\widehat{T}^\pm)}$
yields the desired estimate.
\end{proof}

We see from Lemmas \ref{L:elem-osc} and \ref{L:jump-osc} that the
  discrete surface $\Gamma$ enters the estimates via
  $|q_\Gamma\bgG_\Gamma^{-1}|_{B^{td}_q(L_q(\widehat T))}$. 
 The next
  lemma provides control of this term.

\medskip
\begin{lemma}[Besov semi-norm of $q_\Gamma\bgG_\Gamma^{-1}$]\label{L:Besov-qG}
Let the assumptions of Proposition~\ref{P:oscU-bound} hold
and $k = \lfloor td \rfloor + 1$.
We then have for $T \in \T^i$, $1\le i \le M$,
\begin{align*}
|q_\Gamma \bgG_\Gamma^{-1} |_{B^{td}_q(L_{q}(\widehat T))}
\Cleq
\sum_{j=1, \ell_j = j/k^ {k}}^{k^{k}}
\rhn{| \pedro{\chi^i} |^{1/\ell_j}_{B^{1+td\ell_j}_q(L_{q/\ell_j}(\widehat T))}.}
\end{align*}
\end{lemma}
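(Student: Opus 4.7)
My plan is to recognize $q_\Gamma\bgG_\Gamma^{-1}$ as the composition of a smooth matrix-valued function with $\widehat\nabla I_\T\chi^i$, and then to combine the scale-invariant Besov composition estimate of Lemma~\ref{L:composition-semi} with the Lagrange-interpolation stability of Lemma~\ref{L:interpolation}.

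First I would introduce the map
\begin{equation*}
\Phi(G):=\sqrt{\det(G^T G)}\,(G^T G)^{-1},\qquad G\in\R^{(d+1)\times d},\ \det(G^T G)>0,
\end{equation*}
which is $C^\infty$ on its domain. Writing $\bG_\Gamma=\widehat\nabla I_\T\chi^i$, we have the pointwise identity $q_\Gamma\bgG_\Gamma^{-1}=\Phi(\bG_\Gamma)$ on $\widehat T$. Under~\eqref{eq:init_cond}, Lemma~\ref{L:geom_estim} pins the eigenvalues of $\bgG_\Gamma=\bG_\Gamma^T\bG_\Gamma$ in the compact interval $[\tfrac12 L^{-2},\tfrac32 L^2]$, so $\bG_\Gamma(\widehat{\mathbf x})$ ranges in a fixed compact subset of the smoothness domain of $\Phi$; every partial derivative of $\Phi$ evaluated along this range is then uniformly bounded by a constant depending only on $L$.

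Next I would apply Lemma~\ref{L:composition-semi} to $\Phi$ and $\bG_\Gamma$, with target smoothness $td$ and integrability $q$. Since $k=\lfloor td\rfloor+1$ is the order of finite differences measuring $B^{td}_q(L_q)$, a Faà--di--Bruno expansion of $\Phi(\bG_\Gamma)$ produces at most $k^k$ terms, one for each distribution of the $k$ differentiations among the $k$ factors arising from iterating the product rule. A Hölder splitting $\sum 1/q_\sigma=1/q$ combined with Besov interpolation, arranged to keep each product \emph{scale-invariant} under dilations of $\widehat T$, forces each such term to appear in the homogeneous form $|\bG_\Gamma|^{1/\ell_j}_{B^{td\ell_j}_q(L_{q/\ell_j}(\widehat T))}$ with $\ell_j=j/k^k$, $1\le j\le k^k$; the scaling match comes from the identity $d\ell_j/q-td\ell_j=\ell_j(d/q-td)$, so that raising to the $1/\ell_j$-th power reproduces the correct homogeneity exponent $d/q-td$. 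Collecting yields
\begin{equation*}
|q_\Gamma\bgG_\Gamma^{-1}|_{B^{td}_q(L_q(\widehat T))}\ \Cleq\ \sum_{j=1}^{k^k}\bigl|\widehat\nabla I_\T\chi^i\bigr|^{1/\ell_j}_{B^{td\ell_j}_q(L_{q/\ell_j}(\widehat T))}.
\end{equation*}

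To finish I would shift the derivative back onto $\chi^i$ through the standard identity $|\widehat\nabla v|_{B^{s-1}_q(L_p(\widehat T))}\le |v|_{B^s_q(L_p(\widehat T))}$, and then strip the Lagrange interpolant via Lemma~\ref{L:interpolation} at order $s=1+td\ell_j$, outer exponent $q$, and inner exponent $q/\ell_j$: its hypothesis $s>d\ell_j/q$ rearranges to $1>d\ell_j(1-tq)/q$, which is automatic from $tq>1$ (the right-hand side is non-positive), and $s\le n+1$ follows from $td\ell_j\le td\le n$. This delivers $|I_\T\chi^i|_{B^{1+td\ell_j}_q(L_{q/\ell_j}(\widehat T))}\Cleq|\chi^i|_{B^{1+td\ell_j}_q(L_{q/\ell_j}(\widehat T))}$, whose $1/\ell_j$-th power gives the claim after summation. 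The main obstacle I expect is the combinatorial accounting in Lemma~\ref{L:composition-semi}: one must verify that the scale-invariant Hölder partitions produced by the Faà--di--Bruno expansion exhaust exactly the dyadic enumeration $\ell_j=j/k^k$, while the smoothness of $\Phi$ (rational in $G$ with a denominator uniformly bounded away from zero on the relevant compact set) contributes only multiplicative constants depending on $L$.
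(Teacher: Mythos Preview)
Your overall strategy—write $q_\Gamma\bgG_\Gamma^{-1}=\Phi(\widehat\nabla X_\T)$ for a single smooth $\Phi$, apply the scale-invariant composition estimate, then invoke Lagrange stability—is sound and in fact more streamlined than the paper's route. The paper first splits the product via Lemma~\ref{L:Besov-product}, obtaining terms $|q_\Gamma|_{B^{s^*}_q(L_{q^*})}$ and $|\bgG_\Gamma^{-1}|_{B^{s^*}_q(L_{q^*})}$ with $s^*=td\ell_m$, $q^*=q/\ell_m$ for $\ell_m=m/k$, and only then applies Lemma~\ref{L:composition-semi} separately to each factor. This two-stage process is what produces the $k^k$ in the final enumeration: the product step contributes a factor $k$ and the composition step contributes $k^{k-1}$, so $\ell_n=\ell_j^*\ell_m=n/k^k$. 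Your one-shot composition would naturally yield denominators $k^{k-1}$, giving a slightly tighter bound that still implies the stated one after padding with the extra (positive) terms.

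However, there is a genuine gap in your middle step. Lemma~\ref{L:composition-semi} does \emph{not} hand you a sum of monomials $|\bG_\Gamma|^{1/\ell_j}_{B^{td\ell_j}_q(L_{q/\ell_j})}$; it hands you
\[
\sum_{\ell=1}^k\sum_{i=1}^\ell \|\bG_\Gamma\|_{L_\infty(\widehat T)}^{\ell-i}
\sum_{\sum_j\ell_j^*=1}\prod_{j=1}^i |\bG_\Gamma|_{B^{td\ell_j^*}_q(L_{q/\ell_j^*}(\widehat T))}.
\]
Two separate manipulations are needed to reach the claimed form, and you have glossed over both. First, the powers $\|\bG_\Gamma\|_{L_\infty}^{\ell-i}$ must be removed: this uses $\|\widehat\nabla X_\T\|_{L_\infty(\widehat T)}\Cleq 1$, which follows from the $W^1_\infty$-stability~\eqref{interpolation-W1} and the global Lipschitz bound~\eqref{bi_lipschitz} on $\chi$. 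Second, the \emph{products} $\prod_j |\bG_\Gamma|_{\dots}$ must be converted to \emph{sums} $\sum_j |\bG_\Gamma|^{1/\ell_j^*}_{\dots}$ via H\"older's inequality (using $\sum_j\ell_j^*=1$). Your phrase ``a H\"older splitting \dots forces each such term to appear in the homogeneous form'' conflates these two distinct steps and does not make clear that the H\"older you need here is applied to the product of seminorms, not inside the modulus of smoothness. Also note that Lemma~\ref{L:composition-semi} is stated for scalar $f:\R\to\R$; applying it componentwise to the matrix-valued $\Phi$ is routine but should be said. Once these points are filled in, your argument goes through and is arguably cleaner than the paper's.
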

\begin{proof}
\pedro{We fix $i$ and drop it from the notation throughout this proof.}
We invoke Lemma \ref{L:Besov-product} (scale-invariant Besov semi-norm
of the product of two functions) along with the H\"older inequality
to write
\begin{equation*}\label{eq:qG-bound1}
\begin{aligned}
  |q_\Gamma \bgG_\Gamma^{-1} |_
  {B^{td}_q(L_{q}(\widehat T))}
  \Cleq{}&
  \big\|q_\Gamma \big\|_{L_\infty(\widehat T)}
  \big|\bgG_\Gamma^{-1} \big|_{B^{td}_q(L_{q}(\widehat T))} \\
 &+  
  \sum_{
  	\begin{smallmatrix} m=1 \\ \ell_m := m/k
  	\end{smallmatrix}
  	}^{k-1}
  \big|q_\Gamma \big|^{1/\ell_m}_
  {B^{td \ell_m}_q(L_{q/\ell_m}(\widehat T))}
  +
  \big|\bgG_\Gamma^{-1} \big|^{1/(1-\ell_m)}_
  {B^{td (1-\ell_m)}_q(L_{q/(1-\ell_m)}(\widehat T))}\\
&+
  \big|q_\Gamma \big|_{B^{td }_q(L_{q}(\widehat T))}
  \big\|\bgG_\Gamma^{-1} \big\|_{L_\infty(\widehat T)} 
  \\
  \Cleq{}& \sum_{
    	\begin{smallmatrix} m=1 \\ \ell_m := m/k
    	\end{smallmatrix}
    	}^{k}
    \big|q_\Gamma \big|^{1/\ell_m}_
    {B^{td \ell_m}_q(L_{q/\ell_m}(\widehat T))}
    +
    \big|\bgG_\Gamma^{-1} \big|^{1/\ell_m}_
        {B^{td \ell_m}_q(L_{q/\ell_m}(\widehat T))},
  \end{aligned}
\end{equation*}
because $\big\|q_\Gamma \big\|_{L_\infty(\widehat T)}, 
\big\|\bgG_\Gamma^{-1} \big\|_{L_\infty(\widehat T)} \Cleq 1$
for $\gamma$ being globally Lipschitz.
We denote
\[
s^* = td \ell_m, \quad q^* = q/\ell_m, \quad 1\le m \le k,
\]
and bound $|q_\Gamma |_{B^{s^*}_q(L_{q^*}(\widehat T))}$ using
Lemma \ref{L:composition-semi} for $q_\Gamma = \big(\det ( \rhn{\widehat{\nabla}
X_{\cT}^T} \widehat{\nabla} X_{\cT})\big)^{1/2}$:
\begin{equation*}
|q_\Gamma|_{B^{s^*}_q(L_{q^*}(\widehat{T}))}
\Cleq 
\sum_{\ell=1}^{k} \sum_{i=1}^\ell \| \widehat{\nabla} X_{\cT} \|_{L_\infty(\widehat{T})}^{\ell-i}
\sum_{\sum_{j=1}^i\ell^*_j=1} \prod_{j=1}^i | \widehat{\nabla} X_{\cT} |_{B^{s^*\ell^*_j}_q(L_{q^*/\ell^*_j}(\widehat{T}))};
\end{equation*}
a similar bound is valid for $\bgG_\Gamma^{-1}$.
To get a simpler expression, we observe that $\sum_{j=1}^i \ell_j^*=1$ with
\rhn{$0 \le \ell_j^*=i_j/k^{i-1} \le 1$} and $i_j\in\N_0$, whence there cannot
be more than $i-1$ vanishing $\ell_j^*$'s in each product
$\prod_{j=1}^i |\widehat{\nabla} X_{\cT}|_{B^{s^*\ell^*_j}_q(L_{q^*/\ell^*_j}(\widehat{T}))}$. Therefore
\begin{equation*}
\begin{split}
\prod_{j=1}^i | \widehat{\nabla} X_{\cT} |_{B^{s^*\ell^*_j}_q(L_{q^*/\ell^*_j}(\widehat{T}))}
\leq  \; 
\mbox{max}\left(1, \| \widehat{\nabla} X_{\cT} \|_{L_\infty(\widehat T)}^{i-1} \right)
\prod_{
	\begin{smallmatrix} j=1 \\ \ell_j^* \neq 0 \end{smallmatrix}}^i
|\widehat{\nabla} X_{\cT} |_{B^{s^* \ell_j^*}_q(L_{q^*/\ell_j^*}(\widehat T))},
\end{split}
\end{equation*}
because $|\widehat{\nabla} X_{\cT}|_{B^0_\infty(L_\infty(\widehat T))} = \|\widehat{\nabla} X_{\cT}\|_{L_\infty(\widehat T)}$.
Now, using $ i \leq \ell \leq k$, we obtain 
 \begin{equation*}
   |q_\Gamma |_{B^{s^*}_q(L_{q^*}(\widehat{T}))} \Cleq
\mbox{max}\left(1, \| \widehat{\nabla} X_{\cT} \|_{L_\infty(\widehat T)}^{k-1} \right)
\sum_{\ell = 1}^{k} \sum_{i=1}^\ell \sum_{\sum_{j=1}^i \ell_j^* = 1} 
\prod_{
	\begin{smallmatrix} j=1 \\ \ell_j^* \neq 0 \end{smallmatrix}}^i
|\widehat{\nabla} X_{\cT} |_{B^{s^* \ell_j^*}_q(L_{q^*/\ell_j^*}(\widehat T))},
 \end{equation*}
 and remove the first factor in light of \eqref{interpolation-W1}
 and $\chi$ being globally Lipschitz.
 Since $\sum_{j=1}^i \ell_j^* = 1$ and $\ell_j^*>0$, we employ H\"older's inequality to 
 estimate each product as
 \begin{equation*}
 \prod_{
 	\begin{smallmatrix} j=1 \\ \ell_j^* \neq 0 \end{smallmatrix}}^i
 |\widehat{\nabla} X_{\cT} |_{B^{s^* \ell_j^*}_q(L_{q^*/\ell_j^*}(\widehat T))}
 \Cleq 
 \sum_{\begin{smallmatrix}
	 	 j=1 \\ \ell_j^* \neq 0
 	   \end{smallmatrix}
 	  }^i
 	  |\widehat{\nabla} X_{\cT}|_{B^{s^* \ell_j^*}_q(L_{q^*/\ell_j^*}(\widehat T))}
 	  ^{1/\ell_j^*}.
 \end{equation*}
 Combining this with the preceding expression, and taking into account
   that the numbers of appearances of each
   $|\widehat{\nabla} X_{\cT} |_{B^{s^* \ell_j^*}_q(L_{q^*/\ell_j^*}(\widehat T))}$ only depends of $k$,
  we have
  \begin{equation*}
  |q_\Gamma |_{B^{s^*}_q(L_{q^*}(\widehat T))}^{1/\ell_m}
  \Cleq
  \sum_{\begin{smallmatrix}
  	j=1 \\ \ell_j^* = j/k^{k-1}
  	\end{smallmatrix}
  }^{k^{k-1}}
  |\widehat{\nabla} X_{\cT} |_{B^{s^* \ell_j^*}_q(L_{q^*/\ell_j^*}(\widehat T))}
  ^{1/\ell_j^*\ell_m},
  \end{equation*}
where $\ell_j^* = j / k^{k-1}$ has been redefined to fit all possible
cases. It suffices now to realize that $\ell_j^*\ell_m$ can be written as
$\ell_n = \ell_j^*\ell_m = n/k^k$ for $1\le n \le k^k$ and
\[
s^*\ell_j^* = t d \ell_n, \quad q^*/\ell_j^* = q /\ell_n.
\]
Finally, applying Lemma \ref{L:interpolation} (local stability
of Lagrange interpolation) \rhn{with $s=1+t d \ell_n$, so that $d\ell_n/q < s \le n+1$}, 
we may replace $X_{\cT}$ by $\chi$ in $\widehat T$
and thereby obtain the asserted estimate.
\end{proof}

\medskip
\begin{proposition} [Uniform decay rate of $\osc_{\cT}(V)$] \label{P:oscU}
\rhn{Let $\gamma$ be globally of class $W^1_\infty$ and be
parameterized by $\bXi \in [B^{1+td}_q(L_q(\Omega))]^M$}
with $tq>1$, $td\leq n$.
Let \rhn{$\T_0$ \manel{have} an admissible labeling} and let $\T\ge\T_0$
be a refinement of $\T_0$.
Then, for any tolerance $\delta>0$
there exists a subdivision $\T_\delta \in \grids$ such that
$\T_\delta\ge\T$ and
\[
\max_{V\in \V(\T_\delta)}
\frac{\osc_{\T_\delta} (V)}{\|\widehat \nabla \bV
  \|_{L_2(\Omega)}} \Cleq \delta ,  \qquad 
\# \M \Cleq C_2(\gamma)^{1/t} \,  \delta^{-1/t},
\]
where $\M$ is the set of elements marked to create
$\T_\delta$ from $\T$ and
the constant $C_2(\gamma)$ depends on $\gamma$ and is given explicitly by
\[
C_2(\gamma) := \max \left(  
  	\| \bXi \|_{B^{1+td}_q(L_{q}(\Omega))},
  	\| \bXi \|_{B^{1+td}_q(L_{q}(\Omega))}^{k^k}
  	\right) ,
\]
\manel{with $k= \lfloor td \rfloor +1$.}
\end{proposition}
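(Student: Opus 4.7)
The plan is to separate the surface from the solution in the oscillation estimate and then apply the greedy framework to each term of the surface indicator furnished by Proposition~\ref{P:oscU-bound}. That proposition bounds, for any $V \in \V(\cT)$ and $T \in \cT$,
\begin{equation*}
\osc_{\cT}(V,T) \Cleq \zeta_{\cT}(\bXi,T)\,\|\widehat\nabla V\|_{L_2(N_{\cT}(\widehat T))},
\end{equation*}
with
\begin{equation*}
\zeta_{\cT}(\bXi,T) := h_T^r \sum_{j=1,\,\ell_j=j/k^k}^{k^k} |\bXi|^{1/\ell_j}_{B^{1+td\ell_j}_q(L_{q/\ell_j}(N_{\cT}(\widehat T));\cT)},\qquad r = td-d/q > 0.
\end{equation*}
Squaring, summing over $T \in \cT$, and invoking shape regularity of $\widehat{\mathbb T}$ to control the finite overlap of the neighborhoods $\{N_{\cT}(\widehat T)\}_{T}$, I obtain
$\osc_{\cT}(V)^2 \Cleq \bigl(\max_{T\in\cT} \zeta_{\cT}(\bXi,T)\bigr)^2 \|\widehat\nabla V\|_{L_2(\Omega)}^2$.
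It thus suffices to produce a refinement $\cT_\delta \ge \cT$ making $\max_T \zeta_{\cT_\delta}(\bXi,T) \Cleq \delta$, with the appropriate cardinality estimate on $\cM$.

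For each $j\in\{1,\dots,k^k\}$ I would apply Corollary~\ref{C:est-lq} with $\mathtt{q}=\infty$, $\mathtt{p}=q$, $s=t$ to the local surface indicator
\begin{equation*}
\zeta^{(j)}_{\cT}(\bXi,T) := h_T^r |\chi^i|^{1/\ell_j}_{B^{1+td\ell_j}_q(L_{q/\ell_j}(\widehat T))},\qquad T\in\cT^i.
\end{equation*}
The hypothesis $\zeta^{(j)}_{\cT}(\bXi,T) \Cleq h_T^r |g|_T$ of Proposition~\ref{P:greedy_algo} is verified with $g=\widehat\nabla\chi^i$ in the Besov scale $B^{td\ell_j}_q(L_{q/\ell_j})$: the standard interpolation estimate on $\widehat T$ produces the factor $h_T^{r\ell_j}$ in front of $|\widehat\nabla\chi^i|_{B^{td\ell_j}_q(L_{q/\ell_j}(\widehat T))}$, and taking the $\ell_j$-th root recovers $h_T^r$ in front of the $1/\ell_j$-th power of the seminorm. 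Corollary~\ref{C:est-lq} then returns a refinement $\cT_j \ge \cT$ with $\max_T \zeta^{(j)}_{\cT_j}(\bXi,T) \le \delta/k^k$ and
\begin{equation*}
\#\cM_j \Cleq \bigl(|\bXi|^{1/\ell_j}_{B^{1+td\ell_j}_q(L_{q/\ell_j}(\Omega))}\bigr)^{1/t}\,\delta^{-1/t}.
\end{equation*}
Defining $\cT_\delta$ as the overlay of $\{\cT_j\}_{j=1}^{k^k}$ and invoking \cite[Lemma~3.7]{CaKrNoSi:08} gives $\#\cM \le \sum_j \#\cM_j$. Using the Besov embedding $B^{1+td}_q(L_q) \hookrightarrow B^{1+td\ell_j}_q(L_{q/\ell_j})$ just above the Sobolev scale (valid because the embedding gap $(1-\ell_j)(td-d/q)$ is non-negative) together with the global Lipschitz property of $\bXi$, every exponent $1/(t\ell_j)\in[1/t,k^k/t]$ is controlled by $\max(\|\bXi\|_{B^{1+td}_q(L_q(\Omega))},\|\bXi\|^{k^k}_{B^{1+td}_q(L_q(\Omega))})$, which reproduces the declared constant $C_2(\gamma)$.

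The principal obstacle is the careful bookkeeping of the shifting Besov scales: each summand in $\zeta_{\cT}(\bXi,T)$ carries simultaneously a different smoothness $1+td\ell_j$, a different integrability index $q/\ell_j$, and a fractional external exponent $1/\ell_j$, yet the greedy algorithm must return the same cardinality exponent $1/t$ in every sub-call for the overlay to preserve optimality. The identity $r\ell_j = td\ell_j - d\ell_j/q$, which is precisely the Sobolev embedding gap for $\widehat\nabla\chi$ at the $j$-th scale, is what synchronizes these exponents. A subsidiary difficulty is that the local Besov seminorms on $\widehat T$ must accumulate into a global broken Besov seminorm of $\bXi$ on $\Omega$, which is where the framework developed in \S\ref{S:Besov} plays its role.
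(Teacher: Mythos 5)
Your proposal is correct and is essentially the paper's own proof: both rest on Proposition~\ref{P:oscU-bound}, the greedy machinery of Proposition~\ref{P:greedy_algo}/Corollary~\ref{C:est-lq} with $r=td-d/q$ (so the cardinality exponent is $\frac{dq}{d+rq}=\frac1t$), and the embedding $B^{1+td}_q(L_q(\Omega))\subset B^{1+td\ell_j}_q(L_{q/\ell_j}(\Omega))$ that consolidates all powers $1/\ell_j\in[1,k^k]$ into $C_2(\gamma)$. The only cosmetic difference is that the paper makes one call $\GREEDY(\bXi,\T,\delta)$ with the summed indicator $\zeta_{\T}(\bXi,T)=h_T^r\sum_{j}|\chi|^{1/\ell_j}_{B^{1+td\ell_j}_q(L_{q/\ell_j}(\widehat T))}$ rather than your $k^k$ calls plus overlay; just note that Corollary~\ref{C:est-lq} is stated for refinements of $\T_0$, so to guarantee $\T_\delta\ge\T$ you should launch the greedy from $\T$ itself, which Proposition~\ref{P:greedy_algo} (step 5 of its proof) licenses with the same bound on $\#\M$.
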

\begin{proof}
We make use of the $\GREEDY$ algorithm upon taking
  $\mathtt{p} = q$, $\vg = \bXi$, and 
\begin{align}\label{zeta-oscV}
  \zeta_{\T}(\bXi, T) = h_{T}^r |\chi|_{\widehat T},
  \qquad
  | \chi |_{\widehat T} =
  \sum_{j=1, \ell_j = j/k^ {k}}^{k^{k}} | \chi|^{1/\ell_j}_{B^{1+td \ell_j}_q(L_{q/\ell_j}(\widehat T))},
\end{align}	
  with $r =  d(t - \frac{1}{q}) >0$.
\rhn{Since} %The assumptions of Proposition~\ref{P:greedy_algo} hold because 
\begin{align*}
 |\bXi |_\Omega^q &= \sum_{\widehat T \in \wT} |\chi|_{\widehat T}^q = 
\sum_{\widehat T \in \wT} 
\bigg(
\sum_{\begin{smallmatrix}
	j=1 \\ \ell_j = j/k^ {k}
	\end{smallmatrix}
	}^{k^{k}}
	| \chi |^{1/\ell_j}_{B^{1+td \ell_j}_q(L_{q/\ell_j}(\widehat T))}
\bigg)^{q} 
\\ 
& \Cleq \sum_{\widehat T \in \wT} 
\sum_{\begin{smallmatrix}
	j=1 \\ \ell_j = j/k^ {k}
	\end{smallmatrix}
}^{k^{k}}
| \chi |^{q/\ell_j}_{B^{1+td \ell_j}_q(L_{q/\ell_j}(\widehat T))}
 \Cleq
 \sum_{\begin{smallmatrix}
 	j=1 \\ \ell_j = j/k^ {k}
 	\end{smallmatrix}
 }^{k^{k}}
 | \bXi |^{q/\ell_j}_{B^{1+td \ell_j}_q(L_{q/\ell_j}(\Omega))},
\end{align*}
and % together with
\[
B^{1+td}_q(L_q(\Omega)) \subset
B^{1+td \ell_j}_q(L_{q/\ell_j}(\Omega))
\quad\text{for all $0  < \ell_j \leq 1$},
\]
\rhn{we deduce} $| \bXi |_{B^{1+td \ell_j}_q(L_{q/\ell_j}(\Omega))}\Cleq \| \bXi \|_{B^{1+td}_q(L_{q}(\Omega))}$
\rhn{along with}
\[
 |\bXi |_\Omega  \Cleq  C_2(\gamma).
\]
Then, Proposition \ref{P:greedy_algo} guarantees that the call \rhn{$\T_\delta=\GREEDY( \bXi,\T,\delta)$} stops in a finite number of steps and the resulting subdivision \rhn{$\T_\delta$} satisfies
\[
\zeta_{\T_\delta}(\bXi,  T) \le \delta
\qquad\forall~T\in\T_\delta.
\]
This, in conjunction with Proposition \ref{P:oscU-bound}
and the finite overlapping property of the sets $N_\T(T)$,
implies that \rhn{$\T_\delta$} satisfies
\begin{equation*}\label{eq:oscU-bound}
\osc_{\T_\delta}(V)^2 \Cleq \sum_{ T\in\T_{\delta}} \zeta_{\T}(\bXi,N_{\T_{\delta}}(T))^2
\| \widehat{\nabla}  V \|_{L_2(N_{\T_\delta}(\widehat{T}))}^2
\le \delta^2  \|  \widehat \nabla  \bV \|_{L_2(\Omega)}^2,
\quad\forall~V\in \mathbb V(\T_\delta).
\end{equation*}
This proves the first assertion.
In order to bound the cardinality of $\M$ 
we rely on the estimate \eqref{eq:greedy-bounds} on the elements marked by $\GREEDY$
\begin{equation*}
  \# \M \Cleq
  |\bXi |_\Omega^{\frac{d q}{d + r q}}   \delta^{-\frac{d q}{d + r q}}.
\end{equation*}
The proof concludes upon realizing that
$\frac{d q}{d + r q}= \frac{1}{t}$.
\end{proof}
\begin{remark}[Approximation of $\gamma$]\label{R:approx-gamma}
\rm
\rhn{Since $\lambda_{\T}(\gamma,T)$ and $\zeta_{\T}(\bXi,T)$ satisfy
\[
\lambda_{\T}(\gamma,T) = \| \nabla(\chi - X_\T) \|_{L_\infty(\widehat T)}
\Cleq \hT^r |\chi|_{B_q^{1+td}(L_q(\widehat T))} \le \hT^r |\chi|_{\widehat T} 
= \zeta_{\T}(\bXi,T) ,
\]
we deduce {$\lambda_{\T_\delta}(\gamma)\Cleq\delta$} for the mesh $\T_\delta$ of Proposition \eqref{P:oscU}.
}
%Since the local error estimator in \eqref{zeta-oscV} satisfies
%\rhn{
%$\zeta_{\T_\delta}(\bXi,T) \Cgeq \lambda_{\T_\delta}(\gamma,T)$},
% we deduce \rhn{$\lambda_{\T_\delta}(\gamma)\Cleq\delta$} for the mesh $\T_\delta$ of Proposition \eqref{P:oscU}.
\end{remark}

%--------------------------------------------------------------------------------
\subsection{Membership in $\As$}\label{S:member-As}
%--------------------------------------------------------------------------------
We now collect the estimates derived earlier in this section and
prove our second main result. 

\medskip
\begin{theorem}[Membership in $\As$]\label{T:member-As}
\rhn{Let $\gamma$ be globally of class $W^1_\infty$ and be
parameterized by $\bXi \in [B^{1+td}_q(L_q(\Omega))]^M$
with $tq>1, 0<q\le\infty$ and $td\le n$, and let
\manel{$k := \lfloor td\rfloor +1$}. Let $u\in
H^1_\#(\gamma)$ and $f \in L_2(\gamma)$ be piecewise of class
$B^{1+sd}_p(L_p(\Omega))$ and $B^{sd}_p(L_p(\Omega))$, respectively, namely
$\bu \in [B^{1+sd}_p(L_p(\Omega))]^M$ and $\mathbf{f}\in [B^{sd}_p(L_p(\Omega))]^M$,
with $s-1/p+1/2>0, 0<p\le\infty$ and $0<sd\le n$.
Let $\T_0$ \manel{have} an admissible labeling} and $\lambda_{\T_0}(\gamma)$ satisfy \eqref{eq:init_cond}.
Then,  
$$
	(u,f,\gamma) \in \mathbb{A}_{s \wedge t},
$$
i.e, given $\delta>0$ there exists a conforming refinement $\T$
such that 
\begin{equation}\label{class-st}
\inf_{V \in \V(\T) }
E_{\T} (V; u,f,\gamma)  \Cleq \delta,
\qquad
\#\T-\#\T_0 \Cleq |u,f,\gamma|_{\mathbb{A}_{s\wedge t}}^{\frac{1}{s\wedge t}}
\delta^{-{\frac{1}{s\wedge t}}},
\end{equation}
and
\begin{equation}\label{ufg}
\begin{aligned}
|u,f,\gamma|_{\mathbb{A}_{s\wedge t}} \le&{}
|\bu|_{B^{1+sd}_p(L_p(\Omega))}
\\
& +
\Big(\|\bXi\|_{B^{1+td}_q(L_q(\Omega))} + \|\bXi\|_{B^{1+td}_q(L_q(\Omega))}^{k^k}  \Big)
\Big( 1 + \|{\mathbf f}\|_{B^{sd}_p(L_p(\Omega))} \Big).
\end{aligned}
\end{equation}

\end{theorem}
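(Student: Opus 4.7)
The plan is to build four separate meshes adapted respectively to $\gamma$, to $u$, to $\osc_\T(f)$, and to $\osc_\T(V)$, each with its own prescribed tolerance, and then to overlay them. On the overlay, we take $V$ to be the best $H^1$-seminorm approximation of $\bu$ and verify that each term in $E_\T(V;u,f,\gamma)$ is controlled by $\delta$, using monotonicity under refinement for all the quantities involved.

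Given $\delta>0$, first apply Corollary~\ref{C:t-optimality} to obtain $\T_1\geq\T_0$ with
$\lambda_{\T_1}(\gamma)\leq\delta$ and $\#\T_1-\#\T_0\Cleq
|\bXi|_{B^{1+td}_q(L_q(\Omega))}^{1/t}\delta^{-1/t}$. Next, apply
Corollary~\ref{C:approx-u} to obtain $\T_2\geq\T_0$ and $V_2\in\V(\T_2)$ with
$\|\wnabla(\bu-\bV_2)\|_{L_2(\Omega)}\leq\delta$ and
$\#\T_2-\#\T_0\Cleq |\bu|_{B^{1+sd}_p(L_p(\Omega))}^{1/s}\delta^{-1/s}$.
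Apply Corollary~\ref{C:approx-f} to obtain $\T_3\geq\T_0$ with
$\osc_{\T_3}(f)\leq\delta$ and
$\#\T_3-\#\T_0\Cleq C(\mathbf{f},\gamma)^{1/(s\wedge t+1/d)}\delta^{-1/(s\wedge t+1/d)}$.
Finally, set $\delta^\prime:=\delta/\|\wnabla\bu\|_{L_2(\Omega)}$ and apply
Proposition~\ref{P:oscU} starting from $\T_0$ to obtain $\T_4\geq\T_0$ such that
$\osc_{\T_4}(W)\Cleq\delta^\prime\|\wnabla\bW\|_{L_2(\Omega)}$ for every
$W\in\V(\T_4)$ and $\#\T_4-\#\T_0\Cleq C_2(\gamma)^{1/t}(\delta^\prime)^{-1/t}$.

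Let $\T$ be the overlay of $\T_1,\T_2,\T_3,\T_4$. By \cite[Lemma~3.7]{CaKrNoSi:08},
$\#\T-\#\T_0\leq\sum_{i=1}^{4}(\#\T_i-\#\T_0)$. Since each of the four cardinality
bounds grows at most like $\delta^{-1/(s\wedge t)}$ when $\delta\le 1$ (replacing the
exponents by $s\wedge t$ and using $s\wedge t+1/d\ge s\wedge t$), we obtain
$\#\T-\#\T_0\Cleq|u,f,\gamma|_{\mathbb A_{s\wedge t}}^{1/(s\wedge t)}\delta^{-1/(s\wedge t)}$
with the seminorm bound \eqref{ufg} assembled from the constants of Corollaries~\ref{C:t-optimality}, \ref{C:approx-u}, \ref{C:approx-f} and Proposition~\ref{P:oscU}.
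On $\T$ we let $V\in\V(\T)$ be the $H^1$-seminorm best approximation of $\bu$.
Since $\T\geq\T_2$ we have $\|\wnabla(\bu-\bV)\|_{L_2(\Omega)}\leq\|\wnabla(\bu-\bV_2)\|_{L_2(\Omega)}\leq\delta$,
so in particular $\|\wnabla\bV\|_{L_2(\Omega)}\leq\|\wnabla\bu\|_{L_2(\Omega)}+\delta\Cleq\|\wnabla\bu\|_{L_2(\Omega)}$.

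Next we verify the remaining terms of $\widehat E_\T(V;u,f,\gamma)$. Because $\T\geq\T_1$, the
quasi-monotonicity of the geometric estimator \eqref{quasi-mono-n} yields
$\lambda_\T(\gamma)\leq\Lambda_0\lambda_{\T_1}(\gamma)\leq\Lambda_0\delta$.
Because $\T\geq\T_3$ and the local indicator
$h_T\|(\id-\Pi^2_{2n-2})(fq)\|_{L_2(\widehat T)}$ entering $\osc_\T(f)$ is monotone
under refinement (it is governed by local best polynomial approximation in $L_2$
multiplied by a non-increasing factor $h_T$), we get $\osc_\T(f)\leq\osc_{\T_3}(f)\leq\delta$.
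Because $\T\geq\T_4$, the elementwise bound of Proposition~\ref{P:oscU-bound} (whose right-hand side
depends only on the local geometric indicator $\zeta_\T(\bXi,T)$, which is monotone upon
refinement) gives $\osc_\T(V)\Cleq\delta^\prime\|\wnabla\bV\|_{L_2(\Omega)}
\Cleq\delta^\prime\|\wnabla\bu\|_{L_2(\Omega)}=\delta$. Combining with the splitting
$\osc_\T(V,f)^2\leq\osc_\T(V)^2+\osc_\T(f)^2$ of \eqref{d:osc-split} and with the equivalence
of norms \eqref{eq:equiv-norms}, we conclude $E_\T(V;u,f,\gamma)\Cleq\delta$, which together
with the cardinality bound proves \eqref{class-st}.

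The main technical obstacle is to ensure that all four error contributions remain controlled
on the common refinement $\T$: for $\lambda_\T(\gamma)$ this is quasi-monotonicity
(Lemma~\ref{L:quasi-mono-n}), for $\osc_\T(f)$ and $\osc_\T(V)$ the key point is that the
relevant local indicators constructed in Corollary~\ref{C:approx-f} and
Proposition~\ref{P:oscU} are bounded above by intrinsic monotone quantities (a positive power
of $h_T$ times a local Besov semi-norm of $f$, $q$, or $\bXi$), so refining further cannot
spoil them. The only non-monotone step is the estimate $\osc_\T(V)\Cleq\delta^\prime\|\wnabla\bV\|_{L_2(\Omega)}$, which is why we calibrate $\delta^\prime$ by $\|\wnabla\bu\|_{L_2(\Omega)}$
rather than by $\|\wnabla\bV\|_{L_2(\Omega)}$, using the $H^1$-stability of the best
approximation $V$ of $\bu$ on $\T\geq\T_2$. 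Accumulating the constants from the four modules
and noting that $\|\wnabla\bu\|_{L_2(\Omega)}$ is controlled by $|\bu|_{B^{1+sd}_p(L_p(\Omega))}$
yields the stated bound \eqref{ufg}.
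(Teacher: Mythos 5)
Your proposal is correct and follows the same overall strategy as the paper: adapted meshes from Corollaries~\ref{C:approx-u} and~\ref{C:approx-f} and Proposition~\ref{P:oscU}, an overlay, and the best $H^1$-approximation of $\bu$ on the overlay. The genuine difference is in how $\osc_{\cT}(V)$ is transferred to the overlay mesh $\T$. The paper runs Proposition~\ref{P:oscU} \emph{starting from} $\T_u$, fixes $V_u\in\V(\T_u)\subset\V(\T_\gamma)$, and then moves the oscillation from $\T_\gamma$ to $\T$ by a perturbation argument in the spirit of Lemma~\ref{L:osc-reduction}, which costs an extra $\lambda_{\T_\gamma}(\gamma)$ and absorbs $\|\wnabla\bu\|_{L_2(\Omega)}$ into the hidden constant of $E_\T(V_u;u,f,\gamma)\Cleq\delta$; moreover, by Remark~\ref{R:approx-gamma} the mesh of Proposition~\ref{P:oscU} already controls $\lambda_{\cT}(\gamma)$, so the paper needs no separate geometric mesh and overlays only two meshes. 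You instead exploit that Proposition~\ref{P:oscU-bound} is valid on \emph{every} $\T\in\grids$ and every $V\in\V(\T)$, together with the elementwise monotonicity of the intrinsic indicator $\zeta_\T(\bXi,T)=h_T^r|\chi|_{\widehat T}$ under refinement (domain monotonicity of the local Besov seminorms and $h_T\le h_{T_4}$; for the neighborhood/broken version this is monotone only up to a shape-regularity constant, which you should state but which is harmless since everything is $\Cleq$). This lets you avoid the oscillation-perturbation lemma entirely, at the price of calibrating the tolerance by $\|\wnabla\bu\|_{L_2(\Omega)}$, which injects a factor $\|\wnabla\bu\|_{L_2(\Omega)}^{1/t}$ into the cardinality of $\T_4$; this does not literally reproduce the product structure of \eqref{ufg} but is absorbed by $\|\wnabla\bu\|_{L_2(\Omega)}\Cleq\|\bu\|_{B^{1+sd}_p(L_p(\Omega))}$ (note: the full norm via the embedding $s>1/p-1/2$, not just the seminorm as you wrote), exactly as the paper's route hides the same quantity in its $\Cleq$. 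Your extra mesh $\T_1$ from Corollary~\ref{C:t-optimality} is redundant for the reason above, but not wrong, and the four-fold overlay estimate is fine.
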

\vskip-0.3cm
\begin{proof}
Since $\lambda_{\T_0}(\gamma)$ satisfies \eqref{eq:init_cond}, instead of
dealing with $E_{\T} (V; u,f,\gamma)$, we argue with the equivalent
quantity $\widehat{E}_{\T}(V;u, f, \gamma)$ from
\eqref{total-error-3}, which is evaluated in $\Omega$.
Given $\delta > 0$, we invoke Corollaries \ref{C:approx-u} and \ref{C:approx-f} to obtain meshes
$\T_u, \T_f\in\grids$ satisfying
\begin{align*}
\inf_{V\in\V(\T_u)} \| \rhn{\widehat{\nabla}}(\bu - \bV)\|_{L^2(\Omega)} \Cleq \delta&, 
\qquad
\#\M_u \Cleq C(\bu)^{1/s} \, \delta^{-1/s},
\\
\osc_{\T_f}(f) \Cleq \delta&, 
\qquad
\#\T_f - \#\T_0 \Cleq C(\mathbf{f},\gamma)^{1/(s\wedge t+1/d)} \,
\delta^{-1/(s\wedge t+1/d)}.
\end{align*}
We next apply Proposition \ref{P:oscU} and Remark \ref{R:approx-gamma},
starting from $\T_u$, to obtain a refinement $\T_\gamma\in\grids$ such that $\T_\gamma \ge \T_u$,
\[
\lambda_{\T_\gamma}(\gamma) \Cleq \delta,
\qquad
\max_{V\in\V(\T_\gamma)} \frac{\osc_{\T_\gamma} (V)}{\|\wnabla \bV\|_{L_2(\Omega)}}
\Cleq \delta,
\qquad
\#\M_\gamma \Cleq C_2(\gamma)^{1/t} \, \delta^{-1/t}.
\]
The cardinality of $\T_\gamma$ can be estimated via Lemma \ref{L:REFINE:complexity}
(complexity of $\REFINE$)
\[
\#\T_\gamma - \#\T_0 \Cleq \# \M_u + \#\M_\gamma \Cleq C(\bu)^{1/s}
\delta^{-1/s} + C_2(\gamma)^{1/t} \delta^{-1/t}.
\]
Since the cardinalities can be assumed to be at least $1$, we can
replace the exponents
\rhn{$1/s$ and $1/t$ of $C(\bu)\delta^{-1}$ and $C_2(\gamma) \delta^{-1}$
by $1 / s \wedge t$.}
Let $\T = \T_\gamma \oplus \T_f$ be the overlay of the
two meshes $\T_\gamma$ and $\T_f$. According to \cite[Lemma 3.7]{CaKrNoSi:08} the cardinality of 
$\#\T - \#\T_0$ is bounded by
$\# \T_{\gamma} + \# \T_f - 2 \#\T_0$,
whence
\[
\#\T - \#\T_0 \Cleq |u,\gamma,f|_{\mathbb{A}_{s\wedge t}}^{1/s\wedge t}
\, \delta^{-1/s\wedge t},
\]
with the nonlinear quantity $|u,\gamma,f|_{\mathbb{A}_{s\wedge t}}$
satisfying \eqref{ufg}.

It remains to show the first estimate in \eqref{class-st}. We first
observe that
\[
\inf_{V\in\V(\T)} \| \rhn{\widehat{\nabla}}(\bu -  \bV)\|_{L^2(\Omega)}
\le
\inf_{V\in\V(\T_u)} \| \rhn{\widehat{\nabla}}(\bu -  \bV)\|_{L^2(\Omega)}
\]
because $\T\ge\T_u$. We choose $V_u\in\V(\T_u)$ to be the function
that realizes the minimum\rhn{, whence $\| \wnabla(\bu - \bV_u)\|_{L_2(\Omega)}\Cleq \delta$.}
Since the definition \eqref{d:osc-U} of $\osc_{\cT}(V)$ involves the
best $L_2$-approximation,
we can argue as in Lemma \ref{L:est-reduction} to deduce for
$\T\ge\T_\gamma$ \rhn{(see Lemma~\ref{L:osc-reduction})}
\[
\osc_{\T}(V_u) \Cleq \osc_{\T_\gamma}(V_u) + \lambda_{\T_\gamma}(\gamma)
\le \delta \|\wnabla \bV_u\|_{L_2(\Omega)} + \delta,
\]
because $ V_u\in\V(\wT_u)\subset\V(\wT_\gamma)$. Upon adding and
subtracting $u$, we readily see that
\[
\osc_{\T}(V_u) \Cleq \delta \|\wnabla\bu\|_{L_2(\Omega)} +
\delta \|\wnabla(\bu- \bV_u)\|_{L_2(\Omega)} + \delta
\Cleq \delta.
\]
Since the definition \eqref{d:osc-f} utilizes the $L_2$-projection and
$\T\ge\T_f$ we infer that
\[
\osc_{\cT}(f) \le \osc_{\T_f}(f) \Cleq \delta.
\]
Collecting the preceding estimates and using the definition
\eqref{total-error-3} of total error gives the
desired estimate $\rhn{\widehat{E}}_{\T} (V_u; u,f,\gamma) \Cleq \delta$, and
finishes the proof.
\end{proof}

%%%%%%%%%%%%%%%%%%%%%%%%%%%%%%%%%%%%%%%%%%%%%%%%%%%%%%%%%%%%%%%%%%%%%%%%%%%%%%%%%
\section{Convergence rates}\label{S:rates}
%%%%%%%%%%%%%%%%%%%%%%%%%%%%%%%%%%%%%%%%%%%%%%%%%%%%%%%%%%%%%%%%%%%%%%%%%%%%%%%%%

In this section we study the cardinality of AFEM, which is dictated by
the regularity of $u,f$ and $\gamma$.
We now prove that AFEM achieves the asymptotic decay rate $s$ dictated by the class $\As$. We establish the link between the
performance of AFEM and the best possible error by adapting a clever
idea of Stevenson \cite{Stevenson:07} for the Laplace operator, further
extended by Casc\'on et al \cite{CaKrNoSi:08} to general elliptic PDE, in flat
domains. We refer to the survey \cite{NoSiVe:09} for a thorough
discussion and to \cite{GSTER14}. The insight \rhn{of \cite{Stevenson:07}} is the following
\begin{equation}\label{stevenson}
\begin{minipage}{0.9\linewidth}
\emph{
Any marking strategy that reduces the total error \rhn{to a fraction of} its current
value must contain a substantial portion of the error estimator, and so it can
be related to D\"orfler Marking.
}
\end{minipage}
\end{equation}
Exploiting next the minimality of D\"orfler marking we can
compare meshes generated by AFEM with the best meshes within $\grids$.
The approach of \cite{CaKrNoSi:08,NoSiVe:09,Stevenson:07} does not
apply directly to the present context because of the consistency error
due to surface interpolation. We account for this discrepancy below
upon making the parameter $\omega$ of $\ADAPTSURF$ sufficiently small.
Let
\begin{equation}\label{omega34}
\gls{const:omega3}\omega_3 := \frac{C_5}{\Lambda_0\sqrt{3 \Lambda_1 + 4 \Lambda_2 + 2 \Lambda_1 \Lambda_3}},
%\qquad
%\omega_4 := \frac{C_5}{2\Lambda_0}\sqrt{\Big( 1 - \frac{\theta^2}{\theta_*^2} \Big) 
%				    \frac{1}{\Lambda_2}}
\end{equation}
be a threshold for $\omega$ to be used next and let \gls{const:thetas}$\theta_*$ be a threshold
for D\"orfler parameter $\theta$
\begin{equation}\label{theta*}
\theta_* := \frac{C_5}{\sqrt{2C_3 + C_1(3 + 2 \Lambda_3)}};
\end{equation}
since $C_5=\sqrt{C_2/2}$ and $C_2\le C_1$, we see that $\theta_*<1$.
\begin{lemma}[D\"orfler marking]\label{L:dorfler-marking}
Let $\lambda_{\T_0}(\gamma)$ satisfy \eqref{eq:init_cond}, 
and the parameters $\theta$ and $\omega$ satisfy
\begin{equation}\label{eq:theta-omega-def}
0<\theta<\theta_*,\qquad
0<\omega \le \min\{\omega_1,\omega_3\},
\end{equation}
where $\theta_*,\omega_3$ are defined in
\eqref{omega34}, \eqref{theta*}, and $\omega_1$ in \eqref{bound_omega}.
Let $\mu := \frac12  \sqrt{1 - \frac{\theta^2}{\theta_*^2}}$
and $(\Gamma,\cT, U)$ 
be the approximate surface, mesh and discrete solution produced by an inner
iterate of $\ADAPTPDE$. 
If $(\Gamma_*, \cT_*,U_*)$ is a surface-mesh-solution triple with 
$\cT_* \ge \cT$, such that the PDE error satisfies
\begin{equation}\label{mu-condition}
\E_{\T_*}(U_*,f) \le \mu\, \E_\T(U,f),
\end{equation}
then the refined set $\cR \definedas \ab{\T \setminus \T_*}$ satisfies D\"orfler 
property with parameter $\theta$, namely
\begin{equation}\label{dorfler-cond}
\eta_\T(U,F_\Gamma,\cR) \ge \theta \eta_\T(U,F_\Gamma).
\end{equation}
\end{lemma}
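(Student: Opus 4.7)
The plan is to adapt the Stevenson/Cascón--Kreuzer--Nochetto--Siebert argument that a sufficient reduction of the total error must be driven by a substantial fraction of the residual estimator concentrated on the refined set $\cR=\T\setminus\T_*$. The new difficulty relative to the flat case is that a residual $\lambda_\T(\gamma)^2$-term appears at every step, and all these contributions must ultimately be absorbed using the threshold $\omega\le\omega_3$ together with the fact, valid throughout the inner loop of $\ADAPTPDE$, that $\lambda_\T(\gamma)\le\Lambda_0\,\omega\,\eta_\T(U,F_\Gamma)$ (a consequence of \eqref{quasi-mono-n}, of the $\ADAPTSURF$ output condition on $\T_k^+$, and of $\eta_\T\ge\eps_k$ inside the while loop).

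The first move is to invoke Lemma~\ref{L:equiv}, whose hypothesis \eqref{bound_lambda} follows from $\omega\le\omega_1$ together with the above, to obtain the clean lower bound $C_5^2\,\eta_\T(U,F_\Gamma)^2\le\E_\T(U,f)^2$ without the residual $\Lambda_1\lambda^2$-term that would appear if one used \eqref{lower} directly; this saving is essential for the final calibration of $\omega_3$. Next I would bound $\|\nabla_\gamma(u-U)\|_{L_2(\gamma)}^2$ via quasi-orthogonality \eqref{quasi-ortho} by $\|\nabla_\gamma(u-U_*)\|_{L_2(\gamma)}^2+\tfrac32 E^2+\Lambda_2\lambda^2$, with $E:=\|\nabla_\gamma(U_*-U)\|_{L_2(\gamma)}$, and split $\osc_\T(U,f)^2=\osc_\T(U,f,\cR)^2+\osc_\T(U,f,\T\cap\T_*)^2$. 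The refined piece is controlled by $C_3\,\eta_\T(U,F_\Gamma,\cR)^2$ via the dominance \eqref{dominance}, while the unrefined piece is bounded, using the local perturbation \eqref{osc-pert} on $\tau:=\T\cap\T_*$ (where $\Gamma$ and $\Gamma_*$ coincide, hence with the sharp constant $2$ instead of the global $C_6$ of \eqref{osc-mono}), by $2\,\osc_{\T_*}(U_*,f)^2+\Lambda_3 E^2+\Lambda_2\lambda^2$. Substituting the localized upper bound \eqref{localized} $E^2\le C_1\,\eta_\T(U,F_\Gamma,\cR)^2+\Lambda_1\lambda^2$ and grouping $\|\nabla_\gamma(u-U_*)\|_{L_2(\gamma)}^2+2\,\osc_{\T_*}(U_*,f)^2\le 2\,\E_{\T_*}(U_*,f)^2$, the chain collapses to
\[
C_5^2\,\eta_\T(U,F_\Gamma)^2 \le 2\,\E_{\T_*}(U_*,f)^2 + K\,\eta_\T(U,F_\Gamma,\cR)^2 + C''\,\lambda_\T(\gamma)^2,
\]
with $K:=C_3+(\tfrac32+\Lambda_3)C_1$ and $C'':=(\tfrac32+\Lambda_3)\Lambda_1+2\Lambda_2$, chosen precisely so that $2K=2C_3+C_1(3+2\Lambda_3)=C_5^2/\theta_*^2$ and $2C''\Lambda_0^2=(3\Lambda_1+4\Lambda_2+2\Lambda_1\Lambda_3)\Lambda_0^2=C_5^2/\omega_3^2$.

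To finish I would insert the hypothesis $\E_{\T_*}^2\le\mu^2\E_\T^2$ to get $(1-2\mu^2)\E_\T^2\le K\,\eta_\T(U,F_\Gamma,\cR)^2+C''\lambda^2$; since $\mu^2\le\tfrac14$ the factor $1-2\mu^2\ge\tfrac12$ is positive, so using $C_5^2\,\eta_\T^2\le\E_\T^2$ on the left and $C''\lambda^2\le\tfrac12 C_5^2\,\eta_\T^2$ on the right (which follows from $\omega\le\omega_3$ and $\lambda\le\Lambda_0\omega\,\eta_\T$) yields $C_5^2(\tfrac12-2\mu^2)\eta_\T^2\le K\,\eta_\T(U,F_\Gamma,\cR)^2$. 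The identity $\tfrac12-2\mu^2=\theta^2/(2\theta_*^2)$, which is precisely how $\mu=\tfrac12\sqrt{1-\theta^2/\theta_*^2}$ is designed, combined with $2K=C_5^2/\theta_*^2$, immediately collapses to $\theta^2\,\eta_\T(U,F_\Gamma)^2\le\eta_\T(U,F_\Gamma,\cR)^2$, that is~\eqref{dorfler-cond}. The main obstacle I anticipate is the careful bookkeeping of every $\lambda^2$-contribution so that the coefficient in the displayed inequality is exactly $2C''=3\Lambda_1+4\Lambda_2+2\Lambda_1\Lambda_3$ and matches the definition of $\omega_3$; the two non-obvious ingredients that secure this matching are the use of Lemma~\ref{L:equiv} on the left (in place of the raw lower bound \eqref{lower}, which would add a spurious $\Lambda_1\lambda^2$) and the improved constant $2$ in \eqref{osc-pert} on the common subset $\tau=\T\cap\T_*$ (rather than the global $C_6$ from \eqref{osc-mono}).
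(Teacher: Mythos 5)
Your argument is correct and follows essentially the same route as the paper's proof: the lower bound $C_5\eta_\T\le\E_\T$ from Lemma~\ref{L:equiv} (enabled by $\omega\le\omega_1$ and $\lambda_\T(\gamma)\le\Lambda_0\omega\eta_\T$), quasi-orthogonality \eqref{quasi-ortho}, dominance \eqref{dominance} on $\cR$, the local perturbation \eqref{osc-pert} with constant $2$ on $\T\cap\T_*$, the localized upper bound \eqref{localized}, and the final calibration against $\omega_3$ and $\theta_*$ via $\mu$, with constants $K=C_3+(\tfrac32+\Lambda_3)C_1$ and $C''=(\tfrac32+\Lambda_3)\Lambda_1+2\Lambda_2$ matching the paper's exactly. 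The only difference is cosmetic: you absorb the $2\E_{\T_*}^2$ term at the end rather than subtracting $e(U_*)^2+2\osc_{\T_*}(U_*,f)^2$ at the outset, which changes nothing in the bookkeeping.
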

\begin{proof}
We proceed as in \cite[Lemma 5.9]{CaKrNoSi:08} using the notation
$e(U):=\|\nabla_\gamma(u-U)\|_{L_2(\gamma)}$. 
Since $\omega \leq \omega_1$, we combine the lower bound of
\eqref{equiv} with \eqref{mu-condition} to write
\begin{align*}
(1- 2 \mu^2) C_5^2 \eta_\T (U,F_\Gamma)^2 &\le
(1- 2 \mu^2) \big(e(U)^2 + \osc_{\cT}(U,f)^2 \big) 
\\
& \le
e(U)^2 - e(U_*)^2 + \osc_{\T}(U, f)^2 - 2\osc_{\T_*}(U_*,f)^2.
\end{align*}
We now estimate separately error and oscillation terms. According to 
\eqref{quasi-ortho} and \eqref{localized}, we obtain
\begin{equation*}
\begin{split}
e(U)^2 - e(U_*)^2 &\le \frac32 \|\nabla_\gamma(U_*-U)\|_{L_2(\gamma)}^2 
+ \Lambda_2 \lambda_{\cT}(\gamma)^2 
\\
& \le \frac32 C_1\eta_\cT(U,F_\Gamma,\cR)^2 + \Big(\frac{3}{2} \Lambda_1 +
\Lambda_2\Big) \lambda_{\cT}(\gamma)^2.
\end{split}
\end{equation*}
For the oscillation terms we argue according to whether an element 
$T \in \T$ belongs to the set of refined elements $\cR$ or not. 
We use the dominance bound \eqref{dominance} to arrive at
\[
 \osc_{\T} (U,f,\cR)^2 \le C_3\eta_\cT(U,F_\Gamma,\cR)^2.
\]
On the other hand, using \eqref{osc-pert} for $\T_*\cap \T$ 
with $V = U$ and $W = U_*$ yields
\[
\osc_{\cT}(U,f, \T_*\cap \T)^2 - 2
\osc_{\T_*}(U_*,f)^2
\leq  \Lambda_3 \|\nabla_\gamma(U_*-U)\|_{L_2(\gamma)}^2
+ \Lambda_2 \lambda_{\cT}(\gamma)^2. 
\]
By combining these two estimates with \eqref{localized} 
we infer that 
\[
\osc_{\cT}(U,f)^2 - 2 \osc_{\T_*}(U_*,f)^2 
   \le 
(C_3+ C_1\Lambda_3) \eta_\T(U,F_\Gamma,\cR)^2 
+ ( \Lambda_1 \Lambda_3  + \Lambda_2) \lambda_{\cT}(\gamma)^2. 
\]
Since $\cT$ is produced within
$\ADAPTPDE$, \rhn{which is initialized with $\T^+$ in \AFEM of \S~5}, we have $\eta_\T(U,F_\Gamma) \ge \eps$ , $\lambda_{\T^+}(\gamma) \le\omega\eps$,  \rhn{and $\T \geq \T^+$}, whence
$$
\lambda_{\cT}(\gamma) \leq \Lambda_0 \lambda_{\T^+}(\gamma)\leq
\Lambda_0 \omega \eps \leq \Lambda_0 \omega\eta_\T(U,F_\Gamma).
$$ 
Collecting these three estimates, we deduce
\begin{equation*}
\begin{aligned}
  (1-2\mu^2) C_5^2 \eta_\T(U,F_\Gamma)^2 &\le
  \frac12 \Big(2C_3+ C_1(3 +2 \Lambda_3) \Big) \eta_\T(U,F_\Gamma,\cR)^2
  \\
  & + \frac12 \big( 3 \Lambda_1+ 4\Lambda_2 + 2 \Lambda_1 \Lambda_3 \big)
  \Lambda_0^2\omega^2 \eta_\T(U,F_\Gamma)^2.
\end{aligned}
\end{equation*}
Finally, using that $\omega\le\omega_3$, along with \eqref{omega34} and
\eqref{theta*}, we infer that
\[
(1-4\mu^2) \rhn{\theta_*^2 }\eta_\T(U,F_\Gamma)^2 \le \eta_\T(U,F_\Gamma,\cR)^2.
\]
The choice of $\mu$ implies the asserted estimate \eqref{dorfler-cond}.
\end{proof}

\begin{lemma}[Cardinality of $\cM$]\label{L:cardinality}
Let $\lambda_{\T_0}(\gamma)$ satisfy \eqref{eq:init_cond} and 
the procedure $\MARK$ select a set $\cM$ with minimal cardinality. Let  
the parameters $\theta$ and $\omega$ satisfy
\begin{equation}\label{theta-omega4}
0<\theta<\theta_*,\qquad
0<\omega\le \min\{\omega_1, \omega_3 \}
\end{equation}
with $\theta_*,\omega_1$ and $\omega_3$  given in \eqref{theta*}, \eqref{bound_omega}, and \eqref{omega34},
 respectively. Let $u$ be the solution  
of \eqref{p:Weak_PdeGm}, and let $(\Gamma,\cT, U)$ be produced within $\ADAPTPDE$.
If $(u,f,\gamma)\in\As$, then
\[
\#\cM \Cleq |u,f,\gamma|_{\As}^{\frac{1}{s}} \, \E_\T(U,f)^{-\frac{1}{s}}.
\]
\end{lemma}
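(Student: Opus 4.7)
The plan is to adapt the Stevenson--Cascon--Kreuzer--Nochetto--Siebert strategy (\cite{Stevenson:07,CaKrNoSi:08}) to the geometric setting. The engine is the observation \eqref{stevenson}: any mesh that drives the PDE error down by a fixed factor must resolve a D\"orfler-fraction of the estimator, and minimality of $\MARK$ then transfers the cardinality bound from the ``optimal'' mesh (supplied by $\mathbb{A}_s$) back to $\cM$. Concretely, with $\mu = \frac12\sqrt{1-\theta^2/\theta_*^2}$ from Lemma \ref{L:dorfler-marking}, I would first pick $\delta := c_\mu\,\E_\T(U,f)$ for a suitably small constant $c_\mu>0$ to be fixed later, and invoke the characterization \eqref{class-As} of $\As$ to produce a mesh $\T_\delta\in\grids$ and $V_\delta\in\V(\T_\delta)$ with
\[
E_{\T_\delta}(V_\delta;u,f,\gamma) \le \delta,
\qquad
\#\T_\delta - \#\T_0 \Cleq |u,f,\gamma|_{\As}^{1/s}\delta^{-1/s};
\]
in particular $\lambda_{\T_\delta}(\gamma)^2\le\omega\delta^2$ and $\|\nabla_\gamma(u-V_\delta)\|_{L_2(\gamma)}^2+\osc_{\T_\delta}(V_\delta,f)^2\le\delta^2$.

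Next I would take the overlay $\T_*:=\T\oplus\T_\delta$ and let $U_*\in\V(\T_*)$ be the Galerkin solution on $\Gamma_{\T_*}$. The mesh overlay estimate from \cite[Lemma~3.7]{CaKrNoSi:08} gives $\#\T_*-\#\T \le \#\T_\delta - \#\T_0$, and the quasi-monotonicity \eqref{quasi-mono-n} yields $\lambda_{\T_*}(\gamma)\le\Lambda_0\lambda_{\T_\delta}(\gamma)$. The crucial step is then to prove the contraction
\[
\E_{\T_*}(U_*,f) \le \mu\,\E_\T(U,f).
\]
For the oscillation piece I would apply Lemma \ref{L:osc-reduction} with the pair $\T_\delta\le\T_*$, test functions $V_\delta$ and $U_*$, getting
\[
\osc_{\T_*}(U_*,f)^2 \le C_6 \osc_{\T_\delta}(V_\delta,f)^2
 + \Lambda_3\|\nabla_\gamma(U_*-V_\delta)\|_{L_2(\gamma)}^2
 + \Lambda_2\lambda_{\T_\delta}(\gamma)^2.
\]
For the energy piece I would combine the upper bound \eqref{upper} on $\T_*$ with C\'ea-type quasi-optimality for the Galerkin solution (which, due to the inconsistency between $\gamma$ and $\Gamma_*$, costs an extra term proportional to $\lambda_{\T_*}(\gamma)$ via the consistency estimate \eqref{Lp:Ak}) to obtain
\[
\|\nabla_\gamma(u-U_*)\|_{L_2(\gamma)}^2 \Cleq \|\nabla_\gamma(u-V_\delta)\|_{L_2(\gamma)}^2 + \lambda_{\T_*}(\gamma)^2.
\]
Substituting $\|\nabla_\gamma(U_*-V_\delta)\|\le\|\nabla_\gamma(u-U_*)\|+\|\nabla_\gamma(u-V_\delta)\|$ into the oscillation bound and collecting terms leads to $\E_{\T_*}(U_*,f)^2 \Cleq \delta^2$, and by choosing $c_\mu$ small enough (depending on the various constants $C_1,C_6,\Lambda_0,\Lambda_1,\Lambda_2,\Lambda_3,\omega$) this becomes $\le \mu^2\E_\T(U,f)^2$.

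With the contraction in hand, Lemma \ref{L:dorfler-marking} applies to the pair $(\T,U)$ and $(\T_*,U_*)$, so the refined set $\cR=\T\setminus\T_*$ satisfies $\eta_\T(U,F_\Gamma,\cR)\ge\theta\,\eta_\T(U,F_\Gamma)$. Since $\MARK$ produces a set $\cM$ of minimal cardinality fulfilling \eqref{doerfler}, we conclude
\[
\#\cM \le \#\cR \le \#\T_* - \#\T \le \#\T_\delta - \#\T_0
\Cleq |u,f,\gamma|_{\As}^{1/s}\delta^{-1/s}
\Cleq |u,f,\gamma|_{\As}^{1/s}\E_\T(U,f)^{-1/s},
\]
which is the claim.

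The main obstacle will be Step~3, the contraction $\E_{\T_*}(U_*,f)\le\mu\E_\T(U,f)$. On flat domains this is immediate from Galerkin orthogonality and the fact that $V_\delta\in\V(\T_\delta)\subset\V(\T_*)$, but here neither the nestedness of the spaces nor plain Galerkin orthogonality survive: the discrete surfaces $\Gamma_{\T_\delta}$ and $\Gamma_{\T_*}$ differ, $V_\delta$ has to be compared with $U_*$ across two different lifts, and the oscillation is not monotone under refinement. Tracking all geometric perturbations so that they can be absorbed into a single factor $\mu<1$ by shrinking $c_\mu$---which in turn forces the thresholds on $\theta$ and $\omega$ in \eqref{eq:theta-omega-def} to be compatible with those of Lemma \ref{L:dorfler-marking}---is the delicate part; the rest of the argument is essentially bookkeeping.
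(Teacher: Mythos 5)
Your proposal is correct and follows essentially the same route as the paper: set $\delta\simeq\E_\T(U,f)$, use membership in $\As$ to get $(\T_\delta,V_\delta)$, overlay $\T_*=\T\oplus\T_\delta$, prove $\E_{\T_*}(U_*,f)\le\mu\,\E_\T(U,f)$ by combining the oscillation quasi-monotonicity \eqref{osc-mono} with a perturbed-orthogonality bound, and then invoke Lemma~\ref{L:dorfler-marking} plus minimality of \MARK. The only cosmetic difference is in the energy step: where you invoke a C\'ea-type quasi-optimality with the consistency term \eqref{Lp:Ak}, the paper simply applies the quasi-orthogonality estimate \eqref{quasi-ortho} twice (using the remark that it holds with an arbitrary $V\in\V(\T)$ in place of the Galerkin solution), which yields the same bounds and makes the choice of the small constant $\hat\mu$ explicit.
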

\begin{proof}
We set 
\[
\delta^2 = \hat\mu ^2 \, \E_\T(U,f)^2 =
\hat\mu^2 \Big(e(U)^2 + \osc_{\cT}(U,f)^2 \Big),
\]
for $0<\hat\mu<\mu = \frac12\sqrt{1 - \frac{\theta^2}{\theta_*^2}}<1$
sufficiently small to be determined later. 
Since $(u,f,\gamma)\in\As$, there exists a subdivision $\cT_\delta \in \mathbb T$
and 
$V_\delta\in\V(\T_\delta)$ such that
\begin{equation}\label{u-As}
\#\cT_\delta - \#\cT_0 \Cleq |u,f,\gamma|_{\As}^{\frac{1}{s}} \delta^{-\frac{1}{s}},
\qquad
e(V_\delta)^2 + \osc_{\T_\delta}(V_\delta, f)^2 
+ \lambda_{\T_\delta}(\gamma) ^2 \le \delta^2.
\end{equation}
Let $\cT_* = \cT \oplus \cT_\delta$ be the overlay of $\cT$ and $\cT_\delta$, which
satisfies \cite[Lemma 3.7]{CaKrNoSi:08}, \cite{NoSiVe:09},
\begin{equation}\label{overlay}
\#\cT_* \le \#\cT + \#\cT_\delta - \#\cT_0.
\end{equation}
Let $U_*\in \V(\T_*)$ be the corresponding Galerkin solution.  
We observe that $\cT_*\ge\cT_\delta, \cT$, and invoke the upper bound
of \eqref{quasi-ortho} in conjunction with \eqref{osc-mono} to write
\begin{equation*}
\begin{split}
e(U_*)^2 + \osc_{\T_*}(U_*, f)^2 \le{}&{}
  e(V_\delta)^2 + \Lambda_2\lambda_{\T_\delta}(\gamma)^2 
\\
& + C_6 \osc_{\T_{\delta}}(V_\delta,f)^2
+ \Lambda_3 \|\nabla_\gamma(U_* - V_\delta) \|^2_{L_2(\gamma)}
+ \Lambda_2 \lambda_{\T_\delta}(\gamma)^2.
\end{split}
\end{equation*}
Applying \eqref{quasi-ortho} again gives
$\|\nabla_\gamma(U_* - V_\delta) \|^2_{L_2(\gamma)} \le \rhn{2} e(V_\delta)^2
+ \rhn{2} \Lambda_2 \lambda_{\T_{\delta}}(\gamma)^2$, whence
\begin{equation*}
e(U_*)^2 + \osc_{\T_*}(U_*, f)^2 
\leq (1 + 2\Lambda_3)e(V_\delta)^2  
+ C_6 \osc_{\T_{\delta}}(V_\delta,f)^2
+ 2\Lambda_2(1+\Lambda_3) \lambda_{\T_{\delta}}(\gamma)^2.
\end{equation*}

We  now choose 
$\hat\mu  = \frac{\mu}{ \sqrt{\max\{C_6, 1+2\Lambda_3, 2\Lambda_2(1+\Lambda_3)\}}}$ to end up with
\[
e(U_*)^2 + \osc_{\T_*}(U_*,f)^2
\leq \max\{C_6, 1+2\Lambda_3, 2\Lambda_2(1+\Lambda_3)\} \delta^2
= \mu^2 \Big( e(U)^2 + \osc_{\cT}(U,f)^2 \Big).
\]
We thus deduce from Lemma \ref{L:dorfler-marking} that the 
subset $\cR = \cR_{\T\to\T_*} \subset \T$ satisfies D\"orfler 
property \eqref{dorfler-cond}. Since the set $\cM \subset \cT$ 
also satisfies this property, but with minimal cardinality, we infer from~\eqref{u-As}--\eqref{overlay}
\[
\#\cM \le \#\cR \le \#\cT_* - \#\cT \le \#\cT_\delta-\#\cT_0 \Cleq
|u,f,\gamma|_{\As}^{\frac{1}{s}} \delta^{-\frac{1}{s}},
\]
The asserted estimate finally follows upon using the definition of $\delta$.
\end{proof}

The quasi-optimal cardinality of AFEM is a direct consequence of Lemma
\ref{L:cardinality} and Theorem \ref{T:conditional}.
This is our third main result and we prove it next.

\begin{theorem}[Convergence rate of $\AFEM$]\label{T:rate}
%Let $\gamma \in \mathbb B_{t}$ 
Let $\eps_0\le (6\omega\Lambda_0 L^3)^{-1}$ 
be the initial tolerance, and the parameters $\theta, \omega, \rho$ of
AFEM satisfy
\begin{equation}\label{restrict-omega}
0<\theta \leq \theta_*,
\qquad
0< \omega \leq \omega_* \definedas \min \{\omega_1, \omega_2,\omega_3\},
\qquad
\rhn{0<\rho<1,}
\end{equation}
where $\theta_*,\omega_1,\omega_2,\omega_3$ are given in \eqref{theta*},
\eqref{bound_omega}, \eqref{bound_omega_2}, and \eqref{omega34}, respectively.
\rhn{Let $\T_0$ \manel{have} an admissible labeling,} and let the procedure $\MARK$ select sets with minimal cardinality.
%and $\ADAPTSURF$ be $s$-optimal on the surface $\gamma$. 
Let $u$ be the solution of \eqref{p:Weak_PdeGm}
and $\{\Gamma_k, \cT_k, U_k\}_{k \geq 0}$  be \pedro{the} sequence of approximate 
surfaces, meshes and discrete solutions generated by AFEM.

\rhn{If $(u,f,\gamma)\in \mathbb A_s$ 
for some $0<s\leq n/d$,} then there exists a constant $C$, depending on the Lipschitz constant
$L$ of $\gamma$, $\|f\|_{L_2(\gamma)}$, the refinement depth $b$, the
initial triangulation 
$\T_0$, and AFEM parameters $\theta,\omega,\rho$
such that
\begin{equation}\label{rate}
e(U_k) + \osc_{\T_k}(U_k,f) + \omega^{-1}\lambda_{\T_k}(\gamma) \leq C
|u,f,\gamma|_{\mathbb A_s} 
\big(\#\cT_k-\#\cT_0 \big)^{-s},
\end{equation}
where $|u,f,\gamma|_{\mathbb A_s}$ is defined in \eqref{approx_class}.
\end{theorem}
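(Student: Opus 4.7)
The plan is to combine three ingredients from the paper: (i) the conditional contraction of $\ADAPTPDE$ (Theorem~\ref{T:conditional}), which drives the PDE estimator below $\eps_k$ at each outer step; (ii) the quasi-optimal cardinality of a single \MARK call (Lemma~\ref{L:cardinality}); and (iii) the $s$-optimality of $\ADAPTSURF$ (Corollary~\ref{C:t-optimality}), which is guaranteed as soon as $(u,f,\gamma)\in\As$. I would then sum a geometric series and apply the \REFINE complexity bound (Lemma~\ref{L:REFINE:complexity}) to convert mark counts into an element count bound.

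First I would show that the left-hand side of~\eqref{rate} is bounded by a constant times $\eps_k$. Indeed, upon exit from $\ADAPTPDE(\T_{k-1}^+,\eps_{k-1})$ one has $\eta_{\T_k}(U_k,F_{\Gamma_k})\le\eps_{k-1}=\rho^{-1}\eps_k$, and by Lemma~\ref{L:quasi-mono-n} together with \eqref{cond:AS}, $\lambda_{\T_k}(\gamma)\le\Lambda_0\lambda_{\T_{k-1}^+}(\gamma)\le\Lambda_0\rho^{-1}\omega\eps_k$. Since $\omega\le\omega_1$, Lemma~\ref{L:equiv} yields $e(U_k)+\osc_{\T_k}(U_k,f)\le C_4\eta_{\T_k}(U_k,F_{\Gamma_k})\Cleq\eps_k$, and the scaled geometric term $\omega^{-1}\lambda_{\T_k}(\gamma)$ is bounded by $\Lambda_0\rho^{-1}\eps_k$. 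Thus
\begin{equation*}
e(U_k)+\osc_{\T_k}(U_k,f)+\omega^{-1}\lambda_{\T_k}(\gamma)\Cleq\eps_k.
\end{equation*}

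Next I would estimate $\#\T_k-\#\T_0$ by Lemma~\ref{L:REFINE:complexity} as $C_7$ times the total number of elements marked by all calls to $\MARK$ made by $\ADAPTSURF$ and $\ADAPTPDE$ through outer step $k-1$. For the $j$-th $\ADAPTSURF$ call, with tolerance $\omega\eps_j$, the $s$-optimality~\eqref{optimal:AS} gives $\#\cM^{\mathrm{SURF}}_j\Cleq(\omega\eps_j)^{-1/s}$. For the inner iterates $\{\T_j^{(i)},U_j^{(i)}\}_{i=0}^{J_j}$ of the $j$-th $\ADAPTPDE$ call, the contraction~\eqref{contraction} yields $\E_{\T_j^{(i)}}(U_j^{(i)},f)\ge c\,\alpha^{J_j-i}\eps_j$ for some $c>0$ (using that the stopping inequality is reached at iterate $J_j$ and the equivalence of error and estimator). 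Applied iterate by iterate, Lemma~\ref{L:cardinality} then gives
\begin{equation*}
\sum_{i=0}^{J_j-1}\#\cM^{\mathrm{PDE}}_{j,i}\Cleq|u,f,\gamma|_{\As}^{1/s}\sum_{i=0}^{J_j-1}\alpha^{(J_j-i)/s}\eps_j^{-1/s}\Cleq|u,f,\gamma|_{\As}^{1/s}\eps_j^{-1/s},
\end{equation*}
where the geometric sum is bounded by a constant depending on $\alpha<1$. Adding the two contributions and using $\omega\Cgeq 1$ absorbed into constants, one obtains
\begin{equation*}
\#\T_k-\#\T_0\Cleq|u,f,\gamma|_{\As}^{1/s}\sum_{j=0}^{k-1}\eps_j^{-1/s}=|u,f,\gamma|_{\As}^{1/s}\eps_{k-1}^{-1/s}\sum_{j=0}^{k-1}\rho^{(k-1-j)/s}\Cleq|u,f,\gamma|_{\As}^{1/s}\eps_k^{-1/s},
\end{equation*}
again by the geometric series since $\rho<1$.

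Inverting this estimate yields $\eps_k\Cleq|u,f,\gamma|_{\As}(\#\T_k-\#\T_0)^{-s}$, which combined with the first step of the argument gives~\eqref{rate}. The main technical obstacle I foresee is the step that converts the contraction into a useful per-inner-iterate lower bound on $\E_{\T_j^{(i)}}(U_j^{(i)},f)$: one must argue that $\ADAPTPDE$ stops essentially at the first iterate where $\eta\le\eps_j$, so the estimator at intermediate iterates is at least a geometric multiple of $\eps_j$, which is then transferred to the total error via Lemma~\ref{L:equiv} (this is where the hypothesis $\omega\le\omega_1$ is crucial, since it keeps $\lambda^2\le\tfrac{C_2}{2\Lambda_1}\eta^2$ throughout the inner loop). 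Once this geometric-in-$i$ lower bound is in place, the remainder is a clean summation argument, and the restriction $\omega\le\min\{\omega_1,\omega_2,\omega_3\}$ ensures that all three pillars (contraction, equivalence, and D\"orfler-reachability via Lemma~\ref{L:dorfler-marking}) are simultaneously available.
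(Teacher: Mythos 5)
Your argument is essentially the paper's own proof: bound the total error by $\Cleq\eps_k$ using the stopping criteria of the two modules, count the elements marked by \ADAPTSURF through its $s$-optimality and those marked inside \ADAPTPDE via Lemma~\ref{L:cardinality} combined with the contraction of Theorem~\ref{T:conditional} (which gives the geometric-in-$j$ lower bound on the PDE error at intermediate iterates, since the last iterate before exit has $\eta\ge\eps_k$), then sum the geometric series in $\alpha$ and in $\rho$ and convert marks to cardinality with Lemma~\ref{L:REFINE:complexity}. One small repair: at the exit iterate $U_k$ you invoke Lemma~\ref{L:equiv}, but its hypothesis \eqref{bound_lambda} is not guaranteed there, because $\eta_{\T_k}(U_k,F_{\Gamma_k})$ may be much smaller than $\eps_{k-1}$ while $\lambda_{\T_k}(\gamma)$ is only bounded by $\Lambda_0\omega\eps_{k-1}$; the needed bound $e(U_k)+\osc_{\T_k}(U_k,f)\Cleq\eps_{k-1}$ follows instead from the unconditional upper bound \eqref{upper} together with the dominance \eqref{dominance}. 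Also, the $\omega^{-1/s}$ factor in the \ADAPTSURF count is absorbed because the constant $C$ in \eqref{rate} is allowed to depend on $\omega$ (and $C(\gamma)\Cleq|u,f,\gamma|_{\As}^{1/s}$ once $(u,f,\gamma)\in\As$), not because $\omega\Cgeq 1$.
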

\begin{proof}
We start by noting that since $\omega\eps_0 \leq \frac{1}{6 \Lambda_0 L^3}$ 
the first output of the procedure $\ADAPTSURF$ fulfills 
$\rhn{\lambda_{\T_0^+}(\gamma)} \le \frac{1}{6 \Lambda_0L^3}$ which is
  \eqref{eq:init_cond} and implies that $\grids(\T_0^+)$ is shape regular.

There are two instances where elements are added, inside $\ADAPTSURF$
and $\ADAPTPDE$.
\rhn{In light of~\eqref{optimal:AS} and~\eqref{class-As}
we observe that \ADAPTSURF is $s$-optimal with
$ C(\gamma) \Cleq |u,f,\gamma|_{\mathbb A_s}^{1/s}$, whence}
the set of all the elements marked for refinement in the $k$-th call to
$\ADAPTSURF$  satisfies
\begin{equation*}\label{eq:card-M+}
\#\cM_k \Cleq C(\gamma) \; \omega^{-\frac{1}{s}} 
\; \eps_k^{-\frac{1}{s}}
\Cleq |u,f,\gamma|^{\frac{1}{s}}_{\mathbb{A}_s} \; \eps_k^{-\frac{1}{s}} .
\end{equation*}
For $\ADAPTPDE$, Lemma \ref{L:cardinality} (cardinality of $\M$) yields
\[
\#\cM_k^j \Cleq |u,f,\gamma|_{\mathbb A_s}^{\frac{1}{s}} \Big(e(U_k^j) +
\osc_{\T_k^j}(U_k^j, f) \Big)^{-\frac{1}{s}}
\qquad
0\le j < J.
\]
where $\cM_k^j$ denotes the subset of elements selected by
the marking procedure at the j-th subiteration of the $k$-th step of $\ADAPTPDE$.
Since the inner iterates of $\ADAPTPDE$ satisfy 
Theorem \ref{T:conditional} (conditional contraction property) and
\[
e(U_k^j) + \osc_{\T_k^j}(U_k^j,f) \approx e(U_k^j) + \eta_{\T_k^j}(U_k^j,F_k^j),
\]
according to~\eqref{equiv}, we deduce that
\begin{align*}
\Big(e(U_k^j) + \osc_{\T_k^j}(U_k^j,f)\Big)^{-\frac{1}{s}} &\Cleq
\alpha^{\frac{J-j-1}{s}} \Big(e(U_k^{J-1}) +
\eta_{\T_k^{J-1}} (U_k^{J-1}, F_k^{J-1}) \Big)^{-\frac{1}{s}} 
\\
&\le \alpha^{\frac{J-j-1}{s}} \eps_k^{-\frac{1}{s}}
\end{align*}
\rhn{by virtue of $\eta_{\T_k^{J-1}} (U_k^{J-1}, F_k^{J-1})>\eps_k$.} This implies
\begin{equation*}\label{eq:card-Mj}
\sum_{j=0}^{J-1} \#\cM_k^j \Cleq |u,f,\gamma|_{\mathbb A_s}^{\frac{1}{s}} \eps_k^{-\frac1s}
\sum_{j=0}^{J-1} \alpha^{\frac{J-j-1}{s}}
\Cleq |u,f,\gamma|_{\mathbb A_s}^{\frac{1}{s}} \eps_k^{-\frac1s}.
\end{equation*}

To do a full counting argument, we resort to the crucial estimate  
\eqref{REFINE:complexity} which, combined with the estimates above, 
gives 
\begin{equation*}
\begin{split}
\#\cT_k - \#\cT_0 \le C_7 \sum_{i=0}^{k-1}
\Big(\# \cM_i + \sum_{j=0}^{J-1} \#\cM_i^j \Big)
\Cleq 
\rhn{|u,f,\gamma|_{\mathbb A_s}^{\frac{1}{s}} \sum_{i=0}^{k-1} \eps_i^{-\frac{1}{s}}.}
\end{split}
\end{equation*}
We now use the relation $\eps_{k+1}=\rho\eps_k$ of step $3$ of AFEM,
\rhn{together with $\rho <1$,} to obtain
$\sum_{i=0}^{k-1} \eps_i^{-\frac{1}{s}} = \eps_{k-1}^{-\frac{1}{s}} 
\sum_{i=0}^{k-1} \rho^{\frac{i}{s}} \Cleq \eps_k^{-\frac{1}{s}}$,
whence
\begin{equation}\label{dof-eps-k}
\#\cT_k - \#\cT_0
\Cleq |u,f,\gamma|_{\mathbb A_s}^{\frac{1}{s}} \, \eps_k^{-\frac{1}{s}}.
\end{equation}
Moreover, the stopping criteria
\eqref{e:target_surf} and \eqref{afem-enter}  guarantee that
\begin{equation}\label{e:estim_no_T}
e(U_k) + \osc_{\T_k}(U_k,f) + \omega^{-1}\lambda_{\cT_k}(\gamma) \Cleq \eps_{k},
\end{equation}
which implies the desired estimate \eqref{rate}.
\end{proof}

The precise constant on the right-hand side of \eqref{rate} is
$\omega^{-1} C(\gamma)^s + |u,f,\gamma|_{\mathbb A_s}$.
This and the condition $\omega\le\omega_*$ in
\eqref{restrict-omega} suggest that $\omega$ should not 
be too small to optimize \eqref{rate}. An
optimal choice of $\omega$,  %for the case $s=t$, 
which unfortunately is not computable, appears to be
\[
\omega = \min\big\{\omega_*, |u,f,\gamma|_{\As}^{-1} C(\gamma)^{s} \big\}.
\]

We also provide an estimate on the workload in the following corollary.
We assume that  the adaptive loop~\eqref{p:loop} on a subdivision $\T
\in \mathbb T$ requires $O(\#\T)$ computations and in particular (i)
the linear algebra solver scales like $\#\T$~\cite{BP:11,KY:08} and (ii) an approximate
sort requiring $O(\#\T)$ arithmetic operations is used to select the
local estimators $\eta_\T(U,F_\Gamma,T)$
for all $T\in \T$ (see e.g. \cite[Remark~5.3]{BN:10}).
\begin{corollary}[Workload estimate]
In addition to the assumptions of Theorem~\ref{T:rate}, suppose that
each inner loop of $\ADAPTPDE$ on a subdivision $\T \in \mathbb T$
requires $O(\#\T)$ arithmetic operations. 
If $\varepsilon \leq \varepsilon_0$, then the number of arithmetic
operations $W$ for \AFEM to construct a triple $(\Gamma,\T,U)$ such that
\begin{equation}\label{e:final_tol}
e(U) + \osc_{\cT}(U,f) + \omega^{-1}\lambda_{\cT}(\gamma) \leq \varepsilon
\end{equation}
satisfies
$$
W \Cleq \varepsilon^{-1/s}.
$$
\end{corollary}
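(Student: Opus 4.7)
The plan is to count work outer-loop by outer-loop and then use the cardinality bound already established in the course of proving Theorem~\ref{T:rate}. Let $K$ be the smallest integer with $\varepsilon_K \le \varepsilon$; because the tolerances decrease geometrically as $\varepsilon_k=\rho^k\varepsilon_0$ with $\rho<1$, we have $K\lesssim \log(\varepsilon_0/\varepsilon)$, and by the stopping criteria \eqref{e:target_surf}, \eqref{afem-enter} together with \eqref{e:estim_no_T} the triple $(\Gamma_K,\T_K,U_K)$ produced after $K$ outer iterations already fulfills the target \eqref{e:final_tol}.

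Next I would bound the cost $W_k$ of the $k$-th outer step. Theorem~\ref{T:conditional} guarantees that $\ADAPTPDE(\T_k^+,\varepsilon_k)$ terminates after a uniformly bounded number $J$ of inner $\Solve$–$\Estimate$–$\Mark$–$\Refine$ loops; each such loop runs in linear time on the current mesh by hypothesis, and all intermediate meshes are nested in $\T_{k+1}$, so this call contributes at most $CJ\,\#\T_{k+1}$. For $\ADAPTSURF$, the greedy passes only bisect elements whose geometric indicator exceeds $\omega\varepsilon_k$, and Corollary~\ref{C:t-optimality} (applied with $t=s$, which is available because $(u,f,\gamma)\in\mathbb A_s$ forces $\ADAPTSURF$ to be $s$-optimal) bounds the cumulative number of marked elements, hence the cumulative work, by a multiple of $\#\T_k^+\le\#\T_{k+1}$. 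Summing gives
\[
W \;\le\; C \sum_{k=0}^{K} \#\T_k.
\]

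Finally, I would invoke the key cardinality bound \eqref{dof-eps-k} derived in the proof of Theorem~\ref{T:rate}, namely $\#\T_k - \#\T_0 \Cleq |u,f,\gamma|_{\mathbb A_s}^{1/s}\, \varepsilon_k^{-1/s}$, to obtain
\[
W \;\lesssim\; K\,\#\T_0 \;+\; |u,f,\gamma|_{\mathbb A_s}^{1/s} \sum_{k=0}^{K} \varepsilon_k^{-1/s}.
\]
The geometric decay $\varepsilon_k=\rho^k\varepsilon_0$ with $\rho<1$ makes the last sum a geometric series dominated by its final term: $\sum_{k\le K}\varepsilon_k^{-1/s}\lesssim \varepsilon_K^{-1/s}\lesssim \varepsilon^{-1/s}$; the remaining logarithmic term $K\,\#\T_0\lesssim|\log\varepsilon|$ is absorbed by $\varepsilon^{-1/s}$ as $\varepsilon\to 0$. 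This yields $W\lesssim \varepsilon^{-1/s}$.

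I expect the only delicate step to be the bookkeeping inside a single call to $\ADAPTSURF$, where the greedy procedure re-evaluates $\lambda_\T(\gamma)$ after each $\Refine$ and there is no a priori bound on the number of greedy passes. The resolution is that the \emph{total} number of elements ever marked within one call is controlled by Corollary~\ref{C:t-optimality} and \REFINE-complexity (Lemma~\ref{L:REFINE:complexity}), so the linear-in-$\#\T$ hypothesis on each sub-step telescopes to a bound proportional to $\#\T_{k+1}$; everything else is routine.
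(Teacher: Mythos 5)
Your argument is essentially the paper's own proof: both bound the work of the $k$-th outer step by a multiple of $\#\T_{k+1}$ (the geometric indicators in \ADAPTSURF cost $O(1)$ each and require no sorting, while the number $J$ of inner loops of \ADAPTPDE is uniformly bounded by Theorem~\ref{T:conditional} and each loop is linear by hypothesis), and then invoke \eqref{dof-eps-k} and sum the resulting geometric series in $\varepsilon_k^{-1/s}$, which is dominated by its last term. The only bookkeeping slip is that \eqref{e:estim_no_T} holds with a hidden constant $C$, so to guarantee \eqref{e:final_tol} exactly you should iterate until $\varepsilon_k \le \varepsilon/C$ rather than $\varepsilon_k \le \varepsilon$ (this is how the paper chooses $K$); that adds only $O(1)$ outer steps and leaves the final bound $W \Cleq \varepsilon^{-1/s}$ unchanged.
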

\begin{proof}
Let $C \geq 1$ be the hidden constant in~\eqref{e:estim_no_T} and set
\ab{$K$ to be the integer such that $\varepsilon_{K+1}:=\rho^{K+1}
\varepsilon_0 \leq \varepsilon/C \leq \varepsilon_{K}$}.
Moreover, we define $W_j^+$ to be the number of
arithmetic operations performed within the call
\rhn{$\T_j^+=\ADAPTSURF(\T_j,\omega \varepsilon_j)$}
 and 
$W_{j+1}$ those within the call
$[U_{j+1},\T_{j+1}]=\ADAPTPDE(\T_j^+,\varepsilon_j)$.
With these notations, the total number of operations to
achieve~\eqref{e:final_tol} satisfies
$$
W \Cleq \sum_{j=0}^{\ab{K}} (W_j^+ +W_{j+1}).
$$
We now bound each term separately starting with $W_j^+$.
The computation of each local geometric estimator requires $O(1)$ arithmetic operations and is performed 
$$
\#\T_j + \#\mathcal M_j \leq \#\T_{j}^+ \leq \#\T_{j+1}
$$
times.
Since $\ADAPTSURF$ does not involve sorting the local geometric estimators, we readily deduce that
$$
W_j^+ \Cleq \#\T_{j+1}.
$$
Regarding $W_{j+1}$, we recall that Theorem~\ref{T:conditional} guarantees that the number of inner iterations $J$ within $\ADAPTPDE$ is uniformly bounded.
This, together with the complexity assumption on the
inner loops of $\ADAPTPDE$, yields
$$
W_{j+1} \Cleq \# \T_{j+1}.
$$
Now combining the above two estimates and invoking~\eqref{dof-eps-k}, we deduce that
$$
W_j^+ + W_{j+1} \Cleq \varepsilon_{j+1}^{-1/s}.
$$
Going back  to the total number of operations $W$, we find 
$$
W \Cleq \rho^{-1/s} \varepsilon_{\ab{K}}^{-1/s} \sum_{j=0}^{\ab{K}} \rho^{j/s} \Cleq \varepsilon_{\ab{K}}^{-1/s},
$$
where we used the relations \ab{$\varepsilon_{j+1} = \rho^{-(K-j-1)} \varepsilon_K$ for $j=0,...,K$}.
The desired estimate follows from the definition of $\varepsilon_{\ab{K}}$.
\end{proof}

%%%%%%%%%%%%%%%%%%%%%%%%%%%%%%%%%%%%%%%%%%%%%%%%%%%%%%%%%%%%%%%%%%%%%%%%%%%%%%%
\section{Products and Compositions in Besov Spaces}\label{S:Besov}
%%%%%%%%%%%%%%%%%%%%%%%%%%%%%%%%%%%%%%%%%%%%%%%%%%%%%%%%%%%%%%%%%%%%%%%%%%%%%%%

\rhn{In this section we derive scale-invariant estimates for Besov
(quasi-)semi-norms for products and compositions of functions. This extends estimates for Sobolev norms of products~\cite{Adams-Fournier:2003}, as alluded to in Remark~\ref{R:scale-invariant}, and for Besov norms of compositions~\cite{Bourdaud-Sickel:2011}. Our estimates, however, possess a structure that does not seem to be available in the literature.}

We recall the definition of Besov spaces via modulus of smoothness; a thorough discussion can be found in~\cite{GM:14}.
Let $\Omega$ be a Lipschitz domain in $\R^d$, $0<p\le \infty$ and $u \in L_p(\Omega)$, we define the differences as follow, for $h\in\R^d$, $k \in \N$:
$\Delta_h u : \Omega \to \R$, with 
\[
\Delta_h u (x) = 
\begin{cases} u(x+h) - u(x),  \qquad &\text{if } 
x \in \Omega_h = \{ x \in \Omega : x + th \in \Omega, \forall t \in [0,1]\}, \\
0, \qquad&\text{otherwise,}
\end{cases}
\]
and $\Delta_h^{k+1}u : \Omega \to \R$ as $\Delta_h^{k+1} u (x) = \Delta_h \Delta_h^{k} u(x)$ for $k \in \N$ and $x \in \Omega_{(k+1)h}$ and 0 otherwise.
Therefore,
% $\Delta $\Delta^k_h u:\Omega \to \R$ given by
\begin{equation}\label{Delta^k}
 \Delta^{k}_{h} u(x) =
 \begin{cases}  
 \sum_{j=0}^{k} (-1)^{k+j} \binom{k}{j} u(x+jh) , 
\qquad &\text{if } x \in \Omega_{kh}, \\
0, \qquad&\text{otherwise}.
\end{cases}
\end{equation}
Using these difference operators we define the \emph{modulus of smoothness} of order $k$ in $L_p(\Omega)$ as:
\[
\omega_k (u,t)_p = \sup_{|h| \leq t} \|\Delta^{k}_{h} u\|_{L_p(\Omega)} ,\qquad t > 0,
\]

Given $s > 0$ and $0<q,p\leq \infty$, the Besov space $B^s_{q}(L_p(\Omega))$, is the set of all functions $f\in L_p(\Omega)$ such that the semi-(quasi)norm \gls{def:besov}$|f|_{B^s_{q}(L_p(\Omega))}$ is finite, with
\begin{equation}\label{besov-seminorm}
|f|_{B^s_{q}(L_p(\Omega))} :=
\left \{
\begin{aligned}
 \displaystyle &\left( \int_{0}^{\infty} [t^{-s} \omega_k(u,t)_p]^q \frac{dt}{t} \right)^{\frac{1}{q}}, \qquad & &\textrm{ if } 0<q<\infty 
     \\
 \displaystyle &\sup_{t>0} t^{-s} \omega_k(f,t)_p , \qquad & &\textrm{ if } q=\infty,
\end{aligned}
  \right.
\end{equation}
where $k \in \N$ is such that 
$s < k$. % - 1 + \max\{1, \frac{1}{p}\}$. 
\rhn{Although this definition only requires \rhn{$k>s$,
it turns out that the space $B_q^s(L_p(\Omega))$ is independent of $k$~\cite[Theorem 10.1]{DL}.} % 
From now on, we fix the value of $ k := \lfloor s \rfloor +1$ in most of our results for simplicity. 
%We assert that
%all results  obtained  are also valid for any $k\in \mathbb{N}$ such that 
%$s < k$. % - 1 + \max\{1, \frac{1}{p}\}$, because their proofs only require $k>s$.
}
The (quasi) norm of $B^s_{q}(L_p(\Omega))$ is defined by:
\begin{equation}\label{norma-besov}
 \|f\|_{B^s_{q}(L_p(\Omega))} = \|f\|_{L_p(\Omega)} + |f|_{B^s_{q}(L_p(\Omega))}.
\end{equation}

\rhn{Two important properties that we will exploit in what follows are
  the embeddings of $B^s_{q}(L_p(\Omega))$ in $L_\infty(\Omega)$,
  whenever $sp > d$, and of $B^{s_1}_{q_1}(L_{p_1}(\Omega))$ in
  $B^{s_2}_{q_2}(L_{p_2}(\Omega))$, whenever $s_1 - d/p_1 > s_2 -
  d/p_2$~\cite{GM:14}.}
  
%---------------------------------------------------------------------------------
\subsection{Product of Functions}
%---------------------------------------------------------------------------------

The following result, essential for our discussion, is analogous to~\cite[Theorem~4.39]{Adams-Fournier:2003} for Besov spaces
and is scale-invariant.

\begin{lemma}[Scale-invariant Besov semi-norm of the product of two functions]\label{L:Besov-product}
\rhn{Let $u,v \in B_q^s(L_p(\Omega))$ with }
  $s>0$ and $0 < p ,q\le \infty$ satisfying $ s > d/p$ (i.e. 
  $B^s_q(L_p(\Omega)) \subset L_\infty(\Omega)$)
  and $k = \lfloor s \rfloor + 1$. Then \rhn{$uv \in B_q^s(L_p(\Omega))$ and}
  \begin{equation}\label{product}
  | u v |_{B^s_q(L_p(\Omega))}
  \Cleq
  \sum_{j=0}^{k}
    | u |_{B^{sj/k}_q(L_{pk/j}(\Omega))} 
    | v |_{B^{s(k-j)/k}_q(L_{pk/(k-j)}(\Omega))},
  \end{equation}
  with the convention that $B^0_q(L_{pk/0}(\Omega)) = L_\infty(\Omega)$ and $|\cdot|_{B^0_q(L_{pk/0}(\Omega))} = \|\cdot\|_{L_\infty(\Omega)}$.
\end{lemma}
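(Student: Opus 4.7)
The plan is to combine a discrete Leibniz identity with H\"older's inequality on $L_p$ spaces. The key identity is
\begin{equation*}
\Delta_h^k(uv)(x) = \sum_{j=0}^k \binom{k}{j}\,\Delta_h^j u(x)\,\Delta_h^{k-j} v(x+jh),
\end{equation*}
which follows by induction on $k$ from the one-step product rule $\Delta_h(uv)(x)=u(x)\Delta_h v(x)+\Delta_h u(x)v(x+h)$. Taking $L_p$-norms over $\Omega_{kh}$, applying H\"older's inequality with the exponent pair $(pk/j,\,pk/(k-j))$ (which satisfies $j/(pk)+(k-j)/(pk)=1/p$, with the convention that $j\in\{0,k\}$ corresponds to an $L_\infty$ factor), and absorbing the translation $jh$ via $\Omega_{kh}+jh\subset\Omega_{(k-j)h}$, one obtains after taking the supremum over $|h|\le t$
\begin{equation*}
\omega_k(uv,t)_p \le \sum_{j=0}^k \binom{k}{j}\,\omega_j(u,t)_{pk/j}\,\omega_{k-j}(v,t)_{pk/(k-j)},
\end{equation*}
under the consistent boundary convention $\omega_0(w,t)_\infty=\|w\|_{L_\infty(\Omega)}$.

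Next I would distribute the weight $t^{-s}=t^{-sj/k}\cdot t^{-s(k-j)/k}$ across the two moduli in each summand and compute the $L_q(dt/t)$ (quasi-)norm. For the intermediate indices $1\le j\le k-1$, the strategy is to bound one factor uniformly in $t$ and keep the other under the integral. Since $k=\lfloor s\rfloor+1>s$, one has $j>sj/k$ and $k-j>s(k-j)/k$, so $\omega_j$ and $\omega_{k-j}$ are valid moduli defining the Besov seminorms of orders $sj/k$ and $s(k-j)/k$, respectively; in particular
\begin{equation*}
\sup_{t>0} t^{-sj/k}\,\omega_j(u,t)_{pk/j} = |u|_{B^{sj/k}_\infty(L_{pk/j}(\Omega))}.
\end{equation*}
Pulling this supremum out of the integral yields a contribution controlled by the product $|u|_{B^{sj/k}_\infty(L_{pk/j})}\cdot |v|_{B^{s(k-j)/k}_q(L_{pk/(k-j)})}$, after which the standard embedding $B^\sigma_q(L_r)\hookrightarrow B^\sigma_\infty(L_r)$, valid for $0<q\le\infty$ and proven by capturing the pointwise value $[t^{-\sigma}\omega_{k'}(\cdot,t)_r]^q$ against its integral over $[t,2t]$ using the monotonicity of $\omega_{k'}$, replaces the $B^\sigma_\infty$ seminorm by the $B^\sigma_q$ seminorm appearing on the right of \eqref{product}.

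For the boundary indices, $j=0$ produces the term $\|u\|_{L_\infty(\Omega)}\,\omega_k(v,t)_p$, whose $L_q(dt/t)$ norm equals $\|u\|_{L_\infty}\,|v|_{B^s_q(L_p)}$; the case $j=k$ is symmetric. Both match the stated convention $|\cdot|_{B^0_q(L_{pk/0})}=\|\cdot\|_{L_\infty}$, and the $L_\infty$ norms are finite thanks to the embedding $B^s_q(L_p)\hookrightarrow L_\infty(\Omega)$ guaranteed by the assumption $s>d/p$. Collecting the $k+1$ contributions via the (quasi-)triangle inequality in $L_q(dt/t)$---which introduces only a constant depending on $k$ and $q$ since the sum is finite---yields the claimed bound.

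The main technical point is controlling the $L_q(dt/t)$ (quasi-)norm uniformly for $0<q\le\infty$: the embedding $B^\sigma_q\hookrightarrow B^\sigma_\infty$ and the quasi-triangle inequality for $q<1$ are standard but require care with constants. The heart of the argument, however, is the combination of the discrete Leibniz identity with the precise choice of interpolated H\"older exponents $pk/j$ and $pk/(k-j)$, which is exactly what produces the scale-invariant structure of the bound and mirrors the Sobolev analogue in~\cite{Adams-Fournier:2003}.
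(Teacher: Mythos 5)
Your proof is correct and takes essentially the same route as the paper's: the discrete Leibniz rule for $\Delta_h^k$ combined with H\"older's inequality at the interpolated exponents $pk/j$ and $pk/(k-j)$, yielding $\omega_k(uv,t)_p \Cleq \sum_{j=0}^k \omega_j(u,t)_{pk/j}\,\omega_{k-j}(v,t)_{pk/(k-j)}$ and then the weighted $L_q(dt/t)$ accumulation. The only (harmless) variation is in handling the $t$-integral for $0<q<\infty$: the paper applies H\"older in $t$ with exponents $k/j$ and $k/(k-j)$ (producing secondary indices $qk/j$, $qk/(k-j)$ and then embedding back to $q$), whereas you pull out one factor as a $B^{\cdot}_\infty$ seminorm and invoke $B^\sigma_q(L_r)\hookrightarrow B^\sigma_\infty(L_r)$; both are scale-invariant and give the stated bound.
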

\begin{proof}
Recall that 
$\omega_k(uv,t)_p = \sup_{|h|\le t} \|\Delta^k_h (uv)
  \|_{L_p(\Omega)}$ and that the $k$-th differences obey a rule similar to Leibniz
rule. This translates into the expression 
\begin{align*}
\|\Delta^k_h (uv) \|_{L_p(\Omega)} \Cleq{}& \sum_{j=0}^k 
\|\Delta^j_h u \Delta^{k-j}_h v \|_{L_p(\Omega)} 
\le  \sum_{j=0}^{k} \|\Delta^j_h u \|_{L_{pk/j}(\Omega)} \|\Delta^{k-j}_h v \|_{L_{pk/(k-j)}(\Omega)}, 
%\Cleq{}& \|u \|_{L_\infty(\Omega)} \|\Delta^k_h v \|_{L_p(\Omega)} 
%+ \sum_{j=1}^{k-1} \|\Delta^j_h u \|_{L_{pk/j}(\Omega)} \|\Delta^{k-j}_h v \|_{L_{pk/(k-j)}(\Omega)} \\
%&+\|\Delta^k_h u \|_{L_p(\Omega)} \| v \|_{L_\infty(\Omega)} ,
\end{align*}
where we have used H\"older inequality in the last step and the conventions $\Delta_h^0=Id$ and $L_{pk/0}(\Omega) = L_\infty(\Omega)$. Therefore,
\[
\omega_k(uv,t)_p \Cleq \sum_{j=0}^k 
\omega_j(u,t)_{pk/j} \, \omega_{k-j}(v,t)_{pk/(k-j)},
\]
where we use the convention
$\omega_0(v,t)_{pk/0}=\|v\|_{L_\infty(\Omega)}$.

We now consider the cases $q=\infty$ and $q<\infty$ separately. Observe that
\begin{align*}
|uv|_{B^s_\infty(L_p(\Omega))} &= \sup_{t>0} t^{-s} \omega_k(uv,t)_p
\\
& \Cleq \sum_{j=0}^k \sup_{t>0} t^{-js/k} \omega_j(u,t)_{pk/j} 
\sup_{t>0} t^{-(k-j)s/k}\omega_{k-j}(v,t)_{pk/(k-j)}.
\end{align*}
Utilizing that 
$|u|_{B^{js/k}_\infty(L_{pk/j}(\Omega))} \simeq \sup_{t>0} t^{-js/k} \omega_j(u,t)_{pk/j}$,
because $k>s$ so that $js/k < j \le j-1+\max\{1,j/(pk)\}$ for $0\le j \le k$, we have
\begin{equation*}\label{Leibniz-q=infty}
|uv|_{B^s_\infty(L_p(\Omega))} \Cleq \sum_{j=0}^k |u|_{B^{js/k}_\infty(L_{pk/j}(\Omega))}
|v|_{B^{(k-j)s/k}_\infty(L_{pk/(k-j)}(\Omega))}.
\end{equation*}

If $0<q<\infty$, we define $q^* = \max\{1,q\}$ and notice that by the triangle and H\"older inequalities
\begin{align*}
|uv|_{B^s_q(L_p(\Omega))}^{q/q^*} &= \left( \int_0^\infty t^{-sq} \omega_k(uv,t)_p^q \frac{dt}t \right)^{1/q^*}
\\
\le{}& \sum_{j=0}^k \left( \int_0^\infty t^{-sq} \omega_j(u,t)_{pk/j}^q \omega_{k-j}(v,t)_{pk/(k-j)}^q  \frac{dt}t  \right)^{1/q^*}
\\
\le{}& \sum_{j=0}^k \left( \int_0^\infty t^{-sq} \omega_j\left(v,t\right)_{pk/j}^{qk/j} \frac{dt}t \right)^{\frac{j}{kq^*}} 
                  \left( \int_0^\infty t^{-sq} \omega_{k-j}\left(v,t\right)_{pk/(k-j)}^{qk/(k-j)}  \frac{dt}t  \right)^{\frac{k-j}{kq^*}}
\\
\Cleq{}& \sum_{j=0}^k \left| u \right|_{B_{qk/j}^{sj/k}(L_{pk/j}(\Omega))}^{q/q^*}
                       \left| v \right|_{B_{qk/(k-j)}^{s(k-j)/k}(L_{pk/(k-j)}(\Omega))}^{q/q^*}  .
\end{align*}
Upon raising both sides to the power $q^*/q \ge 1$, this
  inequality implies \eqref{product}.
\end{proof}

We make the important observation that \eqref{product} is
scale-invariant: simply scale $\Omega$ by a constant $h$ and realize
that both sides of \eqref{product} scale by the same factor $h^{s-d/p}$. This implies
that  \eqref{product} can be used at the element level.
Upon iterating \eqref{product} we obtain the following simple, but
technical, generalization of \eqref{product}.

\begin{corollary}[Scale-invariant Besov semi-norm of products of functions]\label{C:Besov-product-seminorm}
  \rhn{Let $s>0$ and $0 < p,q \le \infty$ satisfy $ s > d/p$ (i.e. 
  $B^s_q(L_p(\Omega)) \subset L_\infty(\Omega)$), and let
  $k = \lfloor s \rfloor + 1$. If $u_i\in B^s_q(L_p(\Omega))$ for $1\le i\le m$,
  then $\prod_{i=1}^m u_i \in B^s_q(L_p(\Omega))$ and
\begin{equation}\label{product-semi}
\Big | \prod_{i=1}^m u_i \Big |_{B_q^s(L_p(\Omega))} 
\Cleq \sum_{\sum_{i=1}^m \ell_i=1} ~
\prod_{i=1}^m  \left | u_i \right |_{B_q^{s\ell_i}(L_{p/\ell_i}(\Omega))},
\end{equation}
where $0 \le \ell_i = m_i/k^{m-1} \le 1$ and
the sum ranges over all choices of $m_i \in \N_0$ such that
$\sum_{i=1}^m m_i = k^{m-1}$.}
\end{corollary}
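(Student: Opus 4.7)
My plan is to establish the corollary by induction on $m$, iterating Lemma~\ref{L:Besov-product}. The base case $m = 1$ is trivial (only $m_1 = 1$, $\ell_1 = 1$), and $m = 2$ is exactly Lemma~\ref{L:Besov-product} with $m_1 = j$ and $m_2 = k - j$, so I would start from $m = 2$ and argue the general step.

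A preliminary observation I would record is that the proof of Lemma~\ref{L:Besov-product} in fact only uses that the order of differences $k$ satisfies $k > s$: this is the value of $k$ appearing in the Besov norm via $\omega_k(\cdot,t)_p$, and the equivalence $|u|_{B_q^{sj/k}(L_{pk/j})} \simeq \sup_t t^{-sj/k}\,\omega_j(u,t)_{pk/j}$ only needs $j > sj/k$, i.e.\ $s < k$. Hence a single $k := \lfloor s \rfloor + 1$ (that of the outermost norm) may be used consistently in every inner application, since $s \ge s' > 0$ at every inductive level.

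For the inductive step from $m-1$ to $m$, I would write $\prod_{i=1}^m u_i = \bigl(\prod_{i=1}^{m-1} u_i\bigr) \cdot u_m$ and apply Lemma~\ref{L:Besov-product} to obtain
\[
\Big|\prod_{i=1}^m u_i\Big|_{B_q^s(L_p(\Omega))} \Cleq \sum_{j=0}^{k} \Big|\prod_{i=1}^{m-1} u_i\Big|_{B_q^{sj/k}(L_{pk/j}(\Omega))}\; |u_m|_{B_q^{s(k-j)/k}(L_{pk/(k-j)}(\Omega))}.
\]
The condition $s' > d/p'$ with $s' = sj/k$ and $p' = pk/j$ is equivalent to $s > d/p$ and so is preserved. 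For $1 \le j \le k-1$, I would invoke the induction hypothesis on the group $\prod_{i=1}^{m-1} u_i$, producing a sum over $(n_1,\ldots,n_{m-1}) \in \N_0^{m-1}$ with $\sum_i n_i = k^{m-2}$. The boundary cases $j = 0$ and $j = k$ yield $L_\infty$ factors, which I would handle directly via $\|\prod u_i\|_{L_\infty} \le \prod \|u_i\|_{L_\infty}$ and the convention $|u|_{B_q^0(L_\infty)} = \|u\|_{L_\infty}$ that corresponds precisely to $\ell_i = 0$ for the indices in question.

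Substituting $s' = sj/k$ and $p' = pk/j$ into the hypothesis, the inner exponents become $s' n_i/k^{m-2} = s\,j n_i/k^{m-1}$ and $p' k^{m-2}/n_i = p\, k^{m-1}/(j n_i)$. Defining
\[
m_i := j\, n_i \quad (1 \le i \le m-1), \qquad m_m := (k-j)\, k^{m-2},
\]
produces non-negative integers with $\sum_{i=1}^m m_i = j k^{m-2} + (k-j) k^{m-2} = k^{m-1}$, so $\ell_i := m_i/k^{m-1}$ satisfies $\sum \ell_i = 1$ and the exponents fit the form of~\eqref{product-semi}. Since the tuples $(m_1,\ldots,m_m)$ generated by the iteration form a subset of all tuples in $\N_0^m$ with $\sum m_i = k^{m-1}$, summing over the latter (larger) set gives an upper bound, yielding the claim. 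The main obstacle is therefore purely combinatorial bookkeeping, together with justifying the uniform choice of $k$; no additional analytic input beyond Lemma~\ref{L:Besov-product} is needed.
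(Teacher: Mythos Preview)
Your proposal is correct and follows exactly the approach the paper indicates: the paper simply states that ``upon iterating \eqref{product} we obtain the following simple, but technical, generalization,'' and you have supplied precisely that iteration together with the bookkeeping the paper omits. Your observation that the same $k=\lfloor s\rfloor+1$ can be used at every inductive level (since the proof of Lemma~\ref{L:Besov-product} only needs $k>s$, and $s'\le s$ throughout) is the one point that deserves mention, and you handle it correctly.
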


Using embedding theorems for Besov spaces, \eqref{product-semi} can be further
simplified by replacing the semi-norms by norms.
However, this is at the expense of having a constant depending on $|\Omega|$.

\begin{corollary}[Besov norm of products of functions]\label{C:Besov-product}
  \rhn{Let $s>0$ and $0 < p,q \le \infty$ satisfy $ s > d/p$ (i.e. 
  $B^s_q(L_p(\Omega)) \subset L_\infty(\Omega)$).
  If $u_i\in B^s_q(L_p(\Omega))$ for $1\le i\le m$,
  then $\prod_{i=1}^m u_i \in B^s_q(L_p(\Omega))$ and
  then there exists a constant $C(\Omega)$ such that}
\[
\rhn{
\Big\| \prod_{i=1}^m u_i \Big\|_{B_q^s(L_p(\Omega))} 
\leq C(\Omega)
\prod_{i=1}^m  \left\| u_i \right\|_{B_q^s(L_p(\Omega))} 
}
\]
\end{corollary}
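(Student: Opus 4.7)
The plan is to reduce the full norm estimate to two ingredients: the seminorm bound of Corollary \ref{C:Besov-product-seminorm}, and an embedding argument that collapses each factor $|u_i|_{B_q^{s\ell_i}(L_{p/\ell_i}(\Omega))}$ into the single quantity $\|u_i\|_{B_q^s(L_p(\Omega))}$, at the price of a constant depending on $\Omega$.

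First, I would split the norm as
\[
\Big\|\prod_{i=1}^m u_i\Big\|_{B_q^s(L_p(\Omega))}
= \Big\|\prod_{i=1}^m u_i\Big\|_{L_p(\Omega)}
+ \Big|\prod_{i=1}^m u_i\Big|_{B_q^s(L_p(\Omega))}.
\]
For the $L_p$ piece, since $s>d/p$ we have the Besov embedding $B_q^s(L_p(\Omega)) \hookrightarrow L_\infty(\Omega)$ with some constant $C_0(\Omega)$, so
\[
\Big\|\prod_{i=1}^m u_i\Big\|_{L_p(\Omega)}
\le |\Omega|^{1/p}\prod_{i=1}^m \|u_i\|_{L_\infty(\Omega)}
\le |\Omega|^{1/p} C_0(\Omega)^m \prod_{i=1}^m \|u_i\|_{B_q^s(L_p(\Omega))}.
\]

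Next, Corollary \ref{C:Besov-product-seminorm} gives
\[
\Big|\prod_{i=1}^m u_i\Big|_{B_q^s(L_p(\Omega))}
\Cleq \sum_{\sum_{i=1}^m \ell_i = 1}\; \prod_{i=1}^m |u_i|_{B_q^{s\ell_i}(L_{p/\ell_i}(\Omega))},
\]
with $\ell_i = m_i/k^{m-1} \in [0,1]$. I would then control each factor by the target norm using a scale of embeddings. For $\ell_i=0$ the factor is literally $\|u_i\|_{L_\infty(\Omega)}$, bounded by $C_0(\Omega)\|u_i\|_{B_q^s(L_p(\Omega))}$ as above. For $0<\ell_i\le 1$, since $s-d/p>0$ and $\ell_i\le 1$, we have $s-d/p \ge \ell_i(s-d/p) = s\ell_i - d/(p/\ell_i)$, which is exactly the differential dimension condition for the embedding
\[
B_q^s(L_p(\Omega)) \hookrightarrow B_q^{s\ell_i}(L_{p/\ell_i}(\Omega))
\]
on the bounded Lipschitz domain $\Omega$. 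Let $C_1(\Omega)$ be the maximum of the (finitely many) embedding constants over the finite set of admissible $\ell_i$'s; then $|u_i|_{B_q^{s\ell_i}(L_{p/\ell_i}(\Omega))} \le C_1(\Omega) \|u_i\|_{B_q^s(L_p(\Omega))}$.

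Multiplying the $m$ resulting bounds and summing over the finite collection of admissible tuples $(\ell_1,\ldots,\ell_m)$ yields
\[
\Big|\prod_{i=1}^m u_i\Big|_{B_q^s(L_p(\Omega))}
\Cleq C_1(\Omega)^m \prod_{i=1}^m \|u_i\|_{B_q^s(L_p(\Omega))},
\]
with the hidden combinatorial constant depending only on $m$, $s$, $p$, $q$, $d$. Combining this with the $L_p$ bound and setting $C(\Omega) := |\Omega|^{1/p} C_0(\Omega)^m + C_1(\Omega)^m$ (times the combinatorial constant) gives the claim. The only non-routine step is verifying the embedding $B_q^s(L_p(\Omega)) \hookrightarrow B_q^{s\ell}(L_{p/\ell}(\Omega))$ with an $\Omega$-dependent constant, which is standard for bounded Lipschitz domains given the strict inequality $s>d/p$; there is no genuine obstacle.
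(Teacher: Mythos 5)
Your argument is correct and follows essentially the same route as the paper: apply the scale-invariant seminorm bound for products and then use the embeddings $B^s_q(L_p(\Omega))\subset B^{s\ell}_{q'}(L_{p/\ell}(\Omega))$ (from $s\ell - d\ell/p \le s-d/p$) and $B^s_q(L_p(\Omega))\subset L_\infty(\Omega)$ to replace each seminorm by the full norm, absorbing the domain dependence into $C(\Omega)$. The only point worth noting is that at $\ell_i=1$ the differential-dimension inequality is an equality, but there the two spaces coincide, so your estimate is still valid; your treatment of the $L_p$ part of the norm is merely more explicit than the paper's.
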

\begin{proof}
Since $s \ell_i - d/(p/\ell_i) = \ell_i (s-d/p) < s-d/p $ for $0 < \ell_i < 1$, we deduce
$B^s_q(L_p(\Omega)) \subset B^{s\ell_i}_{q'}(L_{p/\ell_i}(\Omega))$
for any $0<q,q'\le \infty$. This, together with
$B^s_q(L_p(\Omega)) \subset L_\infty(\Omega)$, enables us to replace
the semi-norms in \eqref{product} by the full Besov norms absorbing the scaling into the constant $C(\Omega)$.
\end{proof}  

\subsection{Composition of Functions}

The following result is a scale-invariant
generalization of a result in~\cite{Bourdaud-Sickel:2011} related to the Besov regularity of the composition of functions.

\begin{comment}
\begin{lemma}[Besov norm of composition]\label{L:composition}
Let $f:\R\to\R$ be $C^k$ and $u:\Omega\to\R$ be of class
$B^s_p(L_p(\Omega)$ for $s>0, 0<p\le\infty$ with $s>d/p$. 
Let $R$ be an interval in $\R$ that contains the range of $u$.
Then the
composite function $f \circ u$ satisfies
%
\[
f \circ u \in B^s_p(L_p(\Omega)
\]
%
and there exists a constant $C(f)$ depending on
$\|f^{j}\|_{L_\infty(R)}$ for $1\le j\le \lfloor u \rfloor +1$
such that
%
\begin{equation}\label{composition}
\|f \circ u \|_{B^s_p(L_p(\Omega)} \le C(f) 
\Big( \| u \|_{B^s_p(L_p(\Omega)} 
+ \| u \|_{B^s_p(L_p(\Omega)}^{\lfloor u \rfloor +1} \Big).
\end{equation}
%
\end{lemma}
\end{comment}

\begin{lemma}[Scale-invariant Besov semi-norm of composition]\label{L:composition-semi}
Let $u:\Omega\to\R$ be of class
$B^s_q(L_p(\Omega))$ with $0<p,q\le\infty$ and $s>d/p$, and let
$R$ be a closed interval in $\R$ that contains the range of $u$. If $f \in C^k(R)$, with $k > s$,
then the composite function $f \circ u \in B^s_q(L_p(\Omega)) $
and there exists a constant $C(f)$ depending on
$\max_{1\le j \le k}\|f^{(j)}\|_{L_\infty(R)}$ % for $1\le j\le \lfloor s \rfloor +1$
such that
\begin{equation}\label{composition-semi}
|f\circ u|_{B_q^s(L_p(\Omega))} \Cleq
C(f) \sum_{\ell=1}^{k} \sum_{i=1}^\ell \| u \|_{L_\infty(\Omega)}^{\ell-i}
\sum_{\sum_{j=1}^i\ell_j=1} \prod_{j=1}^i | u |_{B^{s\ell_j}_q(L_{p/\ell_j}(\Omega))},
\end{equation}
where the inner sum ranges over all choices of \rhn{fractions of the form} $0\le \ell_j = m_j/k^{\rhn{i-1}} \le 1$
with $m_j \in \N_0$ such that $\sum_{j=1}^i m_j = k^{\rhn{i-1}}$.
\end{lemma}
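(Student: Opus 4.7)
The plan is to reduce the estimate for $|f \circ u|_{B_q^s(L_p(\Omega))}$ to a finite collection of estimates for differences of products of $u$, which can then be handled by the techniques already developed for Lemma \ref{L:Besov-product} and its iterative consequence Corollary \ref{C:Besov-product-seminorm}.

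First I would expand $f$ around $u(x)$ via Taylor's formula to order $k-1$ with integral remainder, yielding
\[
f(v) = \sum_{\ell=0}^{k-1} \frac{f^{(\ell)}(u(x))}{\ell!}(v-u(x))^\ell + \frac{1}{(k-1)!}\int_{u(x)}^{v} f^{(k)}(s)(v-s)^{k-1} ds,
\]
and substitute $v = u(x+jh)$ for $j=0,\ldots,k$. Applying $\sum_{j=0}^k (-1)^{k-j}\binom{k}{j}(\cdot)$ and using that this functional annihilates constants (so the $\ell=0$ term drops out), we obtain
\[
\Delta_h^k(f\circ u)(x) = \sum_{\ell=1}^{k-1} \frac{f^{(\ell)}(u(x))}{\ell!} \, \Delta_{h,y}^k\big[(u(y)-u(x))^\ell\big]\Big|_{y=x} \;+\; \mathcal R(x,h),
\]
where $|f^{(\ell)}(u(x))|\le \|f^{(\ell)}\|_{L_\infty(R)}$ and $\mathcal R(x,h)$ is bounded pointwise by $C_k\|f^{(k)}\|_{L_\infty(R)}\max_{1\le j\le k}|u(x+jh)-u(x)|^k$, a product of $k$ first differences of $u$. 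This already fixes $C(f) \le C \max_{1\le j\le k}\|f^{(j)}\|_{L_\infty(R)}$.

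Next I would expand the binomial $(u(y)-u(x))^\ell = \sum_{m=0}^\ell (-1)^{\ell-m}\binom{\ell}{m} u(x)^{\ell-m} u(y)^m$, extract the $L_\infty$ factor $u(x)^{\ell-m}$ bounded by $\|u\|_{L_\infty(\Omega)}^{\ell-m}$, and reduce (noting again that $\Delta_h^k$ kills constants) to estimating $\|\Delta_h^k u^m\|_{L_p(\Omega)}$ for $1\le m\le \ell$. Here is where I would invoke the strategy of Lemma \ref{L:Besov-product} (generalized Leibniz rule for $k$-th differences of products combined with Hölder's inequality in space), iterated $i-1$ times for the product of $i$ factors, exactly as in the proof of Corollary \ref{C:Besov-product-seminorm}. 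This yields
\[
\|\Delta_h^k u^m\|_{L_p(\Omega)} \Cleq \sum_{i=1}^{\min(m,k)} \binom{m}{i}\|u\|_{L_\infty(\Omega)}^{m-i}\;\sum_{\substack{\ell_j=\tilde m_j/k^{i-1}\\ \sum_j \tilde m_j = k^{i-1}}}\prod_{j=1}^i \omega_k(u,|h|)_{p/\ell_j},
\]
where the $m-i$ trivial factors in Leibniz (those with $k_{j_0}=0$) contribute the $\|u\|_{L_\infty}^{m-i}$. Combined with the $\|u\|_{L_\infty}^{\ell-m}$ coming from the binomial step, the total $L_\infty$ factor becomes $\|u\|_{L_\infty}^{\ell-i}$; the remainder $\mathcal R(x,h)$ is handled identically, giving the $\ell=k$ contribution with $\ell_j=1/k$ (all $k$ factors of $u$ differenced with first-order differences, bounded via Hölder).

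Finally, I would take $\sup_{|h|\le t}$ to pass to $\omega_k(f\circ u,t)_p$, divide by $t^s$, and integrate against $dt/t$ in $L_q$. Since $\sum_j \ell_j = 1$, Hölder's inequality in $L_q(dt/t)$ — applied exactly as in the concluding step of the proof of Lemma \ref{L:Besov-product} with exponents $q/(\ell_j q^*)$ where $q^* = \max\{1,q\}$ — distributes the weight $t^{-s}$ as $\prod_j t^{-s\ell_j}$ and produces
\[
\Big\|t^{-s\ell_j}\omega_k(u,t)_{p/\ell_j}\Big\|_{L_q(dt/t)} \simeq |u|_{B_q^{s\ell_j}(L_{p/\ell_j}(\Omega))},
\]
which is valid because $k > s \ge s\ell_j$. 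Summing the contributions over $\ell = 1,\ldots, k$ and $i = 1,\ldots, \ell$ yields the asserted bound. The main obstacle will be a clean bookkeeping between the two nested combinatorial layers (the Taylor index $\ell$ and the Leibniz index $i \le \min(m,k) \le \ell$) and, in particular, placing the Taylor remainder $\mathcal R$ into the $(\ell,i)=(k,k)$ slot of the final sum; the scale-invariance itself comes for free from the two Hölder inequalities (in space and in the $dt/t$ integral), both of which preserve homogeneity since the chosen exponents sum to one.
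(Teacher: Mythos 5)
Your proposal is correct and follows essentially the same route as the paper: Taylor expansion with integral remainder, Newton's binomial to reduce to $k$-th differences of powers $u^i$ with $\|u\|_{L_\infty}^{\ell-i}$ factors, and then the Leibniz/H\"older machinery of Lemma \ref{L:Besov-product} together with H\"older in the $dt/t$ integral. The only (cosmetic) difference is that you inline the product estimate at the level of moduli of smoothness, whereas the paper first derives $|f\circ u|_{B_q^s(L_p(\Omega))}\Cleq C(f)\sum_{\ell,i}|u^i|_{B_q^s(L_p(\Omega))}\|u\|_{L_\infty(\Omega)}^{\ell-i}$ and then invokes Corollary \ref{C:Besov-product-seminorm}.
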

\begin{proof}
Recall formula~\eqref{Delta^k} and notice that $\Delta_h^k 1 = 0$. Then for $x \in \Omega_{kh}$,
\[
\Delta_h^k \big(f\circ u\big)(x) =
\sum_{j=1}^k \binom k j (-1)^{k+j} \big[ f(u(x+jh)) - f(u(x)) \big].
\]
Using Taylor's formula
\begin{align*}
f(u(x+jh))-f(u(x)) ={}& \sum_{\ell=1}^{k-1} \frac{f^{(\ell)}(u(x))}{\ell !} \left(\Delta_{jh}^1 u(x) \right)^\ell \\
&+ \int_0^1 \frac{f^{(k)} \left(u(x)+t \Delta_{jh}^1 u(x)\right)}{(k-1)!} (1-t)^{k-1} dt \left(\Delta_{jh}^1 u(x))\right)^k.
%\\
%=:{}& \sum_{\ell=1}^k I_\ell^j.
\end{align*}
Therefore, $\|\Delta_h^k \big(f\circ u\big)\|_{L_p(\Omega)}
    \le \left\|\sum_{\ell=1}^k I_\ell\right\|_{L_p(\Omega)}$ where
\begin{align*}
I_\ell(x) &:= \sum_{j=1}^{k} \binom k j (-1)^{k+j} 
\frac{f^{(\ell)}(u(x))}{\ell !} \left(\Delta_{jh}^1 u(x) \right)^\ell
\quad 1\le \ell <k,
\\
I_k (x) & := \sum_{j=1}^k \binom k j (-1)^{k+j} 
\int_0^1 \frac{f^{(k)} \left(u(x)+t \Delta_{jh}^1 u(x)\right)}{(k-1)!} (1-t)^{k-1} dt \left(\Delta_{jh}^1 u(x))\right)^k.
\end{align*}
In order to bound the terms corresponding to $\ell < k$ we use Newton's binomial formula:
\begin{align*}
 I_\ell &= \sum_{j=1}^{k} \binom k j (-1)^{k+j} 
 \frac{f^{(\ell)}(u(x))}{\ell !} 
 \sum_{i=0}^\ell \binom \ell i (-1)^{\ell-i} u(x+jh)^i u(x)^{\ell-i}
 \\
 &= \sum_{i=0}^\ell \binom \ell i (-1)^{\ell-i}\frac{f^{(\ell)}(u(x))}{\ell !} \sum_{j=1}^{k} \binom k j (-1)^{k+j} u(x+jh)^i u(x)^{\ell-i}
 \\
 &=  \sum_{i=1}^\ell \binom \ell i (-1)^{\ell-i}\frac{f^{(\ell)}(u(x))}{\ell !} \Delta_h^k u^i(x) u(x)^{\ell-i},
\end{align*}
because $\Delta^k_h u^0 = 0$. Consequently,
\[
\left\| \sum_{\ell=1}^{k-1} I_\ell \right\|_{L_p(\Omega)}
\le C(f) \sum_{\ell=1}^{k-1} \sum_{i=1}^\ell \| \Delta_h^k u^i \|_{L_p(\Omega)} \| u \|_{L_\infty(\Omega)}^{\ell-i}.
\]
A similar formula is valid for $I_k$, whence

\[
\| \Delta_h^k (f\circ u) \|_{L_p(\Omega)}
\le  C(f) \sum_{\ell=1}^{k} \sum_{i=1}^\ell \| \Delta_h^k u^i\|_{L_p(\Omega)}
\| u \|_{L_\infty(\Omega)}^{\ell-i}.
\]
The modulus of smoothness $\omega_k(f\circ u,t)_p$ in turn  satisfies
\begin{align*}
\omega_k(f\circ u,t)_p 
= \sup_{|h|\le t} \| \Delta_h^k f\circ u \|_{L_p(\Omega)} 
\le C(f) \sum_{\ell=1}^{k} \sum_{i=1}^\ell \omega_k(u^i,t)_p
\| u \|_{L_\infty(\Omega)}^{\ell-i}.
\end{align*}
Consequently, the Besov seminorm satisfies
\begin{align*}
|f\circ u|_{B_q^s(L_p(\Omega))}
& := \left( \int_0^\infty t^{-sq} \omega_k(f\circ u,t)_p^q  \frac{dt}t \right)^{1/q} 
\\
& \Cleq C(f) \sum_{\ell=1}^{k} \sum_{i=1}^\ell \left( \int_0^\infty t^{-sp} \omega_k(u^i, t)_p^q \frac{dt}t \right)^{1/q} \| u \|_{L_\infty(\Omega)}^{\ell-i}
\\
& \Cleq C(f) \sum_{\ell=1}^{k} \sum_{i=1}^\ell |u^i|_{B_q^s(L_p(\Omega))}
\| u \|_{L_\infty(\Omega)}^{\ell-i}.
\end{align*}
Employing Corollary \ref{C:Besov-product-seminorm}, we eventually
infer the desired bound \eqref{composition-semi}.
\end{proof}

The inequality \eqref{composition-semi} is scale-invariant and, as such,
can be used at the element level. However, it can be further
simplified   at the expense of having a constant depending on $|\Omega|$. 
\begin{corollary}[Besov norm of composition] \label{C:composition-norm}
Under the assumptions of Lemma \ref{L:composition-semi}, there exists a constant $C(f,\Omega)$ such that
\begin{equation}\label{composition-norm}
\begin{aligned}
|f\circ u|_{B_q^s(L_p(\Omega))} &\Cleq
C(f,\Omega) \sum_{\ell=1}^{k} \| u \|_{B^{s}_q(L_{p}(\Omega))}^\ell
\\
&\le 
C(f,\Omega)\, k \, \max \left\{\| u \|_{B^{s}_q(L_{p}(\Omega))}, \| u \|_{B^{s}_q(L_{p}(\Omega))}^k\right\}.
\end{aligned}\end{equation}
\end{corollary}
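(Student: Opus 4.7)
The plan is to apply two standard Besov embeddings to the scale-invariant bound \eqref{composition-semi} of Lemma~\ref{L:composition-semi}, then a simple estimate on a geometric-type sum. Since $s > d/p$ we have $B^s_q(L_p(\Omega)) \subset L_\infty(\Omega)$, and since for $0 \le \ell_j \le 1$ we have $s\ell_j - d/(p/\ell_j) = \ell_j(s-d/p) \le s-d/p$, we also have the embedding $B^s_q(L_p(\Omega)) \subset B^{s\ell_j}_q(L_{p/\ell_j}(\Omega))$. Both embeddings introduce constants depending on $|\Omega|$, which we absorb into $C(f,\Omega)$.

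First I would bound each factor in the product appearing in \eqref{composition-semi} by a full norm:
\[
|u|_{B^{s\ell_j}_q(L_{p/\ell_j}(\Omega))} \le C(\Omega)\, \|u\|_{B^s_q(L_p(\Omega))},
\qquad
\|u\|_{L_\infty(\Omega)} \le C(\Omega)\, \|u\|_{B^s_q(L_p(\Omega))}.
\]
Since the inner sum in \eqref{composition-semi} ranges over choices of $\ell_j = m_j/k^{i-1}$ with $\sum m_j = k^{i-1}$, whose number depends only on $i$ and $k$, each product is bounded by $C(\Omega)^i \|u\|_{B^s_q(L_p(\Omega))}^i$, and the corresponding sum by a constant (depending on $k$) times the same quantity. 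Combined with the factor $\|u\|_{L_\infty(\Omega)}^{\ell-i} \le C(\Omega)^{\ell-i}\|u\|_{B^s_q(L_p(\Omega))}^{\ell-i}$, the $(\ell,i)$-term of \eqref{composition-semi} is bounded by $C(f,\Omega)\, \|u\|_{B^s_q(L_p(\Omega))}^{\ell}$. Summing over $1 \le i \le \ell \le k$ yields the first inequality in \eqref{composition-norm}.

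For the second inequality, I would observe that for any $x \ge 0$,
\[
\sum_{\ell=1}^{k} x^\ell \le k \max\{x, x^k\},
\]
because $x^\ell \le x$ when $0 \le x \le 1$ and $x^\ell \le x^k$ when $x \ge 1$ (for $1 \le \ell \le k$). Applying this with $x = \|u\|_{B^s_q(L_p(\Omega))}$ finishes the proof.

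No substantial obstacle is expected here: the work was already done in Lemma~\ref{L:composition-semi}, and this corollary simply trades scale invariance for a simpler right-hand side by invoking well-known Besov embeddings on the fixed domain $\Omega$.
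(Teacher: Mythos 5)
Your proposal is correct and follows essentially the same route as the paper: both convert the scale-invariant bound \eqref{composition-semi} into \eqref{composition-norm} by applying the Besov embeddings valid under $s>d/p$ (absorbing the $|\Omega|$-dependent constants into $C(f,\Omega)$), and the second inequality is the same elementary estimate $\sum_{\ell=1}^k x^\ell \le k\max\{x,x^k\}$. Your writeup is actually a bit more explicit than the paper's about counting the terms in the inner sum, but there is no substantive difference.
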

\begin{proof}
It suffices to use the embeddings
$B^s_q(L_p(\Omega)) \subset B^{s\ell_j}_q(L_{p/\ell_j}(\Omega)) $ for $0<\ell_j<1$
as well as $B^{s\ell_j}_q(L_{p/\ell_j}(\Omega)) \subset
L_\infty(\Omega)$, which are valid because $s>d/p$, to convert
\eqref{composition-semi} into \eqref{composition-norm}.
\end{proof}

\section*{Glossary}

No glossary available in the arxiv version. 

%\printglossary[type=not]
%\printglossary[type=const]
%\printglossary[type=def]
%\printglossary[type=algo]

%%%%%%%%%%%%%%%%%%%%%%%%%%%%%%%%%%%%%%%%%%%%%%%%%%%%%%%%%%%%%%%%%%%%%%%%%%%%%%
\bibliographystyle{plain}

%%%%%%%%%%%%%%%%%%%%%%%%%%%%%%%%%%%%%%%%%%%%%%%%%%%%%%%%%%%%%%%%%%%%%%%%%%%%%%%
\end{document}